\newtheorem{lem}{Lemma}
\newtheorem{thm}{Theorem}
\newtheorem{prop}{Proposition}
\newtheorem{rem}{Remark}
\newtheorem{cor}{Corollary}
\newtheorem{defn}{Definition}
\renewcommand{\t}{\mathfrak{t}}
\newcommand{\C}{\mathbb{C}}
\newcommand{\Z}{\mathbb{Z}}
\newcommand{\N}{\mathbb{N}}
\newcommand{\K}{\Bbbk}
\newcommand{\g}{\mathfrak{g}}
\newcommand{\cc}{\mathfrak{c}}
\newcommand{\gl}{\mathfrak{gl}}
\newcommand{\HH}{\mathfrak{H}}
\newcommand{\NN}{\mathfrak{N}}
\newcommand{\h}{\mathfrak{k}}
\newcommand{\li}{\mathfrak{l}}
\newcommand{\z}{\mathfrak{z}}
\newcommand{\mm}{\mathfrak{m}}
\newcommand{\Oo}{\mathcal{O}}
\newcommand{\PP}{\mathcal{P}_\epsilon}
\newcommand{\ii}{\textnormal{\textbf{i}}}
\newcommand{\jj}{\textnormal{\textbf{j}}}
\newcommand{\rank}{\text{rank}}
\newcommand{\End}{\text{End}}
\newcommand{\ad}{\text{ad}}
\newcommand{\Ker}{\text{Ker}}
\newcommand{\im}{\text{Im}}
\newcommand{\Ad}{\text{Ad}}
\newcommand{\spn}{\text{span}}
\newcommand{\Lie}{\text{Lie}}
\newcommand{\Ind}{\textnormal{Ind}}
\newcommand{\ra}{\rightarrow}
\begin{document}
\title{Derived subalgebras of centralisers and finite $W$-algebras}
\author{Alexander Premet and Lewis Topley}
\thanks{\nonumber{\it Mathematics Subject Classification} (2000 {\it revision}).
Primary 17B35, 17B20. Secondary 17B63.}
\keywords{Simple Lie algebra, nilpotent element, primitive ideal, finite $W$-algebra}
\address{School of Mathematics, University of Manchester, Oxford Road,
M13 9PL, UK} \email{Alexander.Premet@manchester.ac.uk}
\email{Lewis.Topley@uea.ac.uk} \maketitle

\begin{abstract}
Let $\g=\Lie(G)$ be  the Lie algebra of a simple algebraic group $G$
over an algebraically closed field of characteristic $0$. Let $e$ be
a nilpotent element of $\g$ and let $\g_e=\Lie(G_e)$ where $G_e$
stands for the stabiliser of $e$ in $G$. For $\g$ classical, we give
an explicit combinatorial formula for the codimension of
$[\g_e,\g_e]$ in $\g_e$ and use it to determine those $e\in\g$ for
which the largest commutative quotient $U(\g,e)^{\rm ab}$ of the
finite $W$-algebra $U(\g,e)$ is isomorphic to a polynomial algebra.
It turns out that this happens if and only if $e$ lies in a unique
sheet of $\g$. The nilpotent elements with this property are called
{\it non-singular} in the paper. Confirming a recent conjecture of
Izosimov we prove that a nilpotent element $e\in\g$ is non-singular
if and only if the maximal dimension of the geometric quotients
$\mathcal{S}/G$, where $\mathcal{S}$ is a sheet of $\g$ containing
$e$, coincides with the codimension of $[\g_e,\g_e]$ in $\g_e$ and
describe all non-singular nilpotent elements in terms of partitions.
We also show that for any nilpotent element $e$ in a classical Lie
algebra $\g$ the closed subset of ${\rm Specm}\, U(\g,e)^{\rm ab}$
consisting of all points fixed by the natural action of the
component group of $G_e$ is isomorphic to an affine space. Analogues
of these results for exceptional Lie algebras are also obtained and
applications to the theory of primitive ideals are given.
\end{abstract}
\maketitle

\bigskip

\section{Introduction and preliminaries}
\subsection{}
Let $\K$ be an algebraically closed field of characteristic $0$ and
let $G$ be a simple algebraic group of adjoint type over $\K$. Given
an element $x$ in the Lie algebra $\g={\rm Lie}(G)$ we write $G_x$
for the (adjoint) stabiliser of $x$ in $G$ and denote by $\g_x$ the
Lie algebra of $G_x$. It is well known that $\g_x$  coincides with
the centraliser of $x$ in $\g$.

Let $U(\g)$ for the universal enveloping algebra of $\g$ and denote
by $\mathcal{X}$ the set of all primitive ideals of $U(\g)$. By the
PBW theorem, the graded algebra associated with the canonical
filtration of $U(\g)$ is isomorphic to the symmetric algebra $S(\g)$
which we identify with $S(\g^*)$ by using the Killing form on $\g$.
Using Commutative Algebra we then attach to $I\in \mathcal{X}$ two
important invariants: the associated variety ${\rm VA}(I)$ and the
associated cycle ${\rm AC}(I)$. The variety ${\rm VA}(I)$ is the
zero locus in $\g$ of the $G$-stable ideal ${\rm gr}(I)$ of
$S(\g^*)$ and ${\rm AC}(I)$ is a formal linear combination
$\sum_{i=1}^l m_i[\mathfrak{p}_i]$ where
$\mathfrak{p}_1,\ldots,\mathfrak{p}_l$ are the minimal primes of
$S(\g^*)$ over ${\rm Ann}_{S(\g^*)}\,{\rm gr}(U(\g)/I)$ and
$m_1,\ldots,m_l$ are their multiplicities; see
\cite[Section~9]{Ja04} where notation is slightly different. Since
the variety ${\rm VA}(I)$ is irreducible by Joseph's theorem
\cite{Jo3} and hence coincides with the Zariski closure of a
nilpotent orbit $\Oo \subset\g$, we have that ${\rm AC}(I)=m_I[J]$
where $m_I\in\N$ and $J=\sqrt{{\rm gr}(I)}$, a prime ideal of
$S(\g^*)$. The positive integer $m_I$ is sometimes referred to as
the {\it multiplicity} of $\Oo$ in the primitive quotient $U(\g)/I$
and abbreviated as ${\rm mult}_{\Oo}(U(\g)/I)$. It is well known
that if $\Oo=\{0\}$ then $I$ coincides with the annihilator in
$U(\g)$ of a finite dimensional irreducible $\g$-module $V$, the
radical $J=\sqrt{{\rm gr}(I)}$ identifies with the ideal
$\bigoplus_{i>0} S^i(\g^*)$  and $m_I=(\dim \, V)^2$.
\subsection{} From now on we let $e$ be a nonzero nilpotent element of $\g$ and
include it into an $\mathfrak{sl}_2$-triple $\{e,h,f\}\subset \g$.
Let $U(\g,e)$ be the finite $W$-algebra associated with the pair
$(\g,e)$, a non-commutative filtered deformation of the coordinate
algebra $\K[e+\g_f]$ on the Slodowy slice $e+\g_f$ regarded with its
Slodowy grading. Recall that $U(\g,e)=\big({\rm End}_\g\,
Q_e\big)^{\rm op}$ where $Q_e$ stands for a generalised
Gelfand--Graev $\g$-module associated with $e$; see \cite{Sasha1},
\cite{gg} for more detail. By a result of Skryabin, proved in the
appendix to \cite{Sasha1}, the right $U(\g,e)$-module $Q_e$ is free
and for any irreducible $U(\g,e)$-module $V$ the $\g$-module
$Q_e\otimes_{U(\g,e)} V$ is irreducible. As a consequence, the
annihilator $I_V:={\rm Ann}_{U(\g)}\big(Q_e\otimes_{U(\g,e)} V\big)$
is a primitive ideal of $U(\g)$.

Let $\Oo$ be the adjoint $G$-orbit of $e$ and define
$\mathcal{X}_{\Oo}:=\{I\in\mathcal{X}\,|\,\,{\rm
VA}(I)=\overline{\Oo}\}$. By \cite{Sasha2},
$I_V\in\mathcal{X}_{\Oo}$ for any finite dimensional irreducible
$U(\g,e)$-module $V$, whilst \cite{Lo2}, \cite{Sasha3} and \cite{gi}
show that any primitive ideal $I\in \mathcal{X}_{\Oo}$ has the form
$I_W$ for some finite dimensional irreducible $U(\g,e)$-module $W$.
As explained in \cite{Sasha3}, there is a natural action of the
component group $\Gamma=G_e/G_e^\circ$ on the set ${\rm
Irr}\,U(\g,e)$ of all isoclasses of finite dimensional irreducible
$U(\g,e)$-modules. It is straightforward to check that the primitive
ideal $I_W$ depends only on the isoclass of $W$ and so one can speak
of a primitive ideal $I_{[W]}$ where $[W]$ is the isoclass of $W$ in
${\rm Irr}\,U(\g,e)$; see \cite[Corollary~4.1]{Sasha3}, for
instance. In \cite{Lo5} Losev showed that
\begin{eqnarray}\label{Losev}{\rm mult}_{\Oo}(U(\g)/I_W)=
[\Gamma:\Gamma_W]\cdot(\dim\,W)^2\end{eqnarray} where $\Gamma_W$
denotes the stabiliser of the isoclass $[W]$ in $\Gamma$.
Furthermore, confirming a conjecture of the first-named author he
proved in {\it loc.\,cit.} that the equality $I_{[W]}=I_{[W']}$
holds for $[W],[W']\in {\rm Irr}\,U(\g,e)$ if and only if
$[W']=\,^{\!\gamma}[W]$ for some $\gamma\in \Gamma$. In particular,
this means that $\dim\,W$ is an intrinsic invariant of the primitive
ideal $I=I_W\in\mathcal{X}_{\Oo}$.

By Goldie's theory, for any $I\in\mathcal{X}$ the prime Noetherian
ring $U(\g)/I$ embeds into a full ring of fractions. The latter ring
is prime Artinian and hence isomorphic to the matrix algebra ${\rm
Mat}_n(\mathcal{D}_I)$ over a skew-field $\mathcal{D}_I$ called the
{\it Goldie field} of $U(\g)/I$. The positive integer $n=n_I$
coincides with the Goldie rank of $U(\g)/I$ which is often
abbreviated as ${\rm rk}(U(\g)/I)$.

Recall that a primitive ideal $I$ is called {\it completely prime}
if $U(\g)/I$ is a domain. It is well known that this happens if and
only if ${\rm rk}(U(\g)/I)=1$. Classifying the completely prime
primitive ideals of $U(\g)$ is an old-standing classical (and much studied) problem of
Lie Theory. In general, it remains open outside type $\sf A$
although many important partial results can be found in \cite{BV},
\cite{Bo1}, \cite{BJ}, \cite{RBr}, \cite{Jo0}, \cite{Jo4},
\cite{Lo3}, \cite{Lo1}, \cite{McG}, \cite{Moe}, \cite{Moe1} and
references therein. If $I=I_V\in\mathcal{X}_{\Oo}$, where
$[V]\in{\rm Irr}\,U(\g,e)$, then the main result of \cite{Sasha4}
states that the number $$q_I:=\frac{\dim\,V}{{\rm
rk}(U(\g)/I)}$$ is an integer, and it is also proved in {\it
loc.\,cit.} that $q_I=1$ if the Goldie field $\mathcal{D}_I$ is
isomorphic to the skew-field of fractions of a Weyl algebra.  The
integrality of $q_I$ implies  that $I_V$ is completely prime
whenever $\dim\,V=1$ (this fact also follows from results of
M{\oe}glin \cite{Moe1} and Losev \cite{Lo2}).

Obviously, $I=I_V$ is completely prime if and only if $q_I=\dim\,V$.
If $\Gamma=\{1\}$ then combining (\ref{Losev}) with Joseph's results
on Goldie-rank polynomials \cite{Jo1} (as exposed in
\cite[12.7]{Ja}) it is straightforward to see that the scale factor
$q_I$ takes the same value on coherent families of primitive ideals
in $\mathcal{X}_{\Oo}$; see \cite[5.3]{Lo4} for more detail. It
seems likely that this holds without any assumption on $\Gamma$ and
the entire set $\{q_I\,\colon\,\,I\in\mathcal{X}\}$ is finite. (By a
coherent family of primitive ideals we mean any subset $\{I(w\cdot
\mu)\colon\,\mu\in\Lambda^+\}$ of $\mathcal{X}_{\Oo}$ with $\mu$ and
$w$ satisfying the assumptions of \cite[12.7]{Ja}.) We mention for
completeness that outside type $\sf A$ there are examples of
completely prime primitive ideals $I\in\mathcal{X}_{\Oo}$ for which
$q_I>|\Gamma|$ (see \cite[Remark~4.3]{Sasha4}), but it is proved in
\cite{Lo4} for $\g$ classical (and conjectured for $\g$ exceptional)
that $q_I=1$ whenever the central character of $I$ is integral.
\subsection{}
In this paper we begin a systematic investigation of those
$I\in\mathcal{X}_{\Oo}$ for which ${\rm mult}_{\Oo}(U(\g)/I)=1$; we
call such primitive ideals {\it multiplcity free}. For $\g$
classical we impose no assumptions on $e$, but for $\g$ exceptional
we shall assume that the orbit $\Oo$ is induced in the sense of
Lusztig--Spaltenstein from a nilpotent orbit in a proper Levi
subalgebra of $\g$. The remaining case of rigid (i.e. non-induced)
orbits in exceptional Lie algebras is dealt with in \cite{Pr}.
As we explained earlier, any multiplicity free primitive ideal is
completely prime, but the converse may not always be true outside
type $\sf A$.

Let $\mathcal{S}_1,\ldots, \mathcal{S}_t$ be all sheets of $\g$
containing $\Oo$. For $1\le i\le t$, set
$r_i=\dim\,\mathcal{S}_i-\dim\,\Oo$, the rank of $\mathcal{S}_i$,
and define $$r(e):=\max_{1\le i\le t}\,r_i.$$ Let
$\cc_e=\g_e/[\g_e,\g_e]$. Since any $1$-dimensional torus of $G_e$
and any unipotent element $u=\exp(\ad\,n)$ with $n\in\g_e$ act
trivially on $\cc_e$, it is straightforward to see that the adjoint
action of $G_e$ on $\g_e$ induces the trivial action of the
connected  group $G_e^\circ$ on $\cc_e$ and hence gives rise to a
natural action of $\Gamma$. We denote by $\cc_e^\Gamma$ the
corresponding fixed point space, i.e. the set of all $x\in\cc_e$
such that $\gamma(x)=x$ for all $\gamma\in\Gamma$. We define
$$c(e):=\dim(\cc_e),\qquad\quad
c_\Gamma(e):=\dim(\cc_e^\Gamma).$$

Let $U(\g,e)^{\rm ab}=U(\g,e)/I_c$ where $I_c$ is the two-sided
ideal of $U(\g,e)$ generated by all commutators $u\cdot v-v\cdot u$
with $u,v\in U(\g,e)$. Our assumption on $\Oo$ in conjunction with
\cite{RBr}, \cite{Lo2} and \cite{GRU} guarantees that $I_c$ is a
proper ideal of $U(\g,e)$; see \cite{Sasha3} for more detail. We
denote by $\mathcal{E}$ the maximal spectrum of the finitely
generated commutative $\K$-algebra $U(\g,e)^{\rm ab}$. This affine
variety parametrises the $1$-dimensional representations of
$U(\g,e)$ and is acted upon by the the component group $\Gamma$ (it
is known that $\Gamma$ acts on $U(\g,e)^{\rm ab}$ by algebra
automorphisms). We denote by $\mathcal{E}^\Gamma$ the corresponding
fixed point set which consists of all $\eta\in\mathcal{E}$ such that
$\gamma(\eta)=\eta$ for all $\gamma\in\Gamma$. Let $I_\Gamma$ be the
ideal of $U(\g,e)^{\rm ab}$ generated by all $\phi-\phi^\gamma$ with
$\phi\in U(\g,e)^{\rm ab}$ and $\gamma\in\Gamma$. It is
straightforward to see that $\mathcal{E}^\Gamma$ coincides with the
zero locus of $I_\Gamma$ in $\mathcal{E}$. We define
$U(\g,e)_\Gamma^{\rm ab}:=\,U(\g,e)^{\rm ab}/I_\Gamma$.

It follows from (\ref{Losev}) that $I=I_V$ is multiplicity free if
and only if $\dim\,V=1$ and $\Gamma_V=\Gamma$. Thus, in order to
classify the multiplicity-free primitive ideals in
$\mathcal{X}_{\Oo}$ we need  to determine the variety
$\mathcal{E}^\Gamma$. This problem is important as solving it could
eventually lead us to a complete description of {\it all}
quantisations of nilpotent orbits; see  to \cite{Moe1} and
\cite[Theorem~1.1]{Lo1} for precise statements.

Thanks to \cite[Theorem~1.2]{Sasha3} we know that
$\dim\,\mathcal{E}=r(e)$ and the number of irreducible components of
$\mathcal{E}$ is greater than or equal to $t$. Thus, the variety
$\mathcal{E}$ is irreducible only if $e$ lies in a unique sheet of
$\g$. For $\g=\mathfrak{sl}_n$, this condition is satisfied for any
nilpotent element $e$ and \cite[Corollary~3.2]{Sasha3} states that
$U(\mathfrak{sl}_n,e)^{\rm ab}$ is a polynomial algebra in $r(e)$
variables. Our first main result is a generalisation of that to all
Lie algebras of classical types. We call an element $a\in \g$ {\it
non-singular} if it lies in a unique sheet of $\g$. If
$\dim\,\g_a=m$ and $\g^{(m)}= \{x\in\g\colon\,\dim\,\g_x=m\}$, a
locally closed subset of $\g$, then it follows from the smoothness
of sheets of classical Lie algebras (proved by Im Hof in \cite{Im})
that $a$ is non-singular if and only if $a$ is a smooth point of the
quasi-affine variety $\g^{(m)}$ (hence the name).
\begin{thm}\label{A}
If $e$ is a nilpotent element in a classical Lie algebra $\g$, then
the following are equivalent:
\begin{itemize}
\item[(i)\,] $e$ is non-singular;

\smallskip

\item[(ii)\,] $c(e)=r(e)$;

\smallskip

\item[(iii)\,] $U(\g,e)^{\rm ab}$ is isomorphic to a polynomial algebra in $r(e)$ variables.
\end{itemize}
\end{thm}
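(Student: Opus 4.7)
The plan is to establish the equivalences via $\mathrm{(ii)}\Leftrightarrow\mathrm{(iii)}$, the trivial $\mathrm{(iii)}\Rightarrow\mathrm{(i)}$, and the main combinatorial step $\mathrm{(i)}\Rightarrow\mathrm{(ii)}$. The key tools are: the cited $\dim\mathcal{E}=r(e)$ and the lower bound $\#\{\text{irreducible components of }\mathcal{E}\}\ge t$ from \cite[Theorem~1.2]{Sasha3}; the explicit combinatorial formula for $c(e)$ in classical types developed separately in the body of the paper; and the structure of the associated graded algebra $\mathrm{gr}\,U(\g,e)^{\mathrm{ab}}$ induced by the Kazhdan filtration, which is positively graded with degree-zero part $\K$, so that $\mathcal{E}_0:=\mathrm{Spec}\,\mathrm{gr}\,U(\g,e)^{\mathrm{ab}}$ is conical with unique $\K^\times$-fixed point $\eta_0$ and has $\dim\mathcal{E}_0=r(e)$ by flatness of the Rees degeneration.

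For $\mathrm{(ii)}\Leftrightarrow\mathrm{(iii)}$: one identifies $T_{\eta_0}\mathcal{E}_0\cong\cc_e^*$, so $\dim T_{\eta_0}\mathcal{E}_0=c(e)$. Then $\mathrm{(iii)}$ implies $\dim T_{\eta_0}\mathcal{E}_0=r(e)$, giving $c(e)=r(e)$. Conversely, $c(e)=r(e)=\dim\mathcal{E}_0$ makes $\mathcal{E}_0$ smooth at $\eta_0$; because the smooth locus is open and $\K^\times$-stable and every orbit has $\eta_0$ in its closure, $\mathcal{E}_0$ is smooth globally, hence smooth and connected, hence irreducible. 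A smooth irreducible conical affine variety of dimension $n$ with a unique fixed point is $\K^n$: choose $n$ homogeneous generators of the maximal ideal of $\eta_0$ lifting a cotangent basis, then graded Nakayama gives generation, and integrality plus dimension-matching give algebraic independence. Thus $\mathrm{gr}\,U(\g,e)^{\mathrm{ab}}\cong\K[x_1,\ldots,x_{r(e)}]$, and lifting polynomial generators to $U(\g,e)^{\mathrm{ab}}$ (any lifts work, since the induced surjection from a polynomial ring has trivial associated graded kernel) yields the polynomial algebra structure on $U(\g,e)^{\mathrm{ab}}$ itself.

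For $\mathrm{(iii)}\Rightarrow\mathrm{(i)}$: a polynomial algebra has irreducible spectrum, so $\#\{\text{components of }\mathcal{E}\}=1$, forcing $t=1$ via the lower bound. For $\mathrm{(i)}\Rightarrow\mathrm{(ii)}$: this is the combinatorial heart of the proof. Using the Jordan-block description of $\g_e$ and $[\g_e,\g_e]$ we derive an explicit partition-theoretic formula for $c(e)$ in each of the types $\sf B$, $\sf C$, $\sf D$, and compare it against the formula for $r(e)$ extracted from the Kraft--Procesi parameterisation of sheets and Im Hof's smoothness theorem. The equality $c(e)=r(e)$ is shown to hold exactly for the partitions corresponding to non-singular $e$.

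The principal obstacle is the combinatorial step $\mathrm{(i)}\Rightarrow\mathrm{(ii)}$: producing and then comparing clean closed formulas for $c(e)$ and $r(e)$ partition-by-partition in each classical type, and extracting the equality case. The partition-theoretic classification of non-singular nilpotents promised in the abstract emerges as a by-product of this analysis.
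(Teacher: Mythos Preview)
Your cycle $(\mathrm{ii})\Rightarrow(\mathrm{iii})\Rightarrow(\mathrm{i})\Rightarrow(\mathrm{ii})$ is sound, and the implications $(\mathrm{iii})\Rightarrow(\mathrm{i})$ and $(\mathrm{i})\Rightarrow(\mathrm{ii})$ are handled just as the paper does (the latter via the explicit formulas $c(e)=s(\lambda)+|\Delta(\lambda)|$ and $r(e)=s(\lambda)+|\Delta(\lambda)|-\bigl(|\Delta_{\rm bad}(\lambda)|-|\Sigma(\lambda)|\bigr)$, obtained through the Kempken--Spaltenstein algorithm rather than Kraft--Procesi; see Corollary~\ref{expression}, Theorem~\ref{zismax}, Corollaries~\ref{maxseq} and~\ref{newz}). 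However, your direct $(\mathrm{iii})\Rightarrow(\mathrm{ii})$ rests on the unjustified identification $T_{\eta_0}\mathcal{E}_0\cong\cc_e^*$. From the Kazhdan relations $v_iv_j-v_jv_i=[v_i,v_j]+q_{ij}+(\text{lower})$ with $q_{ij}\in S_{\ge 2}(\g_e)$, one only sees that the linear part of $\mathrm{gr}(I_c)$ \emph{contains} $[\g_e,\g_e]$, whence $\dim T_{\eta_0}\mathcal{E}_0\le c(e)$; the reverse containment is not clear, and I do not see how to obtain it without further input. The inequality suffices for $(\mathrm{ii})\Rightarrow(\mathrm{iii})$, but for $(\mathrm{iii})\Rightarrow(\mathrm{ii})$ it gives only $r(e)\le c(e)$, which is vacuous since $c(e)\ge r(e)$ always. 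So either drop the direct $(\mathrm{iii})\Rightarrow(\mathrm{ii})$ and rely on the cycle, or note explicitly that it follows from $(\mathrm{iii})\Rightarrow(\mathrm{i})\Rightarrow(\mathrm{ii})$.

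For $(\mathrm{ii})\Rightarrow(\mathrm{iii})$, your Rees-degeneration and smoothness-propagation argument is correct but differs from the paper's route. The paper instead constructs a surjection $\psi\colon S(\cc_e)\twoheadrightarrow U(\g,e)^{\rm ab}$ directly (Lemma~\ref{generate} and Proposition~\ref{abgen}; this is essentially the filtered-algebra incarnation of your bound $\dim T_{\eta_0}\mathcal{E}_0\le c(e)$) and then observes that if $c(e)=r(e)=\dim U(\g,e)^{\rm ab}$ then $\ker\psi=0$, since $S(\cc_e)$ is an integral domain of the same Krull dimension. The paper's argument (Corollary~\ref{poly}(i)) is a one-liner once $\psi$ is in hand; yours is more geometric but requires the additional passage from $\mathrm{gr}\,U(\g,e)^{\rm ab}$ back to $U(\g,e)^{\rm ab}$.
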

The equivalence of the first two statements of Theorem~\ref{A} was
conjectured by Izosimov  \cite{Iz} for all elements in a classical
Lie algebra $\g$. In Remark~\ref{izosim}, we use the
Jordan--Chevalley decomposition in $\g$ to show that his conjecture
is an immediate consequence of Theorem~\ref{A}.

Although the polynomiality of $U(\g,e)^{\rm ab}$ occurs rather
infrequently outside type $\sf A$, the algebras $U(\g,e)^{\rm
ab}_\Gamma$ exhibit a much more uniform behaviour:
\begin{thm}\label{B}
If $e$ is any nilpotent element in a classical Lie algebra $\g$,
then $U(\g,e)_\Gamma^{\rm ab}$ is isomorphic to a polynomial algebra
in $c_\Gamma(e)$ variables. In particular $\mathcal{E}^\Gamma$ is a
single point if and only if $c_\Gamma(e)=0$.
\end{thm}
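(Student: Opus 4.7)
The plan is to work with the Kazhdan filtration on $U(\g,e)$ and establish the statement at the level of associated graded algebras, then lift by a standard filtered argument. Because this filtration is $\Gamma$-equivariant and the ideal $I_\Gamma\subset U(\g,e)^{\rm ab}$ is a $\Gamma$-homogeneous filtered ideal, such a reduction is legitimate. The quotient map $U(\g,e)\twoheadrightarrow U(\g,e)^{\rm ab}$ sits above the Poisson-abelianisation on the graded side, and the results of \cite{Sasha3} furnish a natural $\Gamma$-equivariant surjection $\psi\colon S(\cc_e^*)\twoheadrightarrow \gr\,U(\g,e)^{\rm ab}$, where the $\Gamma$-action on $\cc_e=\g_e/[\g_e,\g_e]$ is the one already described in the introduction. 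Thus the strategy splits into bounding $\dim\mathcal{E}^\Gamma$ from above using $\psi$ and $\Gamma$-coinvariants, and bounding it from below using the sheet geometry of $\mathcal{E}$.

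For the upper bound, I would observe that modding out $S(\cc_e^*)$ by the ideal generated by the linear relations $v-v^\gamma$ ($v\in\cc_e^*,\ \gamma\in\Gamma$) yields $S((\cc_e^*)^\Gamma)$, a polynomial algebra in $c_\Gamma(e)$ variables, because $\Gamma$ is finite and $\mathrm{char}\,\K=0$ (so coinvariants of a finite-dimensional representation agree with invariants). Since $\psi$ intertwines the $\Gamma$-actions, the composition $S((\cc_e^*)^\Gamma)\hookrightarrow S(\cc_e^*)/\langle v-v^\gamma\rangle\twoheadrightarrow \gr(U(\g,e)^{\rm ab}_\Gamma)$ is surjective, giving at once $\dim\,\mathcal{E}^\Gamma\leq c_\Gamma(e)$ and presenting $\gr(U(\g,e)^{\rm ab}_\Gamma)$ as a quotient of a polynomial ring in $c_\Gamma(e)$ variables. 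The last assertion of the theorem, that $\mathcal{E}^\Gamma$ is a point iff $c_\Gamma(e)=0$, will follow automatically once we prove this quotient is an isomorphism.

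The main obstacle is the reverse inequality together with the absence of extra relations. Here I would use the sheet picture: by \cite[Theorem~1.2]{Sasha3} the irreducible components of $\mathcal{E}$ are in bijection with the sheets $\mathcal{S}_1,\dots,\mathcal{S}_t$ through $\Oo$, and $\Gamma$ permutes them via its action on nearby orbits. For $\g$ classical, sheets and the $\Gamma$-action are governed by explicit partition combinatorics (the same combinatorics that drives the proof of Theorem~\ref{A}), and I would use this to exhibit, for each $\Gamma$-orbit of sheets, a $\Gamma$-stable irreducible subvariety of $\mathcal{E}^\Gamma$ of the right dimension — built either from the unique $\Gamma$-fixed component (when it exists) or from a diagonal inside a $\Gamma$-orbit of components. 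Concretely, I would lift a basis of $(\cc_e^*)^\Gamma$ to algebraically independent $\Gamma$-invariant elements of $U(\g,e)^{\rm ab}$ by reducing to a $\Gamma$-stable Levi subalgebra in which the induced nilpotent element is non-singular, and then invoking Theorem~\ref{A} together with the type $\sf A$ portion of \cite[Corollary~3.2]{Sasha3}. Assembling these pieces yields $\dim\mathcal{E}^\Gamma\geq c_\Gamma(e)$ and, because the graded quotient is dominated by a polynomial ring in the same number of variables, it must be the whole polynomial ring. A routine filtered lift then promotes this to the desired isomorphism $U(\g,e)^{\rm ab}_\Gamma\cong \K[X_1,\dots,X_{c_\Gamma(e)}]$.
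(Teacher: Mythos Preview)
Your upper bound is essentially what the paper does (Proposition~\ref{abgen}(ii) and Corollary~\ref{poly}(ii)): once $\mathcal{E}^\Gamma\neq\emptyset$, the unital algebra $U(\g,e)^{\rm ab}_\Gamma$ is generated by $c_\Gamma(e)$ elements, so any matching lower bound on $\dim\mathcal{E}^\Gamma$ forces polynomiality. No filtered lift is needed here --- a surjection from a polynomial ring in $n$ variables onto a commutative algebra of Krull dimension $\ge n$ is automatically an isomorphism.

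The genuine gap is in your lower bound. The ``diagonal inside a $\Gamma$-orbit of components'' idea does not produce points of $\mathcal{E}^\Gamma$: if $\Gamma$ genuinely permutes components, their intersection can easily be empty, and there is no diagonal to speak of inside a single irreducible variety $\mathcal{E}$. Likewise, \cite[Theorem~1.2]{Sasha3} only gives an \emph{inequality} between the number of components of $\mathcal{E}$ and the number of sheets, so the bijection you invoke is not available. More seriously, ``reducing to a $\Gamma$-stable Levi subalgebra'' needs a concrete mechanism: you have not explained how a representation of $U(\li,e_0)$ yields a representation of $U(\g,e)$, let alone a $\Gamma$-fixed one.

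What the paper actually does (Theorem~\ref{class1}, via Proposition~\ref{mult1}) is to use Losev's homomorphism $\Xi\colon U(\g,e)\to U(\li,e_0)'$ from \cite[6.3]{Lo3}, whose abelianisation induces a \emph{finite} morphism $\xi^*\colon\widetilde{\mathcal{E}}_0\to\mathcal{E}$. The crucial point, which your proposal omits, is that one must choose the parabolic $P=LU$ so that $G_e\subset P$; only then is $\Xi$ (hence $\xi^*$) $C(e)$-equivariant, and one gets $\xi^*(\widetilde{\mathcal{E}}_0^\Gamma)\subseteq\mathcal{E}^\Gamma$. The paper verifies $G_e\subset P$ by hand using the explicit basis of $\g_e$ from Section~2. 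The Levi $\li$ is reached by iterating Case~1 of the KS algorithm $s(\lambda)$ times until $e_0\in\mathfrak{m}$ is \emph{almost rigid}; then $\dim\z(\li)=s(\lambda)$, and one must further know that $\mathcal{E}_0^{\Gamma_0}\neq\emptyset$ for $e_0$ (here it is a single point, by the non-singular case). This yields $\dim\mathcal{E}^\Gamma\ge\dim\z(\li)=s(\lambda)$. Independently, the paper computes $c_\Gamma(e)=s(\lambda)$ (or $s(\lambda)+1$ in the exceptional type~$\sf D$ case, handled separately) by an explicit analysis of the $\widetilde{\Gamma}$-action on the basis vectors $\zeta_i^{i+1,\lambda_{i+1}-1}$ spanning $\NN_1^-$. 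Both of these concrete computations --- the containment $G_e\subset P$ and the value of $c_\Gamma(e)$ --- are essential and are absent from your outline.
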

\noindent As an obvious corollary of Theorem~\ref{B} we deduce that
the variety $\mathcal{E}^\Gamma$ is isomorphic to an affine space
for any nilpotent element in a classical Lie algebra and hence is
irreducible.
\subsection{}
In order to prove Theorems~\ref{A} and \ref{B} we have to look very
closely at the centralisers of nilpotent elements in classical Lie
algebras. A link between completely prime primitive ideals and
centralisers of nilpotent elements originates in the fact that for
any nilpotent element $e\in\g$ the finite $W$-algebra $U(\g,e)$ is a
filtered deformation of the universal enveloping algebra $U(\g_e)$;
see \cite{Sasha2} and \cite{BGK}.

Suppose $\g$ is one of $\mathfrak{so}_N$ or $\mathfrak{sp}_N$. It is
well known that to any nilpotent element $e\in \g$ one can attach a
partition $\lambda\in\mathcal{P}_\epsilon(N)$ where $\epsilon=1$ if
$\g=\mathfrak{so}_N$ and $\epsilon=-1$ if $\g=\mathfrak{sp}_N$.
Recall that a partition $\lambda=(\lambda_1, \ldots, \lambda_n)$ of
$N$ with $\lambda_1\ge \cdots\ge \lambda_n\ge 1$ is in
$\mathcal{P}_\epsilon(N)$ if there is an involution $i\mapsto i'$ on the set
of indices $\{1,\ldots, n\}$ satisfying $i'\in\{i-1,i,i+1\}$
such that $\lambda_{i'}=\lambda_i$ and $i'=i$ if and
only if $\epsilon(-1)^{\lambda_i}=-1$ for all $i$. We call a pair of
indices $(i,i+1)$ with $1\le i<n$ a {\it 2-step} of $\lambda$ if
$i'=i$, $(i+1)'=i+1$ and
$\lambda_{i-1}\ne\lambda_i\ge
\lambda_{i+1}\ne\lambda_{i+2}$ where
our convention is that $\lambda_i=0$ for $i\in\{0,n+1\}$. We denote
by $\Delta(\lambda)$ the set of all 2-steps of $\lambda$ and set
 $$s(\lambda):=\,\textstyle{\sum}_{i=1}^n\lfloor
(\lambda_i-\lambda_{i+1})/2\rfloor.$$ We call $\lambda$ {\it exceptional}
if $\h$ has type $\sf D$ and there exists a $k< n$ such that the parts
$\lambda_{k},\lambda_{k+1}$ are odd and the parts $\lambda_i$
with $i\not\in \{k, k+1\}$ are all even.

It should be mentioned that for any $(i,i+1)\in\Delta(\lambda)$ the
integers $\lambda_i$ and $\lambda_{i+1}$ have the same parity. If
$(i,i+1)\in \Delta(\lambda)$ and $i>1$ (resp. $i=1$), then we call
$\lambda_{i-1}$ and $\lambda_{i+2}$ (resp. $\lambda_3$) the {\it
boundary} of $(i,i+1)$. We say that a 2-step $(i,i+1)$ is {\it good}
if its boundary and $\lambda_i$ have the opposite parity.
\begin{thm}\label{C}
Let $\g$ be one of $\mathfrak{so}_N$ or $\mathfrak{sp}_N$, where
$N\ge 2$, and let $e$ be a nilpotent element of $\g$ associated with
a  partition $\lambda\in\mathcal{P}_\epsilon(N)$. Then the following
hold:
\begin{itemize}
\item[(i)\,] $c(e)=s(\lambda)+|\Delta(\lambda)|$;

\smallskip

\item[(ii)\,] $c_\Gamma(e)=s(\lambda)$ unless $\g=\mathfrak{so}_N$ and
$\lambda\in\mathcal{P}_1(N)$ is exceptional, in which case $c_\Gamma(e) = s(\lambda)+1$;
\smallskip

\item[(iii)\,] $e$ is non-singular if and only if
all 2-steps of $\lambda$ are good.
\end{itemize}
\end{thm}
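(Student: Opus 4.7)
The plan is to analyse $\g_e$ through an explicit basis compatible with the Jordan block decomposition of $e$ and the Dynkin grading $\g=\bigoplus_i\g(i)$ attached to an $\mathfrak{sl}_2$-triple $\{e,h,f\}$, in the style of Yakimova and Jantzen. Write $\g_e=\li_e\oplus\mm_e$ where $\li_e=\g_e\cap\g(0)$ is a reductive Levi subalgebra of $\g_e$ and $\mm_e=\bigoplus_{i>0}(\g_e\cap\g(i))$ is its nilradical. For classical $\g$ the structure of $\li_e$ is well known: it is a product of $\gl$-factors (one per pair of distinct indices $i\ne i'$) together with one $\mathfrak{sp}$- or $\mathfrak{so}$-factor in each part size for the self-paired indices $i=i'$. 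The centre $\z(\li_e)$ is thus the direct sum of the centres of the $\gl$-factors, and a direct inspection shows that these are in bijection with the 2-steps of $\lambda$.

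\textbf{Part (i).} The surjection $\g_e\twoheadrightarrow\cc_e$ factors through the decomposition $\cc_e\cong\z(\li_e)\oplus W$ with $W:=\mm_e\big/\bigl([\li_e,\mm_e]+[\mm_e,\mm_e]\bigr)$ the $\li_e$-coinvariants of $\mm_e^{\rm ab}$. The first summand contributes $|\Delta(\lambda)|$ by the remark above. For the second, I would argue that in each Dynkin degree $2j$ lying within the staircase between two consecutive distinct parts $\lambda_i>\lambda_{i+1}$ exactly one highest-weight line for $\li_e$ survives in $W$, and that these classes are linearly independent in $\cc_e$ because they pair non-trivially against the differentials at $e$ of the basic $G$-invariants of $\g$ (the Chevalley generators restricted to the Slodowy slice). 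Summing the number of admissible $j$ gives $\sum_i\lfloor(\lambda_i-\lambda_{i+1})/2\rfloor=s(\lambda)$, yielding (i).

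\textbf{Part (ii).} The component group $\Gamma$ is an elementary abelian $2$-group generated by explicit involutions coming from block-swap automorphisms of $\g_e$, plus, in the $\sf D$-case, the restriction to $G_e$ of an outer involution of $\mathrm{O}_N$. Evaluating these generators on the basis from (i), I expect each central line of $\z(\li_e)$ corresponding to a 2-step to be negated by the block swap interchanging the two blocks of the 2-step; consequently the whole $|\Delta(\lambda)|$-dimensional summand collapses under the averaging onto $\Gamma$-fixed points, while the $s(\lambda)$ staircase classes of $W$ are tautologically $\Gamma$-invariant. The exceptional $\sf D$-case is the single configuration in which the generator of $\Gamma$ is not a block swap but the outer involution, and a short computation shows that this outer involution preserves rather than negates the central line attached to the unique 2-step formed by the two odd parts, accounting for the ``+1'' in (ii).

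\textbf{Part (iii).} By Theorem~\ref{A} combined with (i), non-singularity of $e$ is equivalent to $r(e)=s(\lambda)+|\Delta(\lambda)|$, so it suffices to prove that $r(e)=s(\lambda)+\#\{\textnormal{good 2-steps of }\lambda\}$. This is extracted from the Kraft--Procesi and Im Hof description of sheets in classical Lie algebras: every sheet $\mathcal{S}\supset\Oo$ is induced from a Levi $\li\subset\g$ whose admissible nilpotent orbit is determined by collapsing a 2-step of $\lambda$, and its rank picks up each floor term $\lfloor(\lambda_i-\lambda_{i+1})/2\rfloor$ plus one extra unit for every 2-step whose collapse admits a genuine modular parameter in $\li$. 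The parity condition on the boundary is precisely what controls the existence of that extra parameter, i.e.\ only good 2-steps contribute, as claimed. The main obstacle is the bookkeeping in part (i), where one must track the Jordan-block indexing, the Dynkin grading and the sign issues imposed by the defining bilinear form simultaneously; once the surviving weight vectors of $\cc_e$ are pinned down, parts (ii) and (iii) reduce to inspection of the $\Gamma$-action and comparison with the sheet rank.
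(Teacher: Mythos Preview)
Your decomposition in part~(i) misidentifies where the $|\Delta(\lambda)|$ contribution sits, and this error propagates through the rest. A $2$-step $(i,i+1)$ requires $i'=i$ and $(i+1)'=i+1$, i.e.\ both blocks are \emph{self-paired}; such blocks contribute $\mathfrak{o}$- or $\mathfrak{sp}$-factors to $\li_e=\g_e(0)$, not $\gl$-factors, and those factors have no centre. Concretely, take $\lambda=(3,1)\in\mathcal{P}_1(4)$: both parts are odd, $(1,2)$ is a $2$-step so $|\Delta(\lambda)|=1$, yet $e$ is regular in $\mathfrak{so}_4\cong\mathfrak{sl}_2\times\mathfrak{sl}_2$ and $\g_e(0)=0$, so $\z(\li_e)=0\ne|\Delta(\lambda)|$. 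In the paper the $|\Delta(\lambda)|$-dimensional piece of $\cc_e$ is spanned by the images of the off-diagonal maps $\zeta_i^{i+1,\lambda_{i+1}-1}$ (the subspace $\NN_1^-$), which generically live in positive Dynkin degree $\lambda_i-\lambda_{i+1}$, not in $\z(\li_e)$. The $s(\lambda)$-dimensional piece comes from a quotient $\HH_0/\HH_0^+$ of the block-diagonal part of $\g_e$, again in positive degree. So the split $\cc_e\cong\z(\li_e)\oplus W$ is not aligned with the split $|\Delta(\lambda)|+s(\lambda)$.

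This also derails your part~(ii). The generators of $\Gamma$ are not block swaps but sign involutions $g_i$ acting by $-1$ on a single self-paired block $V[i]$; these do negate the $2$-step classes $\zeta_i^{i+1,\lambda_{i+1}-1}$ (since they change sign on exactly one of the two blocks) and fix the diagonal $\HH_0$ classes, which is how the paper gets $c_\Gamma(e)=s(\lambda)$. The exceptional $\sf D$ case arises not from an outer involution acting kindly, but because when $I\subseteq\{k,k+1\}$ the relevant sign involutions all have determinant $-1$ and drop out of $\widetilde{\Gamma}\cap SL(V)$, so nothing is left to negate the single $2$-step class.

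In part~(iii) there are two problems. First, invoking Theorem~\ref{A} is circular: in the paper the implication $(\text{unique sheet})\Rightarrow c(e)=r(e)$ is proved via the Kempken--Spaltenstein combinatorics (Theorem~\ref{zismax} and Theorem~\ref{nosheets}) that simultaneously establishes~(iii). Second, your claimed formula $r(e)=s(\lambda)+\#\{\text{good $2$-steps}\}$ is false. The paper shows $r(e)=z(\lambda)=s(\lambda)+|\Delta(\lambda)|-|\Delta_{\rm bad}(\lambda)|+|\Sigma(\lambda)|$, with an extra term counting good $2$-\emph{clusters}. For $\lambda=(7,5,3,1)\in\mathcal{P}_1(16)$ all three $2$-steps are bad but $1,3$ is a good $2$-cluster, giving $r(e)=3+3-3+1=4$, whereas your formula gives $3$. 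The correct argument is a direct combinatorial analysis of maximal admissible sequences for $\lambda$ (the KS algorithm), showing $|\Phi_\lambda|=1$ iff $\lambda$ has no bad $2$-step; there is no shortcut through sheet-rank formulae.
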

\noindent
 If $\lambda\in\mathcal{P}_1(N)$ is  exceptional, then it is immediate
 from the definitions that $|\Delta(\lambda)|=1$ and the only 2-step of $\lambda$ is good.
 Therefore, any nilpotent element $e\in\g$ associated with $\lambda$ is
 non-singular. It is also straightforward to see that any such $e$ is a Richardson element of $\g$.

For a nilpotent element $e$ associated with a partition
$\lambda\in\mathcal{P}_\epsilon(N)$, we give an explicit
combinatorial formula for the number $r(e)$; see
Corollary~\ref{newz}. It involves the notion of a {\it good
2-cluster} of $\lambda$ introduced in Subsection~\ref{3.3}.

\subsection{} Now suppose that $\g$ is an exceptional Lie algebra. In this case our
results are less complete because we have to exclude the following seven induced orbits:

\smallskip

\begin{center}
{\sc Table~0. Unresolved cases.}
\end{center}
\begin{center}

\medskip

\begin{tabular}{|c|c|c|c|c|c|c|}
\hline
${\sf F_4}$ & ${\sf E_6}$ & ${\sf E_7}$& ${\sf E_8}$ & ${\sf E_8}$ &${\sf E_8}$ &${\sf E_8}$
\\
\hline
 ${\sf C_3(a_1)}$&
 ${\sf A_3+A_1}$ &  ${\sf D_6(a_2)}$   & ${\sf E_6(a_3)+A_1}$ &    ${\sf D_6(a_2)}$ & ${\sf E_7(a_2)}$
&${\sf E_7(a_5)}$\\
\hline
\end{tabular}
\end{center}

\medskip

Using \cite[pp.~440--445]{C} one observes that all orbits listed in Table~0 are non-special.
\begin{thm}\label{D}
Let $\g$ be an exceptional Lie algebra and suppose that $e$ is an induced nilpotent element of $\g$.
Then the following hold:
\begin{itemize}
\item[(i)\,] $\mathcal{E}^\Gamma\ne \emptyset$.

\smallskip

\item[(ii)\,] If $e$ is not listed in the first six columns of Table~0 and lies in a
single sheet of $\g$, then $U(\g,e)^{\rm ab}$ is isomorphic to a polynomial algebra in $c(e)$ variables.

\smallskip

\item[(iii)\,] If $e$ is not listed in Table~0, then $U(\g,e)^{\rm ab}_\Gamma$ is isomorphic to a
polynomial algebra in $c_\Gamma(e)$ variables.
\end{itemize}
The numbers $c(e)$ and $c_\Gamma(e)$ are listed in
the last two columns of Tables~1--6.
\end{thm}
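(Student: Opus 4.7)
The plan is to proceed by a case-by-case analysis of induced nilpotent orbits in the exceptional types ${\sf G_2,F_4,E_6,E_7,E_8}$, relying on the Bala--Carter classification. Each such orbit $\Oo = G\cdot e$ arises as $\Ind_\li^\g(\Oo_0)$ for some proper Levi subalgebra $\li\subset\g$ and nilpotent $G_\li$-orbit $\Oo_0\subset\li$, and by transitivity of induction one may chain down to a rigid inducing pair $(\li_{\rm rig},\Oo_{\rm rig})$. Tables~1--6 will assemble this data together with the values of $c(e)$ and $c_\Gamma(e)$, computed directly from an explicit description of $\g_e$ and the $\Gamma$-action using the standard tables in the literature.

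For part (i), I would produce $\Gamma$-fixed points of $\mathcal{E}$ by parabolic induction of one-dimensional representations of finite $W$-algebras. For each inducing pair $(\li,e_0)$ with $\Oo=\Ind_\li^\g(\Oo_0)$, the theory of \cite{Sasha3} (compatibly with Losev's induction) provides a closed embedding $\mathcal{E}_{\li,e_0}\hookrightarrow\mathcal{E}$ whose image is an irreducible component. Chaining inductions down to the rigid case, the results of \cite{Pr} supply a $\Gamma_{\li_{\rm rig}}$-fixed point in $\mathcal{E}_{\li_{\rm rig},\Oo_{\rm rig}}$; since $\Gamma$ is built out of the rigid component group together with a piece coming from $N_G(\li)/\li$, its image in $\mathcal{E}$ is $\Gamma$-fixed once the induction map is shown to intertwine the respective $\Gamma$-actions. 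The equivariance check reduces to inspection of $N_G(\li)/\li$ at each stage and is tractable orbit by orbit.

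For parts (ii) and (iii), I would combine \cite[Theorem~1.2]{Sasha3}, which gives $\dim\mathcal{E}=r(e)$, with an explicit count of algebraically independent generators. When $e$ lies in a unique sheet, $r(e)=c(e)$. The surjection $U(\g,e)^{\rm ab}\twoheadrightarrow U(\li,e_0)^{\rm ab}$ dual to the sheet embedding accounts for $r(e)-\dim\z(\li)$ of the generators, while $\z(\li)$ itself provides the remaining polynomial directions through parabolic induction. Iterating down to the rigid case and invoking polynomiality there from \cite{Pr}, one exhibits $c(e)$ algebraically independent elements of $U(\g,e)^{\rm ab}$, and matching Krull dimensions forces polynomiality. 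Part (iii) is obtained by carrying out the same scheme modulo $I_\Gamma$: the $\z(\li)$-generators are automatically $\Gamma$-fixed, and the rigid-stage quotient by $I_\Gamma$ is polynomial of dimension $c_\Gamma(e_{\rm rig})$ by \cite{Pr}, so the counts again coincide with $c_\Gamma(e)$.

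The main obstacle, and the reason for the exclusions, is the seven non-special orbits of Table~0. For these the chain of parabolic inductions terminates at a rigid orbit for which the cooperation of polynomiality with the $\Gamma$-action, as furnished by \cite{Pr}, is not yet in the form required for the reduction to go through; equivalently, the image of $\mathcal{E}_{\li_{\rm rig},\Oo_{\rm rig}}$ in $\mathcal{E}$ fails to be transverse to the other components in the way needed to conclude polynomiality. This is the technically delicate part of the argument, and it is precisely why these orbits must be set aside in the present work. Outside Table~0 the verifications, while numerous, are routine once the inductive data and the invariants $c(e),c_\Gamma(e)$ have been tabulated.
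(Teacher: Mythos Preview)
Your proposal has a genuine structural gap in the polynomiality argument, and it misses the key mechanism the paper uses.

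The crucial lemma you are missing is an \emph{upper bound on generators}: via the Kazhdan filtration, the paper shows (Proposition~\ref{abgen}) that whenever $\mathcal{E}\ne\emptyset$ the unital algebra $U(\g,e)^{\rm ab}$ is generated by $c(e)$ elements, and likewise $U(\g,e)^{\rm ab}_\Gamma$ by $c_\Gamma(e)$ elements. This gives a surjection $S(\mathfrak{c}_e)\twoheadrightarrow U(\g,e)^{\rm ab}$. Polynomiality then follows (Corollary~\ref{poly}) the moment one checks the Krull dimension is at least $c(e)$ (resp.\ $c_\Gamma(e)$). Your scheme runs in the opposite direction: you try to exhibit $c(e)$ algebraically independent elements and then ``match Krull dimensions''. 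But $c(e)$ algebraically independent elements in an algebra of Krull dimension $c(e)$ does \emph{not} force polynomiality; you need the generation bound, and that bound is what requires the filtration argument you never mention.

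Several other points are off. The Losev map gives $\xi\colon U(\g,e)^{\rm ab}\to U(\li,e_0)^{\rm ab}$ with $\xi^*$ a \emph{finite} morphism on spectra, not a closed embedding, and $\xi$ is not a surjection in general; your ``surjection dual to the sheet embedding'' and the generator count built on it do not make sense as stated. The lower bound $\dim\mathcal{E}^\Gamma\ge\dim\z(\li)$ comes from Proposition~\ref{mult1}, which needs $G_e\subset P$; establishing this birationality condition for a suitable parabolic is part of the work (and uses Fu's results in the multi-sheet cases). Finally, the real content of parts~(ii) and~(iii) is a long case-by-case computation of $c_\Gamma(e)$: one must identify explicit bases of the graded pieces of $\mathfrak{c}_e$ from de~Graaf's and Lawther--Testerman's tables and compute the action of explicit generators of $\Gamma$ on them. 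You gloss this as ``routine'', but it is the bulk of the proof. Your diagnosis of the Table~0 exclusions is also incorrect: those orbits are set aside not because of trouble at the rigid stage but because for them $c(e)>r(e)$ (or $c_\Gamma(e)$ exceeds the available lower bound), so the dimension comparison driving Corollary~\ref{poly} fails.
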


Curiously, there are instances where for an induced element $e$ the
variety $\mathcal{E}^\Gamma$ is a single point. For $\g$ exceptional
there are four such cases (two in type ${\sf E}_7$ and two in type
${\sf E_8}$) and for $\g$ classical this occurs when $e$ is
associated with a partition $\lambda\in\mathcal{P}_\epsilon(N)$ for
which $\lambda_i-\lambda_{i+1}\in\{0,1\}$ for all $i$ (we call such
partitions {\it almost rigid}). The nilpotent elements from the four
orbits in types ${\sf E_7}$ and ${\sf E_8}$ have already appeared in
the literature under three different names: $p$-{\it compact}, {\it
compact} and {\it reachable}; see \cite{BB}, \cite{EG}, \cite{Pa},
\cite{Ya10}, \cite{deG2}. It is worth mentioning that almost rigid
and exceptional partitions in $\mathcal{P}_\epsilon(N)$ also played
a special role in Namikawa's work \cite{Na} on
$\mathbb{Q}$-factorial terminalisations of nilpotent orbit closures
in classical Lie algebras.

In proving Theorem~\ref{D} we rely heavily on results of de Graaf
\cite{deG2} and Lawther--Testerman \cite{LT1} obtained by
computational methods. It seems plausible that the algebra
$U(\g,e)^{\rm ab}_\Gamma$ is reduced and the variety
$\mathcal{E}^\Gamma$ is equidimensional in all cases, but to prove
this for the orbits listed in Table~0 one would have to use
different methods (a computational approach in the spirit of
\cite{GRU} would certainly do the trick).
\subsection{} The traditional way to classify the completely prime
ideals $I\in\mathcal{X}_\Oo$ parallels Borho's classification of the
sheets of $\g$; see \cite{Bor}. Here one aims to show that if the
orbit $\Oo$ is induced from a rigid orbit $\Oo_0$ in a Levi
subalgebra $\li$ of $\g$, then the majority of $I$ as above can be
obtained as the annihilators in $U(\g)$ of (not necessarily
irreducible) induced $\g$-modules
$${\rm Ind}_{\mathfrak p}^{\g}(E):=U(\g)\otimes_{U({\mathfrak p})}E,$$ where
$\mathfrak{p}=\li\oplus\mathfrak{n}$ is a parabolic subalgebra of
$\g$ with nilradical $\mathfrak n$ and $E$ is an irreducible
$\mathfrak{p}$-module with $\mathfrak{n}\cdot E=0$ such that the
annihilator $I_0:={\rm Ann}_{U(\li)}\, E$ is a completely prime
primitive ideal of $U(\li)$ with ${\rm VA}(I_0)= \overline{\Oo}_0$.
The ideals
$$I(\mathfrak{p},E):={\rm Ann}_{U(\g)}\big({\rm Ind}_{\mathfrak p}^\g(E)\big)$$ are referred to as {\it induced}.
It should be mentioned that $I(\mathfrak{p},E)$ does not have to be
primitive and completely prime, in general, but this holds under the
additional assumption that $I_0$ is completely prime thanks to
Conze's theorem \cite{Co} and the Dixmier--M{\oe}glin equivalence
\cite[8.5.7]{Di}. It is well known that $I(\mathfrak{p},E)$
coincides with the largest two-sided ideal of $U(\g)$ contained in
the left ideal $U(\g)(\mathfrak{n}+I_0)$ and hence depends only on
$\mathfrak p$ and $I_0$; see \cite[10.4]{BGR}. We shall sometimes
use a more flexible notation $\mathfrak{I}_{\mathfrak p}^\g(I_0)$
when referring to $I(\mathfrak{p},E)$

Motivated by the natural desire to keep things simple, one wants
{\it all} completely prime primitive ideals  in $\mathcal{X}_{\Oo}$
to be induced, but since this fails outside type $\sf A$ one must
find a way to determine the non-induced ones. This is, of course,
the hardest part of the problem and the main reason why the
classification remains open outside type $\sf A$; see \cite{BJ} for
more detail.

Fortunately, this issue does not arise for the multiplicity-free primitive ideals.
The following is the main result of this paper:

\begin{thm}\label{E} Let $I\in\mathcal{X}_{\Oo}$ be a multiplicity-free primitive
ideal associated with an induced nilpotent orbit $\Oo\subset \g$. If
$\g$ is exceptional assume further that $\Oo$ is not listed in
Table~0. Then there exists a proper parabolic subalgebra $\mathfrak
p$ of $\g$ with a Levi subalgebra $\li$ and a rigid nilpotent orbit
$\Oo_0$ in $\li$ such that $\Oo$ is induced from $\Oo_0$ and
$I=I({\mathfrak p},E)$ where $E$ is an irreducible
$U(\mathfrak{p})$-module with the trivial action of the nilradical
of $\mathfrak p$. Moreover, the primitive ideal $I_0={\rm
Ann}_{U(\li)\,} E$ is completely prime and ${\rm
VA}(I_0)=\overline{\Oo}_0$.
\end{thm}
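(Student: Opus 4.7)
The plan is to reformulate multiplicity-freeness via finite $W$-algebras: by formula~(\ref{Losev}), $I=I_V\in\mathcal{X}_\Oo$ is multiplicity-free if and only if $\dim V=1$ and $\Gamma_V=\Gamma$, i.e.\ $[V]$ represents a point $\eta\in\mathcal{E}^\Gamma$. So it suffices to attach to every such $\eta$ a proper parabolic $\mathfrak p=\li\oplus\mathfrak n$, a rigid nilpotent orbit $\Oo_0\subset\li$ with $\Oo=\Ind_\li^\g(\Oo_0)$, and a completely prime primitive ideal $I_0\subset U(\li)$ with ${\rm VA}(I_0)=\overline{\Oo}_0$, and then to identify $I$ with $\mathfrak{I}_{\mathfrak p}^\g(I_0)$.

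First I would pin down the parabolic data. Borho's classification of sheets \cite{Bor} sets up a bijection between the sheets of $\g$ containing $\Oo$ and the $G$-conjugacy classes of pairs $(\li,\Oo_0)$ with $\Oo_0$ rigid in $\li$ and $\Oo=\Ind_\li^\g(\Oo_0)$, while \cite[Theorem~1.2]{Sasha3} matches these sheets with irreducible components of $\mathcal{E}$. Under the standing hypothesis (namely $\Oo$ not in Table~0 when $\g$ is exceptional), Theorems~\ref{B} and~\ref{D}(iii) show that $U(\g,e)^{\rm ab}_\Gamma$ is a polynomial algebra, so $\mathcal{E}^\Gamma$ is irreducible and meets a unique component of $\mathcal{E}$. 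This pinpoints $(\li,\Oo_0)$ up to $G$-conjugation, and I fix a parabolic $\mathfrak p=\li\oplus\mathfrak n$ with this Levi.

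Next I would descend $\eta$ to a 1-dimensional representation $\eta_0$ of $U(\li,e_0)$ for some $e_0\in\Oo_0$ using Losev's parabolic induction theorem for finite $W$-algebras, which provides an injective filtered homomorphism $U(\li,e_0)\hookrightarrow U(\g,e)$ compatible (after completion) with the component of $\mathcal{E}$ selected above. Skryabin's equivalence inside $\li$ turns $\eta_0$ into an irreducible $\li$-module $E_0=Q_{e_0}\otimes_{U(\li,e_0)}\eta_0$; since $\dim\eta_0=1$, the integrality of the scale factor $q_{I_0}$ recalled just after~(\ref{Losev}) forces $I_0:={\rm Ann}_{U(\li)}E_0$ to be completely prime, whilst \cite{Sasha2} yields ${\rm VA}(I_0)=\overline{\Oo}_0$. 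Extending $E_0$ trivially across $\mathfrak n$ gives the required irreducible $U(\mathfrak p)$-module $E$, and Conze's theorem \cite{Co} together with the Dixmier--M\oe{}glin equivalence \cite[8.5.7]{Di} guarantees that $I(\mathfrak p,E)=\mathfrak{I}_{\mathfrak p}^\g(I_0)$ is itself a completely prime primitive ideal with ${\rm VA}(I(\mathfrak p,E))=\overline{\Oo}$.

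The remaining, and most delicate, step is to prove $I=I(\mathfrak p,E)$, and this is where I expect the main obstacle: the compatibility of Losev's parabolic induction on $W$-algebra representations with classical parabolic induction of primitive ideals is not formally stated in the literature at the level of generality we need. My strategy would be to compute ${\rm mult}_\Oo(U(\g)/I(\mathfrak p,E))$ directly, using Losev's formula~(\ref{Losev}) on the $W$-algebra side together with a Borho--Joseph type multiplicity computation for the induced module $\Ind_{\mathfrak p}^\g(E)$ on the enveloping algebra side, to show that $I(\mathfrak p,E)$ is also multiplicity-free and that its class in ${\rm Irr}\,U(\g,e)/\Gamma$ coincides with that of $[V]$. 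The uniqueness statement of \cite{Lo5}, which asserts that $I_{[W]}$ determines $[W]$ up to the action of $\Gamma$, then forces $I(\mathfrak p,E)=I$ and completes the argument.
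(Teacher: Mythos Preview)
Your overall plan has the right shape, but there are two genuine gaps and one unnecessary detour.

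\textbf{Direction of Losev's map and the missing surjectivity argument.} Losev's homomorphism in \cite[6.3]{Lo3} goes $\Xi\colon U(\g,e)\to U(\li,e_0)'$, not $U(\li,e_0)\hookrightarrow U(\g,e)$. Passing to abelian quotients gives $\xi\colon U(\g,e)^{\rm ab}\to U(\li,e_0)^{\rm ab}$ and hence a \emph{finite} morphism $\xi^*\colon\widetilde{\mathcal E}_0\to\mathcal E$. So you cannot simply ``restrict'' $\eta$; you must show that your point $\eta\in\mathcal E^\Gamma$ lies in the image of $\xi^*$. This is exactly where the polynomiality of $U(\g,e)^{\rm ab}_\Gamma$ is used in the paper: since $\mathcal E^\Gamma$ is an affine space of dimension $c_\Gamma(e)$, and since $\xi^*$ is finite (hence closed with finite fibres) and $\dim\widetilde{\mathcal E}_0^{\Gamma_0}\ge\dim\z(\li)=c_\Gamma(e)$, the image $\xi^*(\widetilde{\mathcal E}_0^{\Gamma_0})$ is a closed subset of $\mathcal E^\Gamma$ of full dimension, hence all of $\mathcal E^\Gamma$. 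Only then do you get $\eta=\eta_0\circ\xi$ for some $\eta_0$. You also omit the hypothesis $G_e\subset P$ needed to make $\Xi$ (and hence $\xi^*$) $\Gamma$-equivariant; in the classical case this comes from the explicit construction of $\mathfrak p$ via repeated application of Case~1 of the KS algorithm (Theorem~\ref{class1}), and in the exceptional multi-sheet cases from Fu's birationality results \cite{fu,fu07}.

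\textbf{The ``delicate step'' is already in the literature.} The compatibility you flag as the main obstacle is precisely \cite[Corollary~6.4.2]{Lo3}: once $\eta=\eta_0\circ\xi$, the primitive ideal $I=I_\eta$ equals $\mathfrak I_{\mathfrak p}^\g(I_0)$ where $I_0={\rm Ann}_{U(\li)}\big(Q_{e_0}\otimes_{U(\li,e_0)}\K_{\eta_0}\big)$. Your proposed workaround via a Borho--Joseph multiplicity computation and Losev's uniqueness theorem is therefore unnecessary. The paper then iterates (classical case: first to an almost rigid $e_0$, then to a rigid $e_1$, using transitivity of parabolic induction \cite[10.4]{BGR}) to reach a rigid orbit, again invoking surjectivity of the relevant $\xi^*$ at each stage.
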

Theorem~\ref{E} can be regarded as a generalisation of M{\oe}glin's
theorem \cite{Moe} on completely prime primitive ideals of
$U(\mathfrak{sl}_n)$. From the main body of the paper one can obtain
more information on the parabolic subalgebra $\mathfrak p$ and the
$\mathfrak p$-module $E$. It is quite possible that Theorem~\ref{E}
holds for all induced orbits in $\g$ and this would follow (by the
same argument) if the variety $\mathcal{E}^\Gamma$ turned out to be
irreducible for all orbits listed in Table ~0.

\medskip

\noindent {\bf Acknowledgement.} The authors would like to thank
S.~Goodwin, W.~de Graaf, R.~Lawther, I.~Losev, A.~Moreau, R.~Tange,
D.~Testerman and O.~Yakimova for useful discussions and e-mail
correspondence on the subject of this paper. We are also thankful to the anonymous referee for careful reading, thoughtful suggestions, and pointing out several inaccuracies 
in the first version of this paper.
\section{The derived subalgebra of a centraliser}
\subsection{A basis for centralisers in classical Lie algebras}\label{1.1}

Let $\K$ be an algebraically closed field of any characteristic
$\neq 2$. Fix $N \geq 2$ and denote by $V$ an $N$-dimsnional vector
space over $\K$. In this section we denote by $G$ the algebraic
group $GL(V)$ with Lie algebra $\g = \Lie(G) = \gl(V)$ and let
$\Psi=(\,\cdot\,,\, \cdot\,)$ be a symmetric or skew-symmetric
non-degenerate bilinear form on $V$ with values in $\K$, so that
$(u,v) = \epsilon (v,u)$ for all $u, v\in V$ where $\epsilon =
\pm1$. Choose a basis for $V$ to identify $\gl(V)$ with $\gl_N$ and let $J$ be the matrix associated
to $\Psi$ with respect to that basis. If $X$ is an endomorphism of $V$ then $X^\top$ denotes
the transpose of $X$. There is a Lie algebra automorphism $\sigma :
\g \to \g$ of order 2 taking $X \in \g $ to $-J^{-1} X^\top J$ which
is independent of our choice of basis. Then $\sigma$ induces a
$\Z_2$-grading $\g = \g _0 \bigoplus \g_{1}$. Make the notation $\h
= \g_0$. If $\epsilon = 1$ then $\h$ is an orthogonal algebra, and
if $\epsilon = -1$ then $\h$ is a symplectic algebra. In either case
$\g_{1}$ is an $\h$-module. Let $K$ denote the connected component
of the associated orthogonal or symplectic group.

The conjugacy classes of nilpotent elements in $\g$ are in one to
one correspondence with ordered partitions of $N$: to a partition
$\lambda = (\lambda_1,..., \lambda_n)$ of $N$ with $\lambda_1 \geq
\cdots \geq \lambda_n\ge 1$ we associate the $G$-orbit of the
nilpotent element in Jordan normal form with Jordan block sizes
$\lambda_1,...,\lambda_n$.  Let $e \in \h$ be a nilpotent element
with Jordan block sizes $\lambda_1 \geq \cdots \geq \lambda_n$.
Since $\h$ acts naturally on $V$ we may decompose $V$ uniquely into
minimal $e$-stable subspaces $V = \bigoplus_{i=1}^n V[i]$, and shall
call these $V[i]$ the Jordan block spaces of $e$ in $V$. Since $e$
restricts to a regular nilpotent endomorphism on each $V[i]$, there
exist vectors $\{w_i\}$ such that $\{e^s w_i : 1 \leq i \leq n, 0
\leq s < \lambda_i\}$ forms a basis for $V$. When dealing with
partitions $\lambda$ as above we always assume that $\lambda_0=0$
and $\lambda_i=0$ for all $i>n$.

The following condition on the Jordan block sizes can be found in
\cite[Theorem~1.4]{Ja04}, for example. The final statement follows
from \cite[Theorem~5.1.6]{CM}.
\begin{lem}\label{nilpotents}
The $w_i \in V$ can be chosen so that there exists an involution $i
\mapsto i'$ on the set $\{1,...,n\}$ such that
\begin{enumerate}
\item{$\lambda_i = \lambda_{i'}$ for all $i = 1,...,n$}

\smallskip

\item{$(V[i], V[j]) = 0$ if $i \neq j'$}

\smallskip

\item{$i=i'$ if and only if $\epsilon(-1)^{\lambda_i} = -1$}
\end{enumerate}
\end{lem}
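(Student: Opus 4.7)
The plan is to enlarge $e$ to an $\mathfrak{sl}_2$-triple sitting inside $\h$ and then to diagonalise $\Psi$ one $\mathfrak{sl}_2$-isotypic component of $V$ at a time.

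To begin, Jacobson--Morozov applied in the reductive Lie algebra $\h$ produces an $\mathfrak{sl}_2$-triple $\{e,h,f\}\subset\h$. Under the resulting action $V$ splits as a direct sum of irreducible $\mathfrak{sl}_2$-modules, one of dimension $m$ per Jordan block of $e$ of size $m$; an $e$-cyclic generator of such a block is a lowest-weight vector of the corresponding irreducible $L(m-1)$. Grouping summands by dimension yields an $\mathfrak{sl}_2$-stable decomposition $V=\bigoplus_{m\ge 1}V_m$ with $V_m\cong L(m-1)\otimes M_m$, where the multiplicity space $M_m$ has dimension equal to the number of Jordan blocks of size $m$. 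Because the $\mathfrak{sl}_2$-triple lies inside $\h=\g^\sigma$, the form $\Psi$ is $\mathfrak{sl}_2$-invariant, so Schur's lemma forces the $V_m$ to be pairwise $\Psi$-orthogonal and each restriction $\Psi|_{V_m}$ to be non-degenerate.

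The irreducible $L(m-1)$ carries (up to scalar) a unique non-zero $\mathfrak{sl}_2$-invariant bilinear form, which a short computation with the action of $f$ on a highest-weight vector shows to be $(-1)^{m-1}$-symmetric. Under $V_m\cong L(m-1)\otimes M_m$ this identifies $\Psi|_{V_m}$ with a non-degenerate bilinear form on $M_m$ of symmetry sign $\epsilon(-1)^{m-1}=-\epsilon(-1)^m$. When $\epsilon(-1)^m=-1$ the induced form on $M_m$ is symmetric and can be diagonalised over $\K$; taking each resulting orthogonal line to correspond to a Jordan block $V[i]$ with $e$-cyclic generator $w_i$ makes $V[i]$ non-degenerately self-paired under $\Psi$, so $i=i'$. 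When $\epsilon(-1)^m=+1$ the form on $M_m$ is skew, hence $\dim M_m$ is automatically even; a symplectic basis of $M_m$ decomposes it into hyperbolic planes, each yielding a pair of blocks $V[i],V[j]$ on which $\Psi$ vanishes individually but pairs them non-degenerately. Setting $i'=j$ and $j'=i$ in this case and combining across all $m$ produces an involution on $\{1,\ldots,n\}$ for which (1), (2) and (3) hold by construction, with $w_i$ chosen as a lowest-weight vector of the corresponding irreducible.

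The main obstacle is the sign bookkeeping in the last paragraph: one has to verify carefully that the canonical form on $L(m-1)$ is $(-1)^{m-1}$-symmetric and that the induced form on the multiplicity space $M_m$ therefore has symmetry type matching condition (3). Once this dictionary is in place, everything else is routine, relying only on Jacobson--Morozov, Schur's lemma, and the standard normal-form theorems for non-degenerate symmetric and skew-symmetric bilinear forms over the algebraically closed field $\K$.
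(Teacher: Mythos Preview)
Your argument is correct and is essentially the standard proof that the cited references (Jantzen \cite[Theorem~1.4]{Ja04} and Collingwood--McGovern \cite[Theorem~5.1.6]{CM}) carry out; the paper itself does not give a proof but simply points to these sources. The decomposition $V_m\cong L(m-1)\otimes M_m$ together with the sign calculation for the invariant form on $L(m-1)$ is exactly the mechanism behind those references, so there is no substantive difference in approach.

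One small caveat worth flagging: the paper states this lemma over an algebraically closed field of characteristic $\ne 2$, whereas your first step invokes Jacobson--Morozov in $\h$, which is not available for arbitrary $p>2$ (it can fail when $e$ has Jordan blocks of size $\ge p$). The conclusion of the lemma is still true in that generality, but to cover small positive characteristic one has to bypass the $\mathfrak{sl}_2$-triple and work directly with the filtration $\ker e\subset\ker e^2\subset\cdots$ and the induced forms on the successive quotients, as Jantzen does in the positive-characteristic part of his notes. If you are content with characteristic~$0$ (which is all that is used from Section~4 onward), your proof is complete as written.
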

The lemma states that for a nilpotent element in a symplectic Lie
algebra each Jordan block of odd dimension can be paired with a
different Jordan block of the same dimension; in an orthogonal
algebra each Jordan block of even dimension can be paired with a
different Jordan block of the same dimension; and that this pairing
is involutory. Renumbering the vectors $w_i$ if necessary we may
(and will) assume from now on that $$i'\in\{i-1,i,i+1\}\ \ \mbox{for
all }\ 1\le i\le n.$$ As an immediate consequence of this convention
we have that $j'>i'$ whenever $1\le i<j\le n$ and $j\ne i'$.
 Following \cite{CM} we denote by $\mathcal{P}_\epsilon(N)$ the set of partitions
 of $N$ which are associated to nilpotent elements of $\h$ (ie. fulfilling the parity
 conditions of Lemma \ref{nilpotents}).

If $\mathcal L$ is a Lie algebra and $x\in\mathcal{L}$ the we write
$\mathcal{L}_x$ for the centraliser of $x$ in $\mathcal L$. Since
$\sigma(e) = e$, the centraliser of $e$ in $\g$ is $\sigma$-stable,
inducing a decomposition $\g_e = \h_e \oplus (\g_e)_{1}$ where
$(\g_e)_{1} = (\g_{1})_e$ is a $\h_e$-module. Thanks to
\cite[Theorems~2.5, 2.6]{Ja04} we may identify $\h_e$ with
$\Lie(K_e)$. We shall normalise the basis for $V$. Let $\{w_i\}$ be
chosen in accordance with the above
 and fix $1 \leq i \leq n$, $0 < s$. We have $(e^{\lambda_i-1} w_i, e^sw_{i'}) = (-1)^s
 (e^{\lambda_i - 1 + s}w_i, w_{i'})$ and $e^{\lambda_i - 1 + s}w_i = 0$ so
 $e^{\lambda_i-1}w_i$ is orthogonal to all $e^sw_{i'}$ with $s>0$. There is a
 (unique up to scalar) vector $v \in V[i]$ which is orthogonal to all $e^s w_{i'}$ for
 $s < \lambda_i - 1$. This $v$ does not lie in $\im(e)$ for otherwise it would be othogonal to
 all of $V[i] + V[i']$. This is not possible since the restriction of $\Psi$ to $V[i] + V[i']$ is
 non-degenerate. It does no harm to replace $w_i$ by $v$ and normalise according to the rule
\begin{eqnarray*}
(w_i, e^{\lambda_i - 1} w_{i'}) = 1 & \text{ whenever
} i \leq i'
\end{eqnarray*}
With respect to this basis the matrix of the restriction of $\Psi$
to $V[i] + V[i']$ is antidiagonal with entries $\pm 1$.

Let $\xi \in \mathfrak{g}_e$. Then $\xi (e^s w_i) = e^s (\xi w_i)$
showing that $\xi$ is determined by its action on the $w_i$. If we
define
\begin{eqnarray*}
\xi_i^{j,s} w_k = \left\{ \begin{array}{ll}
         e^sw_j & \mbox{ if $i=k$}\\
        0 & \mbox{ otherwise}\end{array} \right.
\end{eqnarray*}
and extend the action to $\{e^s w_i\}$ by the requirement that
$\xi_i^{j,s}$ is linear and centralises $e$ then
\begin{eqnarray}\label{gebasis}
\{\xi_i^{j,\lambda_j - 1 - s} : 1 \leq i,j \leq n , 0 \leq s < \min(\lambda_i, \lambda_j)\}
\end{eqnarray}
forms a basis for $\mathfrak{g}_e$ ; see \cite{Ya06}, for example.
Our next aim is to describe a basis for $\h_e$. The following
approach is implicit in \cite{Ya06}, however we shall recover the
details for the reader's convenience. Since $\sigma : \g_e \to \g_e$
is an involution the maps $\xi + \sigma(\xi)$, with $\xi \in \g_e$,
span $\h_e$. Thanks to (\ref{gebasis}) we may define $\zeta_i^{j,s}
= \xi_i^{j,\lambda_j - 1 - s} + \sigma(\xi_i^{j,\lambda_j - 1-s})$
and conclude that $\{\zeta_i^{j,s} : 1 \leq i,j \leq n , 0 \leq s <
\min(\lambda_i, \lambda_j)\}$ is the required spanning set for
$\h_e$. This leaves us with two immediate tasks: evaluate
$\sigma(\xi_i^{j,\lambda_j-1-s})$ and determine the linear relations
between the $\zeta_i^{j,s}$. Using the fact that $\zeta_i^{j,s}$ is
skew self-adjoint with respect to $\Psi$ we deduce that
\begin{eqnarray}\label{sigmaaction}
\sigma(\xi_i^{j,\lambda_j - 1 - s}) = \varepsilon_{i,j,s}\xi_{j'}^{i', \lambda_i - 1 - s}
\end{eqnarray}
where $\varepsilon_{i,j,s}$ is defined by the relationship
$(e^{\lambda_j-1-s}w_j,e^sw_{j'}) = - \varepsilon_{i,j,s}(w_i,
e^{\lambda_i-1} w_{i'})$. This requires a little calculation. We now
have made the notation
\begin{eqnarray*}
\zeta_i^{j,s} = \xi_i^{j,\lambda_j-1-s} +  \varepsilon_{i,j,s}\xi_{j'}^{i', \lambda_i - 1 - s}.
\end{eqnarray*}
We make further notation
$$\varpi_{i \leq j} = \left\{ \begin{array}{ll}
         1 & \mbox{if $i \leq j$}\\
        -1 & \mbox{if $i > j$}\end{array} \right. $$
and comparing with Lemma \ref{nilpotents} we see that $\varpi_{i\leq
i'} \varpi_{i'\leq i} = \epsilon(-1)^{\lambda_i-1}$, which shall
prove useful in some later calculations. The next lemma settles the
question of which linear relations exist between the maps
$\zeta_i^{j,s}$. The proof may be found in \cite{Top}.
\begin{lem}\label{spanningdetails}
The following are true:
\begin{enumerate}
\item{$\varepsilon_{i,j,s} = (-1)^{\lambda_j - s} \varpi_{i\leq i'} \varpi_{j \leq j'}$;}

\smallskip

\item{$\varepsilon_{i,j,s} = \varepsilon_{j',i',s}$;}

\smallskip

\item{The only linear relations amongst the $\zeta_i^{j,s}$ are those of the
form $\zeta_i^{j,s} = \varepsilon_{i,j,s} \zeta_{j'}^{i',s}$.}
\end{enumerate} 
\end{lem}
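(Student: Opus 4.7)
The plan is to verify part (1) by explicit computation from the defining relation, deduce (2) formally from (1), and finally establish (3) by re-interpreting $\zeta_i^{j,s}$ as a $\sigma$-symmetrisation of a basis vector of $\g_e$.

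For (1), since $e\in\h$ the operator $e$ is skew-adjoint with respect to $\Psi$, giving the identity $(e^au,e^bv) = (-1)^a(u,e^{a+b}v)$ for all $u,v\in V$. Applied to the left-hand side of the defining relation for $\varepsilon_{i,j,s}$ this yields $(e^{\lambda_j-1-s}w_j,e^sw_{j'}) = (-1)^{\lambda_j-1-s}(w_j,e^{\lambda_j-1}w_{j'})$. The short preliminary identity I would establish is that $(w_k,e^{\lambda_k-1}w_{k'}) = \varpi_{k\leq k'}$ for every $k$: when $k \leq k'$ this is the normalisation convention, and when $k > k'$ one combines the $\epsilon$-symmetry of $\Psi$, the skew-adjointness of $e$, the equality $\lambda_k = \lambda_{k'}$, and the parity condition $\epsilon(-1)^{\lambda_k} = 1$ (which holds precisely because $k \neq k'$) to produce $-1$. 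Substituting this formula into both sides of the defining relation and solving for $\varepsilon_{i,j,s}$ gives the expression claimed in (1).

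Part (2) is then a one-line consequence of (1): inserting the formula into $\varepsilon_{j',i',s}$, using $\lambda_{i'}=\lambda_i$, and applying the identity $\varpi_{k\leq k'}\varpi_{k'\leq k} = \epsilon(-1)^{\lambda_k-1}$ to both $k=i$ and $k=j$ reduces the claim to $(-1)^{\lambda_i-\lambda_j} = (-1)^{\lambda_i+\lambda_j}$, which is trivial. For the forward direction of (3), combining (\ref{sigmaaction}) with (2) yields $\varepsilon_{i,j,s}\zeta_{j'}^{i',s} = \xi_i^{j,\lambda_j-1-s} + \varepsilon_{i,j,s}\xi_{j'}^{i',\lambda_i-1-s} = \zeta_i^{j,s}$, so these relations always hold among the spanning set. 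For the converse, expanding each $\zeta_i^{j,s}$ in the $\g_e$-basis (\ref{gebasis}) shows that any further linear dependence must force two distinct index triples $(i,j,s)$ and $(\tilde i,\tilde j,\tilde s)$ to produce the same unordered pair of basis vectors $\{\xi_i^{j,\lambda_j-1-s},\xi_{j'}^{i',\lambda_i-1-s}\}$; using $(k')'=k$, $\lambda_{k'}=\lambda_k$, and the injectivity of the map $(k,l,m)\mapsto \xi_k^{l,m}$, a brief case analysis forces $(\tilde i,\tilde j,\tilde s) = (j',i',s)$.

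The main obstacle I anticipate is keeping sign bookkeeping uniform across the three cases $k<k'$, $k=k'$, $k>k'$ in the proof of (1); the $\varpi$-notation is designed precisely to absorb these cases into a single formula, so once the preliminary identity $(w_k,e^{\lambda_k-1}w_{k'}) = \varpi_{k\leq k'}$ is in place the remaining manipulations are purely formal.
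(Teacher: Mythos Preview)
Your argument is correct. The paper itself does not prove this lemma but defers to \cite{Top}, so there is nothing to compare against; your direct verification fills that gap cleanly. The key computation $(w_k,e^{\lambda_k-1}w_{k'})=\varpi_{k\le k'}$ is exactly the right preliminary identity, and your derivations of (1) and (2) from it are sound. For (3), your observation that each basis vector $\xi_k^{l,m}$ can appear in at most the two symmetrisations $\zeta_k^{l,\lambda_l-1-m}$ and $\zeta_{l'}^{k',\lambda_l-1-m}$ is the crux, and it immediately forces any linear relation among the $\zeta$'s to decompose into the stated pairwise ones; you might make that ``at most two'' statement explicit rather than leaving it inside the case analysis, but the logic is complete as written.
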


Thanks to the above lemma we may refine a basis from the spanning
set of vectors $\{\zeta_i^{j,s}\}$ by removing any zero elements and
excluding precisely one of the pair $(\zeta_i^{j,s},
\zeta_{j'}^{i',s})$ when these vectors are non-zero. With this in
mind define
\begin{eqnarray*}
H&:=& \{ \zeta_i^{i,s} : i< i' , 0 \leq s < \lambda_i \} \cup
\{ \zeta_i^{i,s} : i = i', 0 \leq s < \lambda_i, \lambda_i - s \text{ even}\};\\
N_0 &:=& \{ \zeta_i^{i',s} : i \neq i', 0 \leq s < \lambda_i,  \lambda_i - s \text{ odd} \};\\
N_1 &:=& \{\zeta_i^{j,s} : i < j \neq i', 0 \leq s < \lambda_j\},
\end{eqnarray*}
and
\begin{eqnarray*}
\HH &:=& \spn (H);\\
\NN_0 &:=& \spn (N_0);\\
\NN_1 &:=& \spn (N_1).
\end{eqnarray*}
If $U_0$ and $U_1$ are subspaces of $V$ then $\End(U_0, U_1)$ shall
denote the space of all linear maps $U_0 \ra U_1$. We consider
$\End(U_0, U_1)$ to be a subspace of $\End(V)$ under the natural
embedding induced by the inclusions of $U_0$ and $U_1$ into $V$.
\begin{lem}\label{subbasis}
The set $H\sqcup N_0 \sqcup N_1$ forms a basis for $\h_e$.
Furthermore we have the following characterisation of the three
spaces:
\begin{enumerate}
\item{$\HH$ is precisely the subspace of $\h_e$ which preserves each Jordan block
space $V[i]$: $$\HH = \h_e \cap(\oplus_{i}\emph{\End}(V[i]));$$}
\item{$\NN_0$ is precisely the subspace of $\h_e$ which ``interchanges'' $V[i]$
and $V[i']$ for $i \neq i'$ and annihilates $V[i]$ for $i=i'$:
$$\NN_0 = \h_e \cap(\oplus_{i\neq i'} \emph{\End}(V[i], V[i']));$$}
\item{$\NN_1$ is the subspace of $\h_e$ which does neither of the
above: $$\NN_1 = \h_e \cap(\oplus_{i} (\oplus_{j\not\in\{i,i'\}} \emph{\End}(V[i], V[j]))).$$}
\end{enumerate}
\end{lem}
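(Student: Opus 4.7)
The plan is to extract a basis from the spanning set $\{\zeta_i^{j,s}\}$ using Lemma~\ref{spanningdetails}, and then identify each of the three pieces with the stated intersection by decomposing $\h_e$ along the Jordan block structure of $V$.

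First I would show that $H\sqcup N_0\sqcup N_1$ is a basis of $\h_e$. By Lemma~\ref{spanningdetails}(3) the only relations among the $\zeta_i^{j,s}$ take the form $\zeta_i^{j,s}=\varepsilon_{i,j,s}\zeta_{j'}^{i',s}$, so the index set partitions into orbits of the involution $(i,j)\mapsto(j',i')$. From each two-element orbit I would choose a single representative using the ordering conventions built into the definitions of $H$ and $N_1$, verifying en route that these conventions select exactly one element per orbit (for $N_1$ one checks that $i<j$ and $j\ne i'$ force $i'<j'$). The fixed orbits are precisely the index pairs with $j=i'$, and on them the relation becomes $\zeta_i^{i',s}=\varepsilon_{i,i',s}\zeta_i^{i',s}$, so the representative survives exactly when $\varepsilon_{i,i',s}=+1$. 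Feeding the formula of Lemma~\ref{spanningdetails}(1) into this criterion, and using the identity $\varpi_{i\le i'}\varpi_{i'\le i}=\epsilon(-1)^{\lambda_i-1}$ for $i\ne i'$, the parity test reduces to ``$\lambda_i-s$ even'' when $i=i'$ (giving the fixed portion of $H$) and to ``$\lambda_i-s$ odd'' when $i\ne i'$ (giving $N_0$). Linear independence is then immediate from Lemma~\ref{spanningdetails}(3).

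For the three characterisations I would exploit the Jordan block decomposition $V=\bigoplus_jV[j]$. Any $\eta\in\g_e$ is determined by the tuple $(\eta(w_i))_i$, and decomposing each $\eta(w_i)$ according to this splitting of $V$ yields a direct sum
$$\g_e=\bigoplus_{i,j}\bigl(\g_e\cap\End(V[i],V[j])\bigr),$$
with the $(i,j)$-summand spanned by the $\xi_i^{j,\lambda_j-1-s}$. Formula~(\ref{sigmaaction}) sends $\End(V[i],V[j])$ to $\End(V[j'],V[i'])$, so $\sigma$ permutes these summands and $\h_e=(\g_e)^\sigma$ inherits a decomposition indexed by $\sigma$-orbits of index pairs. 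Three types of orbits arise: the diagonal orbits $\{(i,i),(i',i')\}$ (which collapse to a single point when $i=i'$), contained in $\bigoplus_k\End(V[k])$ and supplying precisely the elements of $H$; the off-diagonal fixed orbits $\{(i,i')\}$ with $i\ne i'$, contained in $\bigoplus_{k\ne k'}\End(V[k],V[k'])$ and supplying the elements of $N_0$; and the remaining orbits $\{(i,j),(j',i')\}$ with $j\notin\{i,i'\}$, contained in the complementary subspace and supplying the elements of $N_1$. Matching these three types with the three stated intersections yields (1)--(3).

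The main technical obstacle is the parity bookkeeping in the first step: one must verify carefully when a fixed $\sigma$-orbit contributes a nonzero basis vector, translating the sign $\varepsilon_{i,i',s}$ into the precise parity conditions built into the definitions of $H$ and $N_0$. Once that step is settled, the rest of the argument proceeds by reading off where each class of basis vector lives under the Jordan-block decomposition.
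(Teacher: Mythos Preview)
Your proposal is correct and follows essentially the same route as the paper: both extract a basis from the spanning set $\{\zeta_i^{j,s}\}$ by using Lemma~\ref{spanningdetails}(3) to select one representative from each orbit of the involution $(i,j)\mapsto(j',i')$, checking via the sign $\varepsilon_{i,j,s}$ which fixed-orbit elements are nonzero, and then reading off the three characterisations from the Jordan block decomposition. Your treatment of the characterisations via the $\sigma$-action on the summands $\End(V[i],V[j])$ is more explicit than the paper's ``clear upon inspection of the definitions,'' but this is elaboration rather than a different argument.
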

\begin{proof}
First we show that all elements of $H\sqcup N_0 \sqcup N_1$ are
non-zero. Clearly $\zeta_i^{j,s} = 0$ if and only if
$\xi_i^{j,\lambda_j - 1-s} = -\varepsilon_{i,j,s}
\xi_{j'}^{i',\lambda_i-1-s}$. For this we require that $i = j'$ and
$\varepsilon_{i,j,s} = -1$. For $i=j'$ we must have $i = i' = j$ or
$i \neq i' = j$. In the first case, $\varepsilon_{i,j,s} =
(-1)^{\lambda_j-s}$ which equals $-1$ only if $\lambda_i-s$ is odd.
But the maps $\zeta_i^{i,s}$ do not occur in $H$ when $i=i'$ and
$\lambda_i-s$ is odd. In the second case $\varepsilon_{i,j,s} =
(-1)^{\lambda_i-1-s}$ which equals $-1$ only if $\lambda_i-s$ is
even. However, the maps $\zeta_i^{i',s}$ do not occur in $N_0$ when
$i\neq i'$ and $\lambda_i-s$ is even.

Next observe that when $\zeta_i^{j,s}\neq 0$ exactly one of the two
maps $\zeta_i^{j,s}$ and $\zeta_{j'}^{i',s}$ occurs in $H\sqcup
N_0\sqcup N_1$, thus showing this set to be a basis by
Lemma~\ref{spanningdetails}(3). The three characterisations are
clear upon inspection of the definitions of the sets $H, N_0$ and
$N_1$.
\end{proof}

\subsection{Decomposing $\h_e$}\label{sub2}
It is our intention to decompose $[\h_e, \h_e]$ into subspaces. In
order to do so we must first decompose $\HH$ and $\NN_1$. Let
\begin{eqnarray*}
&\HH_0 := \spn\{\zeta_i^{i,s}\in \HH : \lambda_i - s \text{ even} \}\\
&\HH_1 := \spn\{\zeta_i^{i,s} \in\HH : \lambda_i - s \text{ odd} \}
\end{eqnarray*}
so that $\HH = \HH_0 \bigoplus \HH_1$. The space $\HH_0$ can be
further decomposed as $\bigoplus_{m=1}^{\lfloor\lambda_1/2\rfloor}
\HH_0^{m}$ where $$\HH_0^{m} := \spn\{\zeta_i^{i,\lambda_i - 2m} \in \HH: 1
\leq i \leq n\}.$$ Next we must decompose each $\HH_0^{m}$ into
subspaces $\HH_{0,j}^m$ for $j\ge 1$.

Fix $0 < m \leq \lfloor \lambda_1/2 \rfloor$, put $a_{1,m}:=1$ and
let $1=a_{1,m}< a_{2,m}<\cdots< a_{t(m), m}\le n+1$ be the set of
all integers such that $$\lambda_{a_{j,m}-1} - \lambda_{a_{j,m}}
\geq 2m, \qquad 2\le j\le t(m).$$ For $1 \leq j < t(m)$ we define
$$\HH_{0,j}^m := \spn \{\zeta_i^{i,\lambda_i - 2m}\in \HH : a_{j,m} \leq i
< a_{j+1, m}\}$$ and set $$\HH_{0,t(m)}^m := \spn
\{\zeta_i^{i,\lambda_i - 2m} \in \HH : a_{t(m),m} \leq i < n+1\}.$$
\begin{lem}
The following are true:
\begin{enumerate}\label{bits}
\item{If $\lambda_{a_{t(m),m}} < 2m$ then $\HH_{0,t(m)}^m = \{0\}$;}

\smallskip

\item{$\HH_0^m = \bigoplus_{j=1}^{t(m)} \HH_{0,j}^m$.}
\end{enumerate}
\end{lem}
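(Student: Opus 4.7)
My plan is to reduce both statements to the explicit basis $H$ of $\HH$ supplied by Lemma~\ref{subbasis}. The preparatory observation to single out is that the involution $i\mapsto i'$ of Lemma~\ref{nilpotents} never crosses a block boundary $a_{j,m}$: under the standing convention $i'\in\{i-1,i,i+1\}$, whenever $i'\ne i$ we have $\lambda_i=\lambda_{i'}$, so the relevant consecutive difference $\lambda_{i-1}-\lambda_i$ or $\lambda_i-\lambda_{i+1}$ equals $0<2m$, and hence no element of a nontrivial involution orbit can coincide with any $a_{j,m}$ for $j\ge 2$. Thus each orbit $\{i,i'\}$ sits entirely inside a single half-open block $[a_{j,m},a_{j+1,m})$, with the convention $a_{t(m)+1,m}:=n+1$.

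For part (1) I would simply note that the symbol $\zeta_i^{i,\lambda_i-2m}$ is only legal when $\lambda_i\ge 2m$. If $\lambda_{a_{t(m),m}}<2m$, then by monotonicity $\lambda_i<2m$ for every $i$ in the range $a_{t(m),m}\le i\le n$, so the spanning set defining $\HH_{0,t(m)}^m$ is either empty or consists of undefined/zero symbols, and the subspace collapses.

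For part (2) the containment $\sum_j \HH_{0,j}^m\subseteq \HH_0^m$ is built into the definitions, and the reverse containment holds because the blocks $[a_{j,m},a_{j+1,m})$ partition $\{1,\dots,n\}$. To upgrade this to a direct sum I would appeal to Lemma~\ref{spanningdetails}(3): the only linear relations among the vectors $\zeta_i^{i,\lambda_i-2m}$ have the form $\zeta_i^{i,\lambda_i-2m}=\varepsilon_{i,i,\lambda_i-2m}\,\zeta_{i'}^{i',\lambda_i-2m}$, and by the preparatory observation every such identification takes place within a single block. Selecting the representative with $i\le i'$ from each involution orbit yields a subset of $H$ (noting that $\lambda_i-(\lambda_i-2m)=2m$ is even, which is exactly what is required for the $i=i'$ elements to belong to $H$), linearly independent in $\HH$ by Lemma~\ref{subbasis}, and this subset splits as a disjoint union over $j$ according to the block containing the orbit. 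Hence the $\HH_{0,j}^m$ are spanned by mutually disjoint parts of a linearly independent family, forcing the sum to be direct. The only nontrivial ingredient is the involution-respects-blocks observation; the rest is bookkeeping.
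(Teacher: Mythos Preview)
Your proof is correct and follows essentially the same approach as the paper's. For part~(1) both arguments reduce to monotonicity of the $\lambda_i$ forcing $\lambda_i-2m<0$ for all $i\ge a_{t(m),m}$; for part~(2) both use the block partition $[a_{j,m},a_{j+1,m})$ of $\{1,\dots,n\}$ together with the basis $H$ of Lemma~\ref{subbasis}. The one difference is that you make explicit the ``involution respects blocks'' observation (if $i\ne i'$ then $\lambda_i=\lambda_{i'}$ forces the consecutive difference to be $0<2m$, so no $a_{j,m}$ separates $i$ from $i'$), which is exactly what is needed to rule out cross-block linear relations of the form $\zeta_i^{i,\lambda_i-2m}=\zeta_{i'}^{i',\lambda_{i'}-2m}$. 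The paper's proof of the lemma itself is terse on this point and only unpacks the $i\mapsto i'$ identification in the paragraph immediately following the lemma (where the $[i]$ notation is introduced); your version front-loads that justification into the proof, which is arguably cleaner.
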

\begin{proof}
If $a_{t(m),m} = n+1$ then certainly $\HH_{0,t(m)}^m = 0$, so assume
not. If $\lambda_{a_{t(m),m}} < 2m$ then the ordering $\lambda_1
\geq \cdots \geq \lambda_n$  implies that $\lambda_i - 2m < 0$ for
all $i \geq a_{t(m),m}$. Then $\zeta_i^{i,\lambda_i-2m} = 0$ for all
$\zeta_i^{i,\lambda_i - 2m} \in \HH_{0,t(m)}^m$ proving (1). The
choice of $m$ (and the fact that $a_{1,m} = 1$) ensures that
$\bigoplus_{l=1}^{t(m)} \HH_{0,j}^m = \spn\{\zeta_i^{i,\lambda_i-
2m} \in \HH : 1\leq i \leq n\} = \HH_0^m$. Hence (2).
\end{proof}

It should be noted that if $i \neq i'$ then
$\varepsilon_{i,i,\lambda_i - 2m} = 1$ by
Lemma~\ref{spanningdetails}. In this case $\zeta_i^{i,\lambda_i -
2m} = \zeta_{i'}^{i',\lambda_{i'} - 2m}$ by the same lemma. In
order to overcome this notational problem and concisely refer to a
basis for $\HH_{0,j}^m$ it shall be convenient to use an indexing
set slightly different from $\{1,...,n\}$. Extend the involution $i
\mapsto i'$ to all of $\Z$ by the rule $i = i'$ for $i> n$ or $i <
1$. We adopt the convention $\lambda_i = 0$ for all $i > n$ or $i <
1$ which immediately implies $\zeta_i^{i,s} = 0$ for any such $i$.
We shall index our maps and partitions by the set $\Z/\sim$ where
$i\sim j$ if $i = j'$. We denote by $[i]$ the class of $i$ in $\Z
/\sim$. We have $\lambda_i = \lambda_{i'}$ for all $i$ so we may
introduce the notation $\lambda_{[i]}$. As was observed a moment
ago, $\zeta_i^{i,\lambda_i - 2m} = \zeta_{i'}^{i',\lambda_{i'} -
2m}$. Hence we may also use the notation $\zeta_{[i]}^{[i],
\lambda_{[i]} - 2m}$. Furthermore, since $i' \in \{i-1, i, i+1\}$ we
have a well defined order on $\Z /\sim$ inherited from $\Z$: let
$[i] \leq [j]$ if $i \leq j$. As a result there exists a unique
isomorphism of totally ordered sets $\psi : (\Z / \sim) \rightarrow
\Z$ with $\psi([1]) = 1$. Using this isomorphism we define analogues
of addition and subtraction $+ , - \colon\, (\Z / \sim) \times \Z
\to (\Z / \sim)$ by the rules
\begin{eqnarray*}
&[i] + j := \psi^{-1}(\psi(i) + j)\\
&[i] - j := \psi^{-1}(\psi(i) - j)
\end{eqnarray*}
To clarify, $[i] + 1$ is the class in $(\Z/\sim)$ succeeding $[i]$
and $[i]-1$ is that class preceding $[i]$ in the ordering.

For $1\leq j < t(m)$, Lemma~\ref{spanningdetails}(3) yields that the
set $$\big\{ \zeta_{[i]}^{[i],\lambda_{[i]} - 2m} \in \HH:\, [a_{j,m}] \leq
[i] < [a_{j+1, m}]\big\}$$ is a basis for $\HH_{0,j}^m$. Using this
basis we may describe an important hyperplane $\HH_{0,j}^{m,+}$ of
$\HH_{0,j}^m$. First we define the augmentation map
$\HH_{0,j}^m\twoheadrightarrow \K$ by sending
$\zeta_{[i]}^{[i],\lambda_{[i]} - 2m}$ to $1$ for all  $[a_{j,m}]
\leq [i] < [a_{j+1, m}]$ and extending to $\HH_{0,j}^m$ by
$\K$-linearity. Let $\HH_{0,j}^{m,+}$ denote the kernel of this map.
It was noted in Lemma \ref{bits} that $\HH_{0,t(m)}^m$ might be
zero. If this is not the case then a basis for $\HH_{0,t(m)}^m$ is
the span of those $\zeta_{[i]}^{[i], \lambda_{[i]} - 2m}$ which are
non-zero with $[a_{t(m),m}] \leq [i] \leq [n]$. Using this basis we
can define the augmentation map $\HH_{0,t(m)}^m \twoheadrightarrow
\K$ and hyperplane $\HH_{0,t(m)}^{m,+}$ of $\HH_{0,t(m)}^m$ in a
similar fashion. Make the notation $$\HH_0^+\,:=\,
\sum_{m=1}^{\lfloor \lambda_1/2 \rfloor}
\Big(\textstyle{\bigoplus}_{j=1}^{t(m)-1}  \HH_{0,j}^{m,+}+
\HH_{0,t(m)}^m\Big)\subseteq \HH_0.$$

Before we continue we must decompose $\NN_1$ into a direct sum of
two subspaces. We shall need the following definition, first stated in
the introduction.
 \begin{defn}\label{de} Given $\lambda=(\lambda_1,\ldots,\lambda_n)\in\mathcal{P}_\epsilon(N)$ we denote by $\Delta(\lambda)$ the set of all pairs 
 $(i,i+1)$ with $1\le i<n$ such that
$i'=i$, $(i+1)'=i+1$ and
$\lambda_{i-1}\ne\lambda_i\ge
\lambda_{i+1}\ne\lambda_{i+2}$. If $(i,i+1)\in \Delta(\lambda)$ then the pair will be called {\rm a $2$-step} of
 $\lambda$. If $i>1$ and $(i,i+1)$ is a $2$-step of $\lambda$ then  $\lambda_{i-1}$ and
 $\lambda_{i+2}$ are referred to as {\rm the boundary of} $(i,i+1)$. If
 $(1,2)\in\Delta(\lambda)$ then $\lambda_3$ is referred to as {\rm the boundary of} $(1,2)$
(if $n=2$ then $\lambda_3=0$ by convention).
 \end{defn}
Here and throughout we adopt the convention that $\lambda_0 = \lambda_{n+1} = 0$.
Take note that if $(n-1,n) \in \Delta(\lambda)$ then $\lambda_{n-2}$ and $\lambda_{n+1}=0$
form the boundary of $(n-1, n)$. We define $\NN_1^-$ to be the span of the basis vectors $\zeta_i^{i+1, \lambda_{i+1}-1} \in N_1$
such that $(i, i+1) \in \Delta(\lambda)$ and we let $\NN_1^+$ be the complement to $\NN_1^-$ in $\NN_1$
which is spanned by the remaining basis vectors $\zeta_i^{j,s} \in N_1$.

\subsection{Decomposing $[\h_e, \h_e]$}
It is the intention of this section to decompose $[\h_e, \h_e]$ into
a finite collection of those subspaces of $\h_e$ defined in the
previous section. Our calculations shall be quite explicit and
depend principally upon the following.
\begin{lem}\label{product} For all indices
$i,j,s$ and $k,l,r$
$$[\zeta_i^{j,s}, \zeta_k^{l,r}] = \delta_{il} \zeta_k^{j,r+s - (\lambda_i - 1)} -
\delta_{jk} \zeta_i^{l,r+s - (\lambda_j - 1)}+ \varepsilon_{k,l,r}\big(\delta_{k,i'}
\zeta_{l'}^{j,r+s - (\lambda_i - 1)} - \delta_{j,l'} \zeta_i^{k',r+s - (\lambda_j - 1)}\big).$$
\end{lem}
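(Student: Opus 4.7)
The plan is to reduce the bracket to elementary commutation relations among the maps $\xi_i^{j,a}$ in $\g_e$, and then translate back to the $\zeta$-basis using Lemma~\ref{spanningdetails}. As a first step, I would verify by acting on the spanning vectors $e^p w_m$ and using $e$-equivariance that
$$[\xi_i^{j,a}, \xi_k^{l,b}] \;=\; \delta_{il}\,\xi_k^{j,\,a+b} \;-\; \delta_{jk}\,\xi_i^{l,\,a+b}.$$
This is a direct computation since $\xi_i^{j,a}(e^p w_m) = \delta_{im}\, e^{p+a} w_j$, so both sides act on $w_m$ only when $m \in \{k,i\}$ and produce a single Jordan-block vector.

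Next, I would substitute $\zeta_i^{j,s} = \xi_i^{j,\lambda_j-1-s} + \varepsilon_{i,j,s}\,\xi_{j'}^{i',\lambda_i-1-s}$ and the analogous expansion for $\zeta_k^{l,r}$ into $[\zeta_i^{j,s},\zeta_k^{l,r}]$, and expand by bilinearity to produce four $\xi$-brackets. Applying the preliminary formula to each yields eight $\xi$-terms. The exponents that occur, such as $(\lambda_j-1-s)+(\lambda_l-1-r)$, rewrite as $\lambda_j - 1 - (r+s-(\lambda_i-1))$ whenever the accompanying delta forces $i=l$, and analogously in the other cases. Hence each $\xi$-term already carries the exponent expected for one half of one of the four $\zeta$-summands on the right-hand side.

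The remaining task is to group the eight $\xi$-terms into four pairs. The ``diagonal'' pairings $[\xi^{(1)},\eta^{(1)}]$ and $\varepsilon_{i,j,s}\varepsilon_{k,l,r}[\xi^{(2)},\eta^{(2)}]$ contribute $\xi$-summands carrying the deltas $\delta_{il}$ and $\delta_{jk}$, while the ``cross'' pairings $\varepsilon_{k,l,r}[\xi^{(1)},\eta^{(2)}]$ and $\varepsilon_{i,j,s}[\xi^{(2)},\eta^{(1)}]$ contribute those carrying $\delta_{k,i'}$ and $\delta_{j,l'}$ (one uses here that $i\mapsto i'$ is an involution, so e.g.\ $\delta_{j'l}=\delta_{j,l'}$). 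In this way the first two summands of the asserted formula arise from the diagonal pairs and the last two from the cross pairs, so that the prefactor $\varepsilon_{k,l,r}$ on the cross contribution appears naturally.

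The main obstacle is the sign bookkeeping. For each pair of $\xi$-terms to collapse into a single $\zeta$-term with the coefficient stated in the lemma, one has to verify four sign identities, the simplest of which is
$$-\,\varepsilon_{i,j,s}\,\varepsilon_{k,l,r} \;=\; \varepsilon_{k,j,\,r+s-(\lambda_i-1)} \qquad \text{under } i=l,$$
the other three being of the same flavour. Each reduces, via the explicit formula $\varepsilon_{i,j,s}=(-1)^{\lambda_j-s}\varpi_{i\le i'}\varpi_{j\le j'}$ from Lemma~\ref{spanningdetails}(1) together with the cancellation $\varpi_{i\le i'}^2=1$, to a short parity check on exponents. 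Once these four identities are in hand, the eight $\xi$-terms assemble unambiguously into the four $\zeta$-summands of the claimed formula, completing the proof.
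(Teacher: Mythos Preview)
Your proposal is correct and is exactly the short calculation the paper has in mind; indeed the paper offers no proof at all, stating only that ``the proof is a short calculation which we leave to the reader.'' Your outline---computing $[\xi_i^{j,a},\xi_k^{l,b}]=\delta_{il}\xi_k^{j,a+b}-\delta_{jk}\xi_i^{l,a+b}$, expanding the $\zeta$'s, and then verifying the four sign identities via Lemma~\ref{spanningdetails}(1)---is the natural way to carry this out, and the sample identity you display does check.
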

The proof is a short calculation which we leave to the reader. The
following proposition shall be central in the process of decomposing
$[\h_e, \h_e]$.

\begin{prop}\label{subs}
The following inclusions hold
\begin{eqnarray*}
[\HH,\HH] = \{0\} ,& [\HH, \NN_0]\subseteq \NN_0, & [\HH, \NN_1] \subseteq \NN_1, \\
& [ \NN_0, \NN_0 ] \subseteq \HH, & [ \NN_0, \NN_1 ] \subseteq \NN_1
\end{eqnarray*}
Furthermore, for any two elements $\zeta_i^{j,s}, \zeta_k^{l,r} \in
N_1$ the commutator $[\zeta_i^{j,s}, \zeta_k^{l,r}]$ lies in either
$\HH$, $\NN_0$ or $\NN^+_1$. More precisely $$[\zeta_i^{j,s},
\zeta_k^{l,r}] \in\, \left\{ \begin{array}{llll}
         \NN^+_1 & \ \mbox{ if $i=l$ or $j=k$;}\\
         \NN_0 \text{ or } \NN_1^+& \ \mbox{ if $k = i'$ or $j = l'$ but not both};\\
         \HH & \ \mbox{ if $k = i'$ and $j = l'$;}\\
         0 & \ \mbox{ otherwise.}\end{array} \right.$$
\end{prop}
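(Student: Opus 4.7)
The plan is to verify all assertions by direct application of Lemma~\ref{product}, though for the first five containments I would take a more conceptual shortcut via the block-space characterisations in Lemma~\ref{subbasis}.

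First I would dispose of the containments $[\HH,\NN_0]\subseteq\NN_0$, $[\HH,\NN_1]\subseteq\NN_1$, $[\NN_0,\NN_0]\subseteq\HH$ and $[\NN_0,\NN_1]\subseteq\NN_1$ by tracking block-preservation. Since $\HH$ preserves each $V[i]$, $\NN_0$ interchanges $V[i]$ with $V[i']$ for $i\neq i'$, and $\NN_1$ sends $V[i]$ into the sum of $V[j]$ with $j\notin\{i,i'\}$, a routine check shows that any composition (and hence any commutator) of two such endomorphisms lands in the predicted block-pattern; since commutators additionally lie in $\h_e$, Lemma~\ref{subbasis} places them in the claimed subspace. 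For $[\HH,\HH]=0$, the extra input is that $e$ restricts to a regular nilpotent endomorphism of each $V[i]$, so $\HH\cap\End(V[i])$ is contained in the commutative algebra $\K[e|_{V[i]}]$; consequently $\HH$ itself is abelian.

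For the key case $[\NN_1,\NN_1]$, I would apply Lemma~\ref{product} to $\zeta_i^{j,s},\zeta_k^{l,r}\in N_1$ and split cases according to which of the four Kronecker deltas $\delta_{il}, \delta_{jk}, \delta_{k,i'}, \delta_{j,l'}$ is non-zero. The crucial preliminary observation is that the conditions ``$i=l$ or $j=k$'' and ``$k=i'$ or $j=l'$'' are mutually exclusive: for instance, $i=l$ together with $k=i'$ would force $k=l'$, contradicting $\zeta_k^{l,r}\in N_1$, and the other three overlaps are eliminated similarly using $j\neq i'$ and $l\neq k'$. Once these cases are separated, each output term $\zeta_\alpha^{\beta,\gamma}$ appearing in Lemma~\ref{product} lies in $\HH$, $\NN_0$ or $\NN_1$ depending on whether $\alpha=\beta$, $\alpha=\beta'$, or neither, and the tabulation in the statement (including the case $k=i'$ and $j=l'$, where both surviving terms collapse to elements of $\HH$) follows by inspection.

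The hardest step, which I expect to be the main obstacle, is showing that no output term ever lies in the distinguished subspace $\NN_1^-$. Such a term would have the form $\zeta_a^{a+1,\lambda_{a+1}-1}$ with $(a,a+1)\in\Delta(\lambda)$. In the cases driven by the first two deltas, the constraint $j=a+1=k+1$ combined with the inequalities $k<l=i<j$ (respectively $i<j=k<l$) leaves no room for the intermediate index, so these cases are vacuous. For the last two deltas the obstruction is more delicate: one must rule out the extremal configuration in which both exponents are maximal, which reduces to an equality like $\lambda_l=\lambda_{i+1}$ with $l\geq i+2$. This is precisely forbidden by the boundary condition $\lambda_{i+1}\neq\lambda_{i+2}$ built into the definition of a $2$-step. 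Performing this boundary analysis carefully for all four output terms, while keeping track of the signs $\varepsilon_{k,l,r}$, is the technical heart of the argument.
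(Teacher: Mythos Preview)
Your approach is essentially the paper's, with two harmless variations. For $[\HH,\HH]=0$ you argue that $\HH\subseteq\bigoplus_i\K[e\vert_{V[i]}]$ since $e$ is regular nilpotent on each block; the paper instead cites \cite{Ya06} to identify $\HH$ with $\h\cap(\g_e)_\alpha$ for a regular $\alpha\in\g_e^*$ and then invokes \cite[1.11.7]{Di}. Your route is more self-contained. For $[\NN_0,\NN_1]\subseteq\NN_1$ you use block-tracking, whereas the paper explicitly computes via Lemma~\ref{product}; your argument is valid (and shorter), since if $A\in\NN_0$ swaps $V[i]$ with $V[i']$ and $B\in\NN_1$ sends $V[i]$ into $\bigoplus_{j\notin\{i,i'\}}V[j]$, both $AB$ and $BA$ visibly send $V[i]$ into that same sum.

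Your sketch of the $\NN_1^-$ avoidance in the $k=i'$ case is imprecise, however. The obstruction is not an equality like $\lambda_l=\lambda_{i+1}$ ruled out by the boundary $\lambda_{i+1}\neq\lambda_{i+2}$. The paper's argument (after using the relation $\zeta_{l'}^{j,\cdot}=\pm\zeta_{j'}^{l,\cdot}$ to reduce to $j>l'$) is this: for $\zeta_{l'}^{j,\cdot}$ to land in $\NN_1^-$ one needs the 2-step condition $\lambda_{l'-1}\neq\lambda_{l'}$; since $i=k'\leq k+1\leq l$ and $i=k'\neq l$ force $i<l$, this gives the \emph{strict} inequality $\lambda_l<\lambda_i$. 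Then $r\leq\lambda_l-1<\lambda_i-1$ and $s\leq\lambda_j-1$ yield $r+s-(\lambda_i-1)<\lambda_j-1$, so the output exponent is too small. Your instinct to invoke a 2-step boundary condition together with index inequalities is right, but the specific boundary you named is the wrong one, and the mechanism is an exponent \emph{bound} rather than an index gap.
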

\begin{proof}
We shall call on the characterisations of $\HH, \NN_0$ and $\NN_1$
given in Lemma \ref{subbasis}. Thanks to \cite[Theorem~1]{Ya06} we
have $\HH = \h \cap (\g_e)_\alpha$ where $\alpha$ is a certain regular
element of $\g_e^\ast$. By \cite[1.11.7]{Di} the stabaliser
$(\g_e)_\alpha$ is abelian, hence $[\HH, \HH] = 0$. The elements of
$\NN_0$ are characterised by the fact that they exchange the spaces
$V[i]$ and $V[i']$ with $i \neq i'$. Therefore the elements of
$[\HH, \NN_0]$ must exchange them also, implying $[\HH, \NN_0]
\subseteq \NN_0$. Each $\zeta_i^{j,s} \in \NN_1$ transports $V[i]$
to $V[j]$ and $V[j']$ to $V[i']$. Therefore $[\HH, \zeta_i^{j,s}]$
does likewise and $[\HH, \NN_1] \subseteq \NN_1$. Since each element
of $\NN_0$ exchanges the spaces $V[i]$ and $V[i']$ with $i\neq i'$
and annihilates all $V[i]$ with $i=i'$ the commutator space $[\NN_0,
\NN_0]$ must stabalise all $V[i]$, hence be contained in $\HH$. The
inclusion $[\NN_0, \NN_1] \subseteq \NN_1$ is best checked using
Lemma~\ref{product}. Let $i \neq i'$ and $l > k \neq l'$. Then
$[\zeta_i^{i',s} \zeta_k^{l,r}]$ is nonzero only if $i = l$ or $i' =
k$. Our restrictions on $i,l$ and $k$ ensure that these two
possibilities are mutually exclusive. In the first case
$$[\zeta_i^{i',s}, \zeta_k^{l,r}] = \zeta_k^{l', r + s -
(\lambda_i-1)} - \varepsilon_{k,l,r} \zeta_l^{k', r+s-
(\lambda_i-1)}$$ which lies in $\NN_1$. The second case is very
similar.

We now consider the final claim. Suppose $j > i\neq j'$ and $l > k
\neq l'$. By Lemma~\ref{product} the bracket $[\zeta_i^{j,s},
\zeta_k^{l,r}]$ is only nonzero when one or more of the following
equalities hold: $i=l, j=k, i'=k, j'=l$. We shall consider these
four possibilities one by one. Since the bracket is anticommutative
the reasoning in the case $i=l$ is identical to the case $j=k$ and
so we need to consider only the first of these two possibilities. If
$i=l$ then the relations $i' \neq j > i$ and $l > k \neq l'$ ensure
that $j\neq k, i' \neq k$ and $j' \neq l$. Therefore
$[\zeta_i^{j,s}, \zeta_k^{l,r}] = \zeta_k^{j, r+s - (\lambda_i-1)}
\in \NN_1$. In order for this map to lie in $\NN_1^-$ we would
require $j = k+1$, however we have $j > i = l > k$ which makes this
impossible. Thus $[\zeta_i^{j,s}, \zeta_k^{l,r}] \in \NN_1^+$.

By Lemma \ref{spanningdetails} we have $\zeta_i^{j,s} = \pm
\zeta_{j'}^{i',s}$ and $\zeta_k^{l,r} = \pm\zeta_{l'}^{k',r}$ so the
reasoning in case $i=k'$ is identical to the case $j'=l$. Therefore
we need only to consider the first of these two possibilities.
Suppose $i=k'$. Then certainly $i \neq l$ and $j \neq k$. If $j' =
l$ then $$[\zeta_i^{j,s}, \zeta_k^{l,r}] = \varepsilon_{k,l,r}
(\zeta_j^{j,r+s-(\lambda_i - 1)} - \zeta_i^{i,r+s - (\lambda_j-1)})
\in \HH,$$ so assume from henceforth that $j' \neq l$. Then
$$[\zeta_i^{j,s}, \zeta_k^{l,r}] = \varepsilon_{k,l,r}
\zeta_{l'}^{j,r+s - (\lambda_i-1)}.$$ If $j = l$ then the product
lies in $\NN_0$. Assume $j\neq l$. Thanks to the relation
$\zeta_{l'}^{j,r+s - (\lambda_i-1)} = \pm \zeta_{j'}^{l,r+s -
(\lambda_i-1)}$ from Lemma \ref{spanningdetails} we may assume that
$j>l'$, and from here it is easily seen that the product lies in
$\NN_1$. In order for the product to lie in $\NN_1^-$ we require
$\lambda_{l' - 1} \neq \lambda_{l'}$ which implies $\lambda_l < \lambda_i$ since $i = k' <
l$. From the bounds $0 \leq r < \lambda_l$ and $0 \leq s <
\lambda_j$ we deduce that $r + s - (\lambda_i - 1) < \lambda_j - 1$
which confirms that the term $\zeta_{l'}^{j,r+s - (\lambda_i-1)}$
does not lie in $\NN_1^-$.
\end{proof}

\begin{prop}\label{Nprop} The following are true:
\begin{enumerate}
\item{$\NN_0 \subset [\h_e, \h_e]$;}

\smallskip

\item{$\NN_1 \cap [\h_e, \h_e] = \NN_1^+$.}
\end{enumerate}
\end{prop}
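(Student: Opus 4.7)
For part (1), fix a basis vector $\zeta_i^{i',s}\in N_0$; by definition $i\ne i'$ and $\lambda_i-s$ is odd. Both $\zeta_i^{i,s}$ and $\zeta_i^{i',\lambda_i-1}$ lie in $\h_e$ (the first in $\HH$ because $i\ne i'$, the second in $\NN_0$ because $1$ is odd), and Lemma~\ref{product} collapses to
$$[\zeta_i^{i,s},\zeta_i^{i',\lambda_i-1}]\,=\,-\zeta_i^{i',s}-\varepsilon_{i,i',\lambda_i-1}\zeta_i^{i',s}.$$
Using Lemma~\ref{spanningdetails}(1) together with $\varpi_{i\le i'}\varpi_{i'\le i}=-1$, valid for $i\ne i'$, the sign evaluates to $\varepsilon_{i,i',\lambda_i-1}=(-1)(-1)=1$, so the bracket equals $-2\zeta_i^{i',s}$. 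This proves (1).

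For the inclusion $\NN_1^+\subseteq[\h_e,\h_e]$ in part (2), I would exhibit each basis vector $\zeta_i^{j,s}\in N_1^+$ as a commutator, splitting into cases by the action of the involution on $i$ and $j$. When $i\ne i'$, direct expansion gives $[\zeta_i^{i,\lambda_i-1},\zeta_i^{j,s}]=-\zeta_i^{j,s}$, since the extra $\delta$-terms in Lemma~\ref{product} vanish because $i\ne i'$ and $j\ne i'$. When $i=i'$ but $j\ne j'$, the symmetric bracket $[\zeta_i^{j,s},\zeta_j^{j,\lambda_j-1}]=-\zeta_i^{j,s}$ works. When $i=i'$, $j=j'$, and $j>i+1$, any intermediate index $k$ with $i<k<j$ yields $[\zeta_i^{k,\lambda_k-1},\zeta_k^{j,s}]=-\zeta_i^{j,s}$, a bracket lying inside $\NN_1$. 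The two remaining cases have $j=i+1$ with $i=i'$ and $(i+1)'=i+1$. If $s<\lambda_{i+1}-1$ then $\lambda_i\ge 2$ (the block-size constraints rule out $\lambda_i=1$), and a computation using Lemma~\ref{spanningdetails} together with the common parity of $\lambda_i,\lambda_{i+1}$ gives $[\zeta_i^{i,\lambda_i-2},\zeta_i^{i+1,s+1}]=-2\zeta_i^{i+1,s}$. If $s=\lambda_{i+1}-1$ and $(i,i+1)\notin\Delta(\lambda)$, then by definition either $\lambda_{i-1}=\lambda_i$ or $\lambda_{i+1}=\lambda_{i+2}$; the common-parity argument forces $(i-1)'=i-1$ or $(i+2)'=i+2$ respectively, and a bracket of two basis vectors of $\NN_1$ indexed by the neighbouring part returns a non-zero scalar multiple of $\zeta_i^{i+1,\lambda_{i+1}-1}$.

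For the reverse inclusion $[\h_e,\h_e]\cap\NN_1\subseteq\NN_1^+$, Proposition~\ref{subs} reduces the task to proving $[\HH,\NN_1]\subseteq\NN_1^+$ and $[\NN_0,\NN_1]\subseteq\NN_1^+$, since $[\NN_1,\NN_1]$ is already confined to $\HH\oplus\NN_0\oplus\NN_1^+$. For either inclusion I would expand via Lemma~\ref{product} and inspect the at most four resulting basis terms. A term can lie in $\NN_1^-$ only if it has the form $\zeta_a^{a+1,\lambda_{a+1}-1}$ with $(a,a+1)\in\Delta(\lambda)$, which in particular demands $a'=a$ and $(a+1)'=a+1$. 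Propagating these constraints back through the $\delta$-factors, for an element of $\HH$ one is driven to the forced equality $s=\lambda_i-1$ together with $\lambda_i-s$ even, contradicting the defining parity condition of $H$; for an element of $\NN_0$ one is driven to $i=i'$, contradicting the defining condition of $N_0$. Either way no $\NN_1^-$-contribution can survive.

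The main technical obstacle is the final sub-case of the second paragraph: realising $\zeta_i^{i+1,\lambda_{i+1}-1}$ as a commutator precisely when $(i,i+1)$ narrowly fails to be a $2$-step. No bracket against $\HH$ can produce it on the nose (by the argument of the third paragraph), so the construction must live inside $\NN_1$ and must exploit the particular equality of parts that prevents $(i,i+1)\in\Delta(\lambda)$. A secondary but pervasive difficulty is the sign bookkeeping: the factors $\varepsilon_{i,j,s}$ and the basis identifications $\zeta_i^{j,s}=\varepsilon_{i,j,s}\zeta_{j'}^{i',s}$ from Lemma~\ref{spanningdetails}(3) must be tracked consistently so that the contributions from the four $\delta$-terms of Lemma~\ref{product} combine correctly, rather than cancelling spuriously.
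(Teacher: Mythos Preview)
Your proposal is correct and follows essentially the same route as the paper's proof. Your part~(1) uses the bracket $[\zeta_i^{i,s},\zeta_i^{i',\lambda_i-1}]=-2\zeta_i^{i',s}$, which is just the anticommuted version of the paper's $[\zeta_i^{i',s},\zeta_i^{i,\lambda_i-1}]=2\zeta_i^{i',s}$; the case split you give for $\NN_1^+\subseteq[\h_e,\h_e]$ maps onto the paper's sub-parts \emph{(i)--(iv)} almost verbatim (your ``intermediate index'' case is their \emph{(iii)}, your ``$s<\lambda_{i+1}-1$'' case is their \emph{(ii)}, and your final ``neighbouring part'' case is their \emph{(iv)}), and your reverse-inclusion argument via Proposition~\ref{subs} and the $\delta$-term analysis is exactly their \emph{(vi)--(viii)}.

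One small point worth making explicit in your write-up of the reverse inclusion: when $\zeta_i^{i,s}\in H$ with $i\ne i'$, the contradiction is not with the parity of $\lambda_i-s$ (there is none in that part of $H$) but rather that the self-pairing constraints $a'=a$, $(a+1)'=a+1$ already force $i'=i$ before any parameter comparison. Your sentence ``one is driven to $s=\lambda_i-1$ together with $\lambda_i-s$ even'' implicitly assumes this step; spelling it out would make the argument airtight.
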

\begin{proof}
Assume $i \neq i'$ and $\lambda_i - s$ is odd. We
have $\varepsilon_{i,i,s} = (-1)^{\lambda_i-s}$ so
$$[\zeta_i^{i',s},\zeta_i^{i,\lambda_{i}-1}] = \zeta_i^{i',s} -
\varepsilon_{i,i,\lambda_i-1}\zeta_i^{i',s} = 2\zeta_i^{i',s} \in
[\h_e, \h_e].$$ Since char$(\K) \neq 2$ we get $\NN_0 = [\HH, \NN_0]
\subseteq [\h_e, \h_e]$. This proves Part 1.

For the sake of clarity we shall divide the proof of Part 2 of the
current proposition into subsections \emph{(i), (ii),..., (ix).} In
Parts \emph{(i),...,(v)} we demonstrate that $\NN_1^+ \subseteq
[\h_e, \h_e]$ by showing that if $\zeta_i^{j,s} \in N_1$ is amongst
the basis vectors spanning $\NN_1^+$ then some
multiple of $\zeta_i^{j,s}$ may be found as a product of two basis
elements in $\h_e$. Recall that these vectors are defined
to be those for which either $(i, i+1) \notin \Delta(\lambda)$, or for which
$j \neq i+1$, or for which $s < \lambda_{j}-1$.
In Parts \emph{(vi),...,(viii)} we show that the
reverse inclusion holds by noting that $\NN_1\cap [\h_e,\h_e]$ is
the sum of those products $[\zeta_i^{j,s}, \zeta_k^{l,r}]$ which lie
in $\NN_1$, and showing that all such products actually lie in
$\NN_1^+$. For the remainder of the proof we shall fix $l > k \neq
l'$ so that $\min(\lambda_k , \lambda_l) = \lambda_l$.

\emph{(i) If $l \neq l'$ or $k \neq k'$, then $\zeta_k^{l,r} \in
[\h_e, \h_e]$ for $r = 0,1,..., \lambda_l-1$:} Suppose first that $l
\neq l'$. We have
$$[\zeta_l^{l,\lambda_l-1}, \zeta_k^{l,r}] = \zeta_k^{l, r} \in [\h_e, \h_e]$$
whence we obtain $\zeta_k^{l,r} \in [\h_e, \h_e]$ for $r = 0,1,..., \lambda_l-1$.
Now suppose $k \neq k'$. Then
$$[\zeta_k^{k,\lambda_k-1}, \zeta_k^{l,r}] = - \zeta_k^{l, r} \in [\h_e, \h_e]$$
so that $\zeta_k^{l,r} \in [\h_e, \h_e]$ for all $r = 0,1,..., \lambda_l-1$

\emph{(ii) If $l'= l$ and $k = k'$ then $\zeta_k^{l,r} \in [\h_e,
\h_e]$ for $r=0,1,...,\lambda_l -2$:} With $l$ and $k$ as above
$$[\zeta_l^{l,\lambda_{l}-2}, \zeta_k^{l,r}] = \zeta_k^{l,r -1} -
\varepsilon_{k,l,r} \zeta_{l'}^{k', r-1} \in [\h_e, \h_e].$$ By Part
3 of Lemma \ref{spanningdetails} this final expression is $(1 -
\varepsilon_{k,l,r} \varepsilon_{k,l,r-1}) \zeta_k^{l,r-1}$. Since
$\varepsilon_{k,l,r} = (-1)^{\lambda_l-r}$ this expression actually
equals $2\zeta_k^{l,r-1}$. Allowing $r$ to run from $0$ to
$\lambda_l-1$ we obtain the desired result.

\emph{(iii) If $l' = l$, $k = k'$ and $k \neq j-1$ then
$\zeta_k^{l,r} \in [\h_e, \h_e]$ for $r=0,1,...,\lambda_l-1$:} We
may assume there exists $j$ fulfilling $l > j > k$. Then $k \neq l$
and $k' \neq j \neq l'$ so that $$[\zeta_j^{l,r},
\zeta_k^{j,\lambda_j-1}] =  \zeta_k^{l,r}\in [\h_e \h_e].$$

\emph{(iv) If $l = l'$, $k = k'$ and either $\lambda_k = \lambda_{k-1}$ or $\lambda_l
= \lambda_{l+1}$, then $\zeta_k^{l,r} \in [\h_e, \h_e]$ for $r = 0, 1,...,
\lambda_l - 1$:} First suppose that $\lambda_k = \lambda_{k-1}$. Since $k = k'$ we
have $k-1 = (k-1)'$ so that
$$[\zeta_{k-1}^{l,r}, \zeta_{k-1}^{k,\lambda_k-1}] =
\varepsilon_{k-1, k, \lambda_k -1} \zeta_{k}^{l,r}\in [\h_e,
\h_e]$$ for $r = 0, 1,..., \lambda_l - 1$.

Next suppose that $\lambda_l = \lambda_{l+1}$. Since $l=l'$ we have $l+1 = (l+1)'$
and so
$$[\zeta_{k}^{l+1,\lambda_{l+1}-1} \zeta_{l}^{l+1,r}] =
-\varepsilon_{l, l+1,r} \zeta_k^{l, r}\in [\h_e, \h_e]$$ for $r =
0, 1,..., \lambda_l - 1$.

\emph{(v) $\NN_1^+ \subseteq [\h_e, \h_e]$: }
This follows by  combining the deductions of Parts \emph{(i) - (iv)}.

\emph{(vi) $[\HH, \NN_1] \subseteq \NN_1^+$:} We continue to fix $l
> k \neq l'$. The bracket $[\zeta_i^{i,s}, \zeta_k^{l,r}]$ is nonzero
only if $i = k$ or $i = l$. Assume $i = l$ (the case $i = k$ is
similar). Then $[\zeta_i^{i,s}, \zeta_k^{l,r}] =
\zeta_k^{l,r+s-(\lambda_i - 1)}$ which lies either in $\NN_1^-$ or
$\NN_1^+$. In order for $\zeta_k^{l,r+s-(\lambda_i - 1)}$ to lie in
$\NN^-$ we must have $l=l'$. But in that case $i = i'$ and so
$\lambda_i - s$ must be even by the definition of $\HH$. In
particular, $s \leq \lambda_i - 2$ and $r + s - (\lambda_i-1) \leq r
- 1 < \lambda_l - 1$ forcing $[\zeta_i^{i,s}, \zeta_k^{l,r}] \in
\NN_1^+$.

\emph{(vii) $[\NN_0, \NN_1] \subseteq \NN_1^+$:} The product
$[\zeta_i^{i',s}, \zeta_k^{l,r}]$ with $i\neq i'$ is nonzero only if
$i = l$ or $i' = k$. Our restrictions on $i,l$ and $k$ ensure that
these two possibilities are mutually exclusive. In the first case
$$[\zeta_i^{i',s}, \zeta_k^{l,r}] = \zeta_k^{l', r + s -
(\lambda_i-1)} - \varepsilon_{k,l,r} \zeta_l^{k', r+s-
(\lambda_i-1)} = (1 - \varepsilon_{k,l,r}
\varepsilon_{k,l',r+s-(\lambda_i-1)})
\zeta_k^{l',r+s-(\lambda_i-1)}.$$ If
$\zeta_k^{l',r+s-(\lambda_i-1)}\in\NN_1^-$ then $l = l'$ by the
definition of $\NN_1^-$. But then $i = l=l'$ yields $i = i'$
contrary to our assumptions. We deduce that
$\zeta_k^{l',r+s-(\lambda_i-1)}\in\NN_1^+.$

Now consider the case $i' = k$. A calculation similar to the above
gives $$[\zeta_i^{i',s}, \zeta_k^{l,r}] = (\varepsilon_{k,l,r}
\varepsilon_{k,l,r+s - (\lambda_i - 1)} -
1)\zeta_i^{l,r+s-(\lambda_i-1)}.$$ Since $i\ne i'$ we see as before
that the right hand side lies in $\NN_1^+$, hence \emph{(vii)}.

\emph{(viii) $\NN_1 \cap [\NN_1, \NN_1] \subseteq \NN_1^+$:} This
follows immediately from the last statement of Proposition
\ref{subs}.

\emph{(ix) $\NN_1\cap [\h_e, \h_e] = \NN_1^+$:} By \emph{(v)} we
know that $\NN_1^+ \subseteq \NN_1\cap [\h_e, \h_e] $. By
Proposition \ref{subs}, $\NN_1 \cap [\h_e, \h_e]$ is equal to the
span of those products $[\zeta_i^{j,s}, \zeta_k^{l,r}]$ which lie in
$\NN_1$. By that same proposition and Parts \emph{(vi) - (viii)} we
see that every product $[\zeta_i^{j,s}, \zeta_k^{l,r}]$ which lies
in $\NN_1$ actually lies in $\NN_1^+$. The claim follows.
\end{proof}

\begin{prop}\label{Hprop} The following are true:
\begin{enumerate}
\item{$\HH_1 \subset [\h_e, \h_e]$;}

\smallskip

\item{$\HH_0 \cap [\h_e, \h_e] = \HH_0^+$.}
\end{enumerate}
\end{prop}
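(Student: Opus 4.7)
Part 1 is direct. For $\zeta_i^{i,s} \in \HH_1$ we have $i \ne i'$ and $\lambda_i - s$ odd; I would compute $[\zeta_i^{i',s},\zeta_{i'}^{i,\lambda_i - 1}]$ via Lemma~\ref{product}. Using Lemma~\ref{spanningdetails}(1) together with the identity $\varpi_{i \le i'}\varpi_{i' \le i} = \epsilon(-1)^{\lambda_i - 1}$ and the parity constraint $\epsilon(-1)^{\lambda_i} = 1$ (forced by $i \ne i'$), one gets $\varepsilon_{i',i,\lambda_i-1} = 1$ and $\zeta_{i'}^{i',s} = (-1)^{\lambda_i - s}\zeta_i^{i,s}$. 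The bracket collapses to $-4\zeta_i^{i,s}$, which since $\operatorname{char}\K \ne 2$ places $\zeta_i^{i,s}$ in $[\h_e,\h_e]$.

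For the inclusion $\HH_0^+ \subseteq [\h_e, \h_e]$ of Part 2 I would split by cluster. When $j < t(m)$, the space $\HH_{0,j}^{m,+}$ is spanned by consecutive differences $\zeta_v^{v,\lambda_v - 2m} - \zeta_u^{u,\lambda_u - 2m}$ for adjacent classes $[u], [v] = [u]+1$ in cluster $j$; picking representatives with $v = u+1$ ensures $u' \ne v$ (otherwise $[u] = [v]$). The defining inequality $\lambda_{a_{j+1,m}-1} \ge 2m$ together with the monotonicity of $\lambda$ forces $\lambda_u \ge 2m$ throughout cluster $j$, so Lemma~\ref{product} yields $[\zeta_u^{v,\lambda_v - 1}, \zeta_v^{u,\lambda_u - 2m}] = \zeta_v^{v,\lambda_v - 2m} - \zeta_u^{u,\lambda_u - 2m}$. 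When $j = t(m)$ with $\HH_{0,t(m)}^m \ne 0$, the essential observation is that $\lambda_n < 2m$: if $\lambda_n \ge 2m$ then the jump $\lambda_n - \lambda_{n+1} = \lambda_n \ge 2m$ would place a boundary at $n+1$ and empty the terminal cluster. Choosing $u \le n$ with $\lambda_u < 2m$, the inequality $\lambda_u < 2m \le \lambda_v$ gives $u' \ne v$ automatically, and Lemma~\ref{product} yields $[\zeta_v^{u,0}, \zeta_u^{v,\lambda_u + \lambda_v - 2m - 1}] = -\zeta_v^{v,\lambda_v - 2m}$, the companion term $\zeta_u^{u,\lambda_u-2m}$ vanishing by the convention for $\lambda_u < 2m$.

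For the reverse inclusion $\HH_0 \cap [\h_e, \h_e] \subseteq \HH_0^+$, Proposition~\ref{subs} restricts attention to brackets in $[\NN_0,\NN_0]$ and to $[\NN_1,\NN_1]$-brackets of the special form $[\zeta_i^{j,s}, \zeta_{i'}^{j',r}]$ (the case $k=i'$, $j=l'$). A direct calculation with Lemma~\ref{product} shows every nonzero bracket in $[\NN_0,\NN_0]$ is a scalar multiple of $\zeta_{i'}^{i',t} - \zeta_i^{i,t}$ with $\lambda_i - t$ odd, so it lies in $\HH_1$ and contributes nothing to $\HH_0$. The remaining bracket simplifies to $\varepsilon_{i',j',r}(\zeta_j^{j,t} - \zeta_i^{i,t'})$ with $t = r+s-(\lambda_i - 1)$ and $t' = r+s-(\lambda_j - 1)$; demanding that this lie in $\HH_0^m$ forces $\lambda_j - t = \lambda_i - t' = 2m$, and the range constraints $0 \le s, r < \lambda_j = \lambda_{j'}$ then force $\lambda_i - \lambda_j < 2m$. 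The defining property of the cluster indices $a_{\mu,m}$ prohibits a boundary strictly between $i$ and $j$, so $[i]$ and $[j]$ lie in the same cluster of $\HH_0^m$; the difference has augmentation zero in that cluster, and belongs either to $\HH_{0,\mu}^{m,+}$ (when $\mu < t(m)$) or to $\HH_{0,t(m)}^m$, both contained in $\HH_0^+$.

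The principal obstacle is the reverse inclusion, and in particular the step of translating the parity and range constraints in the bracket formula into the inequality $\lambda_i - \lambda_j < 2m$, followed by the combinatorial observation (from the definition of $a_{\mu,m}$) that this inequality forces $[i]$ and $[j]$ into the same cluster of $\HH_0^m$. The forward inclusion is largely bookkeeping; its only subtle step is recognising that a nonempty terminal cluster requires $\lambda_n < 2m$, which is precisely what allows the commutator construction to produce individual basis vectors of $\HH_{0,t(m)}^m$ rather than only differences.
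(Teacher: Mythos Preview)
Your argument for Part~1 and for the reverse inclusion in Part~2 is correct and follows the paper's approach. The forward inclusion for the non-terminal clusters $j < t(m)$ is also fine.

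The gap is in your treatment of the terminal cluster. Your commutator $[\zeta_v^{u,0}, \zeta_u^{v,\lambda_u+\lambda_v-2m-1}]$ requires the second factor to lie in $\h_e$, and the range constraint for $\zeta_u^{v,r}$ is $0 \le r < \min(\lambda_u,\lambda_v) = \lambda_u$. Thus you need $\lambda_u + \lambda_v - 2m - 1 < \lambda_u$, i.e.\ $\lambda_v \le 2m$. Since you also have $\lambda_v \ge 2m$, your construction works only for $\lambda_v = 2m$; when $\lambda_v > 2m$ the map $\zeta_u^{v,\lambda_u+\lambda_v-2m-1}$ is not in $\h_e$ at all (the underlying $\xi_u^{v,t}$ fails to centralise $e$ once $t < \lambda_v - \lambda_u$). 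So you cannot obtain every individual $\zeta_v^{v,\lambda_v-2m}$ directly this way.

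The remedy is what the paper does: first run your consecutive-difference argument \emph{inside} the terminal cluster as well, which yields the hyperplane $\HH_{0,t(m)}^{m,+}$ (adjacent classes within the terminal cluster still satisfy $\lambda_{[i]-1} - \lambda_{[i]} < 2m$, by the very definition of the $a_{\mu,m}$). Then produce a single element outside that hyperplane by taking $v$ to represent $[k] := \max\{[i] : \lambda_{[i]} \ge 2m\}$ and $u$ to represent $[k]+1$. Your own observation that $\lambda_n < 2m$ guarantees $[k]+1 \le [n]$, and adjacency gives $\lambda_{[k]} - \lambda_{[k]+1} < 2m$, so now the sum $r+s = \lambda_{[k]} + \lambda_{[k]+1} - 2m - 1$ can be split with both $r,s < \lambda_{[k]+1}$. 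The resulting bracket is (up to sign) $\zeta_{[k]}^{[k],\lambda_{[k]}-2m}$, the companion term vanishing since $\lambda_{[k]+1} < 2m$, and together with $\HH_{0,t(m)}^{m,+}$ this spans all of $\HH_{0,t(m)}^m$.
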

\begin{proof}
$\HH_1$ has a basis consisting of vectors $\zeta_i^{i,s}$ with $i <
i'$ and $\lambda_i - s$ odd. Fix such a choice of $i$ and $s$, and
choose $r$ such that $\lambda_i - r$ is odd. By Lemma \ref{product},
we have that $$[\zeta_{i'}^{i, s}, \zeta_{i}^{i',r}] = (1 +
\varepsilon_{i,i',r}) (\zeta_i^{i,s+r-(\lambda_i-1)} -
\zeta_{i'}^{i',s+r-(\lambda_i - 1)}).$$ Since
$\varepsilon_{i,i,r+s-(\lambda_i-1)} =
(-1)^{\lambda_i-(s+r-(\lambda_i-1))} = (-1)^{(\lambda_i - s) +
(\lambda_i-r) + 1} = -1$ it follows that
$\zeta_{i'}^{i',s+r-(\lambda_i-1)} = -
\zeta_i^{i,s+r-(\lambda_i-1)}$ by Part 3 of Lemma
\ref{spanningdetails}. Also $\varepsilon_{i, i', r} =
(-1)^{\lambda_i - r + 1} = 1$. Therefore
\begin{eqnarray*}
[\zeta_{i'}^{i, s}, \zeta_{i}^{i',r}] = 4 \zeta_i^{i,s+r-(\lambda_i-1)}
\end{eqnarray*}
which is nonzero since char$(\K)\neq 2$. We make the observation
that the above expression lies in $\HH_1$ for any choice of $r$ and
$s$ with $\lambda_i -r$ and $\lambda_i - s$ both odd. Taking $r =
\lambda_i -1$ we obtain $\zeta_i^{i,s} \in \HH \cap [\h_e, \h_e]$.
Since $\HH_1$ is spanned by those $\zeta_i^{i,s}$ such that $i < i'$
and $\lambda_i - s$ is odd we have $\HH_1 \subseteq [\h_e, \h_e]$.
This completes (1).

\smallskip

For the sake of clarity we shall divide the proof of Part 2 of the
current proposition into subsections \emph{(i), (ii), ..., (vii)}.
The approach is much the same as Part 2 of Proposition \ref{Nprop}.
In Parts \emph{(i),...,(iv)} we show that a spanning set for
$\HH_0^+$ may be found in $[\h_e, \h_e]$ and in the subsequent Parts
\emph{(v), (vi), (vii)} we demonstrate that any product
$[\zeta_i^{j,s}, \zeta_k^{l,r}]$ which lies in $\HH_0$ actually lies
in $\HH_0^+$.

\emph{(i) The subspace $\HH_0 \cap [\NN_1,\NN_1]$ is spanned by all
$\zeta_{[j]}^{[j], \lambda_{[j]} - 2m} - \zeta_{[i]}^{[i],
\lambda_{[i]} - 2m} $ such that $[1] \leq [i] < [j] \leq [n]$ and
$\lambda_i - \lambda_j < 2m < \lambda_j + \lambda_i$:} Indeed by
Proposition \ref{subs} we see that $\HH \cap [\NN_1, \NN_1]$ is
spanned by commutators $[\zeta_i^{j,s}, \zeta_{i'}^{j',r}]$ with
$[j] > [i]$. In turn $$[\zeta_i^{j,s}, \zeta_{i'}^{j',r}] =
\varepsilon_{i', j', r} [\zeta_i^{j,s}, \zeta_{j}^{i,r}] =
\varepsilon_{i', j', r}(\zeta_j^{j,r+s-(\lambda_i-1)} -
\zeta_i^{i,r+s-(\lambda_j-1)}).$$ The reader will notice that
$$[\zeta_i^{j,s}, \zeta_{j}^{i,r}] \in \left\{ \begin{array}{ll}
        \HH_1 & \mbox{if $\lambda_i + \lambda_j - (r+s) -1$ odd};\\
        \HH_0 & \mbox{if $\lambda_i + \lambda_j - (r+s)-1$ even}.\end{array} \right.$$ As a
        consequence $\HH_0 \cap [\NN_1, \NN_1]$  is spanned all
        $[\zeta_i^{j,s}, \zeta_{j}^{i,r}]$ with $[1] \leq [i] <  [j] \leq [n]$ and
        $0 \leq s,r < \lambda_i, \lambda_i + \lambda_j - (r+s) -1\text{ even}.$ If we
        pick $[1] \leq [i] <  [j] \leq [n]$ and $0 \leq s,r < \lambda_i$ such that
        $\lambda_i + \lambda_j - (r+s) -1 = 2m$ then we have $$[\zeta_i^{j,s},
        \zeta_{j}^{i,r}] = \varepsilon_{i',j',r} \big(\zeta_{[j]}^{[j], \lambda_{[j]} - 2m} -
        \zeta_{[i]}^{[i], \lambda_{[i]} - 2m}\big).$$ The constraints placed on $s$ and $r$
        are equivalent to $\lambda_i - \lambda_j < 2m < \lambda_i + \lambda_j$, and \emph{(i)} follows.

\emph{(ii) $\HH_0 \cap [\h_e, \h_e] = \HH_0 \cap [\NN_1, \NN_1]$:}
By Proposition \ref{subs} we see that $$\HH \cap [\h_e, \h_e] =
[\NN_0, \NN_0] + (\HH \cap [\NN_1, \NN_1])$$ whereas our observation
in Part 1 of the current proposition shows that $[\NN_0, \NN_0]
\subseteq \HH_1$. Since $\HH = \HH_0 \oplus \HH_1$ and $$\HH \cap
[\NN_1, \NN_1]=(\HH_0 \cap [\NN_1, \NN_1])\textstyle{\bigoplus}
(\HH_1 \cap [\NN_1, \NN_1])$$ by our discussion in $(i)$ we obtain
$\HH_0 \cap [\h_e, \h_e] = \HH_0 \cap [\NN_1, \NN_1]$.

\emph{(iii) Each spanning vector from (i) lies in a unique
$\HH_{0,l}^m$, in particular we have that $\HH_0 \cap [\NN_1, \NN_1]
= \bigoplus_{l,m} (\HH_{0,l}^m \cap [\NN_1, \NN_1])$:} Fix $m$ in
the appropriate range and suppose $1\leq i < a_{t(m),m}$ We claim
that if $[j] > [i]$ then each $\zeta_{[j]}^{[j], \lambda_{[j]} - 2m}
- \zeta_{[i]}^{[i], \lambda_{[i]} - 2m} \in \HH_0 \cap [\NN_1,
\NN_1]$ lies in $\HH_{0,l}^m$ where $l$ is the unique integer
fulfilling $[a_{l, m}] \leq [i] < [a_{l+1,m}]$. It will suffice to
show that given $i,j,l$ and $m$ as above we have $[j] <
[a_{l+1,m}]$. To see this, suppose that $[j] \geq [a_{l+1,m}]$. Then
by our choice of $a_{l+1,m}$ we have $\lambda_{a_{l+1,m}-1} -
\lambda_{a_{l+1,m}} \geq 2m$ which implies $\lambda_i - \lambda_j
\geq 2m$ contrary to the restriction $\lambda_i - \lambda_j < 2m$
noted in the statement of \emph{(i)}. We conclude that $[a_{l,m}]
\leq [i] < [j] < [a_{l+1,m}]$ and that the corresponding spanning
vector lies in $\HH_{0,l}^m$. In case $a_{t(m),m} \leq i$ we have
$\zeta_{[j]}^{[j], \lambda_{[j]} - 2m} - \zeta_{[i]}^{[i],
\lambda_{[i]} - 2m} \in \HH_{0,t(m)}^m$ by definition. Thus we have
shown that the spanning vectors  of $\HH_0 \cap  [\NN_1, \NN_1]$
each lie in some $\HH_{0,l}^m$, as claimed.

\emph{(iv) The inclusion $\HH_{0,l}^{m,+} \subseteq \HH_{0,l}^m \cap
[\NN_1, \NN_1]$ holds for all $l$ and $m$:} Suppose $1
\leq i < a_{t(m),m}$. Since $\lambda_{a_{t(m),m} -1} -
\lambda_{a_{t(m),m}} \geq 2m$ we know that $\lambda_{a_{t(m),m}-1}
\geq 2m$ and so $\lambda_i \geq 2m$. It follows that
$\zeta_{[i]}^{[i], \lambda_{[i]} - 2m} \neq 0$ for all such $i$. Fix
$[i]$ with $[a_{l,m}] < [i] < [a_{l+1,m}]$. By our choice of
integers $\{a_{1,m},...,a_{t(m),m}\}$ we know that $\lambda_{[i]-1}
- \lambda_{[i]} < 2m$ and since $\lambda_{[i]-1}, \lambda_{[i]} \geq
\lambda_{a_{t(m),m}} \geq 2m$ we have $\lambda_{[i]-1} +
\lambda_{[i]} > 2m$. By these remarks, using \emph{(i)}, it follows
that $\zeta_{[i]-1}^{[i]-1, \lambda_{[i]-1} - 2m} -
\zeta_{[i]}^{[i], \lambda_{[i]} - 2m}$ is a nonzero element of
$\HH_{0,l}^m \cap [\NN_1, \NN_1]$. These vectors span all of
$\HH_{0,l}^{m,+}$ so \emph{(iv)} follows for $l < t(m)$.

The argument for $l = t(m)$ is similar. Let $k = \max\{ i: \lambda_i
\geq 2m\}$. Then $\zeta_i^{i,\lambda_i - 2m} \neq 0 $ if and only if
$i \leq k$ so $\HH_{0,t(m)}^m =
\spn\{\zeta_{[i]}^{[i],\lambda_{[i]}-2m} : [a_{t(m),m}] \leq [i]
\leq [k]\}$. Fix  $[i]$ with $[a_{t(m),m}] < [i] \leq [k]$. By our
choice of integers $\{a_{1,m},...,a_{t(m),m}\}$ we know that
$\lambda_{[i]-1} - \lambda_{[i]} < 2m$ and by our choice of $k$ we
have $\lambda_{[i]-1} + \lambda_{[i]} > 2m$. The argument now
concludes exactly as above.

\emph{(v) The equality $\HH_{0,l}^m \cap [\NN_1, \NN_1] =
\HH_{0,l}^{m,+}$ holds for all $1\leq l < t(m)$:} The discussion in
\emph{(iii)} confirms that $\HH_{0,l}^m \cap [\NN_1, \NN_1]$ is
spanned by all $\zeta_{[j]}^{[j], \lambda_{[j]} - 2m} -
\zeta_{[i]}^{[i], \lambda_{[i]} - 2m}$ with $[a_{l,m}] \leq [i] <
[j] < [a_{l+1,m}], \lambda_i - \lambda_j < 2m < \lambda_j +
\lambda_i$. This space is clearly contained in $\HH_{0,l}^{m,+}$.
Now \emph{(v)} follows from \emph{(iv)}.

\emph{(vi) $\HH_{0,t(m)}^m \cap [\NN_1, \NN_1] = \HH_{0,t(m)}^m$:}
First we note that $\HH_{0,t(m)}^{m,+} \subseteq \HH_{0,t(m)}^m \cap
[\NN_1, \NN_1]$ by \emph{(iv)}. If $\lambda_{a_{t(m),m}} < 2m$ then
$\HH_{0,t(m)}^m = 0$ by Part 1 of Lemma \ref{bits} and the statement
holds trivially. So assume $\lambda_{a_{t(m),m}} \geq 2m$ and let $k
= \max\{ i : \lambda_i \geq 2m\}$. Then $\HH_{0,t(m)}^m$ is spanned
by all $\zeta_{[i]}^{[i],\lambda_{[i]}-2m}$ with  $[a_{t(m),m}] \leq
[i] \leq [k]\}$. We claim that $[k]+1\leq [n]$. If not then $[k] =
[n]$ which implies that $ \lambda_{k} - \lambda_{k+1} = \lambda_k
\geq 2m$ forcing $k+1 \in \{a_{1,m},...,a_{t(m),m}\}$. However $k+1
> a_{t(m),m}$ and $a_{1,m} \leq \cdots \leq a_{t(m),m}$. This
contradiction confirms the claim. By the very same reasoning we know
that $\lambda_{[k]} - \lambda_{[k]+1} = \lambda_k - \lambda_{k+1} <
2m$ and the inequality $[k]+1\leq n$ gives us $\lambda_{[k]+1} > 0$
which in turn implies $\lambda_{[k]} + \lambda_{[k]+1} > 2m$. By
\emph{(i)} and \emph{(iii)} we have $\zeta_{[k]+1}^{[k]+1,
\lambda_{[k]+1}-2m} - \zeta_{[k]}^{[k],\lambda_{[k]} - 2m} \in
\HH_{0,t(m)}^m$. Since $\lambda_{k+1} < 2m$ we know that
$\zeta_{[k]+1}^{[k]+1, \lambda_{[k]+1}-2m} = 0$. Since
$\zeta_{[k]}^{[k],\lambda_{[k]} - 2m} \notin \HH_{0,t(m)}^{m,+}$ and
$\HH_{0,t(m)}^{m,+}$ has codimension 1 in $\HH_{0,t(m)}^m$ statement
\emph{(vi)} follows.

\emph{(vii) $\HH_0 \cap [\h_e, \h_e] = \HH_0^+$:} By \emph{(ii)} and
\emph{(iii)} we have $$\HH_0 \cap [\h_e, \h_e] =
\textstyle{\bigoplus}_{l,m} (\HH_{0,l}^m \cap [\NN_1, \NN_1]).$$ The
proposition now follows from \emph{(v)} and \emph{(vi)}.
\end{proof}

\begin{thm}\label{derived}
The derived subalgebra $[\h_e, \h_e]$ coincides with $\NN_0
\bigoplus \NN_1^+ \bigoplus \HH_0^+\bigoplus \HH_1.$
\end{thm}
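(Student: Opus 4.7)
The plan is to combine the inclusions established in Propositions \ref{Nprop} and \ref{Hprop} with the finer basis-level information contained in their proofs and in Proposition \ref{subs}, and to verify both containments separately.

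For the forward inclusion $\NN_0 \oplus \NN_1^+ \oplus \HH_0^+ \oplus \HH_1 \subseteq [\h_e,\h_e]$, I would appeal to Proposition \ref{Nprop}(1) for $\NN_0 \subseteq [\h_e,\h_e]$, to Proposition \ref{Hprop}(1) for $\HH_1 \subseteq [\h_e,\h_e]$, and to the second parts of those propositions for $\NN_1^+ \subseteq \NN_1 \cap [\h_e,\h_e]$ and $\HH_0^+ \subseteq \HH_0 \cap [\h_e,\h_e]$. Since the four summands sit in distinct pieces of the direct sum decomposition $\h_e = \HH_0 \oplus \HH_1 \oplus \NN_0 \oplus \NN_1$ supplied by Lemma \ref{subbasis}, their sum is direct and contained in $[\h_e,\h_e]$.

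For the reverse inclusion I would exploit the fact that $[\h_e,\h_e]$ is linearly spanned by commutators $[u,v]$ of basis vectors from $H \sqcup N_0 \sqcup N_1$ and check that each such commutator lies in $\NN_0 \oplus \NN_1^+ \oplus \HH_0^+ \oplus \HH_1$. Proposition \ref{subs} splits the analysis into six cases, which I would refine using material already present in the earlier proofs: $[\HH,\HH]=0$ and $[\HH,\NN_0]\subseteq \NN_0$ by Proposition \ref{subs}; $[\HH,\NN_1]\subseteq \NN_1^+$ by step (vi) of the proof of Proposition \ref{Nprop}; $[\NN_0,\NN_1]\subseteq \NN_1^+$ by step (vii) there; $[\NN_0,\NN_0]\subseteq \HH_1$ by the parity argument appearing in Proposition \ref{Hprop}(1); and $[\NN_1,\NN_1]\subseteq \HH \oplus \NN_0 \oplus \NN_1^+$ by the last assertion of Proposition \ref{subs}. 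For the $\HH$-valued brackets arising from $[\NN_1,\NN_1]$, the explicit formula $[\zeta_i^{j,s},\zeta_j^{i,r}] = \varepsilon_{i',j',r}\bigl(\zeta_j^{j,r+s-(\lambda_i-1)} - \zeta_i^{i,r+s-(\lambda_j-1)}\bigr)$ from Lemma \ref{product} shows that each such commutator lies purely in $\HH_0$ or purely in $\HH_1$ according to the parity of $\lambda_i+\lambda_j-r-s-1$, and those lying purely in $\HH_0$ belong to $\HH_0^+$ by steps (i), (iii)--(vi) of the proof of Proposition \ref{Hprop}(2).

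I do not expect a genuine obstacle, but the one delicate point worth flagging is that the intersection identities $\HH_0 \cap [\h_e,\h_e] = \HH_0^+$ and $\NN_1 \cap [\h_e,\h_e] = \NN_1^+$ recorded in the propositions do \emph{not} by themselves imply the theorem, since an arbitrary bracket could a priori have a component in $\HH_0 \setminus \HH_0^+$ balanced by a term outside $\HH_0$. The argument must therefore proceed at the level of individual basis commutators and use the stronger observation that, thanks to the case-by-case parity analysis already performed in the proofs of Propositions \ref{Nprop} and \ref{Hprop}, every basic commutator lies purely in one of the four summands $\HH_0^+$, $\HH_1$, $\NN_0$, $\NN_1^+$. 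Summing over basic commutators then yields $[\h_e,\h_e] \subseteq \NN_0 \oplus \NN_1^+ \oplus \HH_0^+ \oplus \HH_1$, which completes the proof.
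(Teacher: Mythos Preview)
Your proposal is correct and follows essentially the same route as the paper. The paper's proof is simply a terser packaging of the same ingredients: it first invokes Proposition~\ref{subs} to write $[\h_e,\h_e]=(\HH\cap[\h_e,\h_e])+(\NN_0\cap[\h_e,\h_e])+(\NN_1\cap[\h_e,\h_e])$, which is precisely the statement that each basic commutator lies purely in one of $\HH$, $\NN_0$, $\NN_1$ --- exactly the mechanism you isolate to handle your ``delicate point'' --- and then reads off the three intersections from Propositions~\ref{Nprop} and~\ref{Hprop}, the further split $\HH\cap[\h_e,\h_e]=\HH_0^+\oplus\HH_1$ being justified by the parity observation inside the proof of Proposition~\ref{Hprop} that you also cite.
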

\begin{proof}
The sum of the above subspaces is direct by construction. By
Proposition \ref{subs} we know that $[\h_e, \h_e]$ is the sum of the
three spaces $$[\h_e, \h_e] = (\NN_0 \cap [\h_e, \h_e]) + (\NN_1
\cap [\h_e, \h_e]) + (\HH \cap [\h_e, \h_e]).$$ By Proposition
\ref{Nprop} we have that $(\NN_0 \cap [\h_e, \h_e]) + (\NN_1 \cap
[\h_e, \h_e]) = \NN_0 + \NN_1^+$. By Proposition \ref{Hprop} using
the fact that $\HH = \HH_0 \oplus \HH_1$ we have $\HH \cap [\h_e,
\h_e] = \HH_1 + \HH_0^+$. The theorem follows.
\end{proof}

 \subsection{A combinatorial formula for $\dim \h_e^{\text{ab}}$} 

 As a corollary to the previous theorem we obtain an expression for the dimension of
 the maximal abelian quotient $\h_e^\text{ab} := \h_e/[\h_e, \h_e]$. Given a
 partition $\lambda=(\lambda_1,\ldots,\lambda_n)\in\mathcal{P}_\epsilon(N)$ we have 
 defined $\Delta(\lambda)$ to be the set of pairs $(i, i+1)$ with $1\leq i < n,  i'=i, (i+1)'=i+1$ and $\lambda_{i-1}
 \neq \lambda_i \geq \lambda_{i+1} \neq \lambda_{i+2}$; see Definition~1. Recall that the elements of $\Delta(\lambda)$ are referred to as {\it 2-steps}. Now set
$$s(\lambda) := \textstyle{\sum}_{i=1}^{n} \lfloor(\lambda_{i} - \lambda_{i+1})/2
\rfloor.$$
Note that if $(i,i+1)\in\Delta(\lambda)$ then $\epsilon(-1)^{\lambda_i} =
\epsilon(-1)^{\lambda_{i+1}}  = -1$ and recall our convention that $\lambda_0=0$ and $\lambda_i=0$ for all $i>n$.
We may now state and prove the formula for $\dim \,\h_e^\text{ab}$.
\begin{cor}\label{expression} Let $\h$ be one of the classical Lie algebras $\mathfrak{so}_N$
or $\mathfrak{sp}_N$ where $N\ge 2$ and suppose that char$(\K)\ne 2$. Then
 $\dim \h_e^{\emph{\text{ab}}} = s(\lambda) + |\Delta(\lambda)|$ for any
 nilpotent element $e=e(\lambda)\in\h$.
 \end{cor}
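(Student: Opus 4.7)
The plan is to unravel Theorem \ref{derived} together with the constructions of $\NN_1^\pm$ and $\HH_0^+$ from Subsection \ref{sub2}. Since $\h_e = \HH_0\oplus\HH_1\oplus\NN_0\oplus\NN_1$ by Lemma \ref{subbasis} combined with the decomposition $\HH=\HH_0\oplus\HH_1$, and since $\NN_1=\NN_1^+\oplus\NN_1^-$ by construction, Theorem \ref{derived} will yield an isomorphism of vector spaces
$$\h_e^{\text{ab}} \;\cong\; \NN_1^- \,\oplus\, (\HH_0/\HH_0^+),$$
so it will suffice to evaluate the dimensions of the two summands separately.

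The first is immediate from the definition of $\NN_1^-$ given just before Lemma \ref{product}: a basis is provided by the vectors $\zeta_i^{i+1,\lambda_{i+1}-1}$ indexed by the $2$-steps $(i,i+1)\in\Delta(\lambda)$, so $\dim\NN_1^-=|\Delta(\lambda)|$. For the second, I will use the decomposition $\HH_0=\bigoplus_{m=1}^{\lfloor\lambda_1/2\rfloor}\bigoplus_{j=1}^{t(m)}\HH_{0,j}^m$ from Lemma \ref{bits}(2) together with the definition of $\HH_0^+$, which gives the direct sum decomposition
$$\HH_0/\HH_0^+ \;=\; \bigoplus_{m=1}^{\lfloor\lambda_1/2\rfloor}\;\bigoplus_{j=1}^{t(m)-1}\HH_{0,j}^m/\HH_{0,j}^{m,+}.$$
Each $\HH_{0,j}^{m,+}$ is the kernel of the augmentation map to $\K$, hence has codimension $1$ in $\HH_{0,j}^m$ whenever $\HH_{0,j}^m\neq 0$. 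For $j<t(m)$ one has $\lambda_{a_{j,m}}\geq\lambda_{a_{j+1,m}-1}\geq 2m$ by the defining property of the sequence $(a_{k,m})$, so $\zeta_{a_{j,m}}^{a_{j,m},\lambda_{a_{j,m}}-2m}$ is a nonzero element of $\HH_{0,j}^m$ and the corresponding quotient is one-dimensional. Consequently
$$\dim(\HH_0/\HH_0^+) \;=\; \sum_{m=1}^{\lfloor\lambda_1/2\rfloor}(t(m)-1).$$

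The proof will then conclude with the combinatorial identity $\sum_m(t(m)-1)=s(\lambda)$. By the definition of $1=a_{1,m}<a_{2,m}<\cdots<a_{t(m),m}\leq n+1$, we have $t(m)-1=\#\{i:1\leq i\leq n,\;\lambda_i-\lambda_{i+1}\geq 2m\}$ with the convention $\lambda_{n+1}=0$, and swapping the order of summation gives
$$\sum_{m\geq 1}(t(m)-1) \;=\; \sum_{i=1}^n\#\{m\geq 1:2m\leq \lambda_i-\lambda_{i+1}\} \;=\; \sum_{i=1}^n\lfloor(\lambda_i-\lambda_{i+1})/2\rfloor \;=\; s(\lambda).$$
I do not anticipate any significant obstacle: once Theorem \ref{derived} is in hand, the corollary amounts to tracking the definitions of $\HH_0^+$, $\HH_{0,j}^m$ and $\NN_1^-$ together with an elementary double-counting argument, the only minor point requiring care being the non-vanishing of $\HH_{0,j}^m$ for $j<t(m)$.
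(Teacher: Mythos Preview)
Your proof is correct and follows essentially the same route as the paper: both reduce to computing $\dim\NN_1^-$ and $\dim(\HH_0/\HH_0^+)$ via Theorem~\ref{derived}, identify the first with $|\Delta(\lambda)|$ directly from the definition, and obtain the second as $\sum_m (t(m)-1)$ before matching it with $s(\lambda)$. The only cosmetic difference is that where you swap the order of summation to establish $\sum_m(t(m)-1)=s(\lambda)$, the paper instead sets up an explicit bijection between the index sets $\{(l,m):1\le l\le t(m)-1\}$ and $\{(i,m):\lambda_{i-1}-\lambda_i\ge 2m\}$ via $(l,m)\mapsto(a_{l+1,m},m)$; these are the same argument in different clothing.
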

 \begin{proof}
 Recall that $\h_e = \HH \bigoplus \NN_0 \bigoplus \NN_1$, that $\NN_1 =
 \NN_1^- \bigoplus \NN_1^+$, and that $\HH = \HH_0 \bigoplus \HH_1$ with
 $\HH_0^+ \subseteq \HH_0$. By Theorem \ref{derived} we have that
 $\h_e^\text{ab} \cong (\NN_1/\NN_1^+) \bigoplus (\HH_0/\HH_0^+)$ as
 vector spaces. We claim that $\dim(\NN_1/\NN_1^+) = |\Delta(\lambda)|$ and
 that $\dim(\HH_0/\HH_0^+) = s(\lambda)$, from whence the theorem shall follow.
 First of all observe that $\dim(\NN_1/\NN_1^+) = \dim(\NN_1^-)$. By Part 3 of
 Lemma~\ref{spanningdetails} the maps $\zeta_{i-1}^{i,\lambda_i-1}$ spanning
 $\NN_1^-$ are all linearly independent.
Out last remark in Subsection~\ref{sub2} defines the set $\NN_1^{-}$ to be the space spanned
by $$N_1^{-}:=\{\zeta_{i}^{i+1,\lambda_{i+1} - 1} : (i,i+1) \in \Delta(\lambda)\}.$$
The map $(i,i+1) \mapsto \zeta_i^{i+1, \lambda_{i+1}-1}$ is clearly a bijection $\Delta(\lambda) \leftrightarrow N_1^-$.
We conclude that $\dim(\NN_1/\NN_1^+) = \dim(\NN_1^-) = |\Delta(\lambda)|$.

We must now show that $\dim(\HH_0/ \HH_0^+) = s(\lambda)$. Observe
that $\HH_0 = \bigoplus_{l,m} \HH_{0,l}^m$ (Part 2 of Lemma
\ref{bits}) and that each $\HH_{0,l}^{m,+}$ has codimension $1$ in
$\HH_{0,l}^m$. Furthermore if $l < t(m)$ then $\HH_{0,l}^m \neq 0$.
We conclude that $\dim(\HH_0/ \HH_0^+) = |\mathcal{D}|$ where
$$\mathcal{D} = \{(l,m) : 1\leq l \leq t(m)-1,\, 1\leq m\leq \lfloor
\lambda_1/2 \rfloor\} .$$

On the other hand, $s(\lambda) = |\mathcal{D}'|$ where
$$\mathcal{D}' = \{(i,m) \in \{2,...,n+1\}\times \{1,...,\lfloor
\lambda_1/2 \rfloor\}: \lambda_{i-1} - \lambda_{i} \geq 2m \}.$$ If
we construct a bijection from $\mathcal{D}$ to $\mathcal{D}'$ then
the result follows. Define a map from $\mathcal{D}$ to
$\{2,...,n+1\}\times \{1,...,\lfloor \lambda_1/2 \rfloor\}$ by the
rule $$(i,m) \longmapsto (a_{i+1,m}, m).$$ By the definition of the
integers $\{a_{1,m}, a_{2,m}, ..., a_{t(m),m}\}$ it is a well
defined injection into $\mathcal{D}'$. Fix $1\leq m \leq \lfloor
\lambda_1/2 \rfloor$. Since $\lambda_0 = 0$ and $\lambda_1 \geq
\cdots \geq \lambda_n$, we have $a_{1,m} = 1$ and
$\{a_{2,m},...,a_{t(m),m}\}$ is the set of all integers $i$ with
$2\le i\le n+1$ and $\lambda_{i-1} - \lambda_{i} \geq 2m$. Thus the
map is surjective and $\dim(\HH_0/ \HH_0^+) = s(\lambda)$.
\end{proof}
\begin{rem}\label{R1}
{\rm If $\g=\mathfrak{sl}_N$ where $N\ge 2$ and $e$ is a nilpotent
element of $\g$ corresponding to a partition
$(\lambda_1,\ldots,\lambda_n)$ of $N$ then $\dim \g_e^{\rm ab}=\dim
\z(\g_e)=\lambda_1-1$. This follows, for instance, from results of
\cite{Ya10}. If $e$ is a nilpotent element in a classical Lie
algebra $\h$ of type other than $\sf A$ then it may happen that
$\h_e^{\rm ab}$ and  $\z(\h_e)$ have different dimensions.}
\end{rem}
\noindent{\bf Example~1.} To illustrate Corollary~\ref{expression}
we consider the special case where $\h=\mathfrak{so}_4$. This Lie
algebra has type ${\sf D_2}\cong {\sf A_1}\times {\sf A_1}$ and is
isomorphic to a direct sum of two copies of $\mathfrak{sl}_2$.
Therefore $\h$ has three nonzero nilpotent orbits: the orbits
containing root vectors $e_1$ and $e_2$ of the two simple ideals of
$\h$ and the regular nilpotent orbit containing $e_1+e_2$. It is
immediate that $\h_{e_1}\cong \h_{e_2}\cong \mathfrak{sl}_2\oplus\K$
whilst $\h_{e_1+e_2}$ is abelian and has dimension $2$. In
particular, $\dim \h_{e_1}^{\rm ab}=\dim \h_{e_2}^{\rm ab}=1$ and
$\dim \h_{e_1+e_2}^{\rm ab}=2$.

On the combinatorial side, the set $\mathcal{P}_1(4)$ contains only
two nontrivial partitions, namely, $\lambda=(3,1)$ and $\mu=(2,2)$.
Since $\h$ is of type $\rm D$ and the partition $(2,2)$ has even
parts only, there are two nilpotent orbits in $\h$ attached to it
(they are permuted by an outer automorphism of $\h$ and assigned the
Roman numerals $I$ and $I\!I$). It is straightforward to see that
our root vectors $e_1$ and $e_2$ correspond to the partition $\mu$
whereas $e_1+e_2$ is attached to $\lambda$. Since $(1,2)$ is the
only $2$-step of $\lambda$ we get $|\Delta(\lambda)|=1$ and
$s(\lambda)=\lfloor(3-1)/2\rfloor=1$. So $\dim \h_e^{\rm ab}=1+1=2$
by Corollary \ref{expression}. On the other hand,
$\Delta(\mu)=\emptyset$ and
$s(\mu)=\lfloor(2-2)/2\rfloor+\lfloor(2-0)/2\rfloor=1$ yielding
$\dim \h_{e_1}^{\rm ab}=\dim \h_{e_2}^{\rm ab}=0+1=1$. This agrees
with our earlier deductions.

\smallskip

\noindent{\bf Example~2.} Now suppose that $\h=\mathfrak{so}_6$, a
Lie algebra of type ${\sf D_3}\cong {\sf A_3}$. In this case
$\h\cong\mathfrak{sl}_4$. The Lie algebra $\mathfrak{sl}_4$ has four
nonzero nilpotent orbits which correspond to the partitions $(4)$,
$(3,1)$, $(2,2)$ and $(2,1,1)$. Using Remark \ref{R1} we see that
$\dim \h_e^{\rm ab}$ equals $3$, $2$, $1$ and $1$ in the respective
cases.

On the other hand, the set $\mathcal{P}_1(6)$ contains four
nontrivial partitions $\mu$, namely, $(5,1)$, $(3,3)$, $(3,1,1,1)$
and $(2,2,1,1)$ and the corresponding nilpotent orbits of $\h$ are
associated with  the partitions $(4)$, $(3,1)$, $(2,2)$ and
$(2,1,1)$ when regarded as elements of $\mathfrak{sl}_4$. Since
$|\Delta(\mu)|=1$ if $\mu$ is one of $(5,1)$, $(3,3)$ or $(2,2,1,1)$
and $\Delta(\mu)=\emptyset$ if $\mu=(3,1,1,1)$, applying Corollary
\ref{expression} yields that $\dim \h_e^{\rm ab}$ equals $3$, $2$,
$1$ and $1$ in the respective cases. This agrees with our earlier
deductions.

 \section{Applications to the theory of sheets in classical Lie algebras}
 \subsection{The Kempken-Spaltenstein algorithm}\label{2.1}
Let $G$ be a simple algebraic group over $\K$ and $m \in \N$. We
recall that a sheet of the Lie algebra $\g=\Lie(G)$ is an
irreducible component of the locally closed set
$$\g^{(m)} := \{x \in \g : \dim \g_x = m\}.$$
Let $\mathcal{N}(\g)$ denote the the variety of all nilpotent
elements in $\g$. It is well known that every sheet of $\g$ contains
a unique nilpotent orbit; see \cite[5.8]{BKr}. However, outside type
$\sf A$ the sheets are not disjoint and a given nilpotent orbit of
$\g\not\cong\mathfrak{sl}_N$ may lie in several different sheets.

Crucial for the theory of sheets in semisimple Lie algebras is the
notion of a rigid element (such elements were termed {\it original}
by Borho). An element $x \in \mathcal{N}(\g)$ is called {\it rigid}
if the ajoint $G$-orbit of $x$ coincides with a sheet of $\g$. Any
rigid element of $\g$ is necessarily nilpotent.

Let $\mathfrak{l}$ be a Levi subalgebra of $\g$. The centre
$\z(\mathfrak{l})$ is a toral subalgebra of $\g$ and for any
$z\in\z(\mathfrak{l})$ the centraliser $\g_z$ contains
$\mathfrak{l}$. We denote by $\z(\mathfrak{l})_{\rm reg}$ the set of
all $z\in\z(\mathfrak{l})$ for which $\g_z=\mathfrak{l}$. This is a
non-empty Zariski open subset of $\z(\mathfrak{l})$. Given a
nilpotent element $e_0\in[\mathfrak{l},\mathfrak{l}]$ we define
$\mathcal{D}(\mathfrak{l},e_0)$ to be the $G$-stable set $({\rm
Ad}\,G)\big(e_0+\z(\mathfrak{l})_{\rm reg}\big)$ and we call
$\mathcal{D}(\mathfrak{l},e_0)$ a {\it decomposition class} of $\g$.

Every sheet $\mathcal{S}$ of $\g$ is a $G$-stable subset of $\g$
locally closed and irreducible in the Zariski topology of $\g$. By a
classical result of Borho \cite{Bor} every sheet is a finite union
of decomposition classes and contains a unique Zariski open such
class. Furthermore a decomposition class
$\mathcal{D}(\mathfrak{l},e_0)$ contained in $\mathcal S$ is open in
$\mathcal S$ if and only if $e_0$ is rigid in $\mathfrak l$; see
\cite[3.7]{Bor}. Conversely every decomposition class
$\mathcal{D}(\mathfrak{l},e_0)$ with $e_0$ rigid in $\mathfrak{l}$
is Zariski open in a unique sheet of $\g$. Furthermore, the unique nilpotent orbit in that sheet  is obtained from $e_0$ by Lusztig-Spaltenstein induction. This result of Borho
gives us a very transparent way to parametrise the sheets of $\g$. 

If $\mathcal{S}$ is a sheet of $\g$ and
$\mathcal{D}(\mathfrak{l},e_0)$ is its open decomposition class then
$\dim \z(\mathfrak{l})$ is called the {\it rank} of $\mathcal S$ and
abbreviated as ${\rm rk}(\mathcal{S})$. This notion is important as
it enables us to determine the dimension of $\mathcal{S}$. Indeed
suppose $\mathcal{S}\subset\g^{(m)}$. Since the morphism $$G\times
\big(e_0+\z(\mathfrak{l}_{\rm reg})\big)\longrightarrow
\,\mathcal{S},\quad\ (g,x)\mapsto (\Ad\, g)x,$$ is dominant, it
follows from the theorem on dimensions of the fibres of a morphism
and the theory of induced conjugacy classes that
$$\dim \mathcal{S}\,=\,\dim \g-m+{\rm rk}(\mathcal{S});$$
see \cite{LS} and \cite{Bor} for more detail.

In this section we deal with sheets in classical Lie algebras and we
keep the notation introduced in Section~1. We shall be discussing
the properties of various different nilpotent orbits in various
different classical Lie algebras simultaneously. In order to
distinguish between the various orbits we shall often appeal to
their associated partitions.

Recall from Section~1 the set $\mathcal{P}_\epsilon(N)$ of
partitions of $N$ associated with the nilpotent elements of $\h$.
Given $e\in\mathcal{N}(\h)$ we denote by $\lambda(e)$ the partition
in $\mathcal{P}_\epsilon(N)$ corresponding to $e$. If
$\lambda=(\lambda_1,\ldots,\lambda_n)\in\mathcal{P}_\epsilon(N)$
then we write  $e(\lambda)$ for any element in $\mathcal{N}(\h)$
whose Jordan block sizes (arranged as in Lemma~\ref{nilpotents}) are
$\lambda_1,\lambda_2,...,\lambda_n$. The map $e\mapsto \lambda(e)$
indices a surjection from the orbit set $\mathcal{N}(\h)/K$ onto
$\mathcal{P}_\epsilon(N)$. The fibres of this surjction are
singletons unless $\g$ is of type $\rm D$ and all parts of $\lambda$
are even. In the latter case the fibre consists of two nilpotent
orbits permuted by an outer automorphism of $\h$ and the two  orbits
in the fibre are traditionally assigned the Roman numerals $I$ and
$I\!I$. Since the centralisers of all elements lying in the fibres
of the above surjection are isomorphic as abstract Lie algebras, the
notation $e(\lambda)$ is unambiguous and will cause no confusion.

The following classification of rigid elements in
$\mathcal{N}(\h)$ was given by Kempken and Spaltenstein:
\begin{thm}{\rm (See \cite{K}, \cite{Sp}.)}\label{rigids}
Let $\lambda = (\lambda_1, ..., \lambda_n)\in
\mathcal{P}_\epsilon(N)$. Then $e(\lambda)\in\mathcal{N}(\h)$ is
rigid if and only if
\begin{itemize}
\item{$\lambda_i - \lambda_{i+1} \in \{0,1\}$ for all $1\leq i\leq n$;}
\item{the set $\{ (i,i+1) \in \Delta(\lambda) : \lambda_i = \lambda_{i+1}\}$ is empty.}
\end{itemize}
\end{thm}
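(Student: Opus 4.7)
The plan is to use the Lusztig--Spaltenstein characterization: $e(\lambda)$ is rigid if and only if there is no proper Levi $\li \subsetneq \h$ and nilpotent orbit $\Oo_0 \subset [\li,\li]$ with $\Ind_\li^\h \Oo_0 = \Oo_{e(\lambda)}$. Every Levi of $\h$ has the shape $\li = \gl_{m_1}\oplus\cdots\oplus\gl_{m_r}\oplus\h'$ with $\h'$ a classical algebra of the same type on a space of dimension $N - 2\sum m_i$. Using transitivity of induction to reduce to chains of the form $\gl_m\oplus\h'\subset\h$, and the fact that inducing a nilpotent orbit in the $\gl_m$-factor factors further through $\h'$-Levis, it suffices to decide when $e(\lambda) = \Ind_{\gl_m\oplus\h'}^\h(0, e(\mu))$ for some $m\ge 1$ and $\mu\in\PP(N-2m)$.

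The key combinatorial ingredient, due essentially to Kraft--Procesi, is that $\Ind_{\gl_m\oplus\h'}^\h(0,e(\mu))$ corresponds to the partition obtained from $\mu$ by adding a rectangle of two columns of height $m$ to the Young diagram of $\mu$ and then applying the $\PP$-collapse to place the result in $\PP(N)$. Thus $e(\lambda)$ is non-rigid precisely when $\lambda$ arises from some such $\mu$ by this explicit rule, and rigidity amounts to showing that no admissible $\mu\in\PP(N-2m)$ produces $\lambda$ after columnwise addition and parity-collapse.

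The two stated conditions are then necessary. If $\lambda_i - \lambda_{i+1}\ge 2$ for some $i$, one takes $m=i$ and $\mu=(\lambda_1-2,\ldots,\lambda_i-2,\lambda_{i+1},\ldots,\lambda_n)$: the gap $\ge 2$ guarantees $\mu$ is weakly decreasing and a direct check on parities (using the involution $j\mapsto j'$ from Lemma~\ref{nilpotents}) shows $\mu\in\PP(N-2i)$, so $e(\lambda)$ is induced from $\gl_i\oplus\h'$. If instead all gaps are at most $1$ but $(i,i+1)\in\Delta(\lambda)$ satisfies $\lambda_i=\lambda_{i+1}$, then $i=i'$, $(i+1)'=i+1$ and $\epsilon(-1)^{\lambda_i}=-1$; decreasing $\lambda_i$ and $\lambda_{i+1}$ each by $2$ yields a partition $\mu\in\PP(N-4)$ (the boundary inequalities $\lambda_{i-1}\ne\lambda_i$ and $\lambda_{i+1}\ne\lambda_{i+2}$ supply the slack needed for the $\PP$-parity conditions), and $e(\lambda)$ is again induced. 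Hence both conditions are forced by rigidity.

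The main obstacle is the converse: assuming the two conditions, one must rule out \emph{every} proper Levi and every $\mu\in\PP(N-2m)$. The difficulty is that the $\PP$-collapse applied after adding a rectangle can disguise the origin of $\lambda$, so a naive subtraction of two boxes from the first $m$ columns need not return an element of $\PP$. One argues by analyzing the possible column heights $m$ at which a reduction could occur: the bound $\lambda_i-\lambda_{i+1}\le 1$ eliminates all straightforward column-removals (they would leave an invalid ordering or violate parity), while the forbidden configuration $\{(i,i+1)\in\Delta(\lambda):\lambda_i=\lambda_{i+1}\}=\emptyset$ eliminates the only remaining family of reductions, namely those where the $\PP$-collapse merges two parts of the wrong parity into an equal pair. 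A case split by $\epsilon\in\{\pm1\}$ and by the parities of the boundary parts $\lambda_{i-1},\lambda_{i+2}$ at each potential $2$-step completes the argument; this delicate bookkeeping with the collapse is where the bulk of the technical work lies.
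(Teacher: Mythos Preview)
The paper does not prove this theorem at all; it is quoted as a known classification due to Kempken and Spaltenstein (the references \cite{K}, \cite{Sp}), so there is no ``paper's own proof'' to compare against. Your outline is therefore an attempt to reconstruct the original argument, and the overall strategy---reduce rigidity to the non-existence of a maximal Levi $\gl_m\oplus\h'$ from which $e(\lambda)$ is induced, using the explicit combinatorics of induction and $\PP$-collapse---is the right one and is in fact exactly what underlies the paper's later ``KS algorithm'' (see Subsection~\ref{2.1} and Proposition~\ref{indpart}).

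However, your necessity argument in the second case is incorrect. Suppose all gaps are at most $1$ and $(i,i+1)\in\Delta(\lambda)$ with $\lambda_i=\lambda_{i+1}$. Then $\lambda_{i+2}=\lambda_{i+1}-1$, so subtracting $2$ from $\lambda_i$ and $\lambda_{i+1}$ gives $\lambda_{i+1}-2<\lambda_{i+2}$, which is not even weakly decreasing. More fundamentally, subtracting $2$ from two interior parts is not the inverse of adding a $2\times m$ rectangle to the first $m$ rows, so this $\mu$ would not witness an induction from $\gl_2\oplus\h'$ anyway. The correct reduction here (this is Case~2 of the KS algorithm in the paper) takes $m=i$ and
\[
\mu=(\lambda_1-2,\ldots,\lambda_{i-1}-2,\,\lambda_i-1,\,\lambda_{i+1}-1,\,\lambda_{i+2},\ldots,\lambda_n);
\]
one checks $\mu\in\PP(N-2i)$, and adding $2$ back to the first $i$ parts gives $(\lambda_1,\ldots,\lambda_{i-1},\lambda_i+1,\lambda_{i+1}-1,\lambda_{i+2},\ldots)$, whose $\PP$-collapse is $\lambda$. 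Your sufficiency sketch is plausible but is really just a promise; the actual verification that these two moves exhaust all possible inductions (after collapse) is the substance of the Kempken--Spaltenstein argument.
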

In the above we observe the convention $\lambda_0 = 0$ and
$\lambda_i = 0$ for $i > n$. Note that $(i,i+1) \in \Delta(\lambda)$
implies $\lambda_i - \lambda_{i+1}$ is even by
Lemma~\ref{nilpotents}. Therefore the two conditions for
$e(\lambda)$ rigid together imply $\Delta(\lambda) = \emptyset$ and
we may replace second criterion for rigidity with this apparently
stronger condition. Using our results on the derived subalgebra of
$\h_e$ we recover a result of Yakimova first proven in
\cite[Theorem~12]{Ya10}.
\begin{cor}\label{rrr}
$[\h_{e(\lambda)}, \h_{e(\lambda)}] = \h_{e(\lambda)}$ if and only
if $e(\lambda)$ is rigid.
\end{cor}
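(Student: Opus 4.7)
The plan is to deduce this immediately from the dimension formula of Corollary \ref{expression} together with the Kempken--Spaltenstein classification in Theorem \ref{rigids}. The point is that $[\h_e,\h_e] = \h_e$ is equivalent to $\dim \h_e^{\mathrm{ab}} = 0$, so by Corollary \ref{expression} it is equivalent to the simultaneous vanishing
$$s(\lambda) = 0 \qquad \text{and} \qquad |\Delta(\lambda)| = 0.$$

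Next I would translate each of these two vanishing conditions into a piece of the rigidity criterion. Since $s(\lambda) = \sum_{i=1}^{n} \lfloor (\lambda_i - \lambda_{i+1})/2 \rfloor$ is a sum of non-negative integers, $s(\lambda) = 0$ holds if and only if $\lambda_i - \lambda_{i+1} \in \{0,1\}$ for every $i$, which is exactly the first bullet point of Theorem \ref{rigids}. As for $\Delta(\lambda)$, recall from the remark following Theorem \ref{rigids} that under the first rigidity condition any $2$-step $(i,i+1) \in \Delta(\lambda)$ automatically satisfies $\lambda_i - \lambda_{i+1} \in 2\mathbb{Z}_{\ge 0} \cap \{0,1\} = \{0\}$, so the second rigidity condition (no $(i,i+1) \in \Delta(\lambda)$ with $\lambda_i = \lambda_{i+1}$) combined with the first is equivalent to $\Delta(\lambda) = \emptyset$.

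Putting the two equivalences together: $s(\lambda) + |\Delta(\lambda)| = 0$ if and only if $\lambda_i - \lambda_{i+1} \in \{0,1\}$ for all $i$ and $\Delta(\lambda) = \emptyset$, which by Theorem \ref{rigids} is exactly the statement that $e(\lambda)$ is rigid. Conversely, if $e(\lambda)$ is rigid then $s(\lambda) = 0$ follows from the first condition and $\Delta(\lambda) = \emptyset$ follows from the two conditions jointly (as noted in the paragraph after Theorem \ref{rigids}), so $\dim \h_e^{\mathrm{ab}} = 0$ by Corollary \ref{expression}.

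There is essentially no obstacle here: this is a bookkeeping argument that simply matches the two summands in the dimension formula $\dim \h_e^{\mathrm{ab}} = s(\lambda) + |\Delta(\lambda)|$ against the two bullet points of the rigidity criterion, with the only subtle point being the parity remark that lets one replace ``no $2$-step has equal parts'' by ``$\Delta(\lambda) = \emptyset$'' in the presence of the first rigidity condition.
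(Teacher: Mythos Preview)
Your proposal is correct and follows exactly the same approach as the paper's proof, which simply says ``Evidently $[\h_{e(\lambda)}, \h_{e(\lambda)}] = \h_{e(\lambda)}$ if and only if $\dim(\h_{e(\lambda)}^\text{ab}) = 0$. Now apply Corollary \ref{expression} and Theorem \ref{rigids}.'' Your version just unpacks the bookkeeping more explicitly.
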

\begin{proof}
Evidently $[\h_{e(\lambda)}, \h_{e(\lambda)}] = \h_{e(\lambda)}$ if
and only if $\dim(\h_{e(\lambda)}^\text{ab}) = 0$. Now apply
Corollary \ref{expression} and Theorem \ref{rigids}.
\end{proof}

In view of Theorem~\ref{rigids} we have a well defined notion of a
rigid partition in $\mathcal{P}_\epsilon(N)$ and we denote the set
of all such partitions by $\mathcal{P}_\epsilon^\ast(N)$. Relying on
results of \cite{K} and \cite{Sp} Moreau describes an algorithm
\cite{Mor} which takes $\lambda \in \mathcal{P}_\epsilon(N)$ and
returns an element of $\mathcal{P}_\epsilon^\ast(M)$ for some $M
\leq N$. In this section we also follow \cite{K} and \cite{Sp} and
present an extended version of Moreau's algorithm which will be used
later to determine when a nilpotent element of $\h$ lies in a single
sheet and to confirm a conjecture made by Izosimov in \cite{Iz}.

Throughout the following $\ii$ shall denote a finite sequence of
integers between $1$ and $n$. The procedure is as follows: the
algorithm commences with input $\lambda = \lambda^{\emptyset} \in
\mathcal{P}_\epsilon(N)$ where $\emptyset$ denotes the empty
sequence. At the $l^{\text{th}}$ iteration, the algorithm takes
$\lambda^{\ii} \in \mathcal{P}_\epsilon(N - 2\sum_{j=1}^{l-1} i_j)$
where $\ii = (i_1, ..., i_{l-1})$ and returns $\lambda^{\ii'} \in
\mathcal{P}_\epsilon(N- \sum_{j=1}^l i_j)$ where $\ii' =
(i_1,...,i_{l-1}, i_l)$ for some $i_l$. If the output
$\lambda^{\ii'}$ is a rigid partition then the algorithm terminates
after the $l^{\text{th}}$ iteration with output $\lambda^{\ii'}$. We
shall now explicitly describe the $l^{\text{th}}$ iteration of the
algorithm. If after the $(l-1)^\text{th}$ iteration the input
$\lambda^{\ii}$ is not rigid then the algorithm behaves as follows.
Let $i_l$ denote any index in the range $1\leq i \leq n$ such that
either of the following occurs:
\\ \\ \textbf{Case 1:}\qquad\, $\lambda^{\ii}_{i_l} \geq \lambda^{\ii}_{i_l+1}+2$.\\

\noindent
\textbf{Case 2:} \qquad $(i_l,i_l+1) \in \Delta(\lambda^{\ii})\, \text{ and }\,
\lambda_{i_l}^\ii = \lambda^\ii_{i_l+1}.$\\

\noindent Note that no integer $i_l$ will fulfil both of these
criteria. If $\ii = (i_1,..., i_{l-1})$ then define $\ii' =
(i_1,...,i_{l-1}, i_l)$. For Case 1 the algorithm has output
$$\lambda^{\ii'} = (\lambda^{\ii}_1 - 2, \lambda^{\ii}_2 - 2, ...,
\lambda^{\ii}_{i_l}-2, \lambda^{\ii}_{i_l+1},..., \lambda^{\ii}_n)$$
whilst for Case 2 the algorithm has output
$$\lambda^{\ii'} = (\lambda^{\ii}_1-2, \lambda^{\ii}_2-2,...,\lambda^{\ii}_{i_l-1} -2,
\lambda^{\ii}_{i_l} - 1, \lambda^{\ii}_{i_l+1} - 1, \lambda^{\ii}_{i_l+2},...,\lambda^{\ii}_n).$$

In what follows we shall often refer to the algorithm just described
as the {\it KS algorithm} (after Kempken and Spaltenstein). Due to
its definition and the classification of rigid partitions the KS
algorithm certainly terminates after a finite number of steps. In
the hope of avoiding any confusion we shall use `Case' when
referring to Case 1 or Case 2 of the algorithm, and we shall use
`case' to refer to a particular situation. We shall say that a
sequence $\ii = (i_1, i_2, ..., i_l)$ is an \emph{admissible sequence
for} $\lambda$ if Case 1 or Case 2 occurs at the point $i_k$ for the
partition $\lambda^{(i_1,...,i_{k-1})}$ for each $k=1,...,l$. We
shall use the notation $|\ii|$ to denote the length of such a
sequence. An admissible sequence $\ii$  for $\lambda$ shall be
called \emph{maximal admissible for} $\lambda$ if neither Case 1 nor Case 2
occurs for any index $i$ between 1 and $n$ for the partition
$\lambda^{\ii}$. If a sequence $\ii = (i_1,...,i_l)$ is admissible
for $\lambda$ and  $1\leq j \leq l+1$ then we shall use the notation
$\ii_j = (i_1,...,i_{j-1})$. Clearly the sequence $\ii_j$ is
admissible for $\lambda$  for any $1 \leq j\leq l+1$. By convention
the empty sequence is admissible for any
$\lambda\in\mathcal{P}_\epsilon(N)$.
\begin{lem}\label{maxadmiss}
Let \emph{$\ii$} be an admissible sequence for $\lambda$. Then $\bf
i$ is maximal admissible if and only if $\lambda^{\text{\emph{\bf
i}}}$ is a rigid partition.
\end{lem}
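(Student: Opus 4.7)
The plan is to prove the lemma by a straightforward unpacking of the two definitions, using Theorem~\ref{rigids} as the bridge between them. The statement is essentially a tautology once one aligns the conditions, so no substantial new calculation is needed.

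First I would fix the notation $\mu := \lambda^{\ii}$ (an element of $\mathcal{P}_\epsilon(N')$ for some $N'\le N$) and spell out what maximal admissibility says. By the definition of the KS algorithm given just above, $\ii$ is maximal admissible for $\lambda$ exactly when, for every index $1\le i\le n$, neither Case~1 nor Case~2 applies to $\mu$ at $i$. Negating the Case~1 trigger $\mu_i\ge \mu_{i+1}+2$ gives $\mu_i-\mu_{i+1}\le 1$ for every $i$; negating the Case~2 trigger gives that there is no pair $(i,i+1)\in\Delta(\mu)$ with $\mu_i=\mu_{i+1}$. Combining these, maximal admissibility of $\ii$ is equivalent to the conjunction of
\begin{itemize}
\item[(a)\,] $\mu_i-\mu_{i+1}\in\{0,1\}$ for all $i$, and
\item[(b)\,] $\{(i,i+1)\in\Delta(\mu):\mu_i=\mu_{i+1}\}=\emptyset$.
\end{itemize}

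Next I would invoke Theorem~\ref{rigids}, which says that $e(\mu)$ is rigid precisely when conditions (a) and (b) above hold (these are literally the two bullet points in the statement of Theorem~\ref{rigids}, with $\lambda$ replaced by $\mu$). Thus maximal admissibility of $\ii$ and rigidity of $\lambda^{\ii}$ are each equivalent to the same pair of combinatorial conditions on $\mu=\lambda^{\ii}$, and the biconditional follows at once in both directions.

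There is no genuine obstacle, but one bookkeeping point is worth flagging: one must know that each $\lambda^{\ii}$ produced by the algorithm is actually an element of $\mathcal{P}_\epsilon(N')$ for the appropriate $N'$, so that Theorem~\ref{rigids} can be applied to it. This is built into the description of the KS algorithm recalled from Kempken and Spaltenstein (the two Cases are precisely tailored so that the parity condition defining $\mathcal{P}_\epsilon$ is preserved, using that $(i,i+1)\in\Delta(\mu)$ forces $\epsilon(-1)^{\mu_i}=\epsilon(-1)^{\mu_{i+1}}=-1$), and does not require any further argument here.
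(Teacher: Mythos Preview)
Your proof is correct and follows exactly the same approach as the paper, which simply remarks that in view of Theorem~\ref{rigids} the result follows from the definition of maximal admissible sequences. You have merely unpacked this one-line argument explicitly, matching the two conditions of Theorem~\ref{rigids} to the negations of Case~1 and Case~2.
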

\begin{proof} In view of Theorem~\ref{rigids}
this follows from the definition of maximal admissible sequences.
\end{proof}
\begin{rem}\label{remark1}\,
{\rm {\rm(i)}\ Rather that defining $i_l$ to be any index between
$1$ and $n$ such that Case 1 or Case 2 occurs, Moreau's algorithm in
\cite{Mor} defines $i_l$ to be the \emph{smallest} such index. This
discrepancy ensures that her algorithm is deterministic (the outcome
does not depend upon a choice of indices $i_1, i_2, i_3,...$). In a
sense, being non-deterministic is an advantage of the KS algorithm
and we shall see later that it has enough power to reach and pin
down {\rm all} sheets of $\h$ containing a given nilpotent element.

\smallskip

\noindent {\rm (ii)}\  The KS algorithm is transitive in the
following sense: if $\ii$ is an admissible sequence for $\lambda$
and $\jj$ is an admissible sequence for $\lambda^{\ii}$ then $(\ii,
\jj)$ is an admissible sequence for $\lambda$, where $(\ii, \jj)$
denotes the concatenation of the two sequences $\ii$ and $\jj$.
Furthermore $\lambda^{(\ii, \jj)} = (\lambda^\ii)^\jj$.}
\end{rem}

\subsection{Non-singular partitions and preliminaries of the algorithm}
Before placing the algorithm into the geometric context for which it
was intended we shall discuss it combinatorially. We start by
introducing a combinatorial notion related with the notion of a boundary of $\lambda\in{\mathcal P}_\epsilon(N)$; see Definition~\ref{de}.

A 2-step $(i, i+1)\in\Delta(\lambda)$ is said to be {\it good} if $\lambda_i$ and the boundary of $(i,i+1)$
have the opposite parity. It is worth mentioning that if $(i,i+1)$ is a good 2-step with $i>1$ then both $\lambda_{i-1}$ and $\lambda_{i+2}$ must have the same parity. If a 2-step $(i,i+1)\in\Delta(\lambda)$ is not good then we say that it is {\it bad}. We note that $(i,i+1)$ is a
bad $2$-step of $\lambda$ if and only if either $i>1$ and
$\lambda_{i-1} - \lambda_i \in 2\N$ or $\lambda_{i+1} -
\lambda_{i+2} \in 2\N$.

We call a partition $\lambda\in\mathcal{P}_\epsilon(N)$
\emph{singular} if it has a bad 2-step. Naturally if all 2-steps of
$\lambda$ are good then we call $\lambda$ \emph{non-singular}. In
the next section we shall interpret these singular and non-singular
partitions in geometric terms. In particular we shall show that
singular partitions correspond precisely to the nilpotent singular
points on the varieties $\h^{(m)}$, hence their name.

\begin{figure}[htb]
\setlength{\unitlength}{0.017in}
\begin{center}
\begin{picture}(250,80)(0,0)

\put(0,0){\line(0,1){80}}
\put(0,80){\line(1,0){40}}
\put(40,80){\line(0,-1){20}}
\put(40,60){\line(1,0){40}}
\put(80,60){\line(0,-1){40}}
\put(80,20){\line(1,0){20}}
\put(100,20){\line(0,-1){20}}
\put(100,0){\line(-1,0){100}}

\qbezier[24](0,60),(20,60),(40,60)
\qbezier[48](0,40),(40,40),(80,40)
\qbezier[48](0,20),(40,20),(80,20)

\put(20,0){\line(0,1){80}}
\put(40,0){\line(0,1){60}}
\put(60,0){\line(0,1){60}}
\put(80,0){\line(0,1){20}}

\put(150,0){\line(0,1){80}}
\put(150,80){\line(1,0){20}}
\put(170,80){\line(0,-1){40}}
\put(170,40){\line(1,0){40}}
\put(210,40){\line(0,-1){20}}
\put(210,20){\line(1,0){40}}
\put(250,20){\line(0,-1){20}}
\put(250,0){\line(-1,0){100}}

\qbezier[12](150,60),(160,60),(170,60)
\qbezier[12](150,40),(160,40),(170,40)
\qbezier[36](150,20),(180,20),(210,20)

\put(170,0){\line(0,1){40}}
\put(190,0){\line(0,1){40}}
\put(210,0){\line(0,1){20}}
\put(230,0){\line(0,1){20}}
\end{picture}
\end{center}
\caption{The Young diagrams of two singular partitions in
$\mathcal{P}_1(15)$ and $\mathcal{P}_{-1}(10)$. The bad 2-steps are
$(3,4)$ and $(2,3)$, respectively.}\label{pikcha_A}
\end{figure}
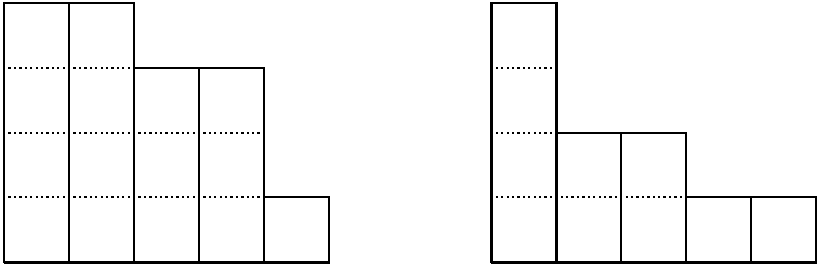

We now collect some elementary lemmas about the behaviour of the
algorithm. For the remnant of the subsection we assume that $\lambda
\in \mathcal{P}_\epsilon(N)$ has the standard ordering
$\lambda_1\geq \cdots \geq \lambda_n$.
\begin{lem}\label{deltain2}
Suppose $\emph{\ii} = (i)$ is a sequence of length $1$. If Case 2
occurs for $\lambda$ at index $i$ then $\Delta(\lambda^{\emph{\ii}})
= \Delta(\lambda) \setminus \{(i,i+1)\}$. Furthermore, if $(i,i+1)$ is a
good $2$-step of $\lambda$ then $s(\lambda^\emph{\ii})=s(\lambda).$
\end{lem}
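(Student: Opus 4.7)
Writing $\mu := \lambda^\ii$, the first step is to pin down the involution on $\mu$'s indices. Since $(i, i+1) \in \Delta(\lambda)$ we have $i' = i$ and $(i+1)' = i+1$ in $\lambda$, so $\epsilon(-1)^{\lambda_i} = \epsilon(-1)^{\lambda_{i+1}} = -1$. In Case 2 both $\lambda_i$ and $\lambda_{i+1}$ are decreased by $1$, which flips their parity, while the equality $\mu_i = \mu_{i+1}$ is inherited from $\lambda_i = \lambda_{i+1}$. The involution on $\mu$'s indices must therefore pair $i$ with $i+1$, giving $i' = i+1$ and $(i+1)' = i$ in $\mu$. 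For every other index $j$, $\mu_j - \lambda_j \in \{0, -2\}$ so the parities, and consequently the involutory pairings, agree in $\lambda$ and $\mu$.

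For the first assertion, $(i, i+1) \notin \Delta(\mu)$ is immediate from the new involution. For any other pair $(j, j+1)$ I would case-split on $|j - i|$. When $|j - i| \geq 3$ the four relevant parts $\mu_{j-1}, \ldots, \mu_{j+2}$ are either all unchanged or all shifted by $-2$, so the defining conditions of $\Delta$ transfer trivially. When $j \in \{i-1, i+1\}$, the pair lies in neither $\Delta(\lambda)$ (the condition $\lambda_i \neq \lambda_{i+1}$ required for $(i-1, i)$ to be a $2$-step fails by Case 2; symmetrically for $(i+1, i+2)$) nor $\Delta(\mu)$ (because $i' = i+1$ in $\mu$). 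The substantive cases are $j = i-2$ and $j = i+2$, where exactly one of the four relevant indices is shifted by $-1$ rather than $0$ or $-2$. At $j = i-2$ the only non-trivial condition is $\mu_{i-1} \neq \mu_i$, i.e.\ $\lambda_{i-1} - \lambda_i \neq 1$; this is automatic because both $(i-1)' = i-1$ and $i' = i$ (required on either side) force $\lambda_{i-1}$ and $\lambda_i$ to share parity, so $\lambda_{i-1} - \lambda_i$ is even and positive, hence $\geq 2$. The case $j = i+2$ is symmetric.

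For the second assertion, I would compare $s(\lambda)$ and $s(\mu)$ summand by summand; only the terms indexed by $j \in \{i-1, i, i+1\}$ can change. The term at $j = i$ vanishes in both by Case 2. At $j = i-1$, relevant only for $i \geq 2$, the difference $\lambda_j - \lambda_{j+1}$ decreases by $1$, but goodness of $(i, i+1)$ forces $\lambda_{i-1}$ and $\lambda_i$ to have opposite parity, so $\lambda_{i-1} - \lambda_i$ is odd and $\lfloor (\lambda_{i-1} - \lambda_i)/2 \rfloor = \lfloor (\lambda_{i-1} - \lambda_i - 1)/2 \rfloor$. The identical argument at $j = i+1$, using that $\lambda_{i+2}$ has parity opposite to $\lambda_{i+1}$ (with the convention $\lambda_{n+1} := 0$ covering the edge case $i+1 = n$, where goodness forces $\lambda_{n-1} = \lambda_n$ odd in the orthogonal setting), handles the remaining term. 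The main obstacle is organisational rather than conceptual: one must verify the involution change carefully and deal with the edge cases $i = 1$ and $i+1 = n$; once the involution is understood, everything reduces to a routine parity check.
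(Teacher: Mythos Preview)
Your proof is correct and follows essentially the same strategy as the paper: a case-split on $|j-i|$, using the shared parity of $\lambda_{i-1}$ and $\lambda_i$ (respectively $\lambda_{i+1}$ and $\lambda_{i+2}$) to force the relevant differences to be at least $2$, and for $s(\lambda)$ the oddness of $\lambda_{i-1}-\lambda_i$ and $\lambda_{i+1}-\lambda_{i+2}$ coming from goodness. Your explicit analysis of how the involution changes in $\mu$ (pairing $i$ with $i+1$) and your simultaneous handling of both containments $\Delta(\mu)\subseteq\Delta(\lambda)\setminus\{(i,i+1)\}$ and its reverse are in fact slightly more thorough than the paper's write-up, which argues only the survival direction explicitly.
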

\begin{proof}
We shall suppose that there is a 2-step $$(j,j+1) \in
\Delta(\lambda)\setminus \big( \Delta(\lambda^{\ii})\cup\{(i,i+1)\}\big)$$
and derive a contradiction. Observe that if $j < i-2$ (resp. $j>
i+2$) then for $k \in\{ j-1, j, j+1, j+2\}$ we have that
$\lambda_{k}^{\ii}=\lambda_{k}-2$ (resp.
$\lambda_{k}^{\ii}=\lambda_{k}$). So $(j,j+1) \in \Delta(\lambda)$ if and
only if $(j,j+1) \in \Delta(\lambda^{\ii})$. It remains to show that if $j
= i\pm1$ or $j = i \pm 2$ and $(j,j+1) \in \Delta(\lambda)$ then $(j,j+1) \in
\Delta(\lambda^{\ii})$. If $j = i\pm1$ and $(j,j+1) \in \Delta(\lambda)$
then $\lambda_i \neq \lambda_{i+1}$ contradicting the fact that Case
2 occurs for $\lambda$ at index $i$.

Suppose $j = i-2$. Then $(j,j+1), (j+2,j+3) \in \Delta(\lambda)$ and hence
$\lambda_{j+1} \neq \lambda_{j+2}$ and $(j+1)'=j+1$, $(j+2)'=j+2$.
As a consequence $\lambda_{j+1} - \lambda_{j+2}$ is even implying
that $\lambda_{j+1} - \lambda_{j+2}\geq 2$ and  $\lambda_{j+1}^{\ii}
\neq \lambda_{j+2}^{\ii}$. Since for $k\in\{j-1, j , j+1\}$ the
equality $\lambda_{k}^{\ii}=\lambda_k-2$ holds, we conclude that $(j,j+1)
\in \Delta(\lambda^{\ii})$. A similar argument shows that if $j =
i+2$ then $(j,j+1) \in \Delta(\lambda)$ implies $(j,j+1) \in
\Delta(\lambda^{\ii})$. We conclude that $\Delta(\lambda^{\rm{\ii}})
= \Delta(\lambda) \setminus \{(i,i+1)\}$.

Now suppose $(i,i+1)$ is a good 2-step of $\lambda$.  Since
$\lambda_{i+1}-\lambda_{i+2}$ and $\lambda_{i-1}-\lambda_i$ if $i>1$
are odd we have that
$$\big\lfloor(\lambda_{i+1}^\ii-\lambda_{i+2}^\ii)/2\big\rfloor
=\lfloor((\lambda_{i+1}-1)-\lambda_{i+2})/2\big\rfloor=
\lfloor(\lambda_{i+1}-\lambda_{i+2})/2\big\rfloor$$
and
$$\big\lfloor(\lambda_{i-1}^\ii-
\lambda_{i}^\ii)/2\big\rfloor=
\big\lfloor((\lambda_{i-1}-2)-(\lambda_i-1))/2\big
\rfloor=
\big\lfloor(\lambda_{i-1}-\lambda_{i})/2\big\rfloor$$
if $i>1$.
As $\lambda_j^\ii=\lambda_j$ for $j\not\in\{i,i+1\}$ it
follows that  $s(\lambda^\ii)=s(\lambda)$ as claimed.
\end{proof}

\begin{lem}\label{deltain}
If $\emph{\ii}$ is an admissible sequence for $\lambda$ then
$\Delta(\lambda^{\emph{\ii}}) \subseteq \Delta(\lambda)$.
\end{lem}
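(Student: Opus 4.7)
The plan is to induct on $|\ii|$, with the base case $|\ii|=0$ being immediate since $\lambda^\emptyset = \lambda$. For $|\ii| \geq 1$, write $\ii = (\ii',i_l)$ with $\ii'$ of length $|\ii|-1$; by Remark~\ref{remark1}(ii) we have $\lambda^\ii = (\lambda^{\ii'})^{(i_l)}$, so combined with the inductive hypothesis $\Delta(\lambda^{\ii'}) \subseteq \Delta(\lambda)$, the problem reduces to proving the single-step statement: if $\mu' = \mu^{(i)}$ is obtained from $\mu \in \mathcal{P}_\epsilon(M)$ by one iteration of the KS algorithm, then $\Delta(\mu') \subseteq \Delta(\mu)$.

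When Case~2 of the algorithm applies at index $i$, the stronger equality $\Delta(\mu') = \Delta(\mu) \setminus \{(i,i+1)\}$ is already provided by Lemma~\ref{deltain2}, so it remains only to treat Case~1, where $\mu_i - \mu_{i+1} \geq 2$ and $\mu'_k = \mu_k - 2$ for $k \leq i$ while $\mu'_k = \mu_k$ for $k > i$. I would first observe that the involution $k \mapsto k'$ associated to $\mu$ coincides with that associated to $\mu'$: parities of all parts are preserved, so condition (3) of Lemma~\ref{nilpotents} transfers, and the only potential obstruction to preserving the equality $\mu_k = \mu_{k'}$ across the cut $k = i$, $k' = i+1$ would force $\mu_i = \mu_{i+1}$, which is ruled out by Case~1. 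Hence the involution-type conditions $j' = j$ and $(j+1)' = j+1$ appearing in the definition of $\Delta$ transfer directly between $\mu$ and $\mu'$.

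To conclude, I would fix $(j, j+1) \in \Delta(\mu')$ and verify the inequality chain $\mu_{j-1} \neq \mu_j \geq \mu_{j+1} \neq \mu_{j+2}$. Since the consecutive differences of $\mu'$ agree with those of $\mu$ except at index $i+1$, where $\mu'_i - \mu'_{i+1} = (\mu_i - \mu_{i+1}) - 2$, a short case split on the position of $j$ relative to $i$, namely the disjoint possibilities $j \leq i-2$, $j \in \{i-1, i, i+1\}$, and $j \geq i+2$, handles all configurations. The only non-routine step is that whenever the $\mu$-chain requires $\mu_i \neq \mu_{i+1}$ (which arises in the sub-cases $j = i-1$ and $j = i+1$), this inequality follows at once from the Case~1 hypothesis $\mu_i \geq \mu_{i+1} + 2$, rather than from the weaker data $\mu'_i \neq \mu'_{i+1}$ inherited from $\mu'$. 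The main obstacle is precisely this bookkeeping at the boundary indices, where one must invoke the strict inequality built into Case~1 and not merely the surviving conditions on $\mu'$.
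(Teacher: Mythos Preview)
Your proposal is correct and follows essentially the same strategy as the paper: reduce by induction and transitivity (Remark~\ref{remark1}(ii)) to a single step, dispatch Case~2 via Lemma~\ref{deltain2}, and handle Case~1 by noting that parities (hence the conditions $j'=j$, $(j+1)'=j+1$) are preserved and then verifying the remaining inequalities by a position-based case split on $j$ relative to $i$. Your finer case split, which isolates $j=i-1$ and explicitly invokes the Case~1 hypothesis $\mu_i-\mu_{i+1}\ge 2$ there, is in fact slightly more accurate than the paper's grouping, which lumps $j=i-1$ into the ``$j<i$'' case and asserts an equality of differences that fails at that index (though the conclusion there is still correct for exactly the reason you give).
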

\begin{proof}
In view of Lemma \ref{deltain2} and Remark \ref{remark1}(ii) it will
suffice to prove the current lemma when $\ii= (i)$ and $i$ is an
index at which Case 1 occurs for $\lambda$. Suppose $(j,j+1) \in
\Delta(\lambda^\ii)$. Then since Case 1 preserves the parity of the
entries of $\lambda$ (that is to say $\lambda_k^\ii \equiv \lambda_k
\mod 2$ for $1\leq k \leq n$), we deduce that $j'=j$ and
$(j+1)'=j+1$. If $j<i$ or $j>i+1$ then $\lambda_{j-1} - \lambda_j =
\lambda_{j-1}^\ii - \lambda_j^\ii$ and  $\lambda_{j+1} -
\lambda_{j+2}=\lambda_{j+1}^\ii - \lambda_{j+2}^\ii$ showing that $(j,j+1)
\in \Delta(\lambda)$ in these cases. If $j=i+1$ then  $\lambda_{j-1}
- \lambda_j = \lambda_{j-1}^\ii - \lambda_j^\ii+2$ and
$\lambda_{j+1} - \lambda_{j+2} = \lambda_{j+1}^\ii -
\lambda_{j+2}^\ii$. Hence $(j,j+1)\in\Delta(\lambda)$. Finally, if $j=i$
then  $\lambda_{j-1} - \lambda_j = \lambda_{j-1}^\ii -
\lambda_j^\ii+2$ and $\lambda_{j+1} - \lambda_{j+2} =
\lambda_{j+1}^\ii - \lambda_{j+2}^\ii$. Thus $(j,j+1)\in\Delta(\lambda)$
in all cases and our proof is complete.
\end{proof}

\begin{lem}\label{goodinherit}
If $(i, i+1)$ is a good 2-step for $\lambda$, $\emph{\ii}$ is an
admissible sequence and $(i,i+1) \in \Delta(\lambda^\emph{\ii})$ then
$(i,i+1)$ is a good 2-step for $\lambda^\emph{\ii}$.
\end{lem}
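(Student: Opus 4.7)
My plan is to reduce to a single step of the KS algorithm by induction on $|\ii|$. Iterating Lemma~\ref{deltain} along the truncations $\ii_1\subset\ii_2\subset\cdots\subset\ii$ shows that the hypothesis $(i,i+1)\in\Delta(\lambda^\ii)$ forces $(i,i+1)\in\Delta(\lambda^{\ii_k})$ for every $k$, so the induction hypothesis applied to $\ii_{|\ii|-1}$ lets me assume that $(i,i+1)$ is a good $2$-step of $\mu:=\lambda^{\ii_{|\ii|-1}}$, and it only remains to transfer goodness to $\nu:=\lambda^\ii$, which is obtained from $\mu$ by a single operation at some index $j=i_{|\ii|}$.

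The key will be to track the parities of $\mu_{i-1},\mu_i,\mu_{i+1},\mu_{i+2}$, since goodness of $(i,i+1)$ is exactly the assertion that the boundary entries $\mu_{i-1}$ (if $i>1$) and $\mu_{i+2}$ have the opposite parity to $\mu_i$. Case~1 of the algorithm subtracts $2$ from each of $\mu_1,\ldots,\mu_j$ and therefore preserves every parity, so goodness passes to $\nu$ for free. Case~2 at $j$ subtracts $1$ from $\mu_j$ and $\mu_{j+1}$ (and $2$ from everything earlier), hence parities are unchanged except possibly at positions $j$ and $j+1$; if $\{j,j+1\}$ is disjoint from $\{i-1,i,i+1,i+2\}$, the parities relevant to goodness are again untouched.

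The remaining task will be to rule out the overlapping possibilities for Case~2. If $j=i$, Lemma~\ref{deltain2} removes $(i,i+1)$ from $\Delta(\nu)$, contradicting our hypothesis. If $j=i-1$, the requirement $(i-1,i)\in\Delta(\mu)$ forces $\epsilon(-1)^{\mu_{i-1}}=\epsilon(-1)^{\mu_i}=-1$, so $\mu_{i-1}$ and $\mu_i$ share a parity, contrary to goodness. If $j=i+1$, Case~2 demands $\mu_{i+1}=\mu_{i+2}$, directly contradicting $\mu_{i+1}\ne\mu_{i+2}$ from $(i,i+1)\in\Delta(\mu)$. If $j=i-2$ or $j=i+2$, then $(j,j+1)\in\Delta(\mu)$ forces $\epsilon(-1)^{\mu_{i-1}}=-1$ or $\epsilon(-1)^{\mu_{i+2}}=-1$ respectively, placing that boundary entry in the same parity class as $\mu_i$ and again contradicting goodness. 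Every Case~2 operation compatible with our hypotheses therefore has $\{j,j+1\}$ disjoint from $\{i-1,i,i+1,i+2\}$, and the induction closes.

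The main obstacle is simply the bookkeeping in the third paragraph: five potential overlaps must be checked and each is ruled out by a different combination of the goodness hypothesis with the parity-based definitions of $\Delta(\mu)$ and of the involution $i\mapsto i'$. Once that interaction is clear each sub-case is immediate.
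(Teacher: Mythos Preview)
Your proof is correct and follows essentially the same route as the paper's: reduce to a single step of the algorithm, observe that Case~1 preserves all parities, and rule out the overlapping Case~2 positions $j\in\{i-2,i-1,i,i+1,i+2\}$ by combining the definition of $\Delta$ with the goodness hypothesis. The only cosmetic differences are that the paper groups $j\in\{i-2,i-1\}$ and $j\in\{i+1,i+2\}$ together via the parity argument (you instead dispatch $j=i+1$ with the sharper observation $\mu_{i+1}=\mu_{i+2}$ versus $\mu_{i+1}\ne\mu_{i+2}$), and that under the paper's convention $\ii_j=(i_1,\ldots,i_{j-1})$ the penultimate partition is $\lambda^{\ii_{|\ii|}}$ rather than $\lambda^{\ii_{|\ii|-1}}$.
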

\begin{proof}
It suffices to prove the lemma when $\ii = (i_1)$ is an admissible
of length 1. If Case 1 occurs at index $i_1$ then $\lambda_j^\ii -
\lambda_{j+1}^\ii \equiv \lambda_j - \lambda_{j+1} \mod 2$ for all
$j$. Since $(i,i+1)$ is good for $\lambda$ it follows that
$\lambda_{i-1}^\ii - \lambda_i^\ii$ is odd (or $i=1$) and
$\lambda_{i+1}^\ii - \lambda_{i+2}^\ii$ is odd, so that $(i,i+1)$ is
a good 2-step for $\lambda^\ii$. Now suppose Case 2 occurs for
$\lambda$ at index $i_1$. We may assume that $i_1 \neq i$. If $i_1 =
i-1$ or $i_1 = i-2$ then $(i_1,i_1+1) \in \Delta(\lambda)$ implies
$\epsilon(-1)^{\lambda_{i-1}} = -1$ and $\lambda_{i-1} - \lambda_i$
is even, contrary to the assumption that the 2-step $(i,i+1)$ is
good for $\lambda$. Similarly, if $i_1 = i+1$ or $i_1 = i+2$ then
$\lambda_{i+1} - \lambda_{i+2}$ is even, contradicting the
assumption that $(i,i+1)$ is good. It follows that $i_1<i-2$ or $i_1
> i+2$, from whence it immediately follows that $(i,i+1)$ is a good
2-step for $\lambda^\ii$.
\end{proof}

\begin{cor}\label{singinherit}
If $\lambda$ is non-singular then $\lambda^{\emph{\ii}}$ is
non-singular for any admissible sequence $\emph{\ii}$.
\end{cor}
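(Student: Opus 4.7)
The plan is to prove this corollary as a direct combination of the two immediately preceding lemmas, so it will be essentially formal once they are invoked correctly.

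First I would unpack the definitions: $\lambda^{\ii}$ is non-singular if and only if every 2-step of $\lambda^{\ii}$ is good, i.e.\ every element of $\Delta(\lambda^{\ii})$ is good for $\lambda^{\ii}$. So I need to verify this for an arbitrary $(i,i+1)\in \Delta(\lambda^{\ii})$.

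Next, given such a 2-step $(i,i+1)$, I apply Lemma~\ref{deltain} to conclude that $(i,i+1) \in \Delta(\lambda)$ as well. Since $\lambda$ is assumed non-singular, this $(i,i+1)$ is automatically a good 2-step of $\lambda$. Now Lemma~\ref{goodinherit} applies with exactly these hypotheses, yielding that $(i,i+1)$ is also a good 2-step of $\lambda^{\ii}$.

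Since $(i,i+1)\in\Delta(\lambda^{\ii})$ was arbitrary, every 2-step of $\lambda^{\ii}$ is good, so $\lambda^{\ii}$ is non-singular. There is no real obstacle here: the content has been absorbed into Lemmas~\ref{deltain} and~\ref{goodinherit} (the first controls how $\Delta$ evolves under an admissible step, the second controls how goodness is preserved), and the corollary is just the clean packaging of these two facts. The only thing worth a brief comment is that no induction on $|\ii|$ is explicitly needed, because both lemmas already cover arbitrary admissible sequences; alternatively one could state the proof as a one-line induction on $|\ii|$ using the $|\ii|=1$ versions of those lemmas, which is perhaps more in line with how they were proved in the text.
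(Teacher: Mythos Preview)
Your proof is correct and follows exactly the same approach as the paper: take an arbitrary $(i,i+1)\in\Delta(\lambda^{\ii})$, use Lemma~\ref{deltain} to pull it back to $\Delta(\lambda)$, invoke non-singularity of $\lambda$ to see it is good there, and then apply Lemma~\ref{goodinherit} to conclude it remains good for $\lambda^{\ii}$.
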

\begin{proof}
If $(i,i+1) \in \Delta(\lambda^\ii)$ then $(i,i+1) \in \Delta(\lambda)$ by
Lemma~\ref{deltain}. Since $\lambda$ is non-singular, $(i,i+1)$ is a
good 2-step for $\lambda$. By Lemma~\ref{goodinherit}, $(i,i+1)$ is
good for $\lambda^\ii$.
\end{proof}

\subsection{The length of admissible sequences}\label{3.3}
In this section we shall give a combinatorial formula for the
maximal length of admissible sequences for $\lambda$. The formula
shall be of central importance to our results on sheets. First we
shall need  some further terminology related to partitions
$\lambda=(\lambda_1,\ldots,\lambda_n)\in \mathcal{P}_\epsilon(N)$.

\begin{defn}
A sequence $1\leq i_1 < i_2 < \cdots < i_k < n$ with $k\ge 2$ is
called a \emph{2-cluster} of $\lambda$ whenever $(i_j,i_j+1) \in
\Delta(\lambda)$ and $i_{j+1} = i_j + 2$ for all $j$. Analogous to
the terminology for 2-steps we say that a 2-cluster $i_1,...,i_k$
\emph{has a bad boundary} if either of the following conditions
holds:
\begin{itemize}
\item{$\lambda_{i_1-1} - \lambda_{i_1} \in 2\N$;}
\smallskip
\item{$\lambda_{i_k+1} - \lambda_{i_k+2} \in 2\N$}
\end{itemize}
(if $i_1=1$ the the first condition should be omitted). A \emph{bad
2-cluster} is one which has a bad boundary, whilst a \emph{good
2-cluster} is one without a bad boundary.
\end{defn}
\begin{lem}
A good 2-cluster is maximal in the sense that it is not a proper
subsequence of any 2-cluster.
\end{lem}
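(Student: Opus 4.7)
The plan is to prove the contrapositive: if a 2-cluster $i_1 < i_2 < \cdots < i_k$ can be properly extended, then it has a bad boundary. Any proper extension must either prepend $i_0 := i_1 - 2$ on the left (which requires $i_1 \geq 3$) or append $i_{k+1} := i_k + 2$ on the right, since by the 2-cluster definition consecutive indices differ by exactly $2$. The two cases are symmetric, so the content of the argument lives in one of them; I shall describe the right-extension case.

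Suppose the right extension is possible, so $(i_k+2, i_k+3) \in \Delta(\lambda)$. The definition of $\Delta(\lambda)$ forces $(i_k+2)' = i_k+2$, which by the parity characterisation of the involution in Lemma~\ref{nilpotents} means $\epsilon(-1)^{\lambda_{i_k+2}} = -1$. On the other hand, the existing 2-step $(i_k, i_k+1) \in \Delta(\lambda)$ forces $(i_k+1)' = i_k+1$, hence $\epsilon(-1)^{\lambda_{i_k+1}} = -1$. So $\lambda_{i_k+1}$ and $\lambda_{i_k+2}$ have the same parity. Combining this with the partition ordering $\lambda_{i_k+1} \geq \lambda_{i_k+2}$ and with the distinctness $\lambda_{i_k+1} \neq \lambda_{i_k+2}$ (coming from the condition $\lambda_{(i_k+2)-1} \neq \lambda_{i_k+2}$ in the $\Delta$-defining inequalities for the index $i_k+2$), I conclude that $\lambda_{i_k+1} - \lambda_{i_k+2}$ is a strictly positive even integer, i.e.\ lies in $2\N$. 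This is exactly the bad right-boundary condition, so the original 2-cluster is not good.

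The left extension is dealt with identically: if $(i_1-2, i_1-1) \in \Delta(\lambda)$ then $(i_1-1)' = i_1-1$ and $(i_1)' = i_1$ (the latter from the existing 2-step at $i_1$), so $\lambda_{i_1-1}$ and $\lambda_{i_1}$ share a parity; together with $\lambda_{i_1-1} \geq \lambda_{i_1}$ and $\lambda_{i_1-1} \neq \lambda_{i_1}$ this puts $\lambda_{i_1-1} - \lambda_{i_1}$ in $2\N$, which is the bad left-boundary condition. Note that when $i_1 = 1$ (and also $i_1 = 2$, since then $i_0 = 0$ is not permitted) no left extension exists, matching the convention in the bad-boundary definition that the first clause is omitted in the $i_1 = 1$ case.

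I do not anticipate a serious obstacle: the whole argument is a direct unwinding of the definition of $\Delta(\lambda)$, together with the single observation that simultaneous involution-fixedness of two adjacent indices forces their $\lambda$-values to share parity, while the strict $\neq$ in the $\Delta$-conditions forces them to differ, and the Young-diagram monotonicity fixes the sign. The only care needed is in handling the edge case $i_1 \in \{1,2\}$, which is accommodated by the convention on the left-boundary condition.
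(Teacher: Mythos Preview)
Your proof is correct and follows essentially the same approach as the paper: both arguments hinge on the observation that involution-fixedness of adjacent indices forces equal parity of the corresponding $\lambda$-values, combined with the strict inequality built into the $\Delta$-condition. The paper argues directly (good boundary $\Rightarrow$ odd difference $\Rightarrow$ opposite parity $\Rightarrow$ adjacent index not fixed by the involution $\Rightarrow$ no extension), while you argue the contrapositive, but the content is identical.
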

\begin{proof}
If $i_1,...,i_k$ is a good 2-cluster then $\lambda_{i_1-1} -
\lambda_{i_1}, \lambda_{i_k+1} - \lambda_{i_k+2} \notin 2\N$. The
fact that $(i_1,i_1+1), (i_k,i_k+1) \in \Delta(\lambda)$ means that
$\epsilon(-1)^{\lambda_{i_1}} = \epsilon(-1)^{\lambda_{i_k+1}} =
-1$. Combining these few observations we get
$\epsilon(-1)^{\lambda_{i_1-1}} = \epsilon(-1)^{\lambda_{i_k+2}} =
1$ and so $(i_1-2,i_1-1) \notin \Delta(\lambda)$ and $(i_k + 2,i_k+3) \notin
\Delta(\lambda)$.
\end{proof}

We introduce the notations:
\begin{eqnarray*}
\Delta_{\rm bad}(\lambda) &:=& \{\text{the bad 2-steps of $\lambda$}\};\\
\Sigma(\lambda) &:=& \{\text{the good 2-clusters of $\lambda$}\};
\end{eqnarray*}
and write
$$z(\lambda) = s(\lambda) + |\Delta(\lambda)| - \big(|\Delta_{\rm bad}(\lambda)| -
|\Sigma(\lambda)|\big).$$
It is immediate from the definitions that $|\Delta_{\rm bad}(\lambda)| \ge
|\Sigma(\lambda)|$ and $|\Delta_{\rm bad}(\lambda)| = |\Sigma(\lambda)|$ if
and only if $\Delta_{\rm bad}(\lambda)=\emptyset$.

\begin{lem}\label{stairslemma}
$|\Sigma(\lambda)| \geq |\Sigma(\lambda^{\emph\ii})|$ for length 1
admissible sequences $\emph\ii = (i)$, unless Case 2 occurs at $i$
and $$i-4, i-2, i, i+2, i+4$$ is a subsequence of a good 2-cluster,
in which case $|\Sigma(\lambda)| = |\Sigma(\lambda^\ii)| - 1$.
\end{lem}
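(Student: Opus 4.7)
The proof is a case analysis of how a single algorithm step transforms $\Delta(\lambda)$ and the $2$-clusters of $\lambda$. The governing dichotomy is between the two Cases: in Case~1 every $\lambda_j$ retains its parity and only the single difference $\lambda_i-\lambda_{i+1}$ changes (by $-2$), while in Case~2 the values $\lambda_i$ and $\lambda_{i+1}$ each decrease by $1$, so their parities flip and the parities of the nearby differences $\lambda_{i-1}-\lambda_i$ and $\lambda_{i+1}-\lambda_{i+2}$ flip as well. Since the definition of a good $2$-cluster depends on the parity of its outer boundary differences, Case~1 tends to preserve goodness, while Case~2 can toggle it at positions adjacent to $i$.

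I first treat Case~1. Lemma~\ref{deltain} gives $\Delta(\lambda^\ii)\subseteq\Delta(\lambda)$, so no new $2$-step arises and hence no $2$-cluster of $\lambda^\ii$ can fail to be a sub-configuration of a $2$-cluster of $\lambda$. Because parities are preserved, the outer boundary parities of any surviving cluster agree with those in $\lambda$. The only subcase in which $\Delta$ shrinks is $\lambda_i-\lambda_{i+1}=2$: the difference becomes $0$ and simultaneously removes $(i-1,i)$ and $(i+1,i+2)$ from $\Delta$ when they were present. The new outer boundaries of the fragments so produced are differences between self-paired values and therefore even, giving bad boundaries. Hence Case~1 never increases $|\Sigma|$.

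For Case~2, Lemma~\ref{deltain2} gives $\Delta(\lambda^\ii)=\Delta(\lambda)\setminus\{(i,i+1)\}$. A direct verification shows that if $(i,i+1)$ lies in no $2$-cluster of $\lambda$ then no cluster of $\lambda$ is altered. Otherwise write $(i,i+1)=(i_l,i_l+1)$ inside the $2$-cluster $i_1<\cdots<i_k$ and compute
\[
\lambda_{i-1}^{\ii}-\lambda_i^{\ii}=\lambda_{i-1}-\lambda_i-1,\qquad \lambda_{i+1}^{\ii}-\lambda_{i+2}^{\ii}=\lambda_{i+1}-\lambda_{i+2}-1,
\]
both of which are odd, because $i_l$ together with $i_l-1=i_{l-1}+1$ and $i_l+2=i_{l+1}$ (when the latter lie in the cluster) are self-paired, so the original differences $\lambda_{i-1}-\lambda_i$ and $\lambda_{i+1}-\lambda_{i+2}$ are even. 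Thus the freshly exposed interior boundaries are always good. Splitting into the subcases $k=2$ (no sub-cluster survives), $l\in\{1,k\}$ with $k\geq 3$ (one sub-cluster survives), and $1<l<k$ with $l\geq 3$ and $k-l\geq 2$ (two sub-clusters survive), and keeping track of which original outer boundary is inherited by each sub-cluster, one finds that the number of good clusters can strictly increase only in the last subcase when the original cluster was itself good; this is exactly the condition that $i-4,i-2,i,i+2,i+4$ form a subsequence of a good $2$-cluster of $\lambda$. In that scenario one good cluster is destroyed and two are created, giving $|\Sigma(\lambda^\ii)|=|\Sigma(\lambda)|+1$; in every other subcase a routine tally yields $|\Sigma(\lambda^\ii)|\leq|\Sigma(\lambda)|$.

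The principal obstacle is the parity bookkeeping in the remaining subcases, particularly when $(i,i+1)$ sits at an end of its cluster (so $l\in\{1,k\}$) or when the original cluster is bad: one must verify carefully that the inherited outer boundary on the unsplit side dominates any goodness newly introduced at the split, and simultaneously rule out that a parity flip at $i,i+1$ could improve the boundary of a cluster lying outside $\{i_1,\ldots,i_k\}$. The arguments are all finite and reduce to the two parity identities displayed above combined with the hypothesis that members of a cluster are self-paired.
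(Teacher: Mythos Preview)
Your approach mirrors the paper's closely: both do a case split on Case~1 versus Case~2, and within Case~2 analyse how the removal of $(i,i+1)$ fragments the maximal 2-cluster containing it. You organise by the position $l$ and length $k$; the paper organises by which of $(i-2,i-1)$, $(i+2,i+3)$ lie in $\Delta(\lambda)$. The parity bookkeeping is essentially identical.

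There are two gaps. First, a minor one: your enumeration ``$k=2$; $l\in\{1,k\}$ with $k\ge 3$; $1<l<k$ with $l\ge 3$ and $k-l\ge 2$'' is not exhaustive. You omit $l=2$ with $k\ge 3$ and $l=k-1$ with $k\ge 4$, where exactly one fragment of length $\ge 2$ survives even though $l\notin\{1,k\}$. These are handled by the same boundary computation, but should be listed.

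Second, and more substantively: your claim that ``the number of good clusters can strictly increase only in the last subcase when the original cluster was itself good'' is not correct, and this affects the lemma as stated. Take $\epsilon=-1$ and $\lambda=(14,12,12,10,10,8,8,6,6,4,4,1,1)$. The unique maximal 2-cluster is $2,4,6,8,10$, and it is \emph{bad} (left boundary $\lambda_1-\lambda_2=2$). Apply Case~2 at $i=6$. Then $\lambda^{(6)}=(12,10,10,8,8,7,7,6,6,4,4,1,1)$, and the right fragment $8,10$ has boundaries $\lambda_7^{\ii}-\lambda_8^{\ii}=1$ and $\lambda_{11}^{\ii}-\lambda_{12}^{\ii}=3$, both odd, so it is good. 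Hence $|\Sigma(\lambda)|=0<1=|\Sigma(\lambda^{\ii})|$, yet the exception clause does not apply because the ambient cluster is not good. The paper's proof skates over this with the sentence ``$\Sigma(\lambda)$ is only affected if $(i,i+1)$ is a bad 2-step in a good 2-cluster,'' which is equally unjustified. The lemma is only invoked inside the proof of Theorem~\ref{zismax}, and there the target inequality $z(\lambda)\ge z(\lambda^{\ii})+1$ does survive in such examples (in the one above, $z$ drops from $6$ to $5$); so this appears to be a misstatement of the lemma rather than a failure of the theorem. But neither your argument nor the paper's establishes the lemma as written.
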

\begin{proof}
We make the notation $\ii =(i)$. In this first paragraph we deal
with the possibility that Case 1 occurs for $\lambda$ at index $i$.
Let us consider some necessary conditions for $\Sigma(\lambda) \neq
\Sigma(\lambda^{\ii})$. We require that $(i-1,i)$ or $(i+1,i+2)$ lie in
$\Delta(\lambda)$, that the 2-steps $(i-1,i)$ or $(i+1,i+2)$ (or
both) constitute a 2-step in a good 2-cluster, and that $\lambda_i -
\lambda_{i+1} = 2$. Let us assume these conditions. If precisely one
of the two pairs $(i-1,i), (i+1,i+2)$ lies in $\Delta(\lambda)$ (we may assume $(i-1,i)
\in \Delta(\lambda)$) then it follows that the good 2-cluster in
question has the form $i_1\leq \cdots \leq i_k = i-1$. But
$\lambda_{i_k + 1} - \lambda_{i_k + 2} = 2$ then implies that the
2-cluster has a bad boundary; a contradiction. It follows that both
$(i-1,i)$ and $(i+1,i+2)$ lie in $\Delta(\lambda)$. Then we have a good
2-cluster $i_1 \leq \cdots \leq i-1 = i_l \leq i_{l+1} = i+1 \leq
\cdots \leq i_k$. However the sequences $i_1,i_2,...,i_{l-1}$ and
$i_{l+2},...,i_{k-1},i_k$ are either of length $\leq 1$, or are bad
2-clusters for $\lambda^\ii$, so $|\Sigma(\lambda)| =
|\Sigma(\lambda^\ii)| + 1$.

Now suppose Case 2 occurs at index $i$. Similar to the previous case
$\Sigma(\lambda)$ is only affected if $(i,i+1)$ is a bad 2-step in a
good 2-cluster. If precisely one of the two pairs $(i-2,i-1)$ and $(i+2,i+3)$ lie in
$\Delta(\lambda)$ (we may assume $(i-2,i-1)\in \Delta(\lambda))$ then such
a 2-cluster will take the form $i_1,...,i_k = i$. If $k > 2$ then
$i_1,...,i_{k-1}$ is a good 2-cluster for $\lambda^\ii$ so that
$|\Sigma(\lambda^\ii)| = |\Sigma(\lambda)|$. If $k = 2$ (we know
$k\geq 2$) then the 2-cluster is eradicated by the iteration of the
algorithm and $|\Sigma(\lambda^\ii)| = |\Sigma(\lambda)| - 1$.

Suppose that both $(i-2,i-1)$ and $(i+2,i+3)$ lie in $\Delta(\lambda)$. Then
$\Sigma(\lambda)$ is unaffected unless $i_1,...,i_j = i,...,i_k$ is
a good 2-cluster, which we shall assume from henceforth. Note that
$j \geq 2$ and $k - j \geq 1$ by assumption. If $j=2$ and $k-j=1$
then the good 2-cluster is no longer present for $\lambda^\ii$ and
$|\Sigma(\lambda)| = |\Sigma(\lambda^\ii)| - 1$. If $j> 2$ and
$k-j=1$ then $i_1,...,i_{j-1}$ is a good 2-cluster for $\lambda^\ii$
and $|\Sigma(\lambda)| = |\Sigma(\lambda^\ii)|$. The situation when
$j=2$ and $k-j>1$ is very similar. In the final case $j > 2$,
$k-j>1$ and $i-4, i-2, i, i+2, i+4$ is a subsequence of a good
2-cluster, as in the statement of the lemma. Here both $i-2j, ...,
i-2$ and $i,i+2,..., i+2k$ are good 2-clusters for $\lambda^\ii$ so
that $|\Sigma(\lambda)| = |\Sigma(\lambda^\ii)| - 1$ as required.
\end{proof}

Before continuing we shall need some notation. We define a
construction which takes $\lambda\in \PP(N)$ to $\lambda^S\in
\mathcal{P}_\epsilon(N-2k)$ for some $k\geq 0$. It is based entirely
on application of the algorithm. The partition $\lambda^S$ is call
\emph{the shell of $\lambda$} and is constructed as follows: for all
$1\leq i \leq n$ we apply Case 1 repeatedly; if $\lambda_{i} -
\lambda_{i+1} \in 2\N$ and if $(i-1,i)$ or $(i+1,i+2)$ lie in
$\Delta(\lambda)$ then apply Case 1 until $\lambda^\ii_{i} -
\lambda^\ii_{i+1} = 2$; if we are not in the previous situation then
apply Case 1 until $\lambda^\ii_{i} - \lambda^\ii_{i+1} \in
\{0,1\}$; finally apply Case 2 at every index $i$ such that
$(i,i+1)$ is a good 2-step. In order to keep the notation consistent
we may regard $S$ as the admissible sequence of indices (chosen in
ascending order) used to construct $\lambda^S$.

Retain the convention $\lambda = (\lambda_1,...,\lambda_n)$ with
$\sum\lambda_i = N$. In order to make use of the shell $\lambda^S$
we shall interest ourselves firstly in the set of partitions which
equal their own shell $\lambda =   \lambda^S$, and secondly in the
relationship between a partition and its shell. It turns out that
certain properties of a partition $\lambda = \lambda^S$ are
controlled by the properties of certain special partitions
constructed from $\lambda$. A \emph{profile} $\mu$ of $\lambda$ is a
partition constructed in the following manner: choose indices
$(j,k)$ with $0 < j \leq k \leq n+1$ such that $i = i'$ for all $j\leq
i < k$, and such that $j-1\neq (j-1)'$ (or $j-1 = 0$) and $k\neq k'$
(or $k = n+1)$. Define $\mu = (\mu_1,...,\mu_{k-j})$ by the rule
$$\mu_i = \lambda_{i + (j-1)} - \lambda_k.$$ If $k < n+1$ then in
order to preserve the condition $i=i'$ we regard $\mu$ as an element
of $\mathcal{P}_{1}(\sum_{i=j}^{k-1} \lambda_i - (k-j)\lambda_k)$. If $k = n+1$ then
$\lambda_{k} = 0$ and we may regard $\mu$ is an element of
$\mathcal{P}_{\epsilon}(\sum_{i=j}^{n} \lambda_i)$. We say that the
profile $\mu$ constructed in this manner is \emph{of type $(j,k)$},
and we include Figure~\ref{pikcha_B} to show what is intended by the
definition.
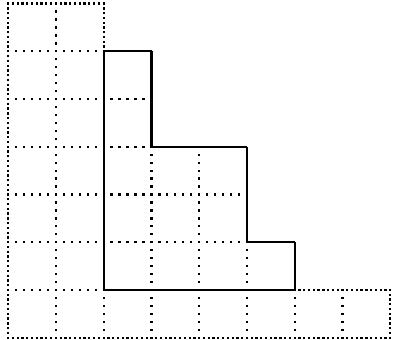
\begin{figure}[htb]
\setlength{\unitlength}{0.025in}
\begin{center}
\begin{picture}(80,70)(0,0)

\qbezier[70](0,0),(0,35),(0,70)
\qbezier[20](0,70),(10,70),(20,70)
\qbezier[10](20,70),(20,65),(20,60)
\qbezier[20](60,10),(70,10),(80,10)
\qbezier[10](80,10),(80,5),(80,0)
\qbezier[80](80,0),(40,0),(0,0)

\put(20,10){\line(0,1){50}}
\put(20,60){\line(1,0){10}}
\put(30,60){\line(0,-1){20}}
\put(30,40){\line(1,0){20}}
\put(50,40){\line(0,-1){20}}
\put(50,20){\line(1,0){10}}
\put(60,20){\line(0,-1){10}}
\put(60,10){\line(-1,0){40}}

\qbezier[42](10,0),(10,35),(10,70)
\qbezier[6](20,0),(20,5),(20,10)
\qbezier[24](30,0),(30,20),(30,40)
\qbezier[24](40,0),(40,20),(40,40)
\qbezier[12](50,0),(50,10),(50,20)
\qbezier[6](60,0),(60,5),(60,10)
\qbezier[6](70,0),(70,5),(70,10)

\qbezier[12](0,10),(10,10),(20,10)
\qbezier[30](0,20),(25,20),(50,20)
\qbezier[30](0,30),(25,30),(50,30)
\qbezier[18](0,40),(15,40),(30,40)
\qbezier[18](0,50),(15,50),(30,50)
\qbezier[12](0,60),(10,60),(20,60)

\end{picture}
\end{center}
\caption{The dotted perimeter represents the Young diagram of the partition
$\lambda=(7,7,6,4,4,2,1,1) \in \mathcal{P}_{-1}(32)$. The solid perimeter represents
the profile of $\lambda$ of type $(3,7)$.}\label{pikcha_B}
\end{figure}

Suppose $\mu$ is a profile of $\lambda$ of type $(j,k)$ and $\ii = (i_1,...,i_l)$ is
an admissible sequence for $\mu$. Then the \emph{$j$-adjust} of $\ii$ is the sequence
$$\jmath(\ii) = (i_1 + (j-1), i_2 + (j-1), ..., i_l + (j-1)).$$
It is clear that $\jmath(\ii)$ is an admissible sequence for $\lambda$.

\begin{prop}\label{reductionprop}
Suppose $\lambda$ is equal to its shell and let $\mu(1), \mu(2),..., \mu(l)$ be a
complete set of distinct profiles for $\lambda$, with $\mu(m)$ of type $(j_m, k_m)$. Then the following hold:
\begin{enumerate}
\item{$z(\lambda) = \sum_{i=1}^l z(\mu(i))$.}

\smallskip

\item{If $\ii(m)$ is an admissible sequence for $\mu(m)$ then
$$(\jmath_1(\ii(1)), \jmath_2(\ii(2)),...,\jmath_l(\ii(l)))$$ is an admissible sequence for $\lambda$,
where this last sequence is obtained by concatenating the sequences $\jmath_m(\ii(m))$.
}
\end{enumerate}
\end{prop}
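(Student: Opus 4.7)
The plan is to reduce both parts to a clean structural picture of $\lambda=\lambda^S$. Since paired indices come in adjacent pairs $\{i,i+1\}$ with $i'=i+1$, the self-paired indices of $\{1,\ldots,n\}$ form maximal runs, which are precisely the ranges $[j_m,k_m-1]$ of the profiles $\mu(1),\ldots,\mu(l)$; the complementary ``paired regions'' have even length $\ge 2$ and separate consecutive profiles. Combining the parity constraints from Lemma~\ref{nilpotents} (namely $\epsilon(-1)^{\lambda_i}=-1$ at self-paired indices and $\epsilon(-1)^{\lambda_i}=+1$ at paired indices) with the definition of the shell, I would first verify that in $\lambda=\lambda^S$ every consecutive difference $\lambda_i-\lambda_{i+1}$ inside a paired region equals $0$ and every difference bridging a profile with an adjacent paired region equals $1$; in particular these positions contribute nothing to $s(\lambda)$.

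For part~(1) I would construct, for each $m$, a bijection between $\{(i,i+1)\in\Delta(\lambda):j_m\le i\le k_m-2\}$ and $\Delta(\mu(m))$ via the shift $(i,i+1)\mapsto(i-j_m+1,i-j_m+2)$. Because any $2$-step needs both its indices self-paired, every $2$-step of $\lambda$ arises from a unique profile. Since $\mu_i^{(m)}=\lambda_{i+j_m-1}-\lambda_{k_m}$ differs from $\lambda_{i+j_m-1}$ by the constant $\lambda_{k_m}$, all differences (and hence the conditions cutting out $\Delta$) are preserved; a direct parity check using the parity of $\lambda_{k_m}$ shows that the good/bad classification translates exactly, and the same shift identifies good $2$-clusters of $\lambda$ with those of $\mu(m)$. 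Matching floor contributions within each profile (including the final position $\lfloor(\lambda_{k_m-1}-\lambda_{k_m})/2\rfloor=\lfloor\mu_{k_m-j_m}^{(m)}/2\rfloor$) together with the vanishing of floor contributions outside profiles yields that each of $s$, $|\Delta|$, $|\Delta_{\rm bad}|$, $|\Sigma|$ splits as a sum over $m$ of the corresponding invariant of $\mu(m)$, whence $z(\lambda)=\sum_m z(\mu(m))$.

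For part~(2) I would argue by induction on $m$. Since paired regions have length $\ge 2$, the largest index modified by any Case~$1$ move at $i\le k_{m-1}-1$ is $i$, and by any Case~$2$ move at $i\le k_{m-1}-2$ is $i+1\le k_{m-1}-1$; hence no entry $\lambda_t$ with $t\ge k_{m-1}$ is touched by $\jmath_1(\ii(1)),\ldots,\jmath_{m-1}(\ii(m-1))$, and in particular the boundary entries $\lambda_{j_m-1},\lambda_{k_m}$ together with the entire $m$-th profile remain equal to their values in the original $\lambda$. Running a second induction on $|\ii(m)|$, each successive move of the adjusted sequence corresponds, under the shift $i=p+j_m-1$, to the move at position $p$ of the current state of $\mu(m)$: Case~$1$ admissibility reads $\mu_p-\mu_{p+1}\ge 2$, which is literally the $\lambda$-side condition $\lambda_i-\lambda_{i+1}\ge 2$; Case~$2$ admissibility requires $(p,p+1)\in\Delta$ for the current $\mu$ and $\mu_p=\mu_{p+1}$, which via the pair/self-pair correspondence from part~(1) and the identity $\mu_p-\mu_{p+1}=\lambda_i-\lambda_{i+1}$ translates into the $\lambda$-side admissibility of the corresponding move.

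The main obstacle is controlling the parity shuffling under Case~$2$ moves, which flip the parities of $\lambda_i,\lambda_{i+1}$ and thus temporarily convert two self-paired indices inside a profile into a paired block. The saving feature is that a Case~$2$ move applied within profile $m$ only modifies entries $\lambda_t$ with $t\le k_m-1$, so the pairing status of the boundary indices $j_m-1$ (paired) and $k_m$ (either paired or $k_m=n+1$) is preserved at every intermediate stage. This keeps the profile bijection of part~(1) valid throughout the execution of $\jmath_m(\ii(m))$ and allows the induction to pass cleanly from profile $m$ to profile $m+1$.
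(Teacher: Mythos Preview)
Your proposal is correct and follows essentially the same architecture as the paper's proof: both arguments exploit the structural decomposition of $\lambda$ into self-paired ``profile'' runs separated by paired blocks of length $\ge 2$, and both prove Part~(2) by inducting over the profiles, using that moves inside profile $m$ only touch entries with index $\le k_m-1$.

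The one notable difference is in Part~(1). You verify that each of the four quantities $s$, $|\Delta|$, $|\Delta_{\rm bad}|$, $|\Sigma|$ decomposes additively over the profiles, which requires the edge-case parity checks you outline. The paper shortcuts this: since the shell construction explicitly applies Case~2 at every good $2$-step, $\lambda=\lambda^S$ forces every $2$-step of $\lambda$ (and of each $\mu(m)$) to be bad, so $|\Delta|=|\Delta_{\rm bad}|$ on both sides and $z$ collapses to $s+|\Sigma|$. One then only needs that $s$ and $|\Sigma|$ decompose, which follows immediately from your observation that differences of size $2$ and good $2$-clusters live entirely inside profiles. Your approach buys a more explicit bijective picture (and in particular your parity check for the good/bad transfer is correct, since bridging differences are odd); the paper's buys brevity. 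For Part~(2) the paper is terser than you --- it takes the admissibility of a single $\jmath_m(\ii(m))$ for $\lambda$ as ``clear'' and then just observes that earlier profiles leave $\lambda_r$ unchanged for $r\ge j_{m+1}$ --- and it also remarks that this part holds even without the hypothesis $\lambda=\lambda^S$, which your argument likewise does not use.
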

\begin{proof}
Since $\lambda = \lambda^S$ all differences $\lambda_i-\lambda_{i+1}$ are equal to $0,1,$ or $2$. If
$\lambda_i - \lambda_{i+1} = 2$ then necessarily $(i-1,i)\in \Delta(\lambda)$ or $(i+1,i+2) \in \Delta(\lambda)$.
In either case $i=i'$, $i+1 = (i+1)'$ (or $i = n$) and it follows that there exists a profile of type $(j,k)$
with $j \leq i$ and $i+1 < k$ (or $i < k$ when $i = n$). Then each index $i$ for which $\lambda_i - \lambda_{i+1} = 2$
contributes 1 to $s(\lambda)$ and 1 to $\sum_{j=1}^l s(\mu(j))$ so that $s(\lambda) = \sum_{j=1}^l s(\mu(j))$. The
condition $\lambda = \lambda^S$ also implies that all 2-steps are bad 2-steps so that
$|\Delta(\lambda)| = |\Delta_{\rm bad}(\lambda)|$. Similarly $\mu(m) = \mu(m)^S$ so
$|\Delta(\mu(m))| = |\Delta_{\rm bad}(\mu(m))|$ for all $m$, and it remains to prove
that $|\Sigma(\lambda)| = \sum_{i=1}^l |\Sigma(\mu(i))|$. This follows from the fact that
all good 2-clusters $i_1\leq \cdots \leq i_l$ fulfil $i = i'$ for all $i_1\leq i \leq i_l+1$
so for each such 2-cluster there exists profile of type $(j,k)$ with $j\leq i_1$ and $i_l +1< k$.
Part (1) follows.

The second actually holds even when $\lambda \neq \lambda^S$. For obvious reasons the indices of the distinct
profiles do not overlap, and we may assume that $k_m < j_{m+1}$ for $m=1,...,l-1$. Then for $1 \leq i < l$
we set $\jj(i) = (\jmath_1(\ii(1)),...,\jmath_i(\ii(i)))$ and note that $\lambda_r^{\jj(i)} = \lambda_r$
for all $r \geq j_{i+1}$. Using that $\jmath_{i+1}(\ii(i+1))$ is admissible for $\lambda$ we obtain by
induction that $\jmath_{i+1}(\ii(i+1))$ is an admissible sequence for $\lambda^{\jj(i)}$. By the transitivity
of the algorithm we deduce then that $(\jmath_1(\ii(1)), \jmath_2(\ii(2)),...,\jmath_l(\ii(l)))$ is admissible
for $\lambda$ as required.
\end{proof}

\begin{prop}\label{profileprop}
Let $\lambda = (\lambda_1,..,\lambda_n)$ be a partition and suppose that $i=i'$ for all $1\leq i\leq n$. Then there
exists an admissible sequence for $\lambda$ of length $z(\lambda)$.
\end{prop}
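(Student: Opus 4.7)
We argue by strong induction on $N = \sum_i \lambda_i$. If $\lambda$ is rigid then Theorem~\ref{rigids} forces $\Delta(\lambda) = \emptyset$ and $s(\lambda) = 0$, so $z(\lambda) = 0$ and the empty sequence is admissible. Assume henceforth that $\lambda$ is not rigid. The hypothesis $i = i'$ for every $i$ forces the parts of $\lambda$ to share a single parity, which sharply restricts the shape of good configurations: any $2$-step $(i, i+1)$ with $1 < i < n-1$ has boundary parts of the same parity as $\lambda_i$ and is therefore bad, while a good $2$-cluster of length at least two must begin at index $1$ and end at index $n-1$ and occurs only in the orthogonal case. Isolated good $2$-steps can only occur at position $(1,2)$ with $n = 2$. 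The plan is to choose a single admissible step $(i)$ which drops $z$ by exactly one and then to pass to $\lambda^{(i)}$, either by the induction hypothesis directly or via the profile decomposition of Proposition~\ref{reductionprop}.

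We treat two scenarios. First, if some index $i$ admits a Case~$1$ step that does not lie at the boundary of a good $2$-step or good $2$-cluster, we apply Case~$1$ at $i$. Since Case~$1$ preserves the parities of all parts, $\lambda^{(i)}$ still satisfies the hypothesis; Lemmas~\ref{deltain} and~\ref{goodinherit} combined with a direct inspection show that $\Delta(\lambda^{(i)})$, $\Delta_{\mathrm{bad}}(\lambda^{(i)})$ and $\Sigma(\lambda^{(i)})$ correspond naturally to $\Delta(\lambda)$, $\Delta_{\mathrm{bad}}(\lambda)$ and $\Sigma(\lambda)$, while $s(\lambda^{(i)}) = s(\lambda) - 1$. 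Hence $z(\lambda^{(i)}) = z(\lambda) - 1$, and the induction hypothesis together with concatenation complete this branch. Otherwise every available Case~$1$ step interacts with a good configuration; in this case we apply Case~$2$ either at an isolated good $2$-step or at the leftmost $2$-step of the (unique) good $2$-cluster. The resulting partition $\lambda^{(i)}$ fails the hypothesis, so we pass to its profiles $\mu(1),\ldots,\mu(l)$: each satisfies $j = j'$ for all its indices by construction of profiles, is strictly smaller than $\lambda$, and hence yields by induction an admissible sequence of length $z(\mu(m))$. These sequences combine via Proposition~\ref{reductionprop}(2) into an admissible sequence for $\lambda^{(i)}$ of total length $\sum_m z(\mu(m))$, equal to $z(\lambda^{(i)})$ by Proposition~\ref{reductionprop}(1) once we verify $\lambda^{(i)}$ equals its own shell.

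The principal obstacle is the bookkeeping in the second scenario. Case~$2$ at the leftmost $2$-step of a good $2$-cluster flips the parities of $\lambda_i$ and $\lambda_{i+1}$, which can convert a neighbouring bad $2$-step into a good one: $|\Delta_{\mathrm{bad}}|$ may then drop by two even though only one element has been removed from $\Delta$. Simultaneously, $|\Sigma|$ may change in the ways catalogued in Lemma~\ref{stairslemma} (in particular it can increase when a long cluster splits), and $s$ may absorb the parity flip via Lemma~\ref{deltain2}. Verifying that these four contributions combine through $z = s + |\Delta| - |\Delta_{\mathrm{bad}}| + |\Sigma|$ to give a net change of exactly $-1$ in every sub-configuration — and that the $\lambda^{(i)}$ obtained equals its own shell, so Proposition~\ref{reductionprop}(1) applies — requires a careful sub-case analysis according to the length of the cluster and the position of the Case~$2$ step within it. This is the technical core of the argument.
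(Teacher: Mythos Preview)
Your inductive strategy is considerably more elaborate than what the paper does, and it contains a genuine gap. The paper's proof is a direct construction with no induction at all: under the hypothesis $i=i'$ for all $i$, either there is a (necessarily unique) good $2$-cluster $1,3,\ldots,n-1$ or there is none. In the second case all $2$-steps are bad and $|\Sigma(\lambda)|=0$, so $z(\lambda)=s(\lambda)$, and repeatedly applying Case~1 until the partition is rigid gives an admissible sequence of length exactly $s(\lambda)$. In the first case one writes down an explicit sequence: Case~1 at odd indices until $\lambda_{2i-1}=\lambda_{2i}$, then Case~1 at even indices until the differences are $2$, then Case~2 once at each odd index. This has length $s(\lambda)+1$, and a short count shows $z(\lambda)=s(\lambda)+1$ here.

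The gap in your approach is the claim in Scenario~2 that the partition obtained after Case~2 equals its own shell, which you need to invoke Proposition~\ref{reductionprop}(1). This is simply false. Take $\lambda=(5,5,3,3)\in\mathcal{P}_1(16)$: the only safe Case~1 step (at $i=2$) destroys the good cluster and drops $z$ by $2$, so your Scenario~1 does not apply cleanly; applying Case~2 at $(1,2)$ gives $\lambda^{(1)}=(4,4,3,3)$, whose shell is empty. Worse, take $\lambda=(7,7,3,3)$ and apply Case~2 at $(1,2)$ to get $(6,6,3,3)$: here $z(\lambda^{(1)})=3$ but the unique profile is $\mu=(3,3)$ with $z(\mu)=2$, so $z(\lambda^{(1)})\ne\sum_m z(\mu(m))$ and Proposition~\ref{reductionprop}(1) genuinely fails. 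You might object that in this example Scenario~1 is available (Case~1 at $i=2$), but your criterion ``does not lie at the boundary of a good $2$-cluster'' is not precise enough to make this distinction reliably, and the burden of proving that one of your two scenarios always applies with the asserted properties is exactly the technical core you have deferred. The paper sidesteps all of this by constructing the sequence in one shot.
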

\begin{proof}
A partition $\lambda$ fulfilling $i=i'$ for all $1\leq i\leq n$ contains a good 2-cluster if and only if
$1, 3, 5, ..., n-1$ is good 2-cluster. In this case it is the only good 2-cluster. Suppose that this is the case.
Of course this implies that $n$ is even and $\epsilon = 1$, so $\lambda_n$ is odd. Construct a sequence $\ii$ by
repeatedly applying Case 1 at indices $2i-1$ for $1\leq i \leq \frac{n}{2}$ so that $\lambda_{2i-1} - \lambda_{2i} = 0$
for all such $i$. Then $$|\ii| = \sum_{i=1}^{\frac{n}{2}} \lfloor \frac{\lambda_{2i-1} - \lambda_{2i}}{2} \rfloor.$$
We construct an admissible sequence $\ii'$ by subsequently applying Case 1 at indices $2i$ for $1\leq i\leq n$ so that
$\lambda_{2i} - \lambda_{2i+1} = 2$ for all such $i$. Our sequence $\ii'$ has length $$|\ii'| =
s(\lambda) - (\frac{n}{2}-1).$$ At this point we are able to say precisely what $\lambda^{\ii'}$ looks like.
We have $\lambda^{\ii'} = \lambda^S = (n-1,n-1,n-3,n-3,...,3,3,1,1)$. Finally we obtain $\ii''$ by applying
Case 2 precisely once at each index $2i-1$ for $1\leq i\leq \frac{n}{2}$. The partition $\lambda^{\ii''}$ is
rigid, so $\ii''$ is maximal (Lemma~\ref{maxadmiss}) and $$|\ii''| = s(\lambda) + 1.$$ In order to complete
this part of the proof we must show that $z(\lambda) = s(\lambda) + 1$. Notice that our assumptions on $\lambda$
imply that every 2-step is bad. Therefore $|\Delta(\lambda)| = |\Delta_{\rm bad}(\lambda)|$ and by our original
remarks $z(\lambda) = s(\lambda) + 1$ as required.

Now assume that $\lambda$ has no good 2-clusters. Since $i=i'$ for all $i$ we may apply Case 1 repeatedly at all
indices to obtain a maximal admissible partition. Clearly $|\ii| = s(\lambda)$. Once again all 2-steps are bad so
that $|\Delta(\lambda)| = |\Delta_{\rm bad}(\lambda)|$, and by assumption $|\Sigma(\lambda)| = 0$. Hence
$z(\lambda) =s(\lambda) =  |\ii|$ as promised.
\end{proof}

\begin{thm}\label{zismax} We have that
$$z(\lambda) = \max |\emph\ii|$$ where the maximum is taken over all admissible sequences $\emph\ii$ for $\lambda$.
\end{thm}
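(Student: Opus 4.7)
The equality $z(\lambda)=\max|\ii|$ will be established by two matching inequalities.

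\emph{Upper bound $\max|\ii|\le z(\lambda)$.} My plan is to argue that $z$ is a strict monovariant under the KS algorithm: at each admissible step, $z$ drops by at least~$1$. Since any admissible sequence can be extended to a maximal one terminating at a rigid partition (Lemma~\ref{maxadmiss}), and rigid partitions satisfy $s=|\Delta|=|\Delta_{\rm bad}|=|\Sigma|=0$ so that $z=0$, an inductive application of the monovariant property yields $|\ii|\le z(\lambda)-z(\lambda^{\ii})\le z(\lambda)$. The monovariant check is a case analysis: Case~1 at index~$i$ decreases $s(\lambda)$ by exactly~$1$ (only the $\lfloor(\lambda_i-\lambda_{i+1})/2\rfloor$ summand drops), preserves parities (hence preserves the property $j=j'$), and by Lemma~\ref{deltain} can only shrink $\Delta(\lambda)$; combined with Lemma~\ref{goodinherit} and a quick inspection of neighbours, one verifies that the ``good 2-step count'' $|\Delta|-|\Delta_{\rm bad}|$ together with $|\Sigma|$ does not increase enough to offset the loss in $s$. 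Case~2 at a good 2-step is handled directly by Lemma~\ref{deltain2}: $s$ is preserved, $|\Delta|$ drops by~$1$, and the corresponding good 2-step disappears from the good 2-step tally; Lemma~\ref{stairslemma} controls $|\Sigma|$. Case~2 at a bad 2-step is the subtle case where $|\Sigma|$ can actually jump up by~$1$ (precisely when the operation merges the neighbouring good 2-cluster fragments into one); Lemma~\ref{stairslemma} quantifies this jump, and combining it with the simultaneous decrease of $|\Delta_{\rm bad}|$ shows that the net change in $z$ is still at most $-1$.

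\emph{Lower bound $\max|\ii|\ge z(\lambda)$.} Here my plan is to construct an admissible sequence of length exactly $z(\lambda)$ in three stages. First, apply the shell construction $S$ to reduce $\lambda$ to its shell $\lambda^S$; by design, each step of $S$ is either a Case~1 that chips off one unit of $s$, or a Case~2 at a good 2-step that removes one contribution to $|\Delta|-|\Delta_{\rm bad}|$. Tracking these operations shows that $|S|=z(\lambda)-z(\lambda^S)$. Second, decompose $\lambda^S$ into its distinct profiles $\mu(1),\ldots,\mu(l)$ of types $(j_m,k_m)$ as in Proposition~\ref{reductionprop}; since each profile $\mu(m)$ satisfies $i=i'$ for all its indices, Proposition~\ref{profileprop} supplies an admissible sequence $\ii(m)$ for $\mu(m)$ of length $z(\mu(m))$. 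Third, concatenate their $j$-adjusts via Proposition~\ref{reductionprop}(2) to obtain an admissible sequence for $\lambda^S$ of total length $\sum_m z(\mu(m))=z(\lambda^S)$ by Proposition~\ref{reductionprop}(1). Prepending $S$ gives an admissible sequence for $\lambda$ of length $|S|+z(\lambda^S)=z(\lambda)$, as required.

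\emph{Main obstacle.} The principal technical difficulty is the monovariant step in the upper bound, specifically the case where Case~2 is applied to a bad 2-step sitting in the interior of a good 2-cluster $i-4,i-2,i,i+2,i+4,\ldots$ In that configuration Lemma~\ref{stairslemma} shows $|\Sigma|$ actually jumps up by~$1$, so one must offset this with a simultaneous loss of two units in $|\Delta|-|\Delta_{\rm bad}|$ (the original bad 2-step and the rearrangement of the neighbouring 2-steps). This delicate accounting between $s$, $|\Delta|$, $|\Delta_{\rm bad}|$ and $|\Sigma|$ is the heart of the argument, and it is also what forces the precise definition of $z(\lambda)$ used in the statement.
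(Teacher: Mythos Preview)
Your proposal is correct and follows essentially the same approach as the paper: an upper bound via the monovariant inequality $z(\lambda)\ge z(\lambda^{\ii})+1$ for each length-$1$ admissible step (established by the same case analysis you outline, with Lemma~\ref{stairslemma} handling the delicate interior-of-a-good-2-cluster subcase), and a lower bound by passing to the shell and then decomposing into profiles via Propositions~\ref{reductionprop} and~\ref{profileprop}. The only point worth tightening is your claim that $|S|=z(\lambda)-z(\lambda^S)$: this requires that each shell step decreases $z$ by \emph{exactly} one, not merely at least one, and the reason is that the shell construction is designed so that Case~1 never destroys a 2-step (we stop at difference~$2$ whenever a neighbouring 2-step is present), hence $|\Delta|$, $|\Delta_{\rm bad}|$, $|\Sigma|$ are all preserved and only $s$ drops; and Case~2 at a good 2-step removes one good 2-step without touching $s$, $|\Delta_{\rm bad}|$ or $|\Sigma|$ (a good 2-step has no neighbouring 2-steps, so lies in no 2-cluster).
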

\begin{proof}
We begin by showing that $z(\lambda) \geq z(\lambda^{\ii}) + 1$ where $\ii = (i)$ is an admissible sequence
of length 1 for $\lambda$. First assume Case 1 occurs for $\lambda$ at $i$. Then $s(\lambda^\ii) = s(\lambda) - 1$.
Furthermore, if the iteration at $i$ removes a 2-step (ie. if $\lambda_{i} - \lambda_{i+1} = 2$ and either
$(i-1,i) \in \Delta(\lambda)$ or $(i+1,i+2) \in \Delta(\lambda)$ or both) then that 2-step is bad. Therefore
$|\Delta(\lambda)| - |\Delta(\lambda^\ii)| = |\Delta_{\rm bad}(\lambda)| - |\Delta_{\rm bad}(\lambda^\ii)|$.
It remains to be seen that the number of good 2-clusters does not increase as we pass from $\lambda$ to
$\lambda^{\ii}$. This follows from Lemma~\ref{stairslemma}.

Now suppose that Case 2 occurs for $\lambda$ at index $i$. Certainly if $(i,i+1)$ is a good 2-step then
$z(\lambda^\ii) = z(\lambda) - 1$, so we may assume that $(i, i+1)$ is a bad 2-step. Suppose first that
this 2-step has precisely one bad boundary. We may assume that $\lambda_{i-1} - \lambda_i $ is even and
$\lambda_{i+1} - \lambda_{i+2}$ is odd. We can deduce at this point that $s(\lambda^\ii) = s(\lambda) - 1$
and $|\Delta(\lambda^\ii)| = |\Delta(\lambda)| - 1$. If $(i-2,i-1) \notin \Delta(\lambda)$ then
$|\Delta_{\rm bad}(\lambda^\ii)| = |\Delta_{\rm bad}(\lambda)| - 1$. Similarly, if
$(i-2,i-1) \in \Delta(\lambda)$ and $\lambda_{i-3} - \lambda_{i-2}$ is even then
$|\Delta_{\rm bad}(\lambda^\ii)| = |\Delta_{\rm bad}(\lambda)| - 1$. In either of these
two situations the number of good 2-clusters decreases, thanks to Lemma~\ref{stairslemma}.
Hence $z(\lambda) \geq z(\lambda^{\ii}) + 1$ once again. We must now consider the possibility
that $(i-2,i-1) \in \Delta(\lambda)$ and $\lambda_{i-3} - \lambda_{i-2}$ is odd. In this situation
$s(\lambda^\ii) = s(\lambda) - 1$, $|\Delta(\lambda^\ii)| = |\Delta(\lambda)| - 1$ and
$|\Delta_{\rm bad}(\lambda^\ii)| = |\Delta_{\rm bad}(\lambda)| - 2$. Notice that $i-2, i$ is a good 2-cluster
for $\lambda$ but not for $\lambda^\ii$, so that $|\Sigma(\lambda^\ii)| = |\Sigma(\lambda)| - 1$ and
$z(\lambda^\ii) = z(\lambda) - 1$. A similar argument works when $\lambda_{i-1} - \lambda_i$ is odd but
$\lambda_{i+1} - \lambda_{i+2} = 2$.

Now we assume that $(i, i+1)$ is a bad 2-step and that both boundaries are bad. If neither $(i-2,i-1)$ nor $(i+2,i+3)$ lie
in $\Delta(\lambda)$ then $s(-)$ decreases by 2, $|\Delta(-)|$ decreases by 1, and $|\Delta_{\rm bad}(-)|$ decreases by 1
upon passing from $\lambda$ to $\lambda^\ii$. Certainly $|\Sigma(-)|$ may only decrease, by lemma \ref{stairslemma}, and
so $z(\lambda) \geq z(\lambda^\ii) + 1$ in this situation. Now move on and suppose that precisely one of $i-2$ and $i+2$
lie in $\Delta(\lambda)$. We shall examine the case $(i-2,i-1)\in \Delta(\lambda)$, the other being very similar.

When $\lambda_{i-3} - \lambda_{i-2}$ is odd $s(\lambda^\ii) = s(\lambda) - 2$, $|\Delta(\lambda^\ii)| = |\Delta(\lambda)| - 1$
and $|\Delta_{\rm bad}(\lambda^\ii)| = |\Delta_{\rm bad}(\lambda)| - 2$ (since $(i-2,i-1)$ is no longer a bad 2-step after
this iteration). Furthermore $(i,i+1)$ cannot make up a 2-step in a good 2-cluster since $(i+2,i+3) \notin \Delta(\lambda)$
and $\lambda_{i+1} - \lambda_{i+2}$ is even, therefore $|\Sigma(\lambda)|$ remains unchanged. So consider the possibility
that $(i-2,i-1)$ has two bad boundaries: that $\lambda_{i-3} - \lambda_{i-2}$ is even. Then our conclusions are exactly
the same as before, except that $|\Delta_{\rm bad}(\lambda^\ii)| = |\Delta_{\rm bad}(\lambda)| - 1$. In either situation
$z(\lambda^\ii) \geq z(\lambda) - 1$.

Finally we have the situation $(i-2,i-1),(i+2,i+3) \in \Delta(\lambda)$. Once again we must distinguish between the number of bad
boundaries attached to the 2-steps $(i-2,i-1)$ and $(i+2,i+3)$. Suppose that both of these 2-steps have a single bad
boundary (they have at least 1). Then $i - 2, i, i+2$ is a good 2-cluster. It is immediately clear upon passing from
$\lambda$ to $\lambda^\ii$ that $s(\lambda^\ii) = s(\lambda) - 2$, $|\Delta(\lambda^\ii)| = |\Delta(\lambda)| - 1$,
$|\Delta_{\rm bad}(\lambda^\ii)| = |\Delta_{\rm bad}(\lambda)| - 3$, and $|\Sigma(\lambda^\ii)| = |\Sigma(\lambda)| - 1$.
Once again $z(\lambda^\ii) \geq z(\lambda) - 1$ follows. The last two situations to consider are when precisely one of the
two 2-steps $(i-2,i-1)$ and $(i+2, i+3)$ has two bad boundaries, and when both of them have two bad boundaries.

Take the former situation. We may assume that $(i-2, i-1)$ has two
bad boundaries, and $(i+2,i+3)$ has one (the opposite configuration
is similar). Upon iterating the algorithm, $s(\lambda^\ii) =
s(\lambda) - 2$, $|\Delta(\lambda^\ii)| = |\Delta(\lambda)| -1$ and
$|\Delta_{\rm bad}(\lambda^\ii)| = |\Delta_{\rm bad}(\lambda)| - 2$.
By Lemma~\ref{stairslemma}, $|\Sigma(\lambda^\ii)| \leq
|\Sigma(\lambda)|$. In the final case $(i-2,i-1)$ and $(i+2,i+3)$
both have two bad boundaries. The outcome is that $s(\lambda^\ii) =
s(\lambda) - 2$, $|\Delta(\lambda^\ii)| = |\Delta(\lambda)| - 1$ and
$|\Delta_{\rm bad}(\lambda^\ii)| = |\Delta_{\rm bad}(\lambda)| - 1$
both decrease by 1 and by Lemma~\ref{stairslemma} either
$|\Sigma(\lambda^\ii)| = |\Sigma(\lambda)|$ or
$|\Sigma(\lambda^\ii)| = |\Sigma(\lambda)| + 1$.

We have eventually shown that $z(\lambda) \geq z(\lambda^\ii) + 1$. Recall that for any maximal admissible sequence
$\ii$ the partition $\lambda^\ii$ is rigid. Also notice that $z(\lambda) = 0$ for any rigid partition $\lambda$. We
deduce for any maximal admissible sequence $\ii$ of length $l$, that
$$z(\lambda) \geq z(\lambda^{\ii_2}) + 1 \geq z(\lambda^{\ii_3}) + 2 \geq \cdots z(\lambda^{\ii_{l+1}}) + l = l.$$
Here $\ii_k$ denotes $(i_1,...,i_{k-1})$. In order to complete the proof we shall exhibit a maximal admissible sequence
of length $z(\lambda)$. This shall require some reductions.

Notice first that $z(\lambda)$ decreases by 1 at each iteration when
we apply Case 1 in constructing the shell $\lambda^S$. Therefore we
may assume that $\lambda = \lambda^S$. Let $\mu(1), \mu(2),...,
\mu(l)$ be a complete set of distinct profiles for $\lambda$, as in
the statement of Proposition~\ref{reductionprop}. By
Proposition~\ref{profileprop} we know that for each $1\leq m\leq l$
there is an admissible sequence of length $z(\mu(m))$ for $\mu(m)$.
Using Part (2) of Proposition~\ref{reductionprop} we obtain an
admissible sequence for $\lambda$ of length $\sum_{i=1}^l
z(\mu(i))$, and by Part~(1) of the same proposition that length is
equal to $z(\lambda)$. Hence a sequence of the correct length
exists, and the theorem follows.
\end{proof}

The following corollary shall be of some importance to our later work.
\begin{cor}\label{maxseq}
For all $\lambda \in \mathcal{P}_\epsilon(N)$ the following hold:
\begin{enumerate}
\item{$c(\lambda) \geq z(\lambda)$;}

\smallskip

\item{$c(\lambda) = z(\lambda)$ if and only if $\lambda$ is non-singular.}
\end{enumerate}
\end{cor}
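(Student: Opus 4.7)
The plan is to observe that both parts follow directly from Corollary~\ref{expression} combined with the combinatorial inequality relating $|\Delta_{\rm bad}(\lambda)|$ and $|\Sigma(\lambda)|$ that was recorded immediately after the definition of $z(\lambda)$. Concretely, Corollary~\ref{expression} gives $c(\lambda) = s(\lambda) + |\Delta(\lambda)|$, while by definition $z(\lambda) = s(\lambda) + |\Delta(\lambda)| - (|\Delta_{\rm bad}(\lambda)| - |\Sigma(\lambda)|)$. Subtracting, one obtains the identity
\[
c(\lambda) - z(\lambda) = |\Delta_{\rm bad}(\lambda)| - |\Sigma(\lambda)|.
\]

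For part (1), I would invoke the inequality $|\Delta_{\rm bad}(\lambda)| \geq |\Sigma(\lambda)|$ recorded earlier: every good 2-cluster $i_1 < i_2 < \cdots < i_k$ has $k \geq 2$ and its first and last 2-steps $(i_1,i_1+1), (i_k,i_k+1)$ both have a bad boundary on at least one side (internal to the cluster), so the good 2-clusters inject into $\Delta_{\rm bad}(\lambda)$ (in fact one can assign to each good 2-cluster one of its constituent bad 2-steps). This immediately yields $c(\lambda) \geq z(\lambda)$.

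For part (2), by the displayed identity the equality $c(\lambda) = z(\lambda)$ is equivalent to $|\Delta_{\rm bad}(\lambda)| = |\Sigma(\lambda)|$, and according to the remark preceding Lemma~\ref{stairslemma} this in turn is equivalent to $\Delta_{\rm bad}(\lambda) = \emptyset$. By the definition of non-singularity, $\Delta_{\rm bad}(\lambda) = \emptyset$ means exactly that every 2-step of $\lambda$ is good, i.e.\ $\lambda$ is non-singular. This gives the ``if and only if'' statement.

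Since every ingredient has already been established — Corollary~\ref{expression} for the dimension formula, the definition of $z(\lambda)$, and the quoted elementary inequality comparing $|\Delta_{\rm bad}(\lambda)|$ and $|\Sigma(\lambda)|$ — there is no real obstacle; the whole proof is just a two-line algebraic manipulation followed by invoking the already-noted combinatorial fact. The only mildly non-trivial step, which I would verify in one sentence if the reader wants justification beyond the earlier remark, is that $|\Delta_{\rm bad}(\lambda)| = |\Sigma(\lambda)|$ forces $\Delta_{\rm bad}(\lambda) = \emptyset$: since every good 2-cluster consumes at least two bad 2-steps (its outermost constituents) and good 2-clusters are disjoint, equality of cardinalities is only possible when there are no bad 2-steps at all.
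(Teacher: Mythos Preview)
Your proposal is correct and follows essentially the same route as the paper: both use the identity $c(\lambda)-z(\lambda)=|\Delta_{\rm bad}(\lambda)|-|\Sigma(\lambda)|$ together with the combinatorial fact (recorded immediately after the definition of $z(\lambda)$) that $|\Delta_{\rm bad}(\lambda)|\ge|\Sigma(\lambda)|$ with equality iff $\Delta_{\rm bad}(\lambda)=\emptyset$. Your extra sentence justifying that equality forces $\Delta_{\rm bad}(\lambda)=\emptyset$ (via disjointness of good $2$-clusters, each contributing at least two bad $2$-steps) is a welcome elaboration of what the paper leaves as ``immediate from the definitions.''
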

\begin{proof}
Part (1) follows from the fact that $|\Delta_{\rm bad}(\lambda)| > |\Sigma(\lambda)|$ for all partitions
$\lambda$. For Part (2) we observe that $|\Delta_{\rm bad}(\lambda)| - |\Sigma(\lambda)| = 0$ if and only
if $\lambda$ is non-singular.
\end{proof}
\section{A Geometric Interpretation of the Algorithm}
We would like to characterise the non-singular partitions in geometric terms. This characterisation  shall
be given in the corollary to the next theorem. The remainder of this section shall be spent preparing to
prove that theorem. The symmetric group $\mathfrak{S}_l$ acts on the set of sequences in $\{1,...,n\}$ of
length $l$  by the rule $\sigma (i_1,...,i_l) = (i_{\sigma(1)},...,i_{\sigma(l)})$.
Let $$\Phi_\lambda := \{ \text{the maximal admissible sequences for }\lambda \} /\sim$$
where $\ii \sim \mathbf{j}$ if $\ii$ and $\jj$ have equal length $l$ and are $\mathfrak{S}_{l}$ conjugate.
What follows is the main theorem of this section.
\begin{thm}\label{nosheets}
The following are true for any $\lambda\in\mathcal{P}_\epsilon(N)$:
\begin{enumerate}
\item{$e(\lambda)$ lies in $|\Phi_\lambda|$ distinct sheets;}

\smallskip

\item{$|\Phi_\lambda| = 1$ if and only if $\lambda$ is non-singular.}
\end{enumerate}
\end{thm}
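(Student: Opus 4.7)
The strategy is to match $\Phi_\lambda$ with the set of sheets through $e(\lambda)$ via Borho's parametrisation of sheets and the Kempken--Spaltenstein description of induction in classical Lie algebras, and then to deduce Part~(2) from a combinatorial analysis of the KS algorithm. First recall: sheets of $\h$ containing $H\cdot e(\lambda)$ are in $H$-equivariant bijection with $H$-conjugacy classes of pairs $(\mathfrak{l},e_0)$ consisting of a Levi subalgebra $\mathfrak{l}\subseteq\h$ and a rigid nilpotent element $e_0\in[\mathfrak{l},\mathfrak{l}]$ with $\mathrm{Ind}_\mathfrak{l}^\h(L\cdot e_0)=H\cdot e(\lambda)$. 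In types $B,C,D$ any such Levi decomposes as $\mathfrak{l}\cong\mathfrak{gl}_{i_1}\oplus\cdots\oplus\mathfrak{gl}_{i_l}\oplus\h'$ with $\h'$ classical of the same type as $\h$ (possibly zero); since the only rigid nilpotent of $\mathfrak{gl}_k$ is the zero element, $e_0$ is determined by a rigid element $e(\mu)\in\h'$ for some $\mu\in\mathcal{P}_\epsilon^{\ast}$. The Kempken--Spaltenstein analysis in \cite{K}, \cite{Sp} shows that a single KS iteration records exactly one such Levi reduction: both Case~1 and Case~2 at index $i$ correspond to extracting a $\mathfrak{gl}_i$ summand, and encode the two possibilities for the partition of the rigid orbit inside the remaining classical summand.

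For Part~(1) I would define the map $\pi\colon\Phi_\lambda\to\{\text{sheets containing }H\cdot e(\lambda)\}$ by sending the class of a maximal admissible sequence $\ii=(i_1,\ldots,i_l)$ to the sheet whose open decomposition class is $\mathcal{D}(\mathfrak{l}_{\ii},e(\lambda^{\ii}))$, where $\mathfrak{l}_{\ii}=\mathfrak{gl}_{i_1}\oplus\cdots\oplus\mathfrak{gl}_{i_l}\oplus\h'_{\ii}$ and $e(\lambda^{\ii})\in\h'_{\ii}$ is rigid by Lemma~\ref{maxadmiss}. Permuting $\ii$ only reorders the $\mathfrak{gl}$-summands of $\mathfrak{l}_{\ii}$, so $\pi$ is $\mathfrak{S}_l$-invariant and descends to $\Phi_\lambda$. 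For surjectivity, given $(\mathfrak{l},e_0)$ corresponding to a sheet containing $H\cdot e(\lambda)$, the multiset of $\mathfrak{gl}$-factor sizes of $\mathfrak{l}$ together with any admissibility-compatible ordering produces a maximal admissible sequence for $\lambda$ terminating at the rigid partition of $e_0$. Injectivity follows because the multiset of $\mathfrak{gl}$-factor sizes of $\mathfrak{l}_{\ii}$ is an $H$-conjugacy invariant of the pair and recovers the $\mathfrak{S}_l$-class of $\ii$.

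For Part~(2), suppose first that $\lambda$ is non-singular. By Lemma~\ref{deltain2}, Lemma~\ref{goodinherit} and Corollary~\ref{singinherit}, every good $2$-step of $\lambda$ propagates through the algorithm and can be eliminated only by Case~2 at its own index, while Case~1 iterations act independently on the individual gaps $\lambda_i-\lambda_{i+1}\geq 2$. A short induction on $|\ii|$ then shows that the multiset of indices in any maximal admissible sequence for $\lambda$ is forced, so all such sequences form a single $\mathfrak{S}_l$-orbit and $|\Phi_\lambda|=1$. Conversely, if $\lambda$ has a bad $2$-step $(i,i+1)$, then by definition at least one of $\lambda_{i-1}-\lambda_i$, $\lambda_{i+1}-\lambda_{i+2}$ is positive and even, giving genuine freedom in the algorithm: one obtains two maximal admissible sequences with distinct index multisets, for example by first absorbing the bad boundary by Case~1 and then eliminating the $2$-step via Case~2 at $(i,i+1)$, versus pushing Case~1 reductions entirely through the bad boundary and never applying Case~2 at $(i,i+1)$.

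I expect the main obstacle to lie in this last step: verifying that both candidate sequences genuinely attain the maximal length $z(\lambda)$ granted by Theorem~\ref{zismax} and really have inequivalent multisets will require local tracking of $s(-)$, $|\Delta(-)|$, $|\Delta_\mathrm{bad}(-)|$ and $|\Sigma(-)|$ in the spirit of that theorem's proof. The geometric content of the first paragraph is essentially standard once Borho's theorem and the Kempken--Spaltenstein induction formulae are accepted, so the proof ultimately reduces to this combinatorial bookkeeping.
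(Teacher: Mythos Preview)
Your Part~(1) is essentially the paper's argument: define $\Psi_\lambda$ as the set of $K$-classes of pairs $(\li,\Oo)$ with $\Oo$ rigid and $\mathrm{Ind}_\li^\h(\Oo)=\Oo_{e(\lambda)}$, then build a bijection $\Phi_\lambda\to\Psi_\lambda$. Be aware that you are eliding the type~$\sf D$ subtleties (two conjugacy classes of Levis when $R_\ii=0$ and all $i_j$ are even, and the labelling of very even orbits); the paper spends Proposition~\ref{mainpropref} and Corollary~\ref{case1or2} on exactly this, and without it both well-definedness and surjectivity of $\pi$ are incomplete.

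For Part~(2) there are two genuine problems. First, your singular direction rests on a misconception: a \emph{maximal admissible} sequence is one terminating at a rigid partition, not one of length $z(\lambda)$; different sheets through $e(\lambda)$ can have different ranks (see Remark~\ref{remmor}), so there is no need---and in general no way---to arrange both candidates to have length $z(\lambda)$. The paper sidesteps your proposed multiset bookkeeping entirely: it distinguishes the two sequences by showing their rigid endpoints $\lambda^\ii$ and $\lambda^\jj$ differ, via the parity invariant $\kappa(\mu)_k=\mu_k-\mu_{k+1}\bmod 2$ (Lemma~\ref{kappa}), which is far easier to control under KS moves than lengths or multisets. Second, your non-singular direction (``a short induction shows the multiset is forced'') is too vague to stand. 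The paper's device is cleaner: by Corollary~\ref{tau} one may put both maximal sequences in ascending order, and then Lemma~\ref{nonsingadmiss} (if $i<j$ are both admissible for a non-singular $\lambda$ then $i$ remains admissible for $\lambda^{(j)}$) forces them to agree term by term. Your cited Lemmas~\ref{deltain2}, \ref{goodinherit}, \ref{singinherit} feed into this but do not by themselves give uniqueness of the multiset.
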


The next corollary explains our choice of terminology.
\begin{cor}\label{izosim} Suppose $\lambda\in\mathcal{P}_\epsilon(N)$ and
$\dim \h_{e(\lambda)}=m$.
Then the following are equivalent:
\begin{enumerate}
\item{the partition $\lambda$ is non-singular;}

\smallskip

\item{$c(\lambda) = z(\lambda)$;}

\smallskip

\item{$e(\lambda)$ lies in a unique sheet;}
\end{enumerate}
If the base field $\K$ has characteristic $0$ or
\emph{char}$(\K)=p\gg 0$ then $(1)$, $(2)$ and $(3)$ hold if and
only if $e(\lambda)$ is a non-singular point on the quasi-affine
variety $\h^{(m)}$.
\end{cor}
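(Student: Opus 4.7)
The plan is to extract the equivalences $(1)$--$(3)$ directly from the algorithmic and combinatorial results already established in Sections~2 and~3. The implication $(1)\Leftrightarrow(2)$ is nothing other than Corollary~\ref{maxseq}(2), which states that $c(\lambda) = z(\lambda)$ if and only if $\lambda$ is non-singular. For $(1)\Leftrightarrow(3)$, I combine the two parts of Theorem~\ref{nosheets}: part~(2) says that $\lambda$ is non-singular iff $|\Phi_\lambda|=1$, and part~(1) identifies $|\Phi_\lambda|$ with the number of sheets of $\h$ containing $e(\lambda)$. Chaining these gives $(1)\Leftrightarrow(3)$ at once.

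For the final clause concerning the smoothness of $e(\lambda)$ as a point of the quasi-affine variety $\h^{(m)}$, the key external input is Im Hof's theorem \cite{Im}, valid for classical $\h$ in characteristic zero (and for $p\gg 0$ by a standard spreading-out argument), which guarantees that every sheet of $\h$ is a smooth locally closed subvariety. Since the sheets containing $e(\lambda)$ are by definition the irreducible components of $\h^{(m)}$ passing through $e(\lambda)$, I would then appeal to the elementary principle that a point of a variety whose local irreducible components are all smooth is itself a regular point if and only if a unique such component passes through it. Indeed, if $e(\lambda)$ lies in two distinct sheets, then $\mathcal{O}_{\h^{(m)}, e(\lambda)}$ admits at least two minimal primes and therefore fails to be a regular local ring; conversely, if exactly one sheet $\mathcal S$ contains $e(\lambda)$, then $\h^{(m)}$ coincides with $\mathcal S$ locally near $e(\lambda)$, and the latter is smooth by Im Hof.

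The body of the argument is thus a formal assembly of prior results, requiring no new calculation. The only real obstacle is the smoothness addendum for $p\gg 0$, where one must justify that Im Hof's characteristic-zero result descends to large positive characteristic; by contrast the purely combinatorial equivalences $(1)\Leftrightarrow(2)\Leftrightarrow(3)$ are immediate from the structural results of the previous two sections.
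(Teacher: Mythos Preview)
Your proposal is correct and follows essentially the same route as the paper: the equivalence $(1)\Leftrightarrow(2)$ is Corollary~\ref{maxseq}(2), the equivalence $(1)\Leftrightarrow(3)$ is Theorem~\ref{nosheets}, and the smoothness clause is deduced from Im Hof's theorem together with the fact that a point on a variety with smooth irreducible components is non-singular if and only if it lies on a unique component (the paper cites \cite[Chapter~II, \S2, Theorem~6]{Sh} for this last step). The only cosmetic difference is that the paper also lists Theorem~\ref{zismax} among the inputs, but this is already absorbed into Corollary~\ref{maxseq}.
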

\begin{proof}
Statements (1), (2) and (3) are equivalent by Theorems \ref{zismax},
Corollary~\ref{maxseq} and Theorem~\ref{nosheets}. Now suppose that
the characteristic of $\K$ is either zero or char$(\K)=p\gg0$.  Then
Im Hof proved in \cite[Chapter 6]{Im} that all sheets of $\h^{(m)}$
are smooth algebraic varieties. (Im Hof assumes in {\it op.\,cit.}
that char$(\K)=0$, but his arguments extend easily to the case where
char$(\K)$ is sufficiently large). In view of our discussion in
Subsection~\ref{2.1}, Im Hof's result implies that all irreducible
components of $\h^{(m)}$ are smooth algebraic varieties. In this
situation it follows from \cite[Chapter II, \S2, Theorem 6]{Sh} that
$e=e(\lambda)$ is a non-singular point of the algebraic variety
$\h^{(m)}$ if and only if $e$ belongs to a unique irreducible
component of $\h^{(m)}$. This completes the proof.
\end{proof}

We shall now assemble all of the necessary information required to
prove Theorem~\ref{nosheets}. We start by recalling some facts
regarding sheets and induced orbits.  A short survey of these topics
can be found in \cite{Mor}. For a full discussion in the
characteristic zero case see \cite{CM} or \cite{TY}. Since every
sheet of $\h$ contains a dense decomposition class we have the
following:.
\begin{thm}{\rm (See \cite{Bor}.)}\label{classsheets}
There is a 1-1 correspondence between the set of sheets of $\h$ and
the $K$-conjugacy classes of pairs $(\li, \Oo_\li)$ where $\li$ is a
Levi subalgebra of $\h$ and $\Oo_\li$ is a rigid nilpotent orbit in
$\li$.
\end{thm}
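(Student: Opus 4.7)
The plan is to repackage the results of \cite{Bor} already recalled in the excerpt into an explicit bijection. The essential inputs are: (a) every sheet $\mathcal{S}$ of $\h$ is a finite union of decomposition classes and contains a unique one that is open in $\mathcal{S}$, and (b) a decomposition class $\mathcal{D}(\li,e_0)$ is open in some sheet of $\h$ if and only if $e_0$ is rigid in $\li$, in which case that sheet is unique.

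First I would define the forward map $F$ from sheets to $K$-conjugacy classes of pairs as follows: for a sheet $\mathcal{S}$, let $\mathcal{D}(\li,e_0)$ be its unique open decomposition class (by (b), $e_0$ is rigid in $\li$), and set $F(\mathcal{S}):=[(\li,(\Ad L)e_0)]$, where $L\subset K$ is the connected Levi subgroup with $\Lie(L)=\li$. To verify that $F$ is well defined I would argue that the decomposition class is intrinsic to $\mathcal{S}$, and that any two data $(\li,e_0)$ and $(\li',e_0')$ defining the same decomposition class must be $K$-conjugate: $\li$ is recovered from a generic point $e_0+z\in\mathcal{D}(\li,e_0)$ as $\h_z$ (using $z\in\z(\li)_{\rm reg}$), and $e_0$ is then the nilpotent part of that point in the sense of the Jordan--Chevalley decomposition in $\h$, so two choices describing the same $K$-saturation of a slice are exchanged by a single element of $K$.

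Next I would define the candidate inverse $G$: for a pair $(\li,\Oo_\li)$ with $\Oo_\li$ rigid in $\li$, pick $e_0\in\Oo_\li$ and send $[(\li,\Oo_\li)]$ to the unique sheet $\mathcal{S}$ in which $\mathcal{D}(\li,e_0)$ is open, the existence and uniqueness of which is provided by (b). Independence of $e_0$ within $\Oo_\li$ is clear because $\mathcal{D}(\li,e_0)$ depends only on the $L$-orbit of $e_0$ after $K$-saturation, and independence under simultaneous $K$-conjugation of the pair is immediate from the $K$-invariance of the construction. The composites $F\circ G$ and $G\circ F$ are then both the identity by unwinding the definitions: each amounts to the tautology, guaranteed by (a) and (b), that a sheet and its unique open decomposition class determine each other.

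The only step that is more than bookkeeping is the intrinsic recovery of the $K$-conjugacy class of the pair $(\li,\Oo_\li)$ from the decomposition class $\mathcal{D}(\li,e_0)$; this is the main (though mild) obstacle and is handled via the Jordan--Chevalley decomposition in $\h$ together with the defining property $\h_z=\li$ for $z\in\z(\li)_{\rm reg}$, which uniquely pins down $\li$ from any generic point in the class and then uniquely pins down $e_0$ as its nilpotent part inside $\li$.
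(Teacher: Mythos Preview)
Your proposal is correct and matches the approach the paper itself takes: the paper does not give an independent proof of this theorem but cites it from \cite{Bor}, and the surrounding discussion in Subsection~\ref{2.1} records precisely the two facts you use as (a) and (b)---that each sheet has a unique open decomposition class, and that $\mathcal{D}(\li,e_0)$ is open in a (unique) sheet if and only if $e_0$ is rigid in $\li$. Your explicit description of the mutually inverse maps $F$ and $G$, together with the Jordan--Chevalley argument for recovering the $K$-conjugacy class of $(\li,\Oo_\li)$ from a generic point of the decomposition class, is exactly the bookkeeping that makes ``this result of Borho gives us a very transparent way to parametrise the sheets of $\g$'' precise.
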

We shall say that a sheet $\mathcal{S}$ of $\h$ \emph{has data}
$(\li, \Oo_\li)$ if $\mathcal{S}$ is identified with $(\li,
\Oo_\li)$ under the above correspondence. In view of our discussion
in Subsection~\ref{2.1} this means that $\mathcal S$ contains an
open decomposition class of the form $\mathcal{D}(\li,e_0)$ with
$e_0\in\Oo_\li$.

Let $\mathfrak{p} = \mathfrak{l} \oplus \mathfrak{n}$ be a parabolic
subalgebra of $\h$ with $\li$ a Levi subalgebra of $\h$ and
$\mathfrak{n}$ the nilradical of $\mathfrak{p}$. Let $\Oo_\li$ be a
nilpotent orbit in $\li$. Since the orbit set $\mathcal{N}/K$ is
finite there exists a unique nilpotent orbit in $\h$ which meets the
irreducible quasi-affine variety $\Oo_\li + \mathfrak{n}\subset
\mathcal{N}(\h)$ in a dense open subset. This orbit, denoted by {\rm
Ind}$_\li^\h(\Oo_\li)$, is said to be {\it induced} from the orbit
$\Oo_\li$.

We record three pieces of information regarding induced orbits.
\begin{prop}\label{ind} {\rm (See \cite{LS}, \cite{BKr}, \cite{Bor}.)}
\label{inducedprops}
The following are true:
\begin{enumerate}
\item{If $\mathcal{S}$ is a sheet with data $(\li, \Oo_\li)$ then \emph{Ind}$_\li^\h(\Oo_\li)$ is
the unique nilpotent orbit contained in $\mathcal{S}$;}

\smallskip

\item{For each nilpotent orbit $\Oo \subseteq \h$ we have that $\Oo=\text{\emph{Ind}}_{\h}^\h(\Oo);$}

\smallskip

\item{If $\li_1$ and $\li_2$ are Levi subalgebras of $\h$, $\Oo$ is a
nilpotent orbit in $\li_1$ and $\li_1 \subseteq \li_2$,
then$$\text{\emph{Ind}}_{\li_2}^\h(\text{\emph{Ind}}_{\li_1}^{\li_2}(\Oo)
) = \text{\emph{Ind}}_{\li_1}^\h(\Oo).$$      }
\end{enumerate}
\end{prop}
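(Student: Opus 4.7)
The three statements are all classical; the plan is to give a unified geometric argument that exploits what we already know about sheets, decomposition classes, and nilradicals of parabolic subalgebras. I shall throughout fix a parabolic subalgebra $\mathfrak{p} = \li \oplus \mathfrak{n}$ of $\h$ with Levi decomposition corresponding to any Levi $\li$ appearing in the statements, and write $L \subset H$ for the corresponding connected subgroup.

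For (1), my starting point is Borho's description recalled in Section~2.1: the sheet $\mathcal{S}$ of data $(\li,\Oo_\li)$ contains the open decomposition class $\mathcal{D}(\li,e_0) = (\Ad H)\bigl(e_0 + \z(\li)_{\rm reg}\bigr)$ with $e_0 \in \Oo_\li$ rigid in $\li$, and the dimension formula $\dim \mathcal{S} = \dim\h - \dim\h_{e_0} + \dim\z(\li)$ holds. I would first show that $\overline{\mathcal{S}}$ contains $e_0 + \mathfrak{n}$: given $z \in \z(\li)_{\rm reg}$, one picks a cocharacter $\tau:\K^\times \to L$ contracting $\z(\li)$ to $0$ inside the unipotent radical of $\mathfrak{p}$, and uses that $(\Ad\,\tau(t))(e_0+z) \in e_0 + \mathfrak{n}$ for generic $t$ together with the $G$-stability and closedness of $\overline{\mathcal{S}}$. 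Hence $\overline{(\Ad H)(e_0 + \mathfrak{n})} \subseteq \overline{\mathcal{S}}$. Since $\Ind_\li^\h(\Oo_\li)$ is by definition the unique $H$-orbit dense in $\overline{(\Ad H)(e_0+\mathfrak{n})}\cap\mathcal{N}(\h)$, a dimension count (using that $\dim\Ind_\li^\h(\Oo_\li) = \dim\h - \dim\li + \dim\Oo_\li$) identifies $\Ind_\li^\h(\Oo_\li)$ as a nilpotent orbit inside $\mathcal{S}$ of the correct dimension. Uniqueness of the nilpotent orbit in $\mathcal{S}$ (a standard fact from \cite{BKr}) then forces the identification.

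Statement (2) is essentially a tautology: when $\li = \h$, the nilradical $\mathfrak{n}$ is zero and $\Oo + \mathfrak{n} = \Oo$ is already a single $H$-orbit, so the unique orbit meeting it densely is $\Oo$ itself. For (3), I would chain parabolics: choose parabolics $\mathfrak{p}_1 \subseteq \mathfrak{p}_2$ of $\h$ with Levi decompositions $\mathfrak{p}_i = \li_i \oplus \mathfrak{n}_i$, and set $\mathfrak{q} := \mathfrak{p}_1 \cap \li_2 = \li_1 \oplus \mathfrak{m}$ where $\mathfrak{m}$ is the nilradical of $\mathfrak{q}$ in $\li_2$, so that $\mathfrak{n}_1 = \mathfrak{m} \oplus \mathfrak{n}_2$. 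The orbit $\Ind_{\li_1}^{\li_2}(\Oo)$ is dense in $(\Ad\,L_2)(\Oo + \mathfrak{m})$, hence $\Ind_{\li_1}^{\li_2}(\Oo) + \mathfrak{n}_2$ is dense in $(\Ad\,L_2)(\Oo + \mathfrak{m}) + \mathfrak{n}_2 \subseteq \Oo + \mathfrak{n}_1$. Taking the $(\Ad H)$-saturation and passing to dense orbits in nilpotent closures yields
\[
\Ind_{\li_2}^\h\bigl(\Ind_{\li_1}^{\li_2}(\Oo)\bigr) \;=\; \Ind_{\li_1}^\h(\Oo),
\]
as both sides are characterised as the unique dense $H$-orbit in $\overline{(\Ad H)(\Oo + \mathfrak{n}_1)}$.

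The main technical point to be careful about is step (1): verifying that the limit procedure genuinely places $\Ind_\li^\h(\Oo_\li)$ inside $\mathcal{S}$ rather than merely in $\overline{\mathcal{S}}$. This requires combining the dimension estimate for the induced orbit with the fact, proven in \cite{BKr}, that every sheet contains a \emph{unique} nilpotent orbit; without the latter uniqueness one could only deduce $\Ind_\li^\h(\Oo_\li) \subseteq \overline{\mathcal{S}}$. Everything else is routine bookkeeping with parabolics and nilradicals.
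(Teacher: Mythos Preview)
The paper does not actually prove this proposition: it is stated with a citation to \cite{LS}, \cite{BKr}, \cite{Bor} and no argument is given. So there is nothing to compare your proof against in the paper itself; I can only assess your argument on its own merits.

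Your treatment of (2) is fine, and the skeleton of (3) is right, though your displayed inclusion $(\Ad L_2)(\Oo + \mathfrak{m}) + \mathfrak{n}_2 \subseteq \Oo + \mathfrak{n}_1$ is false as written: the $L_2$-saturate of $\Oo+\mathfrak m$ is not contained in $\Oo+\mathfrak m$. What you want instead is $(\Ad L_2)(\Oo + \mathfrak{m}) + \mathfrak{n}_2 = (\Ad L_2)(\Oo + \mathfrak{m} + \mathfrak{n}_2) = (\Ad L_2)(\Oo + \mathfrak{n}_1)$, using that $L_2$ normalises $\mathfrak{n}_2$; then take $(\Ad H)$-closures of both sides.

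The real gap is in (1). Your mechanism for getting $e_0+\mathfrak n\subset\overline{\mathcal S}$ does not work: a cocharacter $\tau:\K^\times\to L$ commutes with $\z(\li)$, so $(\Ad\,\tau(t))(e_0+z)$ never leaves $e_0+\z(\li)$, and it certainly cannot land in $e_0+\mathfrak n$. The standard argument is different: for $z\in\z(\li)_{\rm reg}$ the restriction $\ad\,z|_{\mathfrak n}$ is invertible, so the $U$-orbit $(\Ad\,U)(e_0+z)$ equals $e_0+z+\mathfrak n$, hence $e_0+z+\mathfrak n\subset\mathcal S$; letting $z\to 0$ gives $e_0+\mathfrak n\subset\overline{\mathcal S}$. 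Combined with the Lusztig--Spaltenstein dimension formula $\dim\,\Ind_\li^\h(\Oo_\li)=\dim\h-\dim\li_{e_0}$ and the uniqueness of the nilpotent orbit in $\mathcal S$, this gives the conclusion you state.
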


Fix a partition $\lambda \in \mathcal{P}_\epsilon(N)$. We aim to
classify the $K$-conjugacy classes of pairs $(\li, \Oo)$ where
$\li\subseteq \h$ is a Levi subalgebra and $\Oo \subseteq \li$ is a
rigid nilpotent orbit, such that $\Oo_{e(\lambda)} =
\Ind_{\li}^{\h}(\Oo)$. In view of Part~(1) of the above proposition
this shall parameterise the set of sheets containing $e(\lambda)$.
In order to begin this classification we shall require some general
facts about Levi subalgebras of $\h$.

Every Levi subalgebra is conjugate to a standard Levi subalgebra. If
$\t\subset \h$ is a maximal torus and $\Pi$ a fixed basis of simple
roots associated with $(\h, \t)$ then a standard Levi subalgebra is
constructed from a subset $\Pi_0 \subseteq \Pi$. To each such $\Pi_0$ we
attach the Levi subalgebra $\li$ generated by $\t$ and the roots
spaces $\h_{\pm \gamma}$ with $\gamma \in S$. Now order the simple
roots in $\Pi$ in the usual manner and let $\ii = (i_1,...,i_l)$ be
a sequence with $\sum_j i_j \leq \rank\,\h$. Such sequences are in a
bijection with the subsets of $\Pi$ by letting $\Pi_\ii = \Pi
\backslash \{\alpha_{i_1+\cdots+i_k}\colon\, 1\le k\le l\}$. It is
easy to check that in types $\sf B$ and $\sf C$ the standard Levi
subalgebra constructed from $\Pi_\ii$ is isomorphic to $\gl_{i_1}
\times\cdots\times \gl_{i_l} \times \mm$ where $\mm$ is a classical
algebra. If $\sum_{j} i_j = \rank\,\h
 - 1$ in type $\sf D$ then the Levi subalgebra constructed from $\Pi_\ii$ is actually isomorphic
 to $\gl_{i_1} \times \cdots \times\gl_{i_{l-1}} \times \gl_{i_l + 1}$. If we define another
 sequence $\ii' = (i_1,...,i_{l-1}, i_l+1)$ then the Levi subalgebras constructed from $\ii$
 and $\ii'$ are isomorphic. When all terms of $\ii'$ are even these standard Levi subalgebras are
 not conjugate and we shall label their respective conjugacy classes $I$ and $I\!I$.

 When we refer to a Levi subalgebra by its isomorphism type
 we shall implicitly be referring to a standard Levi subalgebra constructed from a subset of $\Pi$.
Let us record these conclusions formally.
\begin{lem}{\rm (See \cite{K}, \cite{CM},  \cite{Mor}.)} \label{levnilorbs}
The following are true:
\begin{enumerate}
\item{Every Levi subalgebra of $\h$ is
$K$-conjugate to a Lie algebra of the form $$\gl_\ii \times
\mathfrak{m} := \gl_{i_1} \oplus \cdots \oplus \gl_{i_l} \oplus
\mathfrak{m}\,\cong\, \gl_{i_1} \times \cdots \times \gl_{i_l}
\times \mathfrak{m}$$ where $\ii = (i_1,...,i_l)$ is a sequence of
integers with $\sum_j i_j \leq {\rm rank}\,\h$ and where
$\mathfrak{m}$ has the same type as $\h$ and a standard
representation of dimension $R_\ii := N - 2\sum_j i_j$ (under the
restriction that $R_\ii \neq 2$ if $\epsilon = 1$). If $\h$ has type
$\sf D$, $R_\ii = 0$ and all parts of $\ii$ are even then there are
two $K$-conjugacy classes of Levi subalgeras isomorphic to $\gl_\ii
\times \mathfrak{m}$. They are assigned labels $I$ and $I\!I$.
Otherwise there is a unique $K$-conjugacy class of Levi subalgebras
isomorphic to $\gl_\ii \times \mathfrak{m}$.}

\smallskip

\item{If $\li$ is a Levi subalgebra as in Part~1 then the rigid nilpotent
orbits in $\li$ take the form $$\Oo = \underbrace{\Oo_0 \times \cdots \times \Oo_0}_{l \text{ times}}
\times \Oo_{e(\mu)}$$ with $\mu \in \mathcal{P}_\epsilon^\ast(N - 2\sum_j i_j)$ a rigid partition.}
\end{enumerate}
\end{lem}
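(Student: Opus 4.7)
The plan is to reduce the classification of Levi subalgebras to that of standard Levis, compute their isomorphism types from the classical root systems, and then analyze the extra subtlety arising in type $\sf D$ before reducing rigid orbits to rigid orbits in the simple factor $\mm$.

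First I would recall that any Levi subalgebra of $\h$ is the centraliser of some torus in $\h$, and all tori are $K$-conjugate to subtori of a fixed maximal torus $\t$. Hence every Levi is $K$-conjugate to a standard Levi, i.e. to $\li_{\Pi_0}$ for some subset $\Pi_0\subseteq\Pi$. Writing the complement of $\Pi_0$ as the set of simple roots $\{\alpha_{i_1+\cdots+i_k}:\,1\le k\le l\}$, the Dynkin diagram of $\li_{\Pi_0}$ decomposes into a string of type-$\sf A$ pieces of sizes $i_1-1,\ldots,i_l-1$, plus a tail of the same type as $\h$ and rank ${\rm rank}\,\h-\sum i_j$. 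In the natural representation this gives $\gl_{i_1}\oplus\cdots\oplus\gl_{i_l}\oplus\mm$ with $\mm$ acting on an invariant subspace of dimension $R_\ii=N-2\sum_j i_j$ (the $\gl_{i_j}$-factors each absorb a pair of dual $i_j$-dimensional subspaces). This establishes the existence part of (1); the condition $R_\ii\ne 2$ when $\epsilon=1$ merely excludes the non-simple $\mathfrak{so}_2$.

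Next I would address uniqueness of conjugacy classes. Two standard Levis are $W(\h)$-conjugate iff their defining subsets of $\Pi$ are related by the Weyl group, and two are $K$-conjugate iff they are $W(\h)$-conjugate (since $N_K(\t)/\t\cong W(\h)$). A routine check in types $\sf B$ and $\sf C$ shows that any two standard Levis with the same multiset of block sizes $\{i_1,\ldots,i_l\}$ and the same remaining tail are $W$-conjugate. In type $\sf D$ the outer diagram automorphism swapping the two fork nodes is implemented by an element of $O_N\setminus SO_N$ but not by $W(\h)$; so when this swap genuinely produces a non-$W$-conjugate subset of $\Pi$ we obtain two $K$-conjugacy classes rather than one. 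An elementary case-analysis on the diagram shows that this only happens when the tail has disappeared, i.e.\ $R_\ii=0$, and simultaneously the deleted node is at the fork, which in our $\ii$-notation is exactly the condition that all $i_j$ be even. This gives the labels $I$ and $I\!I$ and completes (1).

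For (2), since rigidity is equivalent to the adjoint orbit being the whole sheet, and since sheets in a direct product of Lie algebras are products of sheets in the factors, a nilpotent orbit in $\li=\gl_\ii\oplus \mm$ is rigid iff its projection to each factor is rigid. In $\gl_n$ every nilpotent element $x\ne 0$ is induced in the Lusztig--Spaltenstein sense from a proper Levi (standard type-$\sf A$ Richardson induction), so the only rigid nilpotent orbit of $\gl_{i_j}$ is $\Oo_0=\{0\}$. In the tail $\mm$, Theorem~\ref{rigids} identifies the rigid nilpotent orbits with the set $\mathcal{P}_\epsilon^\ast(R_\ii)$ of rigid partitions, so the rigid nilpotent orbits of $\li$ are exactly the products $\Oo_0\times\cdots\times\Oo_0\times\Oo_{e(\mu)}$ with $\mu\in\mathcal{P}_\epsilon^\ast(R_\ii)$.

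The main obstacle is the bookkeeping in the $\sf D$-type conjugacy analysis: one must verify that the outer automorphism swap produces a non-$W$-conjugate standard Levi \emph{precisely} in the stated case, which requires tracking how the fork of the Dynkin diagram interacts with the deleted simple roots, and checking that in every other situation (e.g.\ at least one $i_j$ odd, or $R_\ii>0$) one can find an element of $W(\h)$ implementing the swap. Everything else is either invocation of already-cited results (Theorem~\ref{rigids}) or direct root-system combinatorics.
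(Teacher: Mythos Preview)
The paper does not give a proof of this lemma: it is stated with references to \cite{K}, \cite{CM}, \cite{Mor}, and the paragraph immediately preceding the statement provides only a brief informal sketch (reduction to standard Levis via subsets $\Pi_\ii\subseteq\Pi$, the resulting $\gl_{i_1}\times\cdots\times\gl_{i_l}\times\mm$ decomposition, and the type $\sf D$ anomaly when $\sum_j i_j=\rank\,\h-1$). Your proposal follows exactly this outline---reduction to standard Levis, root-system combinatorics to read off the isomorphism type, Weyl-group analysis of conjugacy, the outer-automorphism argument in type $\sf D$, and the product decomposition of rigid orbits using that $\{0\}$ is the only rigid orbit in $\gl_n$---so it is correct and in the same spirit as the paper's discussion, only more fully fleshed out.
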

Let $\Psi_\lambda$ denote the set of all
$K$-conjugacy classes of pairs $(\li, \Oo)$ where $\li = \gl_{i_1} \oplus \cdots
\oplus\gl_{i_l}\oplus \mathfrak{m}\,\cong\,\gl_{i_1} \times \cdots \times \gl_{i_l}
\times \mathfrak{m}$ is a Levi subalgebra of $\h$ and $\Oo = \Oo_0 \times \cdots \times
\Oo_0 \times \Oo_{e(\mu)}$ a rigid nilpotent orbit in $\li$, such that $\Oo_{e(\lambda)} = \Ind_\li^\h(\Oo).$
\begin{lem}\label{contsheets}
$e(\lambda)$ lies in $|\Psi_\lambda|$ distinct sheets.
\end{lem}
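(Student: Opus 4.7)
The plan is to read the lemma as an essentially tautological rephrasing of Borho's classification (Theorem \ref{classsheets}) combined with Lusztig--Spaltenstein induction (Proposition \ref{inducedprops}(1)) once one passes to the standard form of Levi subalgebras and rigid orbits supplied by Lemma \ref{levnilorbs}. Everything has been set up so that the only real work is to unwind definitions and verify that the three parametrising sets match up.

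First I would recall that a sheet $\mathcal{S}$ of $\h$ is $K$-stable and contains, by \cite[5.8]{BKr}, a unique nilpotent orbit. Hence $\mathcal{S}$ contains the element $e(\lambda)$ if and only if that unique nilpotent orbit is $\Oo_{e(\lambda)}$. By Theorem \ref{classsheets}, the sheets are in bijection with $K$-conjugacy classes of pairs $(\li,\Oo_\li)$ with $\li$ a Levi subalgebra of $\h$ and $\Oo_\li$ a rigid nilpotent orbit in $\li$. Invoking Proposition \ref{inducedprops}(1), the unique nilpotent orbit in the sheet attached to $(\li,\Oo_\li)$ is $\Ind_\li^{\h}(\Oo_\li)$. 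Putting these together, the sheets containing $e(\lambda)$ are parametrised by the $K$-conjugacy classes of pairs $(\li,\Oo_\li)$ with $\Oo_\li$ rigid in $\li$ and
\[
\Ind_\li^{\h}(\Oo_\li)\,=\,\Oo_{e(\lambda)}.
\]

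Next I would translate this parametrising set into the explicit form used in the definition of $\Psi_\lambda$. By Lemma \ref{levnilorbs}(1), every Levi subalgebra of $\h$ is $K$-conjugate to one of the form $\gl_\ii\times\mm$, and by Lemma \ref{levnilorbs}(2) every rigid nilpotent orbit in such a Levi has the shape $\Oo_0\times\cdots\times\Oo_0\times\Oo_{e(\mu)}$ with $\mu\in\mathcal{P}_\epsilon^\ast(R_\ii)$. Since these normal forms are precisely those entering the definition of $\Psi_\lambda$, the $K$-conjugacy classes of pairs $(\li,\Oo_\li)$ satisfying $\Ind_\li^{\h}(\Oo_\li)=\Oo_{e(\lambda)}$ are in natural bijection with the elements of $\Psi_\lambda$, and Lemma \ref{contsheets} follows.

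The only delicate point is the type $\sf D$ ambiguity flagged in Lemma \ref{levnilorbs}(1): when $R_\ii=0$ and all parts of $\ii$ are even, the isomorphism class $\gl_\ii$ splits into two $K$-conjugacy classes $I$ and $I\!I$. I would emphasise that this subtlety is already absorbed into the statement of Lemma \ref{levnilorbs} and hence into the definition of $\Psi_\lambda$ (where $K$-conjugacy classes, not abstract isomorphism classes, are counted), so it introduces no correction and the bijection above remains exact. No other aspect of the proof requires calculation; this is the step that deserves an explicit sentence in the write-up.
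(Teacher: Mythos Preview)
Your proof is correct and follows essentially the same approach as the paper: invoke Theorem~\ref{classsheets} together with Proposition~\ref{inducedprops}(1) to see that sheets containing $e(\lambda)$ correspond to $K$-conjugacy classes of pairs $(\li,\Oo_\li)$ with $\Oo_\li$ rigid and $\Ind_\li^\h(\Oo_\li)=\Oo_{e(\lambda)}$, and then use Lemma~\ref{levnilorbs} to put these pairs into the standard form defining $\Psi_\lambda$. Your write-up is more detailed than the paper's terse version and helpfully flags the type ${\sf D}$ labelling issue, but the substance is the same.
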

\begin{proof}
Let $\mathcal{S}$ be a sheet of $\h$ with data $(\li, \Oo)$. By Theorem~\ref{classsheets} and
Part~1 of Proposition~\ref{inducedprops} we see that $e(\lambda) \in \mathcal{S}$ if and only
if $e(\lambda) = \Ind_\li^\h(\Oo)$. By
Lemma~\ref{levnilorbs} we have
$\li \cong \gl_{i_1} \times \cdots \times \gl_{i_l} \times \mathfrak{m}$
and $\Oo = \Oo_0 \times\cdots \Oo_0 \times \Oo_{e(\mu)} \subseteq \li$.
\end{proof}

We now briefly discuss the partitions associated to induced orbits. The result
stated below may be deduced from \cite[Corollary~7.3.3]{CM}. We warn the reader
that when interpreting the proposition for the Lie algebras of type $\sf B$ the
unique nilpotent orbit in the trivial algebra $\mathfrak{so}_1$ is labelled by
the partition $\lambda = (1)$ contrary to the common convention. Furthermore,
our description of labels attached to induced orbits does not quite agree with
the description in \cite{CM}; see Remark~\ref{amendment} for more detail.

Recall that the natural representation of $\h$ is of dimension $N$.
\begin{prop}\label{indpart}
Choose a positive integer $ i$ with $2i \leq N$ and let $\li
=\gl_{i} \oplus \mathfrak{m}\cong \gl_{i} \times \mathfrak{m}$ be a
maximal Levi subalgebra of $\h$. Let $\Oo = \Oo_0\times \Oo_{\mu}$
be a nilpotent orbit in $\li$ where $\Oo_\mu$ has partition $\mu \in
\mathcal{P}_\epsilon(N-2i)$. Then $\Oo_{e(\lambda)} =
\emph{\Ind}_\li^\g(\Oo)$ has associated partition $\lambda$ where
$\lambda$ is obtained from $\mu$ by the following procedure: add 2
to the first $i$ columns of $\mu$ (extending by zero if necessary);
if the resulting partition lies in $\mathcal{P}_\epsilon(N)$ then we
have found $\lambda$, otherwise we obtain $\lambda$ by subtracting 1
from the $i^\text{th}$ column and adding 1 to the $(i+1)^\text{th}$.

Now suppose we are in type $\sf D$ and $\lambda$ is very even. Then
either $\mu$ is very even or $N=2i$ and ${\rm rank}\,\h$ is even. If
$N>2i$ then $\Oo_{e(\lambda)}$ inherits its label from $\mu$, whilst
if $N=2i$ then the induced orbit inherits its label from $\li$.
\end{prop}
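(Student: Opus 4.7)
The plan is to use the Lusztig--Spaltenstein definition of induction directly and read off the Jordan type of a generic element of $e_0+\mathfrak{n}$, where $\mathfrak{p}=\li\oplus\mathfrak{n}$ is a parabolic of $\h$ with Levi $\li=\gl_i\oplus\mm$. Concretely, $\mathfrak{p}$ is the stabiliser in $\h$ of an $i$-dimensional isotropic subspace $U\subseteq V$, and $\li$ acts on $V$ as $\gl(U)\oplus\mathfrak{g}(U^\perp/U)$, the second summand being a copy of $\mm$. Picking $e_0\in\Oo_\mu\subset\mm$ with Jordan type $\mu$, the induced orbit $\Oo_{e(\lambda)}=\Ind_\li^\h(\Oo_0\times\Oo_\mu)$ is by definition the unique nilpotent $K$-orbit meeting $e_0+\mathfrak{n}$ densely, so only the Jordan type of a generic $x\in e_0+\mathfrak{n}$ need be computed.

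For this computation I would invoke the classical formula that the $k$-th column length of the partition of a nilpotent $x$ equals $\dim\ker(x^k) - \dim\ker(x^{k-1})$. Using the flag $0\subset U\subset U^\perp\subset V$ preserved by every element of $e_0+\mathfrak{n}$, together with the action $0\oplus e_0\oplus 0$ on the associated graded pieces, a direct dimension count for generic $n\in\mathfrak{n}$ gives
\[
\dim\ker((e_0+n)^k)\;=\;\dim\ker(e_0^k)\,+\,2\min(k,i)\qquad(k\ge 1).
\]
Differencing in $k$, this is exactly the prescription ``add $2$ to each of the first $i$ columns of $\mu$''. If the resulting diagram already satisfies the parity condition of Lemma~\ref{nilpotents} we have found $\lambda$.

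The genuine obstacle is the case when the naive shape fails to lie in $\PP(N)$. Inspection shows that such a failure is always local, occurring at the interface between columns $i$ and $i+1$: a single part of the forbidden parity appears because the jumps in column length across that interface have been altered by the procedure. Since $e_0+n\in\h$, the true partition of the generic element does lie in $\PP(N)$, so some alternative must be realised. Comparing centraliser dimensions (the generic $x$ must satisfy $\dim\h_x=\dim\h_{e(\lambda)}$ on the induced orbit, whereas any further drop would force it into a strictly smaller orbit) pins down the unique correction, namely subtracting $1$ from column $i$ and adding $1$ to column $i+1$. This is the only modification preserving both $N$ and the stabiliser dimension while landing inside $\PP(N)$.

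For the very-even label in type $\sf D$, separate the subcases $N>2i$ and $N=2i$. If $N>2i$ then $\mm$ has positive rank, and $\lambda$ very even forces $\mu$ very even; there are then exactly two induction data $(\li, \Oo_0\times\Oo_\mu)$, one per label of $\mu$, and tracking the action of the diagonal outer automorphism of $\h$ (which preserves $\li$ and restricts to the outer automorphism of $\mm$) shows that the label of $\mu$ passes to that of $\lambda$. If $N=2i$ then $\mm=0$ and the two labels occur at the level of $\li$ itself, available only when $\rank\,\h$ is even as required in Lemma~\ref{levnilorbs}; the two $K$-classes of Levi subalgebras then induce to the two $K$-orbits of partition $\lambda$, with labels matched by construction of the induced orbit as the image of $\li$ under the dense map $K\times_{\li\oplus\mathfrak{n}}(\Oo_0\times\Oo_\mu+\mathfrak{n})\to\h$.
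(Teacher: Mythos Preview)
The paper does not supply its own proof of this proposition; it simply records that the statement ``may be deduced from \cite[Corollary~7.3.3]{CM}'' and then, in Remark~\ref{amendment}, corrects the labelling conventions used there. So there is no argument in the paper to compare against, and your proposal amounts to an attempt to reconstruct the Collingwood--McGovern computation directly.

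The overall strategy --- pick the isotropic-flag parabolic, look at a generic element of $e_0+\mathfrak{n}$, and read off Jordan block sizes --- is the right one, but the crucial displayed formula
\[
\dim\ker\big((e_0+n)^k\big)\;=\;\dim\ker(e_0^k)\,+\,2\min(k,i)
\]
is false. Take $\h=\mathfrak{sp}_4$, $i=1$, and $\mu=(2)$, so that $\mm=\mathfrak{sp}_2$ and $e_0$ is regular in $\mm$. The induced orbit is the regular orbit in $\mathfrak{sp}_4$, with partition $\lambda=(4)$: a direct matrix check shows that a generic $x=e_0+n$ has $\dim\ker x=1$ and $x^3\neq 0$. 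Your formula, however, predicts $\dim\ker x=\dim\ker(e_0\vert_W)+2=1+2=3$, hence partition $(2,1,1)$. The discrepancy is structural: differencing your formula in $k$ adds $2$ to the first $i$ entries of the \emph{conjugate} partition $\mu^t$, whereas the proposition (and the paper's convention, in which the parts are drawn as columns) adds $2$ to the first $i$ parts of $\mu$ themselves. In particular the phrase ``this is exactly the prescription'' is a convention slip, not a verification.

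The underlying reason your heuristic fails is that for generic $n$ the contributions of the three pieces $U$, $U^\perp/U$, $V/U^\perp$ to $\ker(x^k)$ do not decouple: the nilradical glues the Jordan strings of $e_0$ onto the extra $2i$ basis vectors, lengthening existing blocks rather than creating $2i$ new ones. The standard route (as in \cite{CM}) is to first induce inside $\gl(V)$ from the Levi $\gl_i\times\gl_{N-2i}\times\gl_i$, where the type~$\sf A$ formula gives the partition $(1^i)+\mu+(1^i)$ taken partwise, i.e.\ exactly $\mu_j+2$ for $j\le i$; one then passes to $\h$ and invokes the $\epsilon$-collapse when the result leaves $\mathcal{P}_\epsilon(N)$. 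Your centraliser-dimension argument for the collapse step, and your outline for the type~$\sf D$ labels, are reasonable in spirit but would also need to be tightened once the partition computation is repaired.
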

\begin{rem}\label{amendment}
\rm{The above proposition is based on \cite[Theorem~7.3.3]{CM}.
However the reader will notice that the way in which the labels are
chosen does not coincide with that theorem. The reason for this is
that \cite{CM} contains two misprints which we must now
amend\footnote[1]{We are thankful to Monty McGovern for this
clarification.}.

The first problem stems from comparing Lemma~5.3.5 and Theorem~7.3.3(ii) in
\cite{CM}. We see, given the conventions of Lemma~5.3.5 in  {\it
op.\,cit}, that \cite[Theorem~7.3.3(ii)]{CM} should actually state
that the label of $\Ind^\h_{\gl_i\oplus\mathfrak{m}}(\Oo)$ is {\it
different} to the label of $\Oo$ when $(\rank\,\h + \rank\,\mm)/2$
is odd. We could, of course, change \cite[Theorem~7.3.3(ii)]{CM} but a
better amendment is to change \cite[Lemma~5.3.5]{CM} so that the
labelling convention for very even orbits is independent of $n$: in
the notation of {\it op.\,cit.} we take $a = 2$ and $b = 0$
regardless of $n$. With this convention the statement of
\cite[Theorem~7.3.3(ii)]{CM} is correct. However
\cite[Theorem~7.3.3(iii)]{CM} should now state that the label of the
induced orbit {\it coincides} with the label of a Levi subalgebra from which it
is induced. This is the convention we have taken in the above
proposition.

The second misprint regards the number of conjugacy classes of
maximal Levi subalgebras in \cite[Lemma~7.3.2(ii)]{CM}. The reader
will notice that when $\h = \mathfrak{so}_{2\ell}$ and $\ell$ is odd,
the longest element of the Weyl group $w_0$ is the negative of the
outer Dynkin automorphism of the root system. Therefore if $gT = w_0
\in W = N_K(T)/T$ then $\Ad\,g$ exchanges the Levi subalgebras which
are labelled $I$ and $I\!I$ in this case. This confirms that there
is just one class of Levi subalgebras of type $\gl_\ell$ when $\ell$
is odd. When $\ell$ is even there are two such classes and our
convention for labelling conjugacy classes of Levi subalgebras in
Lemma~\ref{levnilorbs} is a natural extension of
\cite[Lemma~7.3.2]{CM}.}
\end{rem}

In light of the above proposition we may explain the definition of
the algorithm. We fix $\lambda$ and want to decide when is it
possible to find a pair consisting of a maximal Levi $\li =
\gl_{i_1}\oplus \mathfrak{m}\cong\gl_{i_1}\times \mathfrak{m}$ and a
nilpotent orbit $\Oo = \Oo_0 \times \Oo_{e(\mu)}$ (with partition
$\mu$) such that $\Ind_\li^\g(\Oo) = \Oo_{e(\lambda)}$. It is now
clear that this occurs precisely when we have an admissible index
$i$ and a Levi subalgebra isomorphic to $\gl_i \times \mm$. In this
case $\mu = \lambda^{(i)}$ and if $\Oo_{e(\mu)}$ has a label then it
is completely determined by that of $\Oo_{e(\lambda)}$. The precise
statement is as follows:
\begin{cor}\label{case1or2}
Let $\lambda \in \mathcal{P}_\epsilon(N)$. Suppose there exists a
maximal Levi $\li \cong\gl_{i}\oplus \mathfrak{m}\cong \gl_{i} \times
\mathfrak{m}$. Then the following are equivalent:
\begin{enumerate}
\item{$i$ is an admissible index for $\lambda$. If $\h$ has type $\sf D$
and there are two conjugacy classes of Levi subalgebras isomorphic
to $\gl_i \times \mm$ then $\li$ belongs to the conjugacy class with
the same label as $\Oo_{e(\lambda)}$;}
\item{There exists an orbit $\Oo = \Oo_0 \times \Oo_{e(\mu)}$ with $\Oo_{e(\lambda)} = \Ind_\li^\h(\Oo)$.}
\end{enumerate}
If these two equivalent conditions hold then $\Oo_{e(\mu)}$ has
partition $\mu = \lambda^{(i)}$. Furthermore, for every other orbit
$\tilde{\Oo} = \Oo_0\times \Oo_{e(\tilde{\mu})}$ in $\li$ such that
$\Oo_{e(\lambda)} = \Ind_\li^\h(\tilde{\Oo})$, we have $(\li, \Oo)/K
= (\li, \tilde{\Oo})/K$.
\end{cor}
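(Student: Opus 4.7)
The approach is to exhibit the KS algorithm as the reverse of the induction procedure spelled out in Proposition~\ref{indpart}. Write $\mu := \lambda^{(i)}$ throughout.

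For $(1) \Rightarrow (2)$ I would take $\Oo := \Oo_0 \times \Oo_{e(\mu)}$ in $\li$ and run Proposition~\ref{indpart} on $\mu$ to verify that the induced orbit is $\Oo_{e(\lambda)}$. In Case~1, adding $2$ to the first $i$ parts of $\mu$ reproduces $\lambda$ immediately, and the inequality $\lambda_i \geq \lambda_{i+1} + 2$ keeps the result weakly decreasing, so no adjustment is needed. In Case~2, the intermediate partition $(\lambda_1, \ldots, \lambda_{i-1}, \lambda_i + 1, \lambda_{i+1} - 1, \lambda_{i+2}, \ldots)$ fails to lie in $\mathcal{P}_\epsilon(N)$: the parts at positions $i$ and $i+1$ both satisfy $\epsilon(-1)^{\lambda_i \pm 1} = 1$ and so demand adjacent equal partners under the involution of Lemma~\ref{nilpotents}, but they differ by $2$, so no such pairing exists locally. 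The adjustment step then restores $\lambda$. Some bookkeeping also shows $\mu \in \mathcal{P}_\epsilon(N-2i)$, using in Case~2 the strict inequalities $\lambda_{i-1} > \lambda_i$ and $\lambda_{i+1} > \lambda_{i+2}$ encoded in Definition~\ref{de}.

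For $(2) \Rightarrow (1)$ I would take an arbitrary $\mu$ satisfying $\Ind_{\li}^{\h}(\Oo_0 \times \Oo_{e(\mu)}) = \Oo_{e(\lambda)}$ and inspect Proposition~\ref{indpart}. The no-adjustment branch forces $\lambda_j = \mu_j + 2$ for $j \leq i$ and $\lambda_j = \mu_j$ for $j > i$, so $\lambda_i - \lambda_{i+1} = (\mu_i + 2) - \mu_{i+1} \geq 2$ and Case~1 holds at $i$. The adjustment branch forces parity obstructions at positions $i, i+1$ that translate into $i = i'$, $(i+1)' = i+1$, $\lambda_{i-1} \neq \lambda_i$, and $\lambda_{i+1} \neq \lambda_{i+2}$ together with $\lambda_i = \lambda_{i+1}$; hence $(i, i+1) \in \Delta(\lambda)$ and Case~2 holds. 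Inverting the procedure recovers $\mu = \lambda^{(i)}$ in both subcases.

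The uniqueness of $(\li, \Oo)/K$ and the type-$\sf D$ label condition follow from the second paragraph of Proposition~\ref{indpart}: when $\mu$ is very even, its label is prescribed by the label of $\Oo_{e(\lambda)}$, and when $N = 2i$ with $\mu = \emptyset$ the label of $\li$ is prescribed similarly, so in either situation $(\li,\Oo)$ is determined up to $K$-conjugacy. The main technical obstacle is the parity and involution bookkeeping in Case~2, which requires careful combination of Definition~\ref{de} with the adjacency constraint $j' \in \{j-1, j, j+1\}$ on the involution from Lemma~\ref{nilpotents}.
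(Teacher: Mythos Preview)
Your approach to the equivalence $(1)\Leftrightarrow(2)$ is essentially the same as the paper's: both recognise that the KS step is the inverse of the induction procedure in Proposition~\ref{indpart}, with the two KS cases matching the no-adjustment and adjustment branches. Your case analysis is more explicit than the paper's, which simply asserts that Proposition~\ref{indpart} gives $\mu = \lambda^{(i)}$ and hence admissibility of $i$.

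However, your uniqueness argument has a genuine gap. You write that when $\mu$ is very even its label is prescribed by the label of $\Oo_{e(\lambda)}$, but this only makes sense when $\lambda$ is \emph{also} very even. The delicate case is when $\mu = \lambda^{(i)}$ is very even while $\lambda$ is not: then there are two distinct orbits $\Oo_{e(\mu)}^{I}$ and $\Oo_{e(\mu)}^{I\!I}$ in $\mm$, and Proposition~\ref{indpart} gives no label on the induced side to distinguish them. A priori both pairs $(\li, \Oo_0 \times \Oo_{e(\mu)}^{I})$ and $(\li, \Oo_0 \times \Oo_{e(\mu)}^{I\!I})$ induce to $\Oo_{e(\lambda)}$, and you need to show they are $K$-conjugate.

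The paper handles this by a separate argument: one shows that in this situation $i$ must be odd (since the Case~2 step at the unique $2$-step creating the odd parts forces $i$ odd, all other parts of $\lambda$ being even and hence paired), so $\operatorname{rank}\h = N/2$ is odd. Then the longest Weyl group element $w_0$ is the negative of the outer diagram automorphism, and any representative $g \in N_K(T)$ of $w_0$ preserves $\li$ while exchanging the two orbits labelled $I$ and $I\!I$. This Weyl-group argument is the missing ingredient in your proposal.
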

\begin{proof}
Assume throughout that $\li \cong \gl_i \times \mm$ exists and let
$\Oo = \Oo_0 \times \Oo_{e(\mu)} \subseteq \li$. The previous
proposition implies that if $\lambda$ is the partition of
$\Ind_\li^\h(\Oo_0 \times \Oo_{e(\mu)})$ then $\lambda^{(i)} = \mu$.
Suppose (1) holds. Then the partition of $\Ind_\li^\h(\Oo_0 \times
\Oo_{e(\lambda^{(i)})})$ is $\lambda$. If $\lambda$ is not very even
then (2) follows. If we are in type $\sf D$ and $\lambda$ is very
even then according to the previous proposition either
$\lambda^{(i)}$ is very even or $\li \cong \gl_i$ where $i = N/2 =
\rank\,\h$ is even. In the first case the orbit $\Oo_0 \times
\Oo_{e(\lambda^{(i)})}$ with the same label as $\Oo_{e(\lambda)}$
induces to $\Oo_{e(\lambda)}$ whilst in the second case there is a
unique orbit of the correct form (the zero orbit) and since the
labels of $\li$ and $\Oo_{e(\lambda)}$ coincide, it induces up to
$\Oo_{e(\lambda)}$.

Now suppose that (2) holds. Since $\mu = \lambda^{(i)}$ the index
$i$ is certainly admissible for $\lambda$. If there are two
conjugacy classes of Levi subalgebras then again $\li \cong \gl_i$
and so $\Oo_{e(\lambda)} = \Ind_\li^\h(\Oo)$ implies that the labels
of $\li$ and $\Oo_{e(\lambda)}$ coincide by the last part of the
previous proposition.

The statement that $\mu = \lambda^{(i)}$ is immediate from the above
discussion. Fix $\Oo = \Oo_0\times\Oo_{e(\mu)}$ fulfilling
$\Oo_{e(\lambda)} = \Ind_\li^\h(\Oo)$. We must show that for every
other orbit of the form $\tilde{\Oo} = \Oo_0
\times\Oo_{e(\tilde{\mu})}$ fulfilling $\Oo_{e(\lambda)} =
\Ind_\li^\h(\tilde{\Oo})$ that the pair $(\li, \tilde{\Oo})$ is
$K$-conjugate to $(\li, \Oo)$. Since we know that $\mu =
\lambda^{(i)} = \tilde{\mu}$ this is now obvious unless $\mu$ is
very even and $\lambda$ is not very even.

So suppose that this is the case. We claim that in this situation
any admissible sequence $\ii$ for $\lambda$ has an odd term. Indeed,
in order to see this it suffices to assume $\ii = (i)$ has length 1.
Since $\lambda^\ii$ is very even or empty we conclude that $(i,
i+1)$ must be the only 2-step for $\lambda$. If $\lambda^\ii$ is
empty then $i = 1$. Assume not. Since the parts of $\lambda$ which
precede $\lambda_i$ are all even they must come in pairs and so $i$
must be odd. The claim follows.

Since we are assuming that $\mu$ is very even and $\lambda$ is not,
the above shows that $i$ is odd. We know that $\rank\,\mm = (N -
2i)/2$ is even. From this we can be sure that $N/2 = \rank\,\h$ is
odd. Now from the tables in \cite{B} we see that the longest element
$w_0$ of the Weyl group $W = N_K(T)/T$ is the negative of the outer
diagram automorphism of the root system of $\h$. Therefore if $gT =
w_0$ then $\Ad\,g$ will preserve $\li$ and exchange the orbits with
partition $\lambda^{(i)}$ labelled $I$ and $I\!I$. This complete the
proof.
\end{proof}

The following proposition uses a similar kind of induction as
\cite[Proposition~3.7]{Mor} and is central to our proof of Theorem
\ref{nosheets}.
\begin{prop}\label{mainpropref}
Let $\ii = (i_1,...,i_l)$ be a sequence of integers with $\sum_j i_j
\leq {\rm rank}\,\h$. Suppose there exists a Levi subalgebra $\li
\cong \gl_\ii \times \mathfrak{m}$. Then following are equivalent:
\begin{enumerate}

\item{$\ii$ is an admissible sequence for $\lambda$. If $\h$ has type $\sf D$ and there are two conjugacy classes of Levi
subalgebras isomorphic to $\gl_\ii \times \mm$ then $\li$ belongs to the conjugacy class with the same label
as $\Oo_{e(\lambda)}$;}

\smallskip

\item{There exists an orbit $\Oo = \Oo_0 \times \cdots\times\Oo_0 \times \Oo_{e(\mu)}$ with
$\Oo_{e(\lambda)} = {\rm Ind}_\li^\h(\Oo)$.}
\end{enumerate}
If these two equivalent conditions hold then $\Oo_{e(\mu)}$ has partition
$\mu = \lambda^{\ii}$. Furthermore, for every other orbit $\tilde{\Oo} \subseteq \li$
with $\tilde{\Oo} = \Oo_0 \times\cdots\times\Oo_0\times \Oo_{e(\tilde{\mu})}$
 such that $\Oo_{e(\lambda)} = \Ind_\li^\h(\tilde{\Oo})$, we have $(\li, \Oo)/K = (\li, \tilde{\Oo})/K$.
\end{prop}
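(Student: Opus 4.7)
My plan is to proceed by induction on the length $l = |\ii|$. The base cases are immediate: for $l = 0$, the sequence is empty, $\li = \mm = \h$, and Proposition~\ref{ind}(2) forces $\Oo = \Oo_{e(\lambda)}$ with $\mu = \lambda = \lambda^\emptyset$; for $l = 1$, the statement is exactly Corollary~\ref{case1or2}. The two key ingredients driving the inductive step are the transitivity of Lusztig--Spaltenstein induction from Proposition~\ref{ind}(3) and the transitivity of the KS algorithm from Remark~\ref{remark1}(ii).

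For the inductive step with $l \ge 2$, I would choose a chain of standard Levi subalgebras $\li \subseteq \mm_1 \subseteq \h$ where $\mm_1 \cong \gl_{i_1} \oplus \h_1$ is a maximal Levi of $\h$ (with $\h_1$ classical of the same type as $\h$), and $\li \cong \gl_{i_1} \oplus \li'$ with $\li' \cong \gl_{i_2} \oplus \cdots \oplus \gl_{i_l} \oplus \mm$ now viewed as a Levi of $\h_1$. Setting $\ii' = (i_2, \ldots, i_l)$, the definition of the KS algorithm together with Remark~\ref{remark1}(ii) gives that $\ii$ is admissible for $\lambda$ precisely when $i_1$ is admissible for $\lambda$ and $\ii'$ is admissible for $\lambda^{(i_1)}$. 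For (1)$\Rightarrow$(2), Corollary~\ref{case1or2} yields an orbit $\Oo_0 \times \Oo_{e(\lambda^{(i_1)})}$ in $\mm_1$ which induces to $\Oo_{e(\lambda)}$; the inductive hypothesis applied inside $\h_1$ produces an orbit of the required shape in $\li'$ which induces to $\Oo_{e(\lambda^{(i_1)})}$ with partition $(\lambda^{(i_1)})^{\ii'} = \lambda^\ii$; and Proposition~\ref{ind}(3) splices these into the desired orbit in $\li$. The converse (2)$\Rightarrow$(1) runs the same chain backwards: given $\Oo_{e(\lambda)} = \Ind_\li^\h(\Oo)$, the intermediate orbit $\Ind_\li^{\mm_1}(\Oo)$ again induces to $\Oo_{e(\lambda)}$ by Proposition~\ref{ind}(3), so Corollary~\ref{case1or2} forces $i_1$ admissible and forces $\Ind_\li^{\mm_1}(\Oo)$ to have the form $\Oo_0 \times \Oo_{e(\lambda^{(i_1)})}$; the inductive hypothesis then forces $\ii'$ admissible for $\lambda^{(i_1)}$ and fixes $\mu = \lambda^\ii$.

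The principal obstacle will be bookkeeping of the type $\sf D$ labels. When $\h$ is of type $\sf D$ and there are two conjugacy classes of Levi subalgebras isomorphic to $\gl_\ii \oplus \mm$, one must match the label of $\li$ in $\h$ (dictated by $\Oo_{e(\lambda)}$) with the label of $\li'$ in $\h_1$ (which ought to be dictated by $\Oo_{e(\lambda^{(i_1)})}$) so that the inductive hypothesis is actually available at the next stage. This requires a parity analysis of $i_1$, $\lambda$ and $\lambda^{(i_1)}$ paralleling the very-even/labelling discussion at the end of the proof of Corollary~\ref{case1or2}, together with an inspection of when the very-even property is inherited under a single KS step. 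The final uniqueness clause $(\li, \Oo)/K = (\li, \tilde\Oo)/K$ needs the same care: since $\mu$ is forced to equal $\lambda^\ii$ by the argument above, nontriviality occurs exactly when $\lambda^\ii$ is very even while $\lambda$ is not, and one handles this by combining the uniqueness statement of Corollary~\ref{case1or2} at each stage with the argument, used there already, that a suitable Weyl group element representing $w_0$ interchanges the two very-even orbits while preserving $\li$.
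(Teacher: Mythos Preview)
Your induction scheme is correct and is essentially the paper's argument run from the \emph{other end} of the sequence: the paper strips off the \emph{last} entry, setting $\ii'=(i_1,\ldots,i_{l-1})$, applies the inductive hypothesis in $\h$ itself to reach an intermediate Levi $\li'=\gl_{\ii'}\oplus\mm'$, and then uses Corollary~\ref{case1or2} for the single step $\gl_{i_l}\oplus\mm\subset\mm'$; you strip off the \emph{first} entry, use Corollary~\ref{case1or2} for $\gl_{i_1}\oplus\h_1\subset\h$, and then invoke the inductive hypothesis inside $\h_1$. Both orderings are legitimate. The paper's choice has the small bookkeeping advantage that, away from the anomalous case, the intermediate Levi $\li'$ always has $R_{\ii'}>0$ and hence carries no type~$\sf D$ label, so all label-matching is concentrated in a single invocation of Corollary~\ref{case1or2} at the end; in your version the label check is pushed into the inductive hypothesis for $\li'\subset\h_1$, which is equally fine but requires you to verify that the label of $\li$ in $\h$ agrees with that of $\li'$ in $\h_1$ under your standard-Levi conventions.

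There is one genuine gap you have not addressed. In type~$\sf D$ the intermediate maximal Levi you need, $\mm_1\cong\gl_{i_1}\oplus\h_1$ with $\h_1$ orthogonal of natural dimension $N-2i_1$, does not exist when $N-2i_1=2$: Lemma~\ref{levnilorbs} explicitly excludes $R=2$. The paper encounters the mirror image of this obstruction (when $R_{\ii'}=2$, so that $\mm'$ would be $\mathfrak{so}_2$) and deals with it separately by stepping back two terms to $\ii''=(i_1,\ldots,i_{l-2})$, listing the four possible shapes of $\lambda^{\ii''}$, and computing the induction through $\gl_{i_{l-1}+1}$ directly using \cite[Theorem~7.2.3]{CM}. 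In your ordering the anomaly is narrower --- it forces $l=2$ and $\ii=(\mathrm{rank}\,\h-1,\,1)$ with $R_\ii=0$ --- but it still has to be handled by hand, since your chain $\li\subset\mm_1\subset\h$ simply cannot be built in that case. Your sketch of the uniqueness clause is otherwise on target and matches the paper's resolution: when $\mu=\lambda^\ii$ is very even but $\lambda$ is not, some $i_j$ is odd, and after permuting one may place that odd entry at the step where Corollary~\ref{case1or2} is applied, whereupon the $w_0$ argument from that corollary finishes the job.
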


\begin{proof}
The proof proceeds by induction on $l$. When $l = 0$ we have $\li =
\h$ and the proposition holds by Part~2 of
Proposition~\ref{inducedprops} (note that $\lambda^\emptyset =
\lambda$). If $\li$ is a proper Levi subalgebra then $l > 0$. The
case $l = 1$ is simply the previous corollary. The inductive step is
quite similar although to begin with we must exclude the possibility
that $R_\ii = 0$ and $N - 2\sum_{j=1}^{l-1} i_j= 2$ in type $\sf D$.
We will treat this possibility at the end.

Suppose that the proposition has been proven for all $l'< l$. Since
we have excluded this anomalous case in type $\sf D$ we may set
$\ii' = (i_1,...,i_{l-1})$ and deduce that there exists a Levi $\li'
\cong \gl_{\ii'} \oplus \mm'$ where $\mm'$ has a natural
representation of dimension $R_{\ii'}$ and the same type as $\h$.
Let $M'$ be the closed subgroup of $K$ with $\mm' = \Lie(M')$. We
may ensure that $\li \subseteq \li'$ by embedding $\gl_{i_l} \oplus
\mm$ in $\mm'$.

Suppose that $\ii$ is admissible and, if possible, that the label of
$\li$ coincides with that of $\Oo_{e(\lambda)}$. We deduce that
$\ii'$ is also admissible, and since $R_{\ii'} > 0$ there is a
unique class of Levi subalgebras isomorphic to $\gl_{\ii'} \oplus
\mm'$. By the inductive hypothesis we deduce that there exists an
orbit $\Oo' = \Oo_0 \times \cdots\times\Oo_0 \times \Oo_{e(\tau)}
\subseteq \li'$ with $\Oo_{e(\lambda)} = \Ind_{\li'}^\h(\Oo')$. We
also see that this orbit is unique, that it has partition
$\lambda^{\ii'}$ and that if it has a label then it is the same as
$\Oo_{e(\lambda)}$. Clearly $i_l$ is admissible for $\lambda^{\ii'}$
and examining our labelling conventions for Levi subalgebras described
preceding Lemma~\ref{levnilorbs} we see that the label of the $K$-conjugacy
class of $\li$ equals the label of the $M'$-conjugacy class of
$\gl_{i_l} \oplus \mm \subseteq \mm'$. Therefore we can apply
Corollary~\ref{case1or2} to conclude that there exists an orbit $\Oo
= \Oo_0 \times \Oo_{e(\mu)}$ with $\Oo_{e(\tau)} =
\Ind^{\mm'}_{\gl_{i_l} \oplus \mm}(\Oo)$. We make use of Proposition
\ref{inducedprops} in the following calculation:
\begin{eqnarray*}
\Oo_{e(\lambda)} = \Ind_{\li'}^\h(\Oo') &= &\Ind^\h_{\li'}(\Oo_0
\times\cdots\times\Oo_0\times \Ind^{\mm'}_{\gl_{i_l} \oplus
\mm}(\Oo_0\times \Oo_{e(\mu)}))\\& = &
\Ind^\h_{\li}(\Oo_0\times\cdots \times \Oo_0 \times \Oo_{e(\mu)})
\end{eqnarray*}
We have shown that $(1) \Rightarrow (2)$. Before proving $(2)
\Rightarrow (1)$ we shall take a quick detour to show that the final
remarks in the statement of the proposition follow from (1). We
certainly have $\mu = \tau^{(i_l)} = (\lambda^{\ii'})^{(i_l)} =
\lambda^\ii$ by the transitivity of the algorithm. Suppose
$\tilde{\Oo} = \Oo_0 \times\Oo_0\times\Oo_{e(\tilde{\mu})}$ is
another orbit in $\li$ inducing to $\Oo_{e(\lambda)}$. By the
inductive hypothesis the partition of $\Ind^{\mm'}_{\gl_{i_l}\oplus
\mm}(\Oo_0 \times \Oo_{e(\tilde{\mu})})$ is $\lambda^{\ii'}$ and so
we get $\tilde{\mu} = \lambda^\ii = \mu$. The uniqueness assertion
is therefore obvious unless $\mu$ is very even and $\lambda$ is not.
In this case the argument used in the proof of
Corollary~\ref{case1or2} tells us that some term of the sequence
$\ii$ is odd. After conjugating by some element of $K$ we can assume
that $i_l$ is odd. The proof of uniqueness then concludes just as
with the previous corollary, with $\gl_{i_l} \oplus \mm$ playing the
role of our Levi subalgebra and $\mm'$ playing the role of $\h$.

Now we must go the other way. Keep $\li$, $\li'$, $\mm'$, etc as
above. Suppose that there exists an orbit $\Oo = \Oo_0 \times
\cdots\times\Oo_0 \times \Oo_{e(\mu)}\subseteq \li$ with
$\Oo_{e(\lambda)} = \Ind_\li^\h(\Oo)$. Then we set $\Oo_{e(\tau)} :=
\Ind_{\gl_{i_l} \oplus \mm}^{\mm'}(\Oo_0 \times \Oo_{e(\mu)})$,
$\Oo' := \Oo_0 \times\cdots \times\Oo_0\times \Oo_{e(\tau)}
\subseteq \li'$ and conclude that $\Oo_{e(\lambda)} =
\Ind^\h_{\li'}(\Oo')$ using a calculation very similar to the above
one. Applying the inductive hypotheses we get that $\ii'$ is
admissible. There is no label associated to $\li'$ since $R_{\ii'} >
0$. Now Corollary~\ref{case1or2} tells us that $i_l$ is an
admissible index for $\tau = \lambda^{\ii'}$ and so $\ii$ is
admissible for $\lambda$. The same corollary tells us that if the
$M'$-conjugacy class of the Levi subalgebra $\gl_{i_l}\oplus \mm\subseteq \mm'$
has a label then it coincides with that of $\Oo_{e(\tau)}$. The
inductive hypothesis tells us that this label coincides with that of
$\Oo_{e(\lambda)}$.

Finally we must turn our attention to those sequences $\ii$ in type
$\sf D$ for which $R_{\ii'} = 2$ (as before $\ii'$ stands for $\ii$
with the last term removed). In this case there does not exist a
Levi subalgebra of the form $\gl_{\ii'}\oplus \mm$ and the induction
falls down. In order to resolve this we define $\ii'' =
(i_1,...,i_{l-2})$ and let $\li'' = \gl_{\ii''} \oplus \mm''$. Since
$\li$ has the form $\gl_\ii$ we may embed $\gl_{i_{l-1}}\oplus
\gl_{i_l}\subseteq \mm''$ to get $\li \subseteq \li''$. Since $i_l =
1$ there is a unique conjugacy class of Levi subalgebras isomorphic
to $\gl_\ii$. Furthermore, since the $\mm$ part is zero, there is
only one orbit of the prescribed form in $\li$. We let $\Oo$ equal
the zero orbit in $\li$. The proposition in this case is therefore
reduced to the statement that $\ii$ is admissible if and only if
$\Oo_\lambda = \Ind_\li^\h(\Oo)$.

Suppose $\ii$ is admissible for $\lambda$. Then so is $\ii''$ and by
the inductive hypothesis there exists an orbit $\Oo'' =
\Oo_0\times\cdots\times\Oo_0\times \Oo_{e(\tau)}$ in $\li''$ with
$\Oo_{e(\lambda)} = \Ind^\h_{\li''}(\Oo'')$. Since $i_{l-1}$ is an
admissible index for $\tau$ and $\tau^{(i_{l-1})}$ is $(1,1)$ we
conclude that $\tau = (3,1)$ if $i_l = 1$, that $\tau = (3,3)$ if
$i_l = 2$, that $\tau = (3,3,2^{i_{l-1}})$ if $i_{l-1}> 2$ is even
or finally that $\tau = (3,3,2^{i_{l-1}-1},1,1)$ if $i_{l-1}>2$ is
odd. None of these partitions are very even and so there is a unique
orbit with partition $\tau$. According to \cite[Theorem~7.2.3]{CM}
the induced orbit $\Ind_{\gl_{i_{l-1}}\oplus
\gl_{i_l}}^{\gl_{i_{l-1}+1}}(\Oo_0\times\Oo_0)$ is the minimal
nilpotent orbit in $\gl_{i_{l-1}+1}$ with partition $(2,1,...,1)$.
If we induce into $\mm''$ then \cite[Lemma~7.3.3(i)]{CM} tells us
that $\Ind_{\gl_{i_{l-1}+1}}^{\mm''}(\Oo_\text{min}) =
\Oo_{e(\tau)}$. Placing these ingredients together we get
\begin{eqnarray*}
\Ind^\h_{\li}(\Oo) = \Ind^\h_{\li''}(\Ind^{\li''}_{\li}(\Oo)) &=&
\Ind^\h_{\li''}\big(\Oo_0\times
\cdots \times\Oo_0 \times \Ind^{\mm''}_{\gl_{i_{l-1}}\oplus\gl_{i_l}}(\Oo_0 \times \Oo_0) \big)\\
&=& \Ind^\h_{\li''}(\Oo'') = \Oo_{e(\lambda)}
\end{eqnarray*}
as required. To go the other way we assume that such an orbit $\Oo$
exists and go backwards through the above deductions. We will
conclude that $\tau$ has one of the prescribed forms so that
$(i_{l-1}, 1)$ is an admissible sequence for $\ii''$ and conclude
that $\ii$ is admissible.
\end{proof}

\begin{cor}\label{tau}
Let $\lambda\in\mathcal{P}_\epsilon(N)$. If
$\emph\ii=(i_1,\ldots,i_l)$ is an admissible sequence for $\lambda$
(and $R_\ii \neq 2$ in type $\sf D$) then so is $\sigma(\emph\ii)$
for every $\sigma \in \mathfrak{S}_l$.
\end{cor}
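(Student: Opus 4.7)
The plan is to derive this corollary as an almost immediate consequence of Proposition~\ref{mainpropref}, which translates the combinatorial notion of admissibility into an intrinsic geometric condition. Fix $\sigma\in\mathfrak{S}_l$. Since the multiset $\{i_1,\ldots,i_l\}$ is preserved by $\sigma$, the sum $\sum_j i_j$ and hence $R_{\sigma(\ii)}=R_\ii$ are invariants of the $\mathfrak{S}_l$-orbit, so the hypothesis $R_\ii\ne 2$ in type $\sf D$ automatically holds for $\sigma(\ii)$ as well. By Lemma~\ref{levnilorbs}, the isomorphism class $\gl_{i_1}\times\cdots\times\gl_{i_l}\times\mathfrak{m}$ determines the $K$-conjugacy class of the corresponding Levi subalgebra uniquely, except in the special type~${\sf D}$ situation where two classes exist but they have identical lists of part sizes. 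In either case the Levi $\li_\sigma\cong\gl_{\sigma(\ii)}\times\mathfrak{m}$ is $K$-conjugate to $\li\cong\gl_\ii\times\mathfrak{m}$ (after relabelling, if there are two conjugacy classes, we may take the one bearing the same label as $\li$).

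Now I would apply the forward direction of Proposition~\ref{mainpropref}: since $\ii$ is admissible for $\lambda$, with a label matching that of $\Oo_{e(\lambda)}$ when ambiguity arises, there exists an orbit $\Oo=\Oo_0\times\cdots\times\Oo_0\times\Oo_{e(\mu)}\subseteq\li$ with $\mu=\lambda^\ii$ such that $\Oo_{e(\lambda)}=\Ind_\li^\h(\Oo)$. Under a $K$-conjugation carrying $\li$ to $\li_\sigma$ (which amounts to permuting the $\gl$-factors), the orbit $\Oo$ is carried to an orbit in $\li_\sigma$ of the very same form $\Oo_0\times\cdots\times\Oo_0\times\Oo_{e(\mu)}$, because all the nilpotent factors in the $\gl_{i_j}$-components are zero and therefore unaffected by permutation. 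Since induction is invariant under $K$-conjugation, this new orbit still induces up to $\Oo_{e(\lambda)}$.

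I would then invoke the reverse direction of Proposition~\ref{mainpropref} applied to $\sigma(\ii)$ and $\li_\sigma$: the existence of an orbit of the prescribed product form in $\li_\sigma$ inducing up to $\Oo_{e(\lambda)}$ forces $\sigma(\ii)$ to be an admissible sequence for $\lambda$, and moreover guarantees the label compatibility (which was already inherited from the label compatibility for $\ii$). This concludes the argument.

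The only potentially delicate point, and the one I would verify most carefully, is the label bookkeeping in type~$\sf D$ when $\Oo_{e(\lambda)}$ or some intermediate orbit is very even; but this is controlled by the fact that labels attached to Levi subalgebras depend only on the unordered multiset of block sizes, and that the product orbit $\Oo_0\times\cdots\times\Oo_0\times\Oo_{e(\mu)}$ is literally the same object before and after reordering the zero factors. Because $\mu=\lambda^\ii=\lambda^{\sigma(\ii)}$ need not hold combinatorially at intermediate stages, I would not try to argue by successive adjacent swaps inside the KS algorithm — instead I would rely entirely on the intrinsic characterisation provided by Proposition~\ref{mainpropref}, which bypasses the need to compare step-by-step outputs of the algorithm.
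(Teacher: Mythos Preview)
Your proof is correct and follows essentially the same approach as the paper's: both rely on the fact that $\gl_\ii\times\mathfrak{m}\cong\gl_{\sigma(\ii)}\times\mathfrak{m}$ and then invoke the equivalence in Proposition~\ref{mainpropref}. The paper compresses this into a single sentence, while you have carefully unpacked the label bookkeeping and the role of $K$-conjugation, but the underlying argument is identical.
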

\begin{proof}
Since $\gl_{\ii} \times \mathfrak{m}\,\cong\,
\gl_{\sigma(\ii)} \times \mathfrak{m}$, this is immediate from Proposition \ref{mainpropref}.
\end{proof}

We now define a partial function $\varphi$ from the set of all admissible
sequences for $\lambda$ to the set of all $K$-orbits of pairs $(\li, \Oo)$
 where $\li \subseteq \h$ is a Levi subalgebra of $\h$ and $\Oo \subset \li$
 is a nilpotent orbit. The map will remain undefined on sequences $\ii$
 with $R_\ii = 2$ in type $\sf D$. Let $\ii = (i_1,...,i_l)$ be an admissible
 sequence for $\lambda$. Let $\li$ be a Levi subalgebra isomorphic to
 $\gl_\ii \times \mm$. If there are two $K$-conjugacy classes of such Levi subalgebras
 then $\lambda$ is very even and we request that $\li$ has the same label
 as $\Oo_{e(\lambda)}$. Let $\varphi(\ii) = (\li, \Oo)/K$ be the unique pair
 described in Proposition~\ref{mainpropref}. The reader should take note
 that the admissible sequences upon which $\varphi$ is undefined are not
 maximal so the following makes sense.
\begin{cor}\label{bijection}
The restriction of $\varphi$ to the set of maximal admissible
sequences for $\lambda$ descends to a well
 defined bijection from $\Phi_\lambda$ onto $\Psi_\lambda$. In particular
 $|\Phi_\lambda| = |\Psi_\lambda|$.
\end{cor}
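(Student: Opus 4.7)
The plan is to verify, in order, that $\varphi$ is defined on every maximal admissible sequence, that it descends to $\Phi_\lambda$, that the image lies in $\Psi_\lambda$, and finally that the induced map is a bijection. Almost everything will follow from Proposition~\ref{mainpropref} once we settle a small edge case.

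\emph{First step: $\varphi$ is defined on maximal admissible sequences and lands in $\Psi_\lambda$.} The only potential obstruction is the type $\sf D$ situation $R_\ii=2$. If $\ii$ is maximal admissible with $R_\ii=2$ in type $\sf D$ then $\lambda^\ii\in\mathcal{P}_1(2)$ must be rigid by Lemma~\ref{maxadmiss}, which forces $\lambda^\ii=(1,1)$; but one checks directly from the definition that $(1,2)\in\Delta((1,1))$ with $\lambda_1=\lambda_2$, so Case~2 of the KS algorithm applies at index $i=1$, contradicting maximality. Hence $\varphi(\ii)$ is defined. Using Lemma~\ref{maxadmiss} once more, $\lambda^\ii$ is rigid, so by Lemma~\ref{levnilorbs}(2) the product orbit $\Oo_0\times\cdots\times\Oo_0\times\Oo_{e(\lambda^\ii)}\subset\li$ is rigid and $\varphi(\ii)\in\Psi_\lambda$.

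\emph{Second step: descent to $\Phi_\lambda$.} For $\sigma\in\mathfrak{S}_l$, Corollary~\ref{tau} says $\sigma(\ii)$ is also admissible for $\lambda$. Lemma~\ref{levnilorbs}(1) shows that $\gl_\ii\times\mm$ and $\gl_{\sigma(\ii)}\times\mm$ determine the same $K$-conjugacy class of Levi subalgebras (and, in the ambiguous type~$\sf D$ case, we select the label matching $\Oo_{e(\lambda)}$ in both instances, so the labels agree automatically). The uniqueness clause of Proposition~\ref{mainpropref} then yields $\varphi(\ii)=\varphi(\sigma(\ii))$, so $\varphi$ descends to a well-defined map $\Phi_\lambda\to\Psi_\lambda$.

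\emph{Third step: surjectivity and injectivity.} For surjectivity, take any class $(\li,\Oo)/K\in\Psi_\lambda$. By Lemma~\ref{levnilorbs} we may write $\li\cong\gl_\ii\times\mm$ and $\Oo=\Oo_0\times\cdots\times\Oo_0\times\Oo_{e(\mu)}$ with $\mu\in\mathcal{P}_\epsilon^\ast(R_\ii)$. The implication $(2)\Rightarrow(1)$ of Proposition~\ref{mainpropref} gives that $\ii$ is admissible for $\lambda$, with the correct label in type $\sf D$, and that $\mu=\lambda^\ii$. Since $\mu$ is rigid, Lemma~\ref{maxadmiss} shows that $\ii$ is maximal admissible, and by construction $\varphi([\ii])=(\li,\Oo)/K$. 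For injectivity, if $\ii=(i_1,\ldots,i_l)$ and $\jj=(j_1,\ldots,j_m)$ are maximal admissible with $\varphi([\ii])=\varphi([\jj])$, then the underlying Levi subalgebras $\gl_\ii\times\mm$ and $\gl_\jj\times\mm$ are $K$-conjugate, and Lemma~\ref{levnilorbs}(1) forces $l=m$ and the multisets $\{i_1,\ldots,i_l\}$, $\{j_1,\ldots,j_l\}$ to coincide, i.e.\ $\jj=\sigma(\ii)$ for some $\sigma\in\mathfrak{S}_l$. Thus $[\ii]=[\jj]$ in $\Phi_\lambda$.

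The only subtlety, and the point requiring a little care rather than any real difficulty, is the label bookkeeping in the type~$\sf D$ very-even case; this is precisely what is built into our definition of $\varphi$ and into the labelled version of Proposition~\ref{mainpropref}, so the bijection statement $|\Phi_\lambda|=|\Psi_\lambda|$ follows immediately from the four verifications above.
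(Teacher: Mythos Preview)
Your proof is correct and follows essentially the same route as the paper's: map to $\Psi_\lambda$ via Lemma~\ref{maxadmiss} and Lemma~\ref{levnilorbs}, descend to $\Phi_\lambda$ using conjugacy of Levi subalgebras plus the uniqueness clause of Proposition~\ref{mainpropref}, and prove bijectivity by reading off the sequence (up to permutation) from the isomorphism type of the Levi. The one place where you add genuine content is the first step, where you explicitly verify that no maximal admissible sequence has $R_\ii=2$ in type~$\sf D$ (by checking that $(1,1)\in\mathcal{P}_1(2)$ admits a Case~2 move); the paper simply asserts this without argument.
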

\begin{proof}
First of all, we show that $\varphi$ maps the set of maximal
admissible sequences for $\lambda$ to $\Psi_\lambda$. Take $\ii$
maximal admissible and $\varphi(\ii) = (\li, \Oo)/K$ with $\Oo =
\Oo_0\times\cdots\times\Oo_0\times \Oo_{e(\mu)}$. By
Proposition~\ref{mainpropref} we have $\mu= \lambda^\ii$ and so by
Lemma \ref{maxadmiss} $\Oo_{e(\mu)}$ is rigid. By Part~2 of
Lemma~\ref{levnilorbs} the orbit $\Oo$ is also rigid. Furthermore we
have that $\Oo_{e(\lambda)} = \Ind_\li^\h(\Oo)$. Hence $\varphi(\ii)
\in \Psi_\lambda$.

We now claim that the map is well defined on $\Phi_\lambda$, that is
to say that $\varphi(\ii) = \varphi(\jj)$ whenever $\ii \sim \jj$.
Let $\varphi(\ii) = (\li_1, \Oo_1)/K$ and $\varphi(\jj) = (\li_2,
\Oo_2)/K$ where $\li_1 \cong \gl_\ii \times \mm_1$ and $\li_2 \cong
\gl_\jj \times \mm_2$. Since $\ii = \sigma(\jj)$ for some $\sigma
\in \mathfrak{S}_{|\ii|}$ and the labels of $\li_1$ and $\li_2$ are
the same (if they exist), we conclude that they are $K$-conjugate by
Part~1 of Lemma~\ref{levnilorbs}. Thus we may assume that $\li_1 =
\li_2$. Now the uniqueness statement at the end of
Proposition~\ref{mainpropref} asserts that $(\li_1, \Oo_1)/K =
(\Oo_2, \li_2)/K$. For the rest of the proof $\varphi$ shall denote
the induced map $\Phi_\lambda \ra \Psi_\lambda$.

Let us prove that $\varphi$ is surjective. Suppose $(\li, \Oo) \in
\Psi_\lambda$ with $\li$ and $\Oo$ as in the definition of
$\Psi_\lambda$. Then by Proposition~\ref{mainpropref} the sequence
$\ii = (i_1,...,i_l)$ is admissible for $\lambda$ and by
Lemma~\ref{maxadmiss} it is a maximal admissible. Therefore
$\varphi(\ii) = (\li, \tilde{\Oo})/K$ for some orbit $\tilde{\Oo}$.
Since $\Oo = \Oo_0 \times \cdots \times \Oo_0 \times \Oo_{e(\mu)}$
by construction, the uniqueness statement in
Proposition~\ref{mainpropref} tells us that $(\li, \Oo)/K = (\li,
\tilde{\Oo})/K$. Hence $\varphi$ sends the equivalence class of
$\ii$ in $\Phi_\lambda$ to $(\li, \Oo)/K$.

In order to prove the corollary we must show that $\varphi$ is
injective. Suppose that $\ii$ and $\jj$ are maximal admissible for
$\lambda$ and $\varphi(\ii) = \varphi(\jj)$. Again we make the
notation $\varphi(\ii) = (\li_1, \Oo_{e(\lambda^\ii)})/K$ and
$\varphi(\jj) = (\li_2, \Oo_{e(\lambda^\jj)})/K$. Since $\li_1$ and
$\li_2$ are $K$-conjugate the sequences $(i_1, ..., i_{l(1)})$ and
$(j_1,...,j_{l(2)})$ corresponding to isomorphisms $$\li_{1} \cong
\gl_{i_1} \oplus \cdots \oplus \gl_{i_{l(1)}}\oplus
\mathfrak{m}_{\ii}\,\cong\,\gl_{j_1} \oplus \cdots \oplus
\gl_{j_{l(2)}}\oplus \mathfrak{m}_{\jj}\cong\li_2$$ must be of the
same length $l=l(1)=l(2)$ and $\mathfrak{S}_l$-conjugate. This
completes the proof.
\end{proof}
Part~1 of Theorem~\ref{nosheets} follows quickly from the above and
Lemma~\ref{contsheets}. We now prepare to prove Part~2 of that
theorem. Before we proceed we shall need two lemmas.  Define a
function $\kappa : \PP(N) \longrightarrow (\Z/2\Z)^\N$ by setting
$$\kappa(\lambda)_i := \lambda_i - \lambda_{i+1} \mod 2 \quad\text{ for all }\, i>0.$$
The reader should keep in mind here that $\lambda_i = 0$ for all $i > n$ by convention.
\begin{lem}\label{kappa}
Let $M,N \in \N$. If $ \mu \in \mathcal{P}^\ast_\epsilon(M)$ and
$\lambda \in \mathcal{P}^\ast_\epsilon(N)$ then $\mu = \lambda$ if
and only if $\kappa(\mu) = \kappa(\lambda)$.
\end{lem}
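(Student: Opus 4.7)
The plan is to observe that for a rigid partition $\lambda$, the sequence $\kappa(\lambda)$ literally records the consecutive differences $\lambda_i - \lambda_{i+1}$ (not merely their parities), and that these differences determine $\lambda$ by telescoping.

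The forward implication is trivial, so I would focus on the converse. By Theorem~\ref{rigids} and the observation immediately following it, any partition in $\mathcal{P}_\epsilon^{\ast}(N)$ satisfies $\lambda_i - \lambda_{i+1} \in \{0,1\}$ for all $i \geq 1$ (recall the convention $\lambda_i = 0$ for $i > n$). Hence for such a $\lambda$ the integer $\lambda_i - \lambda_{i+1}$ coincides with its residue modulo $2$, so $\kappa(\lambda)_i = \lambda_i - \lambda_{i+1}$ as elements of $\Z$.

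Now assume $\mu \in \mathcal{P}_\epsilon^{\ast}(M)$ and $\lambda \in \mathcal{P}_\epsilon^{\ast}(N)$ satisfy $\kappa(\mu) = \kappa(\lambda)$. Using the convention that $\mu$ and $\lambda$ vanish past their last parts, both $\kappa(\mu)_i$ and $\kappa(\lambda)_i$ equal $0$ for $i$ sufficiently large, so only finitely many of these bits are nonzero. Telescoping yields
$$\mu_i \;=\; \sum_{j \geq i} (\mu_j - \mu_{j+1}) \;=\; \sum_{j \geq i} \kappa(\mu)_j \;=\; \sum_{j \geq i} \kappa(\lambda)_j \;=\; \sum_{j \geq i} (\lambda_j - \lambda_{j+1}) \;=\; \lambda_i$$
for every $i \geq 1$, forcing $\mu = \lambda$ (and incidentally $M = N$).

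There is no real obstacle here; the only point worth double-checking is the convention on indexing past the last part of the partition, which ensures that the telescoping sum is finite and equals the correct value. The crux of the argument is the preliminary remark that rigidity promotes the mod-$2$ information in $\kappa$ to actual integer-valued difference data.
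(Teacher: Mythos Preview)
Your proof is correct and follows essentially the same approach as the paper: both rely on the observation from Theorem~\ref{rigids} that rigidity forces $\lambda_i - \lambda_{i+1} \in \{0,1\}$, so that $\kappa(\lambda)$ records the actual differences and hence determines $\lambda$. Your telescoping argument makes the recovery of $\lambda_i$ from the differences more explicit, but the content is the same.
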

\begin{proof}
Evidently $\mu = \lambda$ if and only if $\mu_i - \mu_{i+1} =
\lambda_i - \lambda_{i+1}$ for all $i>0$. Since $\lambda$ is rigid
$\lambda_i - \lambda_{i+1} \in \{0,1\}$; see Theorem \ref{rigids}.
The statement follows.
\end{proof}

\begin{lem}\label{nonsingadmiss}
Suppose $\lambda$ is non-singular and that $i < j$ are admissible indexes
for $\lambda$. Then $i$ is admissible for $\lambda^{(j)}$.
\end{lem}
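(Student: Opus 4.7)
The plan is to carry out a finite case analysis, splitting according to which of Case 1 or Case 2 applies to each of $i$ and $j$ for $\lambda$ and according to the distance $j - i$. In each situation I would verify directly that either Case 1 or Case 2 still applies at $i$ for $\lambda^{(j)}$. The basic observation that keeps the bookkeeping manageable is that both operations at $j$ preserve the parity of $\lambda_k$ for every $k \notin \{j, j+1\}$ (Case 1 changes no parity at all, and Case 2 flips only $\lambda_j$ and $\lambda_{j+1}$). Consequently the involution $k \mapsto k'$ is preserved at every index outside $\{j, j+1\}$, and the 2-step criteria around $i$ are inherited from $\lambda$ as soon as one controls the behaviour near $j$.

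The easy step is the far-apart case $j \geq i+3$: the entries $\lambda_{i-1}, \lambda_i, \lambda_{i+1}, \lambda_{i+2}$ then all sit at indices strictly less than $j$, so both Case 1 and Case 2 at $j$ uniformly decrease each of them by $2$, leaving all local differences and parities unchanged. Thus $i$ stays admissible of the same type, without using non-singularity. The delicate situations are the close ones, $j \in \{i+1, i+2\}$, and here the hypothesis on $\lambda$ enters essentially. If both $i$ and $j$ satisfy Case 2 with $j = i+1$, the requirement $\lambda_{i+1} \neq \lambda_{i+2}$ from $(i, i+1) \in \Delta(\lambda)$ clashes with $\lambda_j = \lambda_{j+1}$ from Case 2 at $j$, so the configuration is vacuous. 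If both satisfy Case 2 with $j = i+2$, the equalities $i' = i$ and $j' = j$ force $\lambda_i \equiv \lambda_{i+2} \pmod 2$, while non-singularity of the 2-step $(i, i+1)$ demands that the boundary entry $\lambda_{i+2}$ have parity opposite to $\lambda_i$; again vacuous. If $i$ satisfies Case 1 and $j$ satisfies Case 2 with $j = i+1$, non-singularity of the 2-step $(j, j+1)$ forces $\lambda_{j-1} - \lambda_j$ to be odd, hence $\geq 3$ since it is $\geq 2$; this supplies just enough slack that $\lambda^{(j)}_i - \lambda^{(j)}_{i+1} = \lambda_{j-1} - \lambda_j - 1 \geq 2$, preserving Case 1 at $i$. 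The symmetric configuration (Case 2 at $i$, Case 1 at $j$, $j = i+1$) is handled by non-singularity of $(i, i+1)$, whose boundary $\lambda_{i+2} = \lambda_{j+1}$ has parity opposite to $\lambda_i = \lambda_j$; therefore $\lambda_j - \lambda_{j+1}$ is odd and $\geq 3$, which ensures $\lambda^{(j)}_{i+1} \neq \lambda^{(j)}_{i+2}$ and so preserves Case 2 at $i$. The remaining close sub-cases (the combinations of Case 1/Case 2 at $i$ and $j$ with $j - i = 2$ that are not yet vacuous) reduce to essentially the same computation and do not require non-singularity.

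The principal obstacle I expect is the sheer amount of bookkeeping in these close cases: one must track (a) the parity of each relevant partition entry, (b) how the involution $k \mapsto k'$ is modified by Case 2 at $j$, and (c) the precise way the parity constraints coming from non-singularity of a 2-step block any collision between the index $i+1$ or $i+2$ and the index $j$ or $j+1$. No individual sub-case is deep, but one must be systematic to avoid missing a degenerate configuration, with some additional care at the extremes $i = 1$ and $i+1 = n$ where the convention $\lambda_0 = \lambda_{n+1} = 0$ slightly alters the boundary analysis.
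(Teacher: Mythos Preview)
Your proposal is correct and follows essentially the same approach as the paper: a case analysis on which of Case~1 or Case~2 applies at $i$ and at $j$, with non-singularity invoked precisely in the close configurations $j=i+1$ (and, in your organisation, $j=i+2$) to rule out parity collisions or to guarantee the extra slack $\lambda_i-\lambda_{i+1}\ge 3$.

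The only structural difference is one of packaging. You organise the analysis by the distance $j-i$ and then by the Case at each index, unrolling every close sub-case by hand. The paper instead organises by the Case at $i$, and in the branch ``Case~2 at $i$'' it disposes of the entire sub-branch ``Case~2 at $j$'' in one stroke by citing Lemma~\ref{deltain2}, which already gives $\Delta(\lambda^{(j)})=\Delta(\lambda)\setminus\{(j,j+1)\}$; since $i<j$ this immediately yields $(i,i+1)\in\Delta(\lambda^{(j)})$. This makes the paper's proof noticeably shorter and avoids the explicit bookkeeping at $j=i+2$ that you flag as ``remaining close sub-cases''. Your approach, on the other hand, is self-contained and makes the role of non-singularity in each individual configuration completely explicit, which is pedagogically clearer. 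Either way the content is the same.
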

\begin{proof}
If Case 1 occurs for $\lambda$ at index $i$ then $\lambda^{(j)}_i - \lambda^{(j)}_{i+1}
= \lambda_i - \lambda_{i+1} \geq 2$ unless $j = i+1$ and Case 2 occurs
for $\lambda$ at index $j$. In this situation it follows from the non-singularity of
$\lambda$ that Case 1 occurs for $\lambda^{(j)}$ at index $i$.

Now suppose Case 2 occurs for $\lambda$ at index $i$. Then $(i, i+1) \in \Delta(\lambda)$
and $\lambda_i = \lambda_{i+1}$ by definition. It follows immediately that
$\lambda^{(j)}_i = \lambda^{(j)}_{i+1}$. We shall show that $(i, i+1) \in \Delta(\lambda^{(j)})$
and conclude that Case 2 occurs for $\lambda^{(j)}$. If Case 1 occurs for
$\lambda$ at index $j$ then $(i, i+1)\in \Delta(\lambda^{(j)})$ by the non-singularity
of $\lambda$. If Case 2 occurs at $j$ for $\lambda$ then the same conclusion follows
from Lemma~\ref{deltain2}. This completes the proof.
\end{proof}

\begin{prop}\label{phising}
$|\Phi_\lambda| = 1$ if and only if $\lambda$ is non-singular.
\end{prop}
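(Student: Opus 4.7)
The statement has two directions, and the ``if'' direction is the cleaner one.

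\emph{The ``if'' direction.} Assume $\lambda$ is non-singular. I would prove, by induction on $z(\lambda)$, that any two maximal admissible sequences $\ii=(i_1,\ldots,i_l)$, $\jj=(j_1,\ldots,j_l)$ (of common length $z(\lambda)$ by Theorem~\ref{zismax}) are $\mathfrak{S}_l$-conjugate. Using Corollary~\ref{tau} I may permute so that $i_1=\min\ii$ and $j_1=\min\jj$. The crux is the following intermediate claim, itself proved by induction on $|{\bf k}|$:
\emph{if $\lambda$ is non-singular, $i$ is admissible for $\lambda$, and ${\bf k}$ is an admissible sequence for $\lambda$ with every entry strictly greater than $i$, then $i$ is still admissible for $\lambda^{{\bf k}}$.} The single-step case is exactly Lemma~\ref{nonsingadmiss}, and the inductive step uses Corollary~\ref{singinherit} to preserve non-singularity after stripping off the first entry of ${\bf k}$. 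Applied to $i=i_1$ and ${\bf k}=\jj$ under the hypothesis $i_1<j_1$, the claim contradicts the maximality of $\jj$; so $i_1\ge j_1$ and, symmetrically, $i_1=j_1$. Then $(i_2,\ldots,i_l)$ and $(j_2,\ldots,j_l)$ are maximal admissible for $\lambda^{(i_1)}$, which remains non-singular by Corollary~\ref{singinherit}, and the induction hypothesis finishes the argument.

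\emph{The ``only if'' direction.} Assume $\lambda$ is singular and fix a bad $2$-step $(i,i+1)$. I would exhibit two maximal admissible sequences that are not $\mathfrak{S}$-conjugate. Up to the symmetry $(i-1)\leftrightarrow(i+2)$ I may assume that the left boundary is bad, i.e.\ $\lambda_{i-1}-\lambda_i\in 2\N$. Two distinct admissible moves are then available at a suitable stage of the algorithm: Case~$1$ at index $i-1$ (since $\lambda_{i-1}\ge\lambda_i+2$) and, after reducing $\lambda_i$ via repeated Case~$1$ at $i$ if necessary so that $\lambda_i=\lambda_{i+1}$, Case~$2$ at index $i$. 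Completing each of these two beginnings to maximal admissible sequences produces two candidates $\ii^A$ and $\ii^B$ of length $z(\lambda)$ (Theorem~\ref{zismax}).

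To show $\ii^A\not\sim\ii^B$ I would exploit the invariant $\tfrac{1}{2}(|\lambda|-|\lambda^{\ii}|)=i_1+\cdots+i_l$, which depends only on the rigid terminal partition of the algorithm. The two different first moves land in different partitions whose rigid limits produce different entry-sums, so the multisets of entries of $\ii^A$ and $\ii^B$ cannot agree. Where this comparison is inconclusive (e.g.\ when the terminal partitions happen to have the same cardinality) I would argue by induction on $|\lambda|$: if some admissible move $\lambda\mapsto\lambda^{(j)}$ leaves $\lambda^{(j)}$ still singular, then the inductive hypothesis supplies two non-conjugate maximal sequences for $\lambda^{(j)}$ which prepend to two non-conjugate maximal sequences for $\lambda$; otherwise every admissible move yields a non-singular partition (as in the toy example $\lambda=(3,3,1)$), and then the direct construction above with $\ii^A=(i)$, $\ii^B=(i-1)$ already gives two non-conjugate length-one classes.

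\emph{Main obstacle.} The serious work is in the ``only if'' direction: making the case split exhaustive and verifying in each case that the two constructed sequences really live in different $\mathfrak{S}$-orbits. The most delicate case is when the bad $2$-step has bad boundary on both sides, since then several options (Case~$1$ at $i-1$, Case~$1$ at $i+1$, Case~$2$ at $i$) coexist and one has to rule out unexpected coincidences of multisets; here the formula $z(\lambda)=s(\lambda)+|\Delta(\lambda)|-|\Delta_{\rm bad}(\lambda)|+|\Sigma(\lambda)|$ from Theorem~\ref{zismax}, together with Lemma~\ref{stairslemma}, should control how $|\Sigma|$ and $|\Delta_{\rm bad}|$ drop along each branch and thereby force the multisets to disagree.
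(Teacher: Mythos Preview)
Your ``if'' direction is essentially the paper's argument: the ``intermediate claim'' is exactly the iterated form of Lemma~\ref{nonsingadmiss} that the paper uses (tersely) to derive a contradiction from the first disagreement between two sorted maximal sequences. One small slip: Theorem~\ref{zismax} only identifies $z(\lambda)$ as the \emph{maximum} length of an admissible sequence, not a common length of all maximal ones; but your argument does not actually need that, since the first-disagreement reasoning already handles unequal lengths.

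The ``only if'' direction has a genuine gap. The entry-sum $\sum_k i_k=\tfrac12(|\lambda|-|\lambda^{\ii}|)$ records only the \emph{size} of the terminal rigid partition, and you give no argument that your two constructions force terminals of different size. Your induction fallback is also broken in case~(2b): ``every admissible move yields a non-singular partition'' does \emph{not} imply that the single moves $(i)$ and $(i-1)$ are maximal. For $\lambda=(5,3,3,1)\in\mathcal{P}_1(12)$ the unique bad $2$-step is $(2,3)$ with left boundary bad; each admissible move (Case~$1$ at $1$, Case~$1$ at $3$, Case~$2$ at $2$) lands in a non-singular partition, yet $\lambda^{(1)}=(3,3,3,1)$ is not rigid, so $\ii^B=(1)$ is not maximal and ``two non-conjugate length-one classes'' fails. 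Your ``symmetry $(i-1)\leftrightarrow(i+2)$'' is also not a genuine symmetry of the problem (the $i=1$ case alone breaks it), so the reduction to one boundary needs separate treatment.

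The paper's approach avoids all of this. It picks the bad $2$-step $(i,i+1)$ with $i$ \emph{maximal} and tracks the parity vector $\kappa(\mu)_j\equiv\mu_j-\mu_{j+1}\pmod 2$, which by Lemma~\ref{kappa} separates rigid partitions. The key mechanism is that Case~$1$ never changes any $\kappa_j$, while Case~$2$ at index~$j$ flips exactly $\kappa_{j-1}$ and $\kappa_{j+1}$; the maximality of $i$ together with Lemmas~\ref{deltain2} and~\ref{deltain} then guarantees that Case~$2$ can never recur at the relevant neighbouring indices along either chosen extension, so a single coordinate $\kappa_{i\pm1}$ of the terminal is pinned down. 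Two explicit openings (a block of Case~$1$'s versus a single Case~$2$) are engineered so that this coordinate is $0$ for one terminal and $1$ for the other. If you want to salvage your plan, this is the missing ingredient: an invariant of the terminal partition that provably survives \emph{arbitrary} completion to a maximal sequence. The parity vector $\kappa$ does this; the entry-sum does not obviously do so, and your case~(2b) does not supply an alternative.
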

\begin{proof}
Suppose that $\lambda$ is non-singular. We shall show that all
maximal admissible sequences for $\lambda$ have the same length and
are conjugate under the symmetric group. Let $\ii$ and $\jj$ be two
such sequences. Note that if the type is $\sf D$ then we can be certain that
$R_\ii \neq 2$ and $R_\jj \neq 2$. Therefore, in any type, we might apply Corollary~\ref{tau}
and assume that they are both in ascending order. It is not hard
to see that they are both still maximal after reordering. We shall
show that they are now equal. Suppose not. Then either there exists
an index $k$ such that $i_k \neq i_j$ or one sequence is shorter
than the other, say $|\ii| < |\jj|$ and $i_k = j_k$ for $k = 1,...,|\ii|$.
In the latter situation $\ii$ clearly fails to be maximal, so assume we
are in the former situation. We may assume without loss of generality that
$i_k < j_k$. Now we may apply Lemma~\ref{nonsingadmiss} and conclude that
$\jj$ is not maximal. This contradiction confirms that $\ii = \jj$
and that all maximal admissible sequences for $\lambda$ are conjugate
under the symmetric group.

In order to prove the converse we assume that $\lambda$ is singular.
Let $(i, i+1)$ be a bad 2-step with $i$ maximal. We shall exhibit
two maximal admissible sequences, $\ii$ and $\jj$, for $\lambda$
such that $\kappa(\lambda^{\ii})_{i+1} \neq
\kappa(\lambda^{\jj})_{i+1}$. In view of Lemma \ref{kappa} the
proposition shall follow. There are two possibilities: either
$\lambda_{i+1} - \lambda_{i+2}$ is even, or $i > 1$ and
$\lambda_{i-1} - \lambda_i$ is even. Assume the first of these
possibilities, so that $\lambda_{i+1} - \lambda_{i+2}$ is even. Let
$$\ii' = (\underbrace{i+1,i+1,...,i+1}_{(\lambda_{i+1} -
\lambda_{i+2})/2 \text{ times}}).$$ We have $\lambda_{i+1}^{\ii'} =
\lambda_{i+2}^{\ii'}$. Let $\ii$ be any maximal admissible sequence
for $\lambda$ extending $\ii'$. Then $\kappa(\lambda^{\ii})_{i+1} =
0$. Now let $\jj' = (i)$ so that $\kappa(\lambda^{\jj'})_{i+1} = 1$.
Let $\jj$ be any maximal admissible sequence extending $\jj'$. By
Lemmas \ref{deltain2} and \ref{deltain}, Case 2 does not occur for
$\lambda^{\jj_k}$ at any index $j_k = i$ with $k > 1$ and since $(i,
i+1)$ is a maximal bad 2-step Case 2 cannot occur at index $j_k =
i+2$. So $\kappa(\lambda^{\jj})_{i+1} = \kappa(\lambda^{\jj'})_{i+1}
= 1$ which enables us to conclude that $\kappa(\lambda^{\ii}) \neq
\kappa(\lambda^{\jj})$, $\lambda^{\ii} \neq \lambda^{\jj}$. Hence
$|\Phi_\lambda| > 1$.

The other case is quite similar. This time we assume that $i > 1$,
that $\lambda_{i-1} - \lambda_i$ is even and $\lambda_{i+1} -
\lambda_{i+2}$ is odd. Our deductions will depend upon whether or
not $(i-2,i-1) \in \Delta(\lambda)$. Let us first assume that $i-2 \notin
\Delta(\lambda)$. We take $$\ii' =
(\underbrace{i-1,i-1,...,i-1}_{(\lambda_{i-1} - \lambda_{i})/2
\text{ times}}).$$ Let $\ii$ be any maximal admissible sequence
extending $\ii'$. Much like before $\kappa(\lambda^\ii)_{i-1} = 0$.
Now let $\jj' = (i)$ and let $\jj$ be a maximal admissible extension
of $\jj'$. Since $(i-2,i-1) \notin \Delta(\lambda)$, Lemma \ref{deltain}
shows that Case 2 does not occur for $\lambda^{\jj_k}$ at index $j_k
= i-2$ for any $k$. Since the same can be said for $j_k = i$ at any
index $k > 1$, we deduce that $\kappa(\lambda^\jj)_{i-1} =
\kappa(\lambda)_{i-1} - 1 = 1$. But then $\lambda^{\ii} \neq
\lambda^{\jj}$ and so $|\Phi_\lambda| > 1$ as desired.

To conclude the proof we must consider the final possibility: $i >
1$, $\lambda_{i-1} - \lambda_i$ even, $\lambda_{i+1} -
\lambda_{i+2}$ odd and $(i-2,i-1) \in \Delta(\lambda)$. We let $\ii'$ and
$\ii$ be defined exactly as it was in the previous paragraph. We
have $\lambda^{\ii'}_{i-1} = \lambda^{\ii'}_{i}$, so that Case 2
cannot occur at index $i_k = i$ for any $k$. Since $(i, i+1)$ is a
maximal bad 2-step for $\lambda$ we know that $(i+2,i+3) \notin
\Delta(\lambda)$. Then Lemma \ref{deltain} implies that Case 2
cannot occur at index $i_k = i+2$ for any $k$, yielding
$\kappa(\lambda^{\ii})_{i+1} = \kappa(\lambda)_{i+1} = 1$. Let
$$\jj' = (i, \underbrace{i+1,i+1,...,i+1}_{(\lambda_{i+1} -
\lambda_{i+2})/2 \text{ times}})$$ and $\jj$ be any maximal
admissible sequence extending $\jj'$. Since $\lambda_{i+1} -
\lambda_{i+2}$ is odd, $\lambda^{\jj_2}_{i+1} -
\lambda^{\jj_2}_{i+2}$ is even, and $\lambda^\jj_{i+1} =
\lambda^\jj_{i+2}$. Hence $\kappa(\lambda^\jj) = 0$ and
$|\Phi_\lambda| > 1$ as before.
\end{proof}
We can finally complete the proof of Theorem \ref{nosheets}.
\begin{proof}
Part 1 follows directly from Corollary \ref{bijection} and Lemma
\ref{contsheets}. For Part 2 use Part 1 along with Proposition
\ref{phising}.
\end{proof}

Let $\mathcal{S}$ be a sheet with data $(\li, \Oo)$. Recall that the rank of $\mathcal{S}$ is defined to
be $\dim\mathfrak{z}(\li)$. The importance of the rank is illustrated by the formula
$$\dim(\mathcal{S}) = \rank(\mathcal{S}) + \dim \Ind^\h_\li(\Oo).$$
It should be mentioned here that \cite[Remark~3]{Mor} claims that
all sheets of $\h$ containing a given nilpotent element have the
same rank (hence the same dimension). However, the example given in
Remark~\ref{remmor} shows that in general this is incorrect. A
corrigendum has been published in \cite{Mor1}. The
error may be traced to Proposition~3.11 of {\it loc.\,cit.} Using
the Kempken-Spaltenstein algorithm we can amend that proposition as
follows. First of all note that if $(\li, \Oo)$ is the data
associated to a sheet $\mathcal{S}$ then $(\li, \Oo) \in
\Psi_\lambda$, so Corollary~\ref{bijection} tells us that
$\varphi^{-1}(\li, \Oo)$ is a well-defined equivalence class in
$\Phi_\lambda$. Clearly all admissible sequences in that equivalence
class have the same length, which we may denote by
$|\varphi^{-1}(\li,\Oo)|$.
\begin{prop}\label{morrep} Let $\mathcal{S}$ be a sheet of $\h$ with data $(\li,\Oo)$.
Then $${\rm rank}(\mathcal{S}) = |\varphi^{-1}(\li,\Oo)|.$$
\end{prop}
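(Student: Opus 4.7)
The plan is to verify this by a direct computation: unwind the definitions so that $\text{rank}(\mathcal{S})$ and $|\varphi^{-1}(\li,\Oo)|$ both become $l$, the number of general linear factors in $\li$.

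First I would observe that by Corollary~\ref{bijection}, the preimage $\varphi^{-1}(\li,\Oo)$ is a single $\sim$-equivalence class in $\Phi_\lambda$, and by the definition of $\sim$ all maximal admissible sequences in this class have the same length. This common length is $|\varphi^{-1}(\li,\Oo)|$. Pick any representative $\ii=(i_1,\ldots,i_l)$. By the definition of $\varphi$ and by Lemma~\ref{levnilorbs}, the Levi subalgebra $\li$ is isomorphic to $\gl_{i_1}\oplus\cdots\oplus\gl_{i_l}\oplus\mm$, where $\mm$ is of the same type as $\h$ with standard representation of dimension $R_\ii=N-2\sum_j i_j$; so by construction $|\varphi^{-1}(\li,\Oo)|=l$.

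Next I would compute $\dim\z(\li)$. Since the centre of a direct sum is the sum of the centres, $\z(\li)=\z(\gl_{i_1})\oplus\cdots\oplus\z(\gl_{i_l})\oplus\z(\mm)$. Each $\gl_{i_k}$ with $i_k\ge 1$ has a one-dimensional centre spanned by the identity matrix, contributing a total of $l$. The algebra $\mm$ is either zero or semisimple of classical type: for $\epsilon=-1$ we have $\mm=\mathfrak{sp}_{R_\ii}$ with $R_\ii$ even, and for $\epsilon=1$ we have $\mm=\mathfrak{so}_{R_\ii}$ with $R_\ii\ne 2$ (the exclusion in Lemma~\ref{levnilorbs} ruling out the abelian $\mathfrak{so}_2$), so in every case $\z(\mm)=0$. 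Hence $\dim\z(\li)=l$.

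Finally, by definition $\text{rank}(\mathcal{S})=\dim\z(\li)$, so combining the two computations gives $\text{rank}(\mathcal{S})=l=|\varphi^{-1}(\li,\Oo)|$. There is no real obstacle here once the setup is in place: the proposition is essentially a bookkeeping exercise identifying the length of the sequence $\ii$ with the number of $\gl$-summands of the Levi subalgebra it encodes, and the latter with $\dim\z(\li)$. The only point requiring care is the verification that $\z(\mm)=0$, which uses precisely the restriction $R_\ii\ne 2$ in type $\sf B$/$\sf D$ built into Lemma~\ref{levnilorbs}.
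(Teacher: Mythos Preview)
Your proof is correct and follows essentially the same approach as the paper: both identify $\li\cong\gl_{i_1}\oplus\cdots\oplus\gl_{i_l}\oplus\mm$, compute $\dim\z(\li)=l$, and note that $|\varphi^{-1}(\li,\Oo)|$ is the length $l$ of the sequence $\ii$. Your version is simply more explicit about why $\z(\mm)=0$, invoking the exclusion $R_\ii\ne 2$ from Lemma~\ref{levnilorbs}, whereas the paper states this as ``clearly''.
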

\begin{proof}
If $\li = \gl_{i_1} \oplus \cdots \oplus \gl_{i_k} \oplus
\mathfrak{m}$ as in Lemma~\ref{levnilorbs} then clearly
$\rank(\mathcal{S}) = \dim \mathfrak{z}(\li) = k$. On the other hand
$\varphi^{-1}(\li, \Oo)$ is just the equivalence class of the
maximal admissible sequence $(i_1,...,i_k)$ which itself has length
$k$.
\end{proof}
\begin{cor}\label{newz}
If $\lambda \in \mathcal{P}_\epsilon(N)$ and $e = e(\lambda)$ then
$$r(e)=\max_{\ e \in \mathcal{S}}\, {\rm rank} (\mathcal{S}) =
z(\lambda)$$ where the maximum is taken over all sheets of $\h$
containing $e$.
\end{cor}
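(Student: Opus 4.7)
The plan is to chain together three results already established in the paper: Proposition~\ref{morrep}, Corollary~\ref{bijection}, and Theorem~\ref{zismax}. The sheets of $\h$ containing $e=e(\lambda)$ are in bijection with $\Psi_\lambda$ (by Theorem~\ref{classsheets} and Lemma~\ref{contsheets}), and via $\varphi^{-1}$, with the equivalence classes of maximal admissible sequences in $\Phi_\lambda$. Under this chain of bijections the rank of a sheet is precisely the length of the admissible sequences representing its preimage class.

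More explicitly, I would first note that if $\mathcal{S}$ is a sheet of $\h$ containing $e$ with data $(\li, \Oo)$, then $(\li,\Oo)\in\Psi_\lambda$, and by Proposition~\ref{morrep},
$$\rank(\mathcal{S})=|\varphi^{-1}(\li,\Oo)|=|\ii|$$
for any maximal admissible sequence $\ii$ representing the class $\varphi^{-1}(\li,\Oo)\in\Phi_\lambda$ (this is well defined because all sequences in a single $\mathfrak{S}_l$-orbit have the same length). Since $\varphi\colon \Phi_\lambda\to\Psi_\lambda$ is a bijection by Corollary~\ref{bijection}, we deduce that
$$r(e)=\max_{e\in\mathcal{S}}\rank(\mathcal{S})=\max_{[\ii]\in\Phi_\lambda}|\ii|,$$
where the right-hand maximum is taken over equivalence classes of maximal admissible sequences for $\lambda$.

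Finally, any admissible sequence $\ii$ for $\lambda$ can be extended to a maximal admissible sequence $\ii'$ (applying the KS algorithm until it terminates, which it must do after finitely many steps), and $|\ii'|\ge|\ii|$. Hence the maximum of $|\ii|$ over maximal admissible sequences coincides with the maximum over all admissible sequences. By Theorem~\ref{zismax} this common value equals $z(\lambda)$, which yields $r(e)=z(\lambda)$.

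There is no real obstacle here: the statement is essentially a bookkeeping corollary that packages the geometric content (sheets $\leftrightarrow$ pairs $(\li,\Oo)$ with rank equal to the number of $\gl$-factors in $\li$) together with the combinatorial content (maximal admissible sequences encode such pairs, and their length is controlled by $z(\lambda)$). The only point that deserves a brief sentence in the write-up is the observation that $|\varphi^{-1}(\li,\Oo)|$ is unambiguous, since $\varphi^{-1}(\li,\Oo)$ is a single $\mathfrak{S}_l$-equivalence class of sequences of fixed length $l=\rank(\mathcal{S})$.
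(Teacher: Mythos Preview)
Your proof is correct and follows exactly the same approach as the paper, which tersely says ``Use the above proposition and Theorem~\ref{zismax}.'' You have simply unpacked the argument in more detail, making explicit the role of Corollary~\ref{bijection} and the observation that the maximum over maximal admissible sequences agrees with the maximum over all admissible sequences.
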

\begin{proof}
Use the above proposition and Theorem~\ref{zismax}.
\end{proof}

\begin{rem}\label{remmor}
{\rm Some sheets of different ranks in Lie algebras of type ${\sf
B}, {\sf C}$ or ${\sf D}$ may share the same nilpotent orbit.
Indeed, the partition $\lambda=(4,2,2)\in \mathcal{P}_\epsilon(-1)$
affords three maximal admissible sequences $(1,3)$, $(3,1)$ and
$(2)$ of lengths $2$, $2$ and $1$, respectively, which lead to two
rigid partitions $(0)$ and $(2,1,1)$.  So it follows from
Proposition~\ref{morrep} that the nilpotent element
$e(\lambda)\in\mathfrak{sp}_8$ lies in two sheets $\mathcal{S}_1$
and $\mathcal{S}_2$ of $\mathfrak{sp}_8$ with ${\rm
rk}(\mathcal{S}_1)=2$ and ${\rm rk}(\mathcal{S}_2)=1$. Note that
$s(\lambda)=2$, $\Sigma(\lambda)=\emptyset$ and
$|\Delta(\lambda)|=|\Delta_{\rm bad}(\lambda)|=1$. Hence
$e(\lambda)$ is singular and
$$z(\lambda)=s(\lambda)+|\Delta(\lambda)|-\big(|\Delta_{\rm bad}(\lambda)|-|\Sigma(\lambda)|\big)=2+1-(1-0)=2.$$
This example shows that different sheets of classical Lie algebras
containing a given nilpotent element may have different dimensions.}
\end{rem}
\begin{rem}
{\rm Suppose that char$(\K)=0$ and let $x$ be an arbitrary element of $\h$. Then it is immediate from
Theorem \ref{nosheets} and
Corollary~\ref{izosim}
that the following are equivalent:
\begin{itemize}
\item[(i)]
$x$ belongs to a unique sheet of $\h$;

\smallskip

\item[(ii)]
$x$ is a smooth point of the quasi-affine variety
$\h^{(m)}$ where  $\dim \h_x=m$;
\smallskip

\item[(iii)]  the maximal rank of the sheets of $\h$ containing $x$ equals
$\dim (\h_x/[\h_x,\h_x])$.
\end{itemize}
Indeed, if $x=x_s+x_n$ is the Jordan--Chevalley decomposition of $x$
then it is well known (and easy to see) that $\li:=\h_{x_s}$ is a
Levi subalgebra of $\h$ and $\dim (\h_x/[\h_x,\h_x])=\dim
(\li_{x_{n}}/[\li_{x_n},\li_{x_n}])$. On the other hand, it follows from our
discussion in Subsection~\ref{2.1}
and the description of the Zariski closure of a sheet given in \cite[Theorem~5.4]{BKr} that there is a
rank-preserving bijection between the sheets of $\h$ containing $x$
and the sheets of $\li$ containing $x_n$. So the problem reduces
quickly to the case where $x=x_n$, and since all simple ideals of
$\li$ are Lie algebras of classical types, Theorem~\ref{nosheets}
and Corollary~\ref{izosim} apply to $x_n$ and give the desired
result. This confirms the first part of Izosimov's conjecture; see
\cite[Conjecture 1]{Iz}. The second part of his conjecture (which
is also interesting and plausible) remains open at the moment. We
mention for completeness that the three statements above hold true
for all $x$ in $\mathfrak{sl}_n$ and  $ \mathfrak{gl}_n$; see \cite[Proposition~3.3]{Iz}.}
\end{rem}

\section{Commutative quotients of finite
 $W$-algebras and sheets}
 \subsection{Finite $W$-algebras and related commutative algebras}\label{3.1} From now on
we assume that $\K$ is an algebraically closed field of
characteristic $0$ and $G$ is a connected reductive $\K$-group. Let
$\g=\Lie(G)$ and let $e$ be a non-zero nilpotent element in $\g$. We
include $e$ into an $\mathfrak{sl}(2)$-triple $\{e,h,f\}\subset\g$
and consider the {\it Slodowy slice} $S_e:=e+\g_f$, an affine
subspace of $\g$ transversal to the adjoint $G$-orbit of $e$. The
finite $W$-algebra $U(\g,e)$ is a non-commutative filtered
deformation of the algebra $\K[S_e]$ of regular functions on $S_e$
endowed with the Slodowy grading; see \cite[5.1]{Sasha1} for more
detail. By using the Killing form of $\g$ we may identify $\K[S_e]$
with the symmetric algebra $S(\g_e)$.

The action of ${\rm ad}\,h$ gives $\g$ a $\Z$-graded Lie algebra
structure $\g=\bigoplus_{i\in\Z}\,\g(i)$ and we have that
$e\in\g(2)$ and $\g_e=\bigoplus_{i\ge 0}\,\g_e(i)$ where
$\g_e(i):=\g_e\cap \g(i)$. Let $v_1,\ldots, v_r$ be a basis of
$\g_e$ such that $v_i\in \g_i(n_i)$ for some $n_i\ge 0$. According
to \cite[Theorem~4.6]{Sasha1} the finite $W$-algebra $U(\g,e)$ has a
$\K$-basis consisting of all monomials
$v^{\mathbf{a}}=v_1^{a_1}\cdots v_r^{a_r}$ with $a_i\in\Z_{\ge 0}$
and assigning to $v^{\bf a}$ filtration degree $|{\bf
a}_e|:=\sum_{i=1}^r\,a_i(n_i+2)$ gives $U(\g,e)$ an algebra
filtration called the {\it Kazhdan filtration} of $U(\g,e)$.
Furthermore, the corresponding graded algebra, ${\rm gr}_{\sf
K}\,U(\g,e)$, is isomorphic to the symmetric algebra $S(\g_e)$ with
$v_i$ having Kazhdan degree $n_i+2$ and the following relations hold
in $U(\g,e)$ for all $1\le i<j\le r$:
\begin{eqnarray}\label{eqn1}
\qquad\quad v_i\cdot v_j-v_j\cdot v_i=[v_i,v_j]+q_{ij}(v_1,\ldots,v_r)+\mbox{terms of lower Kazhdan deree},
\end{eqnarray}
where $q_{ij}$ is a polynomial of Kazhdan degree $n_i+n_j+2$ whose
constant and linear parts are both zero (here $[v_i,v_j]$ is the Lie
bracket of $v_i$ and $v_j$ in $\g_e$). We write ${\sf K}_l\,U(\g,e)$
for of the $l^{\rm th}$ component of the Kazhdan filtration of
$U(\g,e)$.

It is well known that the group $C(e):=G_e\cap G_f$ is reductive and
its finite quotient $\Gamma:=C(e)/C(e)^\circ$ identifies canonically
with the component group of $\Ad\,G_e$. Besides, $\Lie(C(e))=\g_e(0)$.
As explained in \cite[2.2, 2.3]{Sasha2}, the group $C(e)$ acts on
$U(\g,e)$ by algebra automorphisms and preserves all components of
the Kazhdan filtration of $U(\g,e)$. Moreover, there exists an
injective $C(e)$-module homomorphism $\Theta\colon\,\g_e\rightarrow
U(\g,e)$ with the property that $\Theta(\g_e)$ generates $U(\g,e)$
as an algebra and ${\rm gr}_{\sf K}\,\Theta(\g_e)\cong \g_e[2]$ as
$C(e)$-modules, where $\g_e[2]$ stands for the $\Ad\,C(e)$-module
$\g_e$ with all degrees shifted by $2$. To be more precise, the
group $C(e)\subset G_h$ preserves both the Slodowy grading of
$S(\g_e)$ and the grading of $S(\g_e)$ given by total degree. In
view of \cite[Lemma 4.5]{Sasha1} this implies that the graded linear
map ${\rm gr}_{\sf K}\,\Theta(\g_e)\rightarrow \g_e[2]$ sending
${\rm gr}_{\sf K}\,\Theta(v)\in S(\g_e)$ to its linear part is an
isomorphism of $C(e)$-modules. In what follows we shall denote by
${\rm gr}^0_{\sf K}\,\Theta(v)$ the linear part of ${\rm gr}_{\sf
K}\,\Theta(v)$.

To ease notation we shall sometimes suppress the notion of $\Theta$
and assume from now on that the above-mentioned identification of
${\rm gr}_{\sf K}\,U(\g,e)$ and $S(\g_e)$ is $C(e)$-equivariant.
Thanks to \cite[Lemma 2.4]{Sasha2} we then have that $v_i\cdot
v_j-v_j\cdot v_i=[v_i,v_j]$ for all $v_i,v_j\in\g_e(0)$, where the
products of $v_i$ and $v_j$ are taken in $U(\g,e)$ and the Lie
bracket $[v_i,v_j]$ is taken in $\g_e$.

To shorten notation we write $\mathfrak{c}_e$ for $\g_e^{\rm
ab}=\g_e/[\g_e,\g_e]$. Since $[\g_e(0),\g_e]\subset [\g_e,\g_e]$ and
$\g_e(0)=\Lie(C(e))$, it follows from Weyl's theorem that
$C(e)^\circ$ acts trivially on $\mathfrak{c}_e$. This gives rise to
a natural linear action of the component group
$\Gamma=G_e/G_e^\circ$ on the vector space $\mathfrak{c}_e$. We
denote by $\mathfrak{c}_e^\Gamma$ the fixed point space of this
action and set $$c(e):=\dim \mathfrak{c}_e\ \, \mbox { and }\
\,c_\Gamma(e):=\dim \mathfrak{c}_e^\Gamma.$$

Let $\mathcal{S}_1,\ldots,\mathcal{S}_t$ be all pairwise distinct
sheets of $\g$ containing $e$. As we explained in
Subsection~\ref{2.1}, every sheet $\mathcal{S}_i$ contains a unique
Zariski open  decomposition class $\mathcal{D}(\li_{i},e_i)=(
\Ad\,G)(e_i+\z(\li_{i})_{\rm reg})$ characterised by the property
that $e_i$ is rigid in $\li_{i}$. We write $r_i=\dim\z(\li_i)$ for
the rank of $\mathcal{S}_i$. It is well known that the set
$X_i:=\mathcal{S}_i\cap (e+\g_f)$ is a connected affine variety
acted upon by the reductive group $C(e)$. Working over complex
numbers, Katsylo proved in \cite{Kat} that the subgroup $C(e)^\circ$
operates trivially on $X_i$, the induced action of
$\Gamma=C(e)/C(e)^\circ$ on the irreducible components of $X_i$ is
transitive,  and  the  morphism $$G\times X_i \rightarrow
\mathcal{S}_i, \qquad\,(g,x)\mapsto (\Ad\,g)\cdot x,$$ is smooth,
surjective of relative dimension $\dim \g_e$. Moreover, Katsylo
showed that it gives rise to an open morphism
$\psi_i\colon\,\mathcal{S}_i\rightarrow X_i/\Gamma$ with the
following properties:

\begin{itemize}
\item [(i)]the fibres of $\psi_i$ are
$G$-orbits;
\smallskip

\item[(ii)]
 for any open subset
$X$ of $X_i/\Gamma$ the induced algebra map $\K[U]\rightarrow \K[\psi_{i}^{-1}(U)]^G$ is an isomorphism.
\end{itemize}
In brief, each morphism $\psi_i$ is a geometric quotient and
$\dim(\mathcal{S}_i/G)=\dim(X_i/\Gamma)=r_i.$
A purely algebraic proof of Katsylo's results can be found in \cite{Im}.

Denote by $U(\g,e)^{\rm ab}$ the largest commutative quotient of
$U(\g,e)$ (it has the form $U(\g,e)/I_c$ where $I_c$ is the
two-sided ideal of $U(\g,e)$ generated by all commutators $u\cdot
v-v\cdot u$ with $u,v\in U(\g,e)$). This finitely generated
$\K$-algebra is important because its  maximal spectrum, $\mathcal
E$,  parametrises the $1$-dimensional representations of $U(\g,e)$.
According to \cite[Theorem~1.2]{Sasha3}, for any {\it induced}
nilpotent element $e$ of $\g$ the Krull dimension of $U(\g,e)^{\rm
ab}$ equals $r(e):=\max\{r_1,\ldots, r_t\}$ and the number of
irreducible components of $\mathcal E$ is greater than or equal to
the total number of all irreducible components of the $X_i$'s. If
$\g=\mathfrak{sl}_n$ then every nilpotent element $e\in\g$ lies in a
unique sheet $\mathcal{S}=\mathcal{S}(e)$. Since every nilpotent
element of $\mathfrak{sl}_n$ is Richardson, the sheet
$\mathcal{S}(e)$ contains a dense decomposition class of the form
$(\mathrm{Ad}\,G)(\z(\li)_{\rm reg})$. Using Remark \ref{R1} it is
easy to see that $\dim\,\mathfrak{c}_e=\dim\,\z(\li)$. On the other
hand, it was proved in \cite{Sasha3} that for $\g=\mathfrak{sl}_n$
the algebra $U(\g,e)^{\rm ab}$ is isomorphic to a polynomial algebra
in $r(e)=\dim\,\z(\li)$ variables. The proof in ${\it loc.\,cit.}$
relied heavily on the explicit presentation of finite $W$-algebras
of type $\sf A$ obtained by Brundan--Kleshchev in \cite{BK}.

In this section we make an attempt to classify those induced
nilpotent elements $e\in\g$ for which $U(\g,e)^{\rm ab}$ is
isomorphic to a polynomial algebra. In view of the above discussion,
this can happen only if $e$ lies in a unique sheet of $\g$ which
makes one wonder to what extent the converse is true. For $\g$
classical, we are going to apply our results on non-singular
nilpotent elements to show that this is always the case (even for
$e$ rigid!), whilst for $\g$ exceptional we shall rely on de Graaf's
computations in \cite{deG2} to show the same is true for {\it almost
all} induced induced orbits.

Since the group $C(e)$ operates on $U(\g,e)$ by algebra
automorphisms, it acts on the variety $\mathcal E$ which identifies
naturally with the set of all ideals of codimension $1$ in
$U(\g,e)$. Since the group $C(e)^\circ$ preserves any two-sided
ideal of $U(\g,e)$ by \cite[p.~501]{Sasha2}, it must act trivially
on $\mathcal E$. We thus obtain a natural action of
$\Gamma=C(e)/C(e)^\circ$ on the affine variety $\mathcal E$. We
denote by $\mathcal{E}^\Gamma$  the corresponding fixed point set
and let $I_\Gamma$ be the ideal of $U(\g,e)^{\rm ab}$ generated by
all $\phi-\phi^\gamma$ with $\phi\in U(\g,e)^{\rm ab}$ and
$\gamma\in\Gamma$. It is clear that $\mathcal{E}^\Gamma$ is
contained in the zero locus of $I_\Gamma$. Conversely, if
$\eta\in\mathcal{E}$ is such that $\phi(\eta)=0$ for all $\phi\in
I_\Gamma$, then $\gamma(\eta)=\eta$  for all $\gamma\in\Gamma$.
Indeed, otherwise $\eta$ and $\gamma_0^{-1}(\eta)$ would be distinct
maximal ideals of $U(\g,e)^{\rm ab}$ for some $\gamma_0\in \Gamma$
and we would be able to find an element $\phi\in U(\g,e)^{\rm ab}$
with $\phi(\eta)=0$ and $\phi(\gamma_0^{-1}(\eta))\ne 0$. But this
would imply that $(\phi-\phi^{\gamma_0})(\eta)\ne 0$, a
contradiction.  As a result, $\mathcal{E}^\Gamma$ coincides with the
zero locus of $I_\Gamma$ in $\mathcal E$. We denote by
$U(\g,e)_\Gamma^{\rm ab}$ the finitely generated $\K$-algebra
$U(\g,e)^{\rm ab}/I_\Gamma$. The above discussion shows that
$$\mathcal{E}^\Gamma\,=\,{\rm Specm}\,U(\g,e)_\Gamma^{\rm ab}.$$

In this section we aim to show that in most cases $U(\g,e)^{\rm
ab}_\Gamma$ is isomorphic to a polynomial algebra in $c_\Gamma(e)$
variables. As will be explained later, the polynomiality of
$U(\g,e)^{\rm ab}_\Gamma$ can be used to classify those primitive
ideals $I$ of $U(\g)$ whose associated variety ${\rm VA}(I)$ appears
with multiplicity one in the associated cycle ${\rm AC}(I)$.
Detailed information on such ideals is very important because the
primitive quotients $U(\g)/I$ extend to the Dixmier algebras
quantising the nilpotent orbits of $\g$ in the sense of \cite[5.1,
5.3]{Lo1}.

\subsection{A sufficient condition for polynomiality}
The goal of this subsection is to give a sufficient condition of
polynomiality of $U(\g,e)^{\rm ab}$ and $U(\g,e)_\Gamma^{\rm ab}$
for an arbitrary simple Lie algebra $\g$ and use it to classify
those nilpotent elements in the Lie algebras of classical groups for
which $U(\g,e)^{\rm ab}$ is a polynomial algebra. In view of our
discussion in Subsection~\ref{3.1} we may (and will) identify $\g_e$
with a $C(e)$-submodule of $U(\g,e)$ containing a PBW basis of
$U(\g,e)$. For $k,l\in\Z_{\ge 0}$, we set $S_{\le
l}(\g_e):=\bigoplus_{i\le l}\,S^i(\g_e)$ and denote by $S^{\langle
k\rangle}(\g_e)$ the linear span of all monomials $v^{\bf
a}:=v_1^{a_1}\cdots v_r^{a_r}$ in $S(\g_e)$ with $|{\bf a}|_e=k$.
\begin{lem}\label{generate}
Let $I$ be a proper two-sided ideal of $U(\g,e)$ and let $V_I$ and
$V_I'$ be two $C(e)$-submodules of $\g_e$, identified with
$\Theta(\g_e)$, such that $\g_e[2]={\rm gr}_{\sf K}^0(V_I)\oplus{\rm
gr}_{\sf K}^0(V_I')$ as graded ${\rm Ad}\,C(e)$-modules. Suppose
further that
$${\rm gr}_{\sf K}(v)\in \Big({\rm gr}_{\sf K}(I)+S_{\ge 2}(\g_e)\Big)\cap S^{\langle i\rangle}(\g_e)$$
for all $v\in (V_I\cap {\sf K}_i\, U(\g,e))\setminus {\sf K}_{i-1}\, U(\g,e))$, where $i\in\Z_{\ge 0}$. Then the unital
algebra $U(\g,e)/I$ is generated by the subspace $V'_I$.
\end{lem}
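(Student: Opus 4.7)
The plan is to prove the equivalent statement that the unital subalgebra $\mathcal{B}\subseteq U(\g,e)$ generated by $V_I'\cup I$ equals $U(\g,e)$. Observe first that under the standard PBW identification ${\rm gr}_{\sf K}\,U(\g,e)\cong S(\g_e)$ the map ${\rm gr}_{\sf K}^0\circ\Theta$ coincides with the inclusion $\g_e\hookrightarrow S(\g_e)$, so the hypothesis $\g_e[2]={\rm gr}_{\sf K}^0(V_I)\oplus {\rm gr}_{\sf K}^0(V_I')$ is equivalent to the internal direct-sum decomposition $\Theta(\g_e)=V_I\oplus V_I'$. Since $\Theta(\g_e)$ generates $U(\g,e)$, it will suffice to prove the stronger statement ${\sf K}_iU(\g,e)\subseteq\mathcal{B}$ for every $i\ge 0$.

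I argue by induction on $i$; the base case $i=0$ is the inclusion $\K\subseteq\mathcal{B}$. For the inductive step I choose Kazhdan-graded bases $\alpha_1,\ldots,\alpha_s$ of $V_I$ and $\beta_1,\ldots,\beta_t$ of $V_I'$, so that the ordered PBW monomials $\alpha^{\bf a}\beta^{\bf b}$ form a $\K$-basis of $U(\g,e)$ and ${\sf K}_lU(\g,e)$ is spanned by those of Kazhdan degree at most $l$. By the inductive hypothesis it remains only to place every PBW monomial of Kazhdan degree exactly $i$ inside $\mathcal{B}$. A monomial with no $\alpha$-factor is a product of elements of $V_I'\subseteq\mathcal{B}$ and is trivially in $\mathcal{B}$. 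A monomial of polynomial degree at least two has every factor of Kazhdan degree at most $i-2$ (since every generator has Kazhdan degree $\ge 2$), so each factor lies in ${\sf K}_{i-2}U(\g,e)\subseteq\mathcal{B}$ by induction and the product is in $\mathcal{B}$. What remains is the case of a single generator $\alpha_j\in V_I$ of Kazhdan degree $i$, and this is exactly where the hypothesis of the lemma enters.

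By that hypothesis I can write ${\rm gr}_{\sf K}(\alpha_j)={\rm gr}_{\sf K}(\xi)+p$ with $\xi\in I\cap{\sf K}_iU(\g,e)$ and $p\in S_{\ge 2}(\g_e)\cap S^{\langle i\rangle}(\g_e)$. Expanding $p$ in the commutative monomial basis $\alpha^{\bf a}\beta^{\bf b}$ of $S(\g_e)$ and interpreting the same expansion as a PBW-ordered sum in $U(\g,e)$ yields a lift $\tilde p\in{\sf K}_iU(\g,e)$ with ${\rm gr}_{\sf K}(\tilde p)=p$ whose monomials all have polynomial degree $\ge 2$; by the case already handled, $\tilde p\in\mathcal{B}$. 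The residual $\alpha_j-\xi-\tilde p$ has vanishing Kazhdan symbol in degree $i$, hence lies in ${\sf K}_{i-1}U(\g,e)\subseteq\mathcal{B}$ by induction, and combined with $\xi\in I\subseteq\mathcal{B}$ and $\tilde p\in\mathcal{B}$ this forces $\alpha_j\in\mathcal{B}$ and completes the inductive step. The main obstacle is really only organisational: the right choice of PBW basis (adapted to $V_I\oplus V_I'$) separates off precisely those generators that need the hypothesis, after which the Kazhdan filtration supplies a clean induction with no further arithmetic.
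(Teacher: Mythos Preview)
Your proof is correct and follows the same overall strategy as the paper: induction on the Kazhdan filtration, reducing to a single generator in $V_I$ and invoking the hypothesis there. The organization, however, is a bit cleaner than the paper's and worth a remark.

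The paper works with the original PBW generators $v_1,\dots,v_r$ together with the projections $\pi,\pi'$ onto $V_I,V_I'$, and shows $v^{\bf a}\in\mathcal{A}+I$ (where $\mathcal{A}$ is the span of ordered products $\pi'(v)^{\bf i}$) via a nested argument: an outer induction on Kazhdan degree and an inner \emph{downward} induction on polynomial degree within a fixed Kazhdan degree. By choosing a PBW basis adapted to $V_I\oplus V_I'$ from the start and working inside the genuine subalgebra $\mathcal{B}=\langle V_I'\cup I\rangle$, you bypass the inner downward induction entirely: a monomial of polynomial degree $\ge 2$ is immediately a product of factors of strictly smaller Kazhdan degree, and $\mathcal{B}$ is closed under products, so the outer induction alone suffices. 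The trade-off is that you are implicitly using PBW for the adapted generating set (fine, since the linear parts ${\rm gr}_{\sf K}^0(\alpha_j),{\rm gr}_{\sf K}^0(\beta_k)$ form a basis of $\g_e$) and that the lift $\tilde p$ has symbol exactly $p$. For the latter, note that one should expand $p$ in monomials of the \emph{symbols} ${\rm gr}_{\sf K}(\alpha_j),{\rm gr}_{\sf K}(\beta_k)$ rather than of their linear parts; since $p\in S_{\ge 2}$ kills linear contributions, every such monomial automatically has $\ge 2$ factors, and then ${\rm gr}_{\sf K}(\tilde p)=p$ on the nose.
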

\begin{proof} Since $\g_e[2]={\rm gr}_{\sf K}^0(V_I)\oplus{\rm gr}_{\sf K}^0(V_I')$, it is easy to see
that $\g_e=V_I\oplus V_I'$. Let $\pi\colon\,\g_e\twoheadrightarrow
V_I$ and $\pi'\colon\,\g_e\twoheadrightarrow V_I'$ be the
$C(e)$-equivariant projections induced by the direct sum
decomposition $\g_e=V_I\oplus V_I'$ and denote by $\mathcal{A}$ the
$\K$-span in $U(\g,e)$ of all $\pi'(v)^{\bf
i}:=\pi'(v_1)^{i_1}\cdots\pi'(v_r)^{i_r}$ with ${\bf i}\in\Z_{\ge
0}^r$. We shall prove by induction on $k$ that every monomial
$v^{\bf a}\in U(\g,e)$ with $|{\bf a}|_e=k$ lies in $\mathcal{A}+I$.
Then the lemma will follow.

The statement is obviously true for $k=0$. Suppose that it holds for
all $k<m$. If $|{\bf a}|>1$ then the statement follows by induction
on $k$. Hence we may assume further that $|{\bf a}|=1$, so that
$v^{\bf a}=v_s$ for some $s\in\{1,\ldots, r\}$ and $k=n_s+2$. Thanks
to our assumption on $V_I$ we have that
\begin{eqnarray}\label{eqn2}
\pi(v_s)=u_s+\sum_{|{\bf i}|_e=n_s+2,\ |{\bf i}|\ge 2}
\lambda_{s,\,{\bf i}}\,v^{\bf i}+\mbox{ terms of lower Kazhdan
degree}
\end{eqnarray}
for some $u_s\in I$ and $\lambda_{s,\,{\bf i}}\in\K$. Therefore,
$$\pi(v_s)\,\equiv \sum_{|{\bf i}|_e=n_s+2,\ |{\bf i}|\ge 2}
\lambda_{s,\,{\bf i}}\,v^{\bf i}\mod (\mathcal{A}+I)$$ by the induction assumption.
Our aim is to show that
$v^{\bf i}\in \mathcal{A}+I$ for all $\bf i$ such that
$|{\bf i}|_e=n_s+2$ and we are going to use downward induction on the total degree of ${\bf i}$
(this is possible since there are only finitely many ${\bf i}\in\Z_{\ge 0}^r$ for which $|{\bf i}|=n_s+2$).
If $\jj$ is such that $|\jj|_e=n_s+2$ and
$|\jj|\ge |\ii|$ whenever
$|\ii|_e=n_s+2$, then
$$v^\jj=\prod_{i=1}^r(\pi(v_i)+\pi'(v_i))^{j_i}
\equiv\,\pi'(v)^\jj\mod\big(\mathcal{A}+I+{\sf K}_{n_s+1}\,U(\g,e)\big)$$
thanks to (\ref{eqn2}) and (\ref{eqn1}).
This takes care of the induction base.
Now suppose
$v^{\bf i}\in \mathcal{A}+I$ for all $\bf i$ with
$|{\bf i}|_e=n_s+2$ and $|{\bf i}|>d$ and take any ${\bf j}\in\Z_{\ge 0}^r$ with $|{\bf j}|_e=n_s+2$ and $|{\bf j}|=d$.
Since $v^{\bf j}=\prod_{i=1}^r(\pi(v_i)+\pi'(v_i))^{j_i}$, combining (\ref{eqn2}) and (\ref{eqn1}) yields that
$$v^{\bf j}\,\equiv\, \pi'(v)^{\bf j}+\sum_{|{\bf i}|_e=n_s+2,\ |{\bf i}|>d}
\mu_{{\bf j},\,{\bf i}}\,v^{\bf i}\mod \big(\mathcal{A}+I+{\sf K}_{n_s+1}\,U(\g,e)\big)$$
for some $\mu_{{\bf j},\,{\bf i}}\in\K$.
As ${\sf K}_{n_s+1}\,U(\g,e)$ is spanned by all $v^{\bf b}$ with $|{\bf b}|_e<n_s+2$, we know that
 ${\sf K}_{n_s+1}\,U(\g,e)\subseteq \mathcal{A}+I$.
 Then our present induction assumption gives
 $v^{\bf j}\in\mathcal{A}+I$, as claimed. But then $\pi(v_s)\in \mathcal{A}+I$ in view of (\ref{eqn2}). Since
 $v_s=\pi(v_s)+\pi'(v_s)$ and $\pi'(v_s)\in\mathcal{A}$ by the definition of $\mathcal{A}$, we deduce that
 $v_s\in\mathcal{A}+I$ finishing the proof.
\end{proof}
It should be stressed at this point that in Lemma \ref{generate} we do not require $V_I$  to be contained in $I$.
\begin{prop}\label{abgen}
Let $e$ be any nilpotent element of $\g$. Then the following are true:

\smallskip

\begin{itemize}
\item[(i)]
If $\mathcal{E}\ne \emptyset$, then the unital algebra $U(\g,e)^{\rm ab}$ is generated by
$c(e)$ elements.
\smallskip
\item[(ii)]
If $\mathcal{E}^\Gamma\ne \emptyset$, then the unital algebra $U(\g,e)^{\rm ab}_\Gamma$ is generated by
$c_\Gamma(e)$ elements.
\end{itemize}
\end{prop}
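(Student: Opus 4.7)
The plan is to apply Lemma~\ref{generate} in both parts with judicious choices of $C(e)$-stable subspaces $V_I, V_I' \subseteq \Theta(\g_e)$ whose leading Kazhdan parts realise appropriate complements in $\g_e$.

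For part~(i), take $I = I_c$, so that $U(\g,e)/I_c = U(\g,e)^{\rm ab}$ and the hypothesis $\mathcal{E}\ne\emptyset$ is exactly the properness of $I_c$. Since $C(e)$ is reductive and $[\g_e,\g_e]$ is a graded $C(e)$-stable subspace of $\g_e$, Weyl's theorem supplies a graded $C(e)$-stable complement $C$ with $\dim C = c(e)$. Using the $C(e)$-equivariance of ${\rm gr}_{\sf K}^0\Theta\colon\Theta(\g_e)\to \g_e[2]$, I would then lift to obtain filtered $C(e)$-stable subspaces $V_{I_c},V_{I_c}'\subseteq \Theta(\g_e)$ with ${\rm gr}_{\sf K}^0(V_{I_c})=[\g_e,\g_e]$ and ${\rm gr}_{\sf K}^0(V_{I_c}')=C$, so that $\dim V_{I_c}'=c(e)$. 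The hypothesis of Lemma~\ref{generate} is then verified via identity~(\ref{eqn1}): for $v\in V_{I_c}$ of Kazhdan level $i$, the leading term ${\rm gr}_{\sf K}(v)\in\g_e\cap S^{\langle i\rangle}(\g_e)$ lies in $[\g_e,\g_e]$ and can be written as $\sum_\alpha [x_\alpha,y_\alpha]$. Lifting via $\Theta$ to $X_\alpha,Y_\alpha$ and invoking~(\ref{eqn1}) yields
$$X_\alpha Y_\alpha - Y_\alpha X_\alpha \,=\, [x_\alpha,y_\alpha] + q_\alpha + (\text{lower Kazhdan}),$$
with $X_\alpha Y_\alpha-Y_\alpha X_\alpha\in I_c$ and $q_\alpha\in S_{\ge 2}(\g_e)$. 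Summing over $\alpha$ shows ${\rm gr}_{\sf K}(v)\in {\rm gr}_{\sf K}(I_c)+S_{\ge 2}(\g_e)$, and Lemma~\ref{generate} delivers (i).

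For part~(ii), let $J$ be the preimage in $U(\g,e)$ of $I_\Gamma\subset U(\g,e)^{\rm ab}$, so that $U(\g,e)/J=U(\g,e)^{\rm ab}_\Gamma$ and properness of $J$ is equivalent to $\mathcal{E}^\Gamma\ne\emptyset$; explicitly $J$ is generated by $I_c$ together with all $u-c(u)$ with $u\in U(\g,e)$ and $c\in C(e)$. Since $\Gamma$ is finite, decompose $C=C^\Gamma\oplus C'$ as $C(e)$-modules (graded and compatibly with the $\Gamma$-action on $\mathfrak{c}_e$), with $C^\Gamma$ projecting isomorphically to $\mathfrak{c}_e^\Gamma$. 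Set ${\rm gr}_{\sf K}^0(V_J)=[\g_e,\g_e]\oplus C'$ and ${\rm gr}_{\sf K}^0(V_J')=C^\Gamma$ and lift as before; then $\dim V_J'=c_\Gamma(e)$. For $v\in V_J$ with leading term in $[\g_e,\g_e]$ the verification is identical to part~(i). For $v\in V_J$ with leading term $c'\in C'$ of Kazhdan weight $i$, write $c'\equiv \sum_\beta a_\beta\bigl(w_\beta - c_\beta(w_\beta)\bigr)\pmod{[\g_e,\g_e]}$ with $w_\beta\in\g_e$ homogeneous of the same Kazhdan weight and $c_\beta\in C(e)$. Lifting via $W_\beta:=\Theta(w_\beta)$ and using that $\Theta$ is $C(e)$-equivariant while $C(e)\subset G_h$ preserves the Slodowy grading, one has ${\rm gr}_{\sf K}^0\bigl(W_\beta-c_\beta(W_\beta)\bigr)=w_\beta-c_\beta(w_\beta)$ and $W_\beta-c_\beta(W_\beta)\in J$; the leftover $[\g_e,\g_e]$-correction is absorbed into ${\rm gr}_{\sf K}(J)+S_{\ge 2}(\g_e)$ by the argument of part~(i). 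Lemma~\ref{generate} then yields (ii). The main subtlety to track is the compatibility of the lift $\Theta$ with the Kazhdan filtration and with the $C(e)$-action simultaneously, but this is built into the construction of $\Theta$ recalled in Subsection~\ref{3.1}.
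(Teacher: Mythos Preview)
Your proof is correct and follows essentially the same route as the paper's: apply Lemma~\ref{generate} with $V_{I_c},V_{I_c}'$ lifting $[\g_e,\g_e]$ and a graded $C(e)$-complement $C$, and then for part~(ii) enlarge $V_{I_c}$ by the lift of a $\Gamma$-complement $C'$ to $C^\Gamma$. Your verification of the hypothesis of Lemma~\ref{generate} via~(\ref{eqn1}) is exactly what the paper leaves implicit.

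Two small points of comparison. First, your description of $J$ as ``generated by $I_c$ together with all $u-c(u)$ with $c\in C(e)$'' is not literally the definition of $\widetilde{I}_c$: the ideal $I_\Gamma$ is generated by $\phi-\phi^\gamma$ with $\gamma\in\Gamma$, not $c\in C(e)$. This does not harm your argument, since for the specific $w_\beta$ you may take them in $C'$ (on which $C(e)^\circ$ acts trivially, as the paper notes via Weyl's theorem), so $W_\beta-c_\beta(W_\beta)=W_\beta-\gamma_\beta(W_\beta)\in J$; but you should phrase it with $\gamma\in\Gamma$. Second, the paper's treatment of part~(ii) is slightly cleaner than yours: rather than writing $c'\equiv\sum a_\beta(w_\beta-\gamma_\beta(w_\beta))\pmod{[\g_e,\g_e]}$ and then absorbing a correction term, the paper observes that the lift $N'=\Theta(C')$ is itself $C(e)^\circ$-fixed and hence (being a $\Gamma$-module with no invariants) is spanned by elements $u-\gamma(u)$ with $u\in N'$. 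Thus $N'\subset\widetilde{I}_c$ outright, so for $v\in N'$ one has ${\rm gr}_{\sf K}(v)\in{\rm gr}_{\sf K}(\widetilde{I}_c)$ with nothing further to check. Your extra bookkeeping is unnecessary but harmless.
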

\begin{proof}
Due to a  possibility of confusion, in this proof we shall
distinguish between $\g_e=S^{1}(\g_e)\subset S(\g_e)$ and its
isomorphic copy $\Theta(\g_e)\subset U(\g,e)$.

\smallskip

\noindent (i) The defining ideal $I_c$ of $U(\g,e)^{\rm ab}$
contains all commutators $[\Theta(u),\Theta(v)]$ with $u,v\in \g_e$.
Since $C(e)$ is a reductive group, $\g_e$ contains a graded
$\Ad\,C(e)$-submodule of dimension $c(e)$ complementary to the
derived subalgebra $[\g_e,\g_e]$. Let $M$ be such a submodule and
recall the $C(e)$-equivariant isomorphism
$\Theta(\g_e)\stackrel{\sim}{\longrightarrow}{\rm gr}_{\sf
K}^0\,\Theta(\g_e)=\g_e[2]$ described in Subsection~\ref{3.1}. We
choose for $V_{I_c}$ and $V_{I_c}'$ the preimages  under this
isomorphism  of $[\g_e,\g_e]$ and $M$, respectively. It is immediate
from (\ref{eqn1}) and our earlier remarks in this proof that the
$C(e)$-submodules $V_{I_c}$ and $V_{I_c}'$ of $\Theta(\g_e)$ satisfy
all conditions of Lemma \ref{generate}. Since $\dim V_{I_c}'=\dim
M=c(e)$, the first statement follows.

\smallskip

\noindent (ii) Let $\widetilde{I}_c$ be the preimage of the ideal
$I_\Gamma$ of $U(\g,e)^{\rm ab}$ under the canonical homomorphism
$U(\g,e)\twoheadrightarrow U(\g,e)^{\rm ab}$. Then $\widetilde{I}_c$
is a two-sided ideal of $U(\g,e)$ and $U(\g,e)/\widetilde{I}_c\cong
U(\g,e)^{\rm ab}_\Gamma$ as algebras. Since $[\g_e(0),M]\subseteq
[\g_e,\g_e]$ and $M\cap [\g_e,\g_e]=0$, it follows from Weyl's
theorem that the connected reductive group $C(e)^\circ$ acts
trivially on $M$. Therefore, $M$ has a natural structure of a
$\Gamma$-module. There exists a $\Gamma$-submodule $M'$ of $M$
complementary to $M^\Gamma:=\{x\in M\colon\, \gamma(x)=x\}$. We
choose for $V_{\widetilde{I}_c}$ and $V'_{\widetilde{I}_c}$ the
preimages in $U(\g,e)$ of $M'\oplus [\g_e,\g_e]$ and $M^\Gamma$
under the above-mentioned isomorphism
$\Theta(\g_e)\stackrel{\sim}{\longrightarrow}{\rm gr}_{\sf
K}^0\,\Theta(\g_e)=\g_e[2]$ of $C(e)$-modules. Note that
$V_{\widetilde{I}_c}= V_{I_c}\oplus N'$ where $N'$ is the preimage
of $M'$ in $\Theta(\g_e)$. Due to our choice of $M'$ the group
$C(e)^\circ$ acts trivially on $N'\cong M'$ and $N'$ is spanned by
the elements of the form $u-\gamma(u)$ with $u\in N'$ and
$\gamma\in\Gamma$. The definition of $I_\Gamma$ implies that
$N'\subset \widetilde{I}_c$. As $I\subseteq \widetilde{I}$, our
discussion in Part (i) now shows that the modules
$V_{\widetilde{I}_c}$ and $V_{\widetilde{I}_c}'$ satisfy all
conditions of Lemma \ref{generate}. Since $\dim
V_{\widetilde{I}_c}'=\dim M^\Gamma=c_\Gamma(e)$ we obtain (ii).
 \end{proof}
\begin{cor}\label{poly} Let $e$ be an induced nilpotent element of $\g$. Then the following hold:

\smallskip

\begin{itemize}
\item[(i)] If $c(e)=r(e)$, then
$U(\g,e)^{\rm ab}\cong S(\mathfrak{c}_e)$ as $\K$-algebras and
$\Gamma$-modules and $U(\g,e)^{\rm ab}_\Gamma\cong
S(\mathfrak{c}_e^\Gamma)$ as $\K$-algebras.

\smallskip

\item[(ii)] If $\mathcal{E}^\Gamma\ne\emptyset$ and $\dim \mathcal{E}^\Gamma\ge c_\Gamma(e)$, then
$U(\g,e)^{\rm ab}_\Gamma\,\cong \, S(\mathfrak{c}_e^\Gamma)$ is a polynomial algebra in $c_\Gamma(e)$ variables.
\end{itemize}
\end{cor}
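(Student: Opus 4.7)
The plan is to combine the generation bound of Proposition~\ref{abgen} with the Krull dimension formula of \cite[Theorem~1.2]{Sasha3} and with the equivariance built into the construction of the generators. Concretely, I would argue as follows.

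For part~(i), since $e$ is induced the ideal $I_c$ is proper (as recalled in Subsection~\ref{3.1}), so $\mathcal{E}\ne\emptyset$. Proposition~\ref{abgen}(i) then produces a surjective algebra map $\pi\colon S(V'_{I_c})\twoheadrightarrow U(\g,e)^{\rm ab}$, where $V'_{I_c}\subset\Theta(\g_e)$ is a $C(e)$-stable lift of a $C(e)$-stable graded complement $M$ to $[\g_e,\g_e]$ in $\g_e$. The $C(e)$-equivariant isomorphism $\Theta(\g_e)\cong\g_e[2]$ coming from the linear part of the Kazhdan grading identifies $V'_{I_c}$ with $M\cong\mathfrak{c}_e$ as $C(e)$-modules. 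Since $C(e)^\circ$ acts trivially on $\mathfrak{c}_e$ and on any commutative quotient of $U(\g,e)$, the map $\pi$ descends to a $\Gamma$-equivariant algebra surjection $S(\mathfrak{c}_e)\twoheadrightarrow U(\g,e)^{\rm ab}$. By \cite[Theorem~1.2]{Sasha3} the Krull dimension of $U(\g,e)^{\rm ab}$ equals $r(e)$, which by hypothesis equals $c(e)=\dim S(\mathfrak{c}_e)$. Any non-zero element of $\ker\pi$ would strictly decrease the Krull dimension of the polynomial algebra $S(\mathfrak{c}_e)$, so $\pi$ is an isomorphism of algebras and of $\Gamma$-modules.

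To deduce the statement about $U(\g,e)^{\rm ab}_\Gamma$ I would pick a $\Gamma$-stable decomposition $\mathfrak{c}_e=\mathfrak{c}_e^\Gamma\oplus\mathfrak{c}_e'$ and analyse the image $J\subset S(\mathfrak{c}_e)$ of $I_\Gamma$ under $\pi$. Since $\Gamma$ is finite, $\mathrm{char}(\K)=0$ and $(\mathfrak{c}_e')^\Gamma=0$, the elements $x-\gamma x$ with $x\in\mathfrak{c}_e'$, $\gamma\in\Gamma$ already span $\mathfrak{c}_e'$, so $J\supseteq S(\mathfrak{c}_e)\cdot\mathfrak{c}_e'$. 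For the reverse inclusion, decompose $S(\mathfrak{c}_e)\cong S(\mathfrak{c}_e^\Gamma)\otimes S(\mathfrak{c}_e')$ and write an arbitrary generator $\phi-\phi^\gamma$ of $J$ accordingly; it lands in $S(\mathfrak{c}_e^\Gamma)\otimes S(\mathfrak{c}_e')_+$, which is precisely the ideal $S(\mathfrak{c}_e)\cdot\mathfrak{c}_e'$. Hence $U(\g,e)^{\rm ab}_\Gamma\cong S(\mathfrak{c}_e)/J\cong S(\mathfrak{c}_e^\Gamma)$.

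Part~(ii) reduces to a direct dimension count: Proposition~\ref{abgen}(ii) gives a surjection $\K[y_1,\ldots,y_{c_\Gamma(e)}]\twoheadrightarrow U(\g,e)^{\rm ab}_\Gamma$, and since $\dim U(\g,e)^{\rm ab}_\Gamma=\dim\mathcal{E}^\Gamma\ge c_\Gamma(e)$ the same Krull-dimension argument forces the kernel to vanish and gives $\dim\mathcal{E}^\Gamma=c_\Gamma(e)$. The only delicate point in the whole argument is the bookkeeping in part~(i) needed to upgrade the $C(e)$-equivariant construction of Proposition~\ref{abgen} to a $\Gamma$-equivariant identification $V'_{I_c}\cong\mathfrak{c}_e$; this is ensured by choosing $M$ to be $C(e)$-stable and then observing that $C(e)^\circ$ acts trivially on every abelian object in sight.
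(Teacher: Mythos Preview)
Your proof is correct and follows essentially the same route as the paper's: use Proposition~\ref{abgen} to get a $C(e)$-equivariant surjection from a symmetric algebra onto the commutative quotient, then force injectivity via the Krull-dimension input from \cite[Theorem~1.2]{Sasha3} (for (i)) or the hypothesis $\dim\mathcal{E}^\Gamma\ge c_\Gamma(e)$ (for (ii)). The only cosmetic difference is in the second half of (i): the paper phrases the computation of $I_\Gamma$ geometrically, identifying it via $\psi$ with the defining ideal of the linear subspace $(\mathfrak{c}_e^*)^\Gamma\subset\mathfrak{c}_e^*$, whereas you compute it algebraically as $S(\mathfrak{c}_e)\cdot\mathfrak{c}_e'$; both arguments are equivalent and yield $U(\g,e)^{\rm ab}_\Gamma\cong S(\mathfrak{c}_e^\Gamma)$.
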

\begin{proof}
(i) Combining \cite[Theorem 1.2]{Sasha3} with the main results of
\cite{GRU} we see that $\dim U(\g,e)^{\rm ab}=r(e)$ (in particular,
$\mathcal{E}\neq \emptyset$). On the other hand, since $V'_{I_c}$ is
a $C(e)$-submodule of $U(\g,e)$ isomorphic to $\mathfrak{c}_e$,
Propostion \ref{abgen}(i) implies that there exists a natural
surjective $C(e)$-equivariant algebra homomorphism
$\psi\colon\,S(\mathfrak{c}_e)\twoheadrightarrow U(\g,e)^{\rm ab}$.
If $c(e)=\dim S(\mathfrak{c}_e)$ equals $r(e)= U(\g,e)^{\rm ab}$,
the map $\psi$ must be injective. Since $C(e)^\circ$ acts trivially
on $\mathfrak{c}_e$ we deduce that $U(\g,e)^{\rm ab}\cong
S(\mathfrak{c}_e)$ as $\K$-algebras and $\Gamma$-modules. But then
$\mathcal{E}\cong \mathfrak{c}_e^*$ as $\Gamma$-varieties implying
that $\mathcal{E}^\Gamma\cong(\mathfrak{c}_e^*)^\Gamma$. Since the
defining ideal in $S(\mathfrak{c}_e)\cong\K[\mathfrak{c}_e^*]$ of
the linear subspace $(\mathfrak{c}_e^*)^\Gamma$ is generated by all
$f-f^\gamma$ with $f\in S(\mathfrak{c}_e)$ and $\gamma\in\Gamma$,
its image under $\psi$ coincides with $I_\Gamma$. This implies that
$S(\mathfrak{c}_e^\Gamma)\cong U(\g,e)^{\rm ab}_\Gamma$ as
$\K$-algebras.

\smallskip

\noindent (ii) As $\mathcal{E}^\Gamma\ne\emptyset$, it follows from
Proposition~\ref{abgen}(ii) that there is a surjective algebra
homomorphism $S(V'_{\widetilde{I}_c})\twoheadrightarrow U(\g,e)^{\rm
ab}_\Gamma$. As a consequence, $c_\Gamma(e)=\dim
V_{\widetilde{I}_c}'\ge\dim U(\g,e)^{\rm ab}_\Gamma.$ If
 $\dim U(\g,e)_\Gamma^{\rm ab}=\dim\mathcal{E}^\Gamma\ge c_\Gamma(e)$, then it must be that
 $U(\g,e)^{\rm ab}_\Gamma\cong S(V'_{\widetilde{I}_c})\cong S(\mathfrak{c}_e^\Gamma)$ as $\K$-algebras.
\end{proof}
\subsection{Further results on polynomiality of commutative quotients}
In this subsection we are going to apply our results on non-singular
nilpotent elements to give a complete description of those nilpotent
elements $e$ in classical Lie algebras $\g$ for which $U(\g,e)^{\rm
ab}$ is a polynomial algebra. For {\it induced} nilpotent orbits in
exceptional Lie algebras, we are going to apply de Graaf's
computations in \cite{deG2} to obtain  strong partial results in
this direction which will leave undecided only six of such orbits,
one in type ${\sf F_4}$, one in type ${\sf E_6}$, one in type ${\sf
E_7}$ and three in type $\sf E_8$. The very challenging case of
rigid nilpotent orbits in exceptional Lie algebras requires
completely different methods and is dealt with in \cite{Pr}.
\begin{thm}\label{class}
Let $e$ be a nilpotent element in a classical Lie algebra $\g$. Then the following are equivalent:
\begin{itemize}
\item[(1)]
$e$ belongs to a unique sheet of $\g$;
\smallskip

\item[(2)] $U(\g,e)^{\rm ab}$ is isomorphic to a polynomial algebra in $c(e)$ variables.
\smallskip
\end{itemize}
\end{thm}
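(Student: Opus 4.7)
The plan is to split into rigid and induced cases, since every nilpotent element of a classical Lie algebra is either rigid or induced from a proper Levi, and then to assemble the combinatorial and algebraic pieces established earlier: Theorem~\ref{nosheets}(2), Corollaries~\ref{rrr}, \ref{maxseq} and \ref{newz}, Proposition~\ref{abgen}(i), Corollary~\ref{poly}(i), and \cite[Theorem~1.2]{Sasha3}.

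For $e$ induced from a proper Levi I would argue both directions as follows. Let $\lambda$ be the partition of $e$. If $e$ lies in a unique sheet, Theorem~\ref{nosheets}(2) says $\lambda$ is non-singular; Corollary~\ref{maxseq}(2) combined with Corollary~\ref{newz} then gives $c(e)=c(\lambda)=z(\lambda)=r(e)$, and Corollary~\ref{poly}(i) produces a $C(e)$-equivariant algebra isomorphism $U(\g,e)^{\rm ab}\cong S(\mathfrak{c}_e)$, which is a polynomial algebra in $c(e)$ variables. Conversely, if $U(\g,e)^{\rm ab}$ is a polynomial algebra then its maximal spectrum $\mathcal{E}$ is an affine space and hence irreducible, while \cite[Theorem~1.2]{Sasha3} bounds the number of irreducible components of $\mathcal{E}$ below by the number $t$ of sheets of $\g$ containing $e$; irreducibility forces $t=1$, which is (1).

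In the rigid case, (1) holds automatically since a rigid orbit is itself a sheet, and Corollary~\ref{rrr} supplies $\mathfrak{c}_e=0$, so $c(e)=0$ and (2) reduces to the assertion $U(\g,e)^{\rm ab}=\K$. This follows from Proposition~\ref{abgen}(i), which guarantees that as soon as $\mathcal{E}\neq\emptyset$ the algebra $U(\g,e)^{\rm ab}$ is generated by $c(e)=0$ elements, hence equal to $\K$. The main obstacle is therefore to verify $\mathcal{E}\neq\emptyset$ for rigid $e$ in classical types, i.e.\ to exhibit at least one one-dimensional representation of the finite $W$-algebra $U(\g,e)$; for classical $\g$ this is a known input drawn from the work of Losev and M{\oe}glin, and I would cite it rather than reprove it here.
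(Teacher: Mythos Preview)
Your proposal is correct and follows essentially the same route as the paper. The paper handles $(1)\Rightarrow(2)$ without an explicit rigid/induced split, invoking Corollary~\ref{izosim} and Theorem~\ref{zismax} to get $c(e)=r(e)$, citing \cite{KP}, \cite{RBr}, \cite{Lo1} for $\mathcal{E}\ne\emptyset$, and then Corollary~\ref{poly}(i); your more careful separation of the rigid case via Corollary~\ref{rrr} and Proposition~\ref{abgen}(i) is a harmless organisational variant that in fact makes the argument slightly cleaner, since Corollary~\ref{poly}(i) is only stated for induced~$e$.
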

\begin{proof}
If $e$ belongs to a unique sheet of $\g$, then
$c(e)=r(e)$ by Corollary \ref{izosim} and Theorem~\ref{zismax}.
Since $\g$ is classical, it follows from \cite{KP}, \cite{RBr} and \cite{Lo1} that $\mathcal{E}\ne \emptyset$.
But then Corollary \ref{poly}(i) shows that
$U(\g,e)^{\rm ab}\cong S(\mathfrak{c}_e)$ as unital $\K$-algebras.

If $U(\g,e)^{\rm ab}$ is isomorphic to a polynomial algebra, then
the variety $\mathcal{E}$ is irreducible. If $e$ is induced, then applying \cite[Theorem 1.2]{Sasha3} yields
that $e$ belongs to a unique sheet. If $e$ is rigid, this holds automatically as $({\rm Ad}\,G)\, e$ is a sheet of $\g$.
This completes the proof.
\end{proof}
\begin{rem}\label{R5}
{\rm (a) Suppose $\g$ is a classical Lie algebra and $e$ is a rigid
nilpotent element of $\g$. Then it follows from Theorem \ref{class} and Corollary~\ref{rrr}
that the algebra $U(\g,e)$ admits a {\it unique} $1$-dimensional
representation.

\smallskip

\noindent (b) Suppose $\g$ is a classical Lie algebra and
$e=e(\lambda)$ is a nilpotent element of $\g$ associated with a
non-singular partition $\lambda$. Then combining \cite[Theorem
1.2]{Sasha3} with Corollary~\ref{maxseq}(2) and (the proof of)
Theorem \ref{class} we deduce that $e$ belongs to a unique sheet
$\mathcal{S}(e)$ of $\g$ and the variety $(e+\g_f)\cap
\mathcal{S}(e)$ is irreducible. Of course, the uniqueness of
$\mathcal{S}(e)$ also follows from Proposition~\ref{phising} which
we proved by purely combinatorial arguments. The irreducibility of
the variety $(e+\g_f)\cap \mathcal{S}(e)$ is actually a consequence
of the following more general result which follows from Im Hof's
theorem on smoothness of sheets in classical Lie algebras:

\smallskip

\noindent {\it for any sheet $\mathcal S$ containing a nilpotent
element $e\in\g$ the affine variety $X=\mathcal{S}\cap (e+\g_f)$ is smooth
and irreducible.}
\smallskip

\noindent Indeed, it is immediate from Katsylo's results mentioned
in Subsection~\ref{3.1} that $\dim \mathcal{S}=\dim X+\dim\,
(\Ad\,G)\, e$. Since $\mathcal{S}$ contains both $X$ and $(\Ad\,G)e$
the tangent space $T_e(\mathcal{S})$ contains $T_e(X)+
T_e\big((\Ad\,G)\, e\big)$. Since $T_e(X)\subset T_e(e+\g_f)=\g_f$
and $T_e\big((\Ad\,G)\, e\big)=[e,\g]$, it follows that
$T_e(\mathcal{S})$ contains $T_e(X)\oplus [e,\g]$. As the variety
$\mathcal S$ is smooth and $\dim\big(T_e(X)\oplus [e,\g]\big)\ge
\dim\,\mathcal{S}$ it must be that $T_e(\mathcal{S})=T_e(X)\oplus
[e,\g]$ and $\dim T_e(X)=\dim\,X$. As a consequence, $e$ is a smooth
point of $X$. But then $e$ belongs to a unique irreducible component
of $X$; see \cite[Chapter II, \S2, Theorem 6]{Sh}. On the other
hand, there is a regular $\K^\times$-action on $X$ attracting every
point $x\in X$ to $e$. Therefore, all irreducible components of $X$
contain $e$ and hence the variety $X$ is irreducible. Since the
singular locus ${\rm Sing}(X)$ of $X$ is Zariski closed and
invariant under the above $\K^\times$-action, this argument also
shows that $X$ is a smooth variety.}
\end{rem}

Our next result relies heavily on Losev's work \cite{Lo3}. Together
with Corollary~\ref{poly}(ii) it will enable us to describe the
variety $\mathcal{E}^\Gamma$ for many induced nilpotent elements
$e\in \g$.
\begin{prop}\label{mult1}
Let $P=LU$ be a proper parabolic subgroup of $G$, where $L\subset G$
is a Levi subgroup and $U=R_u(P)$, and suppose that a nilpotent
element $e=e_0+e_1\in{\rm Lie}(P)$ with $e_0\in \li={\rm Lie}(L)$
and  $e_1\in{\rm Lie}(U)$ is induced from $e_0$ in such a way that
$G_e\subset P$. Let $\mathcal{E}_0={\rm
Specm}\,U([\li,\li],e_0)^{\rm ab}$ and suppose further that
$\mathcal{E}_0^{\Gamma_0}\neq \emptyset$ where
$\Gamma_0=L_{e_0}/(L_{e_0})^\circ$. Then
$\mathcal{E}^\Gamma\ne\emptyset$ and
$\dim\mathcal{E}^\Gamma\ge\dim\z(\li)$ where $\z(\li)$ denotes the
centre of the Levi subalgebra $\li={\rm Lie}(L)$.
\end{prop}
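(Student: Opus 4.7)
The plan is to use parabolic induction for finite $W$-algebras (as developed by Losev in \cite{Lo3} and used in \cite{Sasha3}) together with the fact that $\z(\li)$ contributes a polynomial factor to $U(\li,e_0)^{\rm ab}$.

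First I would separate out the centre. Since $e_0\in [\li,\li]$ and $\z(\li)$ acts trivially on $e_0$ by the adjoint action, the Slodowy slice $e_0+\li_{f_0}$ for a compatible $\mathfrak{sl}_2$-triple in $[\li,\li]$ decomposes as $\z(\li)\oplus \big(e_0+[\li,\li]_{f_0}\big)$, and the Kazhdan-filtered algebra $U(\li,e_0)$ factors as
$$U(\li,e_0)\;\cong\; U([\li,\li],e_0)\otimes_\K S(\z(\li)).$$
Taking largest commutative quotients therefore gives $U(\li,e_0)^{\rm ab}\cong U([\li,\li],e_0)^{\rm ab}\otimes S(\z(\li))$, so $\mathrm{Specm}\,U(\li,e_0)^{\rm ab}\cong \mathcal{E}_0\times\z(\li)^*$. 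The group $\Gamma_0$ acts trivially on the $\z(\li)^*$-factor (since $\z(\li)$ is central in $\li$), whence the fixed-point subvariety equals $\mathcal{E}_0^{\Gamma_0}\times \z(\li)^*$. By hypothesis this is non-empty and has dimension $\ge \dim\,\z(\li)$.

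Next I would invoke the parabolic-induction morphism. The assumption $G_e\subset P$ is exactly the one under which the construction of \cite[Sec.~3]{Sasha3} (building on \cite{Lo3}) produces a $\K$-algebra homomorphism
$$\Xi\colon\, U(\g,e)^{\rm ab}\;\longrightarrow\; U(\li,e_0)^{\rm ab}$$
(equivalently, a morphism of varieties $\iota\colon\mathcal{E}_0\times \z(\li)^*\to \mathcal{E}$) which is $\Gamma_0$-equivariant with respect to the natural homomorphism $\Gamma_0\to\Gamma$ coming from $L_{e_0}\subset G_e$ (the inclusion $G_e\subset P$ forces $G_e=L_{e_0}\cdot (U\cap G_e)$, so that $L_{e_0}\twoheadrightarrow \Gamma$ and the image of $\Gamma_0$ covers $\Gamma$). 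Consequently $\iota$ maps $\Gamma_0$-fixed points into $\Gamma$-fixed points, producing a morphism
$$\iota^{\rm fix}\colon\; \mathcal{E}_0^{\Gamma_0}\times\z(\li)^*\;\longrightarrow\;\mathcal{E}^\Gamma.$$
In particular, $\mathcal{E}^\Gamma\ne\emptyset$. This settles the existence assertion.

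Finally, for the dimension bound, I would show that $\iota^{\rm fix}$ is already injective when restricted to the $\z(\li)^*$-factor attached to any fixed $\eta_0\in\mathcal{E}_0^{\Gamma_0}$. Geometrically, $\iota$ should be the quantum counterpart of Katsylo's étale parametrisation of the open decomposition class $\mathcal{D}(\li,e_0)$ in the Slodowy slice $e+\g_f$, whose $\z(\li)^*$-direction corresponds bijectively to the torus $\z(\li)_{\rm reg}$ parametrising sheets. More concretely, I would read off the associated-graded map of $\Xi$ from the explicit description of $\mathrm{gr}\,U(\g,e)^{\rm ab}$ via the scheme-theoretic intersection $\overline{\mathcal{D}(\li,e_0)}\cap(e+\g_f)$ used in \cite{Sasha3}; this intersection contains an irreducible component of dimension $\dim\,\z(\li)$ along the $\z(\li)^*$-direction, and after passing to $\Gamma$-invariants the induced map has image of dimension at least $\dim\,\z(\li)$, giving the required estimate.

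The main obstacle will be Step~2—pinning down a $\Gamma$-equivariant induction homomorphism at the level of commutative quotients under the sole hypothesis $G_e\subset P$. Once the equivariance and the associated-graded computation of Step~3 are in place (both extractable from \cite{Lo3} and \cite{Sasha3}), the conclusion is automatic.
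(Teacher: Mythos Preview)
Your overall architecture is right: factor off $\z(\li)$ from $U(\li,e_0)^{\rm ab}$, then push $1$-dimensional representations of $U(\li,e_0)$ to $1$-dimensional representations of $U(\g,e)$ via Losev's parabolic induction, equivariantly. But two points need correcting, and the second one is the real content of the proof.

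First, the equivariance runs the wrong way. You claim a homomorphism $\Gamma_0\to\Gamma$ coming from ``$L_{e_0}\subset G_e$'', but $L_{e_0}$ is \emph{not} contained in $G_e$: an element of $L$ fixing $e_0$ has no reason to fix $e_1\in{\rm Lie}(U)$. What one actually has is the opposite inclusion after a conjugation. Since $G_e\subset P$ and $C(e)$ is reductive, Mostow's theorem lets one arrange $C(e)\subseteq L$; then $C(e)$ preserves both $\li$ and ${\rm Lie}(U)$, so $C(e)\subseteq L_{e_0}$ (indeed $C(e)\subseteq C(L,e_0)$). Thus it is $\Gamma$ that acts on $\widetilde{\mathcal{E}}_0:={\rm Specm}\,U(\li,e_0)^{\rm ab}$, and Losev's map $\xi\colon U(\g,e)^{\rm ab}\to U(\li,e_0)^{\rm ab}$ is $C(e)$-equivariant, hence $\xi^*$ sends $\widetilde{\mathcal{E}}_0^{\Gamma}$ into $\mathcal{E}^\Gamma$. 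The hypothesis $\mathcal{E}_0^{\Gamma_0}\ne\emptyset$ enters because $C(e)\subseteq C(L,e_0)$ forces $\mathcal{E}_0^{\Gamma_0}\subseteq\mathcal{E}_0^{\Gamma}$, so $\widetilde{\mathcal{E}}_0^{\Gamma}\supseteq \z(\li)^*\times\mathcal{E}_0^{\Gamma_0}$ is non-empty of dimension $\ge\dim\z(\li)$.

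Second, and more seriously, your Step~3 misses the key input. You try to recover the dimension bound by arguing that $\iota$ is injective along the $\z(\li)^*$-direction via an associated-graded/Katsylo argument; this is vague and unnecessary. The paper instead invokes Losev's result \cite[Theorem~6.5.2]{Lo3} that the morphism $\xi^*\colon\widetilde{\mathcal{E}}_0\to\mathcal{E}$ is \emph{finite}. Finite morphisms are closed with finite fibres, so $\dim\xi^*(\widetilde{\mathcal{E}}_0^{\Gamma})=\dim\widetilde{\mathcal{E}}_0^{\Gamma}\ge\dim\z(\li)$, and since $\xi^*(\widetilde{\mathcal{E}}_0^{\Gamma})\subseteq\mathcal{E}^\Gamma$ the bound follows immediately. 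Without this finiteness you cannot conclude the dimension estimate, and none of the associated-graded hints you give actually establish injectivity on the $\z(\li)^*$-slice.
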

\begin{proof}
(a) Let $(e_0,h_0,f_0)$ be an $\mathfrak{sl}_2$-triple of $\li$
containing $e_0$ (if $e_0=0$ then $(e_0,h_0,f_0)$ is the zero
triple). By the $\mathfrak{sl}_2$-theory, the reductive group
$C(L,e_0):=L_{e_0}\cap L_{h_0}$ is a Levi subgroup of the
centraliser $L_{e_0}$. We include $e$ into an
$\mathfrak{sl}_2$-triple $(e,h,f)$ of $\g$ and denote by $\lambda_e$
the cocharacter in $X_*(G)$ with $h\in \Lie(\lambda_e(\K^\times))$.
Note that $C(e)=G_e\cap G_h=G_e\cap Z_G(\lambda_e)$. Since
$C(e)\subset P$ by our assumption on $e$, it follows from
\cite[Proposition~6.1.2(4)]{Lo3} that the reductive group
$\lambda_e(\K^\times)C(e)$ is contained in $P$. Since any reductive
subgroup of $P$ is conjugate under $P$ to a subgroup of $L$ by
Mostow's theorem, we may assume without loss of generality that $
\lambda_e(\K^\times)C(e)\subseteq L$. Since $C(e)\subseteq L\cap
G_e$ preserves both $\li$ and $\Lie(U)$, it must be that
$C(e)\subseteq L_{e_0}$. Since the group $C(e)$ is reductive, it
follows from Mostow's theorem that it is conjugate under $L$ to a
subgroup of $C(L,e_0)$. Thus no generality will be lost by assuming
further that $C(e)\subseteq C(L,e_0)$.

\medskip

\noindent (b) In \cite[6.3]{Lo3}, Losev used the techniques of
quantum Hamiltonian reduction to define a completion $U(\li,e_0)'$
of the finite $W$-algebra $U(\li,e_0)$  and an injective algebra
homomorphism $\Xi\colon\,U(\g,e)\rightarrow U(\li,e_0)'$. By
construction, the reductive group $C(L,e_0)$ acts on $U(\li,e_0)'$
by algebra automorphisms. Since $C(e)\subseteq C(L,e_0)$, one can
see by inspection that all maps involved in Losev's construction are
$C(e)$-equivariant (a related discussion can also be found in
\cite[2.5]{Lo4}). This implies, in particular, that in our situation
Losev's homomorphism $\Xi$ is $C(e)$-equivariant. Here $C(e)$
operates on $U(\g,e)$ as in Subsection~\ref{3.1} and the action of
$C(e)$ on $U(\li,e_0)'$ is given by inclusion $C(e)\subseteq
C(L,e_0)$.

\medskip

 \noindent
(c) Given an associative algebra $\mathcal A$ over $\K$ and a
positive integer $d$ we denote by $\mathcal{A}^{(d)}$ the quotient
of $\mathcal A$ by its two-sided ideal generated by all
$s_{2d}(a_1,a_2,\ldots, a_{2d})$ with $a_i\in \mathcal{A}$, where
$$s_{2d}(X_1,X_2,\ldots, X_{2d})=\sum_{\sigma\in\mathfrak{S}_{2d}}{\rm sgn}(\sigma)X_{\sigma(1)}X_{\sigma(2)}\cdots X_{\sigma(2d)}.
$$
According to \cite[Proposition~6.5.1]{Lo3}, the inclusion
$U(\li,e_0)\hookrightarrow U(\li,e_0)'$ induces an algebra
isomorphism algebras ${U(\li,e_0)'}^{(d)}\cong U(\li,e_0)^{(d)}$.
Therefore, for every $d\in\mathbb{N}$ the map $\Xi$ gives rise to a
$C(e)$-equivariant algebra homomorphism $U(\g,e)^{(d)}\rightarrow
U(\li,e_0)^{(d)}$. Since $U(\g,e)^{(1)}\cong U(\g,e)^{\rm ab}$ and
$U(\li,e_0)^{(1)}\cong U(\li,e_0)^{\rm ab}$ as algebras, we thus
obtain a $C(e)$-equivariant algebra homomorphism
$\xi\colon\,U(\g,e)^{\rm ab}\rightarrow U(\li,e_0)^{\rm ab}$.

Let $\widetilde{\mathcal{E}}_0={\rm Specm}\,U(\li,e_0)^{\rm ab}$.
According to \cite[Theorem~6.5.2]{Lo3} the morphism of affine
varieties $\xi^*\colon\,\widetilde{\mathcal{E}}_0
\rightarrow\mathcal{E}$ associated with $\xi$ is finite. In
particular, it has finite fibres. Since $\xi$ is $C(e)$-equivariant
and $C(e)^\circ$ acts trivially on both $U(\g,e)^{\rm ab}$ and
$U(\li,e)^{\rm ab}$, the morphism $\xi^*$ maps
$\widetilde{\mathcal{E}}_0^\Gamma$ into $\mathcal{E}^\Gamma$. It
follows that
$$\dim\,\widetilde{\mathcal{E}}_0^\Gamma=\dim\,\xi^*(\widetilde{\mathcal{E}}_0^\Gamma)\le \dim\,\mathcal{E}_\Gamma.$$

\smallskip

\noindent (d) Write $\z$ for the centre $\z(\li)$ of the Levi
subalgebra $\li$. Clearly, $\z$ is a toral subalgebra of $\g$ and
$\li=\z\oplus [\li,\li]$. It follows that $U(\li,e_0)\cong
S(\z)\otimes U([\li,\li],e_0)$. This, in turn, implies that
$U(\li,e)^{\rm ab}\cong S(\z)\otimes U([\li,\li],e_0)^{\rm ab}$ as
algebras.  Since the subalgebra $U([\li,\li],e_0)$ of $U(\li, e_0)$
is stable under the action of $C(e)$ on $U(\li,e_0)$, we have a
natural action of $\Gamma$ of the affine variety
$\mathcal{E}_0:={\rm Specm}\,U([\li,\li],e_0)^{\rm ab}$. Since
$C(e)\subseteq C(L,e_0)$, and $C(L,e_0)^\circ$ acts trivially on
$\mathcal{E}_0$, the variety  $\mathcal{E}_0^\Gamma=\{\eta\in
\mathcal{E}_0\,:\,\,\gamma(\eta)=\eta \mbox{ for all }
 \gamma\in\Gamma\}$ contains $\mathcal{E}_0^{\Gamma_0}$ and hence  is non-empty by
our assumption on $e$.

Note that $L$ acts trivially on the centre $\z$ of $\Lie(L)$ and
hence so does $C(e)\subset L$. It follows that
$\widetilde{\mathcal{E}}_0^\Gamma\cong\z^*\times
\mathcal{E}_0^\Gamma$ as affine varieties. In particular,
$$\dim\,\widetilde{\mathcal{E}}_0^\Gamma
=\dim\,\mathcal{E}_0^\Gamma+\dim\,\z\ge \dim\,\z.$$ But then
$\mathcal{E}^\Gamma\supseteq\xi^*(\widetilde{\mathcal{E}}^\Gamma)\neq
\emptyset$ and
$\dim\,\mathcal{E}^\Gamma\ge\dim\,\widetilde{\mathcal{E}}_0^\Gamma\ge
\dim\,\z(\li)$ as claimed.
\end{proof}
\begin{rem}
{\rm It is established in \cite{Pr} that the variety $\mathcal{E}^\Gamma$ is non-empty for any nilpotent element in a finite dimensional simple Lie algebra over $\mathbb{C}$. This implies, among other things, that the condition that $\mathcal{E}_0^{\Gamma_0}\ne \emptyset$ imposed in the statement of Proposition~\ref{mult1} can be dropped.}
\end{rem}
\subsection{Describing the varieties ${\mathcal E}^\Gamma$ for classical Lie algebras}
In this subsection we assume that $\g$ is either $\mathfrak{so}_N$
or $\mathfrak{sp}_N$ and $e$ is an arbitrary nilpotent element of
$\g$. It is quite surprising that in this setting we have a very
uniform description of the algebra $U(\g,e)_\Gamma^{\rm ab}$. We
call a partition
$\lambda=(\lambda_1,...,\lambda_n)\in
\mathcal{P}_1(N)$ {\it exceptional} if there exists a $k\le n$ such that the parts
$\lambda_{k},\lambda_{k+1}$ are odd and the parts $\lambda_i$
with $i\not\in \{k, k+1\}$ are all even. Note that
$\Delta(\lambda)=\{(k, k+1)\}$ and $\Delta_{\rm
bad}(\lambda)=\emptyset$, which shows that all exceptional
partitions in $\mathcal{P}_1(N)$ are non-singular. Using the KS
algorithm it is straightforward to see that any nilpotent element of
$\g$ associated with an exceptional partition $\lambda$ is
Richardson (i.e. is induced from $0$).

We call a nilpotent element $e\in \g$ {\it almost rigid} if the
partition
$\lambda=(\lambda_1,\ldots,\lambda_n)\in\mathcal{P}_\epsilon(N)$ of
$e$ has the property that $\lambda_i-\lambda_{i+1}\in\{0,1\}$ for
all $i\le n$ (recall that $\lambda_j=0$ for $j>n$ by convention).
Since any such partition has no bad 2-steps, all almost rigid
nilpotent elements of $\g$ are non-singular.
 \begin{thm}\label{class1}
Let $\lambda=(\lambda_1,\ldots,\lambda_n)\in\mathcal{P}_\epsilon(N)$
and let $e=e(\lambda)$ be a nilpotent element of $\g$ associated
with $\lambda$. Then $U(\g,e)_\Gamma^{\rm ab}$ is isomorphic to a
polynomial algebra in $s(\lambda)$ variables unless $\g$ is of type
${\sf D}$ and $\lambda\in \mathcal{P}_\epsilon(N)$ is
exceptional, in which case $U(\g,e)_\Gamma^{\rm ab}$ is a polynomial
algebra in $s(\lambda)+1=(\lambda_1-\lambda_n+1)/2$ variables.
\end{thm}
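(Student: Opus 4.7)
The plan is to invoke Corollary~\ref{poly}(ii), which reduces the theorem to producing a nonempty $\mathcal{E}^\Gamma$ of dimension at least $c_\Gamma(e)$. Theorem~\ref{C}(ii) computes $c_\Gamma(e)=s(\lambda)$ outside the exceptional type-${\sf D}$ case and $c_\Gamma(e)=s(\lambda)+1$ inside it, so it is enough to exhibit a sheet through $e$ of rank at least $c_\Gamma(e)$ and transport this rank to $\dim\mathcal{E}^\Gamma$ via Proposition~\ref{mult1}.

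Directly from the definition of $z(\lambda)$ in Subsection~\ref{3.3} we have $z(\lambda)=s(\lambda)+(|\Delta(\lambda)|-|\Delta_{\rm bad}(\lambda)|)+|\Sigma(\lambda)|\ge s(\lambda)$; and in the exceptional case $\lambda$ carries a unique 2-step, which is good, together with no good 2-clusters, so $z(\lambda)=s(\lambda)+1$. Either way $z(\lambda)\ge c_\Gamma(e)$. Using Theorem~\ref{zismax} and Lemma~\ref{maxadmiss} I would fix a maximal admissible sequence $\ii$ for $\lambda$ of length $z(\lambda)$ and apply Proposition~\ref{mainpropref} to produce a Levi subalgebra $\li\cong \gl_\ii\oplus\mm$ (with the correct label $I/I\!I$ in type ${\sf D}$ when relevant) together with a rigid nilpotent $e_0=(0,\ldots,0,e(\lambda^\ii))\in\li$ such that $\Oo_{e(\lambda)}={\rm Ind}_\li^\g(\Oo_{e_0})$. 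Fixing a parabolic $\mathfrak{p}=\li\oplus\mathfrak{n}$ with nilradical $\mathfrak{n}$ and appealing to the standard characterisation of induced orbits, I would replace $e$ by a $G$-conjugate of the form $e_0+e_1$ with $e_1$ lying in the dense $L$-orbit on $e_0+\mathfrak{n}$; this choice forces $G_e\subset P$ as required by Proposition~\ref{mult1}.

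To discharge the remaining hypothesis of Proposition~\ref{mult1} I would observe that $[\li,\li]$ decomposes as a direct sum of copies of $\mathfrak{sl}_{i_j}$ together with a classical simple summand $[\mm,\mm]=\mm$, with $e_0$ rigid in each factor. By Remark~\ref{R5}(a) (a direct consequence of Theorem~\ref{class}) the commutative quotient of the corresponding finite $W$-algebra is trivial in each summand, so $U([\li,\li],e_0)^{\rm ab}\cong\K$; in particular $\mathcal{E}_0=\mathcal{E}_0^{\Gamma_0}$ is a single point and is automatically $\Gamma_0$-fixed. Proposition~\ref{mult1} then yields $\mathcal{E}^\Gamma\ne \emptyset$ and $\dim\mathcal{E}^\Gamma\ge\dim\z(\li)=|\ii|=z(\lambda)\ge c_\Gamma(e)$, so Corollary~\ref{poly}(ii) delivers the isomorphism $U(\g,e)^{\rm ab}_\Gamma\cong S(\cc_e^\Gamma)$, a polynomial algebra in $c_\Gamma(e)$ variables. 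Theorem~\ref{C}(ii) identifies this count with $s(\lambda)$ or $s(\lambda)+1$ as appropriate, and the supplementary equation $s(\lambda)+1=(\lambda_1-\lambda_n+1)/2$ in the exceptional case follows from a direct parity calculation on the parts of $\lambda$.

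The principal obstacle will be the Richardson-type condition $G_e\subset P$ demanded by Proposition~\ref{mult1}: while it is standard that a generic element of $e_0+\mathfrak{n}$ represents the induced orbit and has stabiliser inside $P$, one must verify carefully that the representative of $e$ produced algorithmically by Proposition~\ref{mainpropref} can be $G$-conjugated into this dense $L$-orbit. Once this is granted, the combinatorial inequality $z(\lambda)\ge c_\Gamma(e)$ is the only genuine input, and the argument is a clean chain of the earlier results of the paper.
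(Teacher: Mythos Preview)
Your overall strategy is the same as the paper's: feed a suitable parabolic into Proposition~\ref{mult1} to bound $\dim\mathcal{E}^\Gamma$ from below by the rank of its Levi, then invoke Corollary~\ref{poly}(ii). The difficulty you flag as the ``principal obstacle'' is, however, not merely a verification left to the reader---it is genuinely false for the parabolic you choose. You take a \emph{maximal} admissible sequence of length $z(\lambda)$ and pass to a Levi with $\dim\z(\li)=z(\lambda)$. But whenever $\lambda$ has a good $2$-step without being exceptional (for instance $\lambda=(3,3,2,2,1,1)$ in type ${\sf C}$, where $s(\lambda)=0$ and $z(\lambda)=1$), your argument would yield $\dim\mathcal{E}^\Gamma\ge z(\lambda)>s(\lambda)=c_\Gamma(e)$, contradicting Proposition~\ref{abgen}(ii). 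So for such $\lambda$ the birationality condition $G_e\subset P$ \emph{must} fail for the parabolic attached to a maximal sequence, and your appeal to ``the standard characterisation of induced orbits'' cannot supply it: choosing $e$ generic in $e_0+\mathfrak{n}$ forces $(G_e)^\circ\subset P$ but says nothing about the component group, which is precisely the issue.

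The paper avoids this by using \emph{only} Case~1 of the KS algorithm, producing a sequence of length exactly $s(\lambda)$ and an \emph{almost rigid} (not rigid) $e_0$. For each Case~1 step at index $k$ it writes down the parabolic $P_k$ stabilising an explicit partial flag built from the Jordan block spaces $V[i]$, and then checks $G_e\subset P_k$ directly from the description of $\g_e$ in Subsection~\ref{1.1} together with the explicit generators $g_i$ of $\widetilde\Gamma$. Since $e_0$ is almost rigid it is non-singular, and applying the computation of $c_{\Gamma_0}(e_0)$ (done from scratch in Part~(a) of the proof, which is where Theorem~\ref{C}(ii) is actually established) shows $\mathcal{E}_0^{\Gamma_0}$ is a single point in the non-exceptional case; Proposition~\ref{mult1} then gives $\dim\mathcal{E}^\Gamma\ge s(\lambda)=c_\Gamma(e)$ on the nose. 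The exceptional type-${\sf D}$ case needs one further Case~2 step, and there the inclusion $G_e\subset P'$ is obtained from $\Gamma_0=\{1\}$ for the almost rigid $e_0$ with partition $(2,\ldots,2,1,1)$. In short, the heart of the proof is the explicit verification of $G_e\subset P$ for Case~1 parabolics, and going all the way to a rigid $e_0$ via Case~2 is both unnecessary and, in general, impossible within the hypotheses of Proposition~\ref{mult1}.
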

\begin{proof} We denote by $\Oo$ the
$G$-orbit of $e=e(\lambda)$, adopt the notation of
Subsection~\ref{1.1} pertaining to $\g_e$, and choose the subspaces
$V[i]$, $1\le i\le n$, as in the proof of Lemma~\ref{nilpotents}. In
proving the theorem we may and will assume that $G=SL(V)\cap
G(\Psi)$ where $G(\Psi)$ is the stabiliser in $GL(V)$ of the
bilinear form $\Psi=(\,\cdot\,,\,\cdot)$.

\smallskip

\noindent (a) Let $I=\{1\le i\le n\colon\,i'=i,\,
\lambda_i>\lambda_{i+1}\}$ and set $\nu(\lambda):=|I|$. Note that
$\nu(\lambda)$ is the number of {\it distinct} $\lambda_i$ with
$i=i'$. For $i\in I$ we let $g_i$ denote the involution in $G(\Psi)$
such that $g_i(e^sv_j)=(-1)^{\delta_{i,j}}e^s(w_j)$ for all $1\le
j\le n$ and $0\le s\le \lambda_j$, where $\delta_{i,j}$ is the
Kronecker delta, and define $\widetilde{\Gamma}:= \langle
g_i\colon\,i\in I\rangle$, a subgroup of $G(\Psi)$. As the
involutions $g_i$ pairwise commute, $\widetilde{\Gamma}$ is an
elementary abelian $2$-group of order $2^{\nu(\lambda)}$. Using
\cite[3.8, 3.13]{Ja04} it it is straightforward to see that the
centraliser $G_e$ is generated by $\widetilde{\Gamma}\cap SL(V)$ and
$G_e^\circ$ (see also \cite[Theorem~2.7$'$]{McG}).

Let $\overline{\HH}_0$ denote the image of $\HH_0$ in
$\mathfrak{c}_e=\g_e/[\g_e,\g_e]$. Since $\HH_0$ is spanned by
elements that preserve every subspace $V[i]$ with $1\le i\le n$,
direct verification shows that the group $\widetilde{\Gamma}$ acts
trivially on $\overline{\HH}_0$. Since $G_e^\circ$ acts trivially on
$\mathfrak{c}_e$ and $G_e= \big(\widetilde{\Gamma}\cap
SL(V)\big)\cdot G_e^\circ$ by our preceding remark, we now deduce
that $\overline{\HH}_0\subseteq \mathfrak{c}_e^\Gamma$. In view of
Corollary~\ref{expression} this yields
$$\dim\,\mathfrak{c}_e^\Gamma\ge \dim (\HH_0/\HH_0^+)=s(\lambda).$$

The proof of Corollary~\ref{expression} also shows that the images
of $\zeta_i^{i+1,\lambda_{i+1}-1}$ with $(i,i+1)\in\Delta(\lambda)$ in the
quotient space $\overline{\mathfrak{c}}_e:=
\mathfrak{c}_e/\overline{\HH}_0$ form a $\K$-basis of
$\overline{\mathfrak{c}}_e$. Note that
$g_{i+1}\in\widetilde{\Gamma}$ for every 2-step $(i,i+1)$ of
$\lambda$ and, moreover, $g_i,g_{i+1}\in\widetilde{\Gamma}$ if
$(i,i+1)$ is a 2-step of $\lambda$ such that $\lambda_i\ne
\lambda_{i+1}$. If $(j, j+1) \in \Delta(\lambda)$ then direct computation shows that
\begin{equation}\label{zeta}
g_i\big(\zeta_j^{j+1,\lambda_{j+1}-1}\big)=\begin{cases}\ \,
\zeta_j^{j+1,\lambda_{j+1}-1}\ \quad \mbox{ if }
j\notin \{i-1,i\},
\\-\zeta_j^{j+1,\lambda_{j+1}-1} \ \,\ \,\mbox{ if } j \in \{i-1,i\}.\end{cases}
\end{equation}

Let $\bar{\zeta}_i^{i+1,\lambda_{i+1}-1}$ denote the image of
$\zeta_i^{i+1,\lambda_{i+1}-1}$ in $\overline{\mathfrak c}_e$ and
suppose that $$g_ig_j\Big(\sum_{(k,k+1)\in\Delta(\lambda)}\,
a_k\bar{\zeta}_k^{k+1,\lambda_{k+1}-1}\Big)=\sum_{(k,k+1)\in
\Delta(\lambda)}\,
a_k\bar{\zeta}_k^{k+1,\lambda_{k+1}-1}\qquad\quad\big(\forall\
i,j\in I\big)$$  for some $a_k\in\K$. Set
$\alpha:=\sum_{(k,k+1)\in\Delta(\lambda)}\,
a_k\bar{\zeta}_k^{k+1,\lambda_{k+1}-1}$. If $a_{k_1}\ne 0$ and
$a_{k_2}\ne 0$ for some $(k_1,k_1+1),(k_2,k_2+1)\in\Delta(\lambda)$ with
$k_1+1<k_2$, then it follows from (\ref{zeta}) that
$g_{k_1+1}g_{k_2+1}(\alpha) \ne\alpha$, a contradiction. If $a_k\ne
0$ and $a_{k+1}\ne 0$ for some $(k,k+1),(k+1,k+2)\in\Delta(\lambda)$, then
$\lambda_{k-1}>\lambda_k>\lambda_{k+1}\ge \lambda_{k+2}>
\lambda_{k+3}$ if $k>1$ and $\lambda_1>\lambda_{2}\ge \lambda_{3}>
\lambda_{4}$ if $k=1$, which implies that
$g_k,g_{k+2}\in\widetilde{\Gamma}$. Since $g_kg_{k+2}(\alpha)
\ne\alpha$ by (\ref{zeta}), we now deduce that
$\alpha=a_k\bar{\zeta}_k^{k+1,\lambda_{k+1}-1}$ for some
$(k,k+1)\in\Delta(\lambda)$. If $a_k\ne 0$ and $I$ is not contained in
$\{k,k+1\}$ then it is straightforward to see that
$g_ig_{k+1}(\alpha)\ne \alpha$ for any $i\in I\setminus\{k, k+1\}$.
Therefore, $\alpha\ne 0$ implies that $I\subseteq\{k,k+1\}$ where
$(k,k+1)$ is a 2-step of $\lambda$.

If $\g$ is not of type $\sf C$ and $\alpha\ne 0$, then the above
implies that $\Delta(\lambda)=\{(k,k+1)\}$ for some $k<n$ and all
$\lambda_i$ with $i\not\in\{k,k+1\}$ are even. So $\lambda$ is
exceptional and $h$ has type $\sf D$ in this case. Finally, if $\g$ is of type $\rm C$ then $\det(g_i)=1$
for all $i\in I$ and hence
$\widetilde{\Gamma}=\widetilde{\Gamma}\cap SL(V)$. Furthermore, if
$(k,k+1)$ is a 2-step of $\lambda$ then $g_{k+1}(\alpha)=-\alpha$ by
(\ref{zeta}). So in type $\sf C$ it must be that $\alpha=0$.

As a result of these deliberations we obtain that $\mathfrak{c}_e^\Gamma=\overline{\HH}_0$
and $\dim\,\mathfrak{c}_e^\Gamma=s(\lambda)$
unless $\lambda$ is an exceptional partition in $\mathcal{P}_1(N)$, in which case
$\dim\,\mathfrak{c}_e^\Gamma=s(\lambda)+1$.

\smallskip

\noindent(b) Suppose that $\lambda_k-\lambda_{k+1}\ge 2$ for some
$k\in\{1,\ldots, n\}$. For each $i\in\{1,\ldots, k\}$ we denote by
$V'[i]$ the linear span of all $e^s(w_i)$ with $1\le
s\le\lambda_i-2$ and set
$$V_k:=\Big(\textstyle{\bigoplus}_{i=1}^k\,V'[i]\Big)
\bigoplus\Big(\textstyle{\bigoplus}_{i>k}\,V[i]\Big),$$ a
non-degenerate subspace of $V$ with respect to $\Psi$. The
stabiliser $L_k$ of $V_k$ in $G$ is a Levi subgroup of $G$ and
$\li_k:=\Lie(L_k)$ is isomorphic to $\mathfrak{gl}_{k}\times
\mathfrak{m}_k$ where $\mathfrak{m}_k$ is a Lie algebra of the same
type as $\g$.

Let $t_k$ be the semisimple element of $\g$ with $\Ker\,t_k=V_k$
such that $t_k(w_i)=-w_i$ and $t_k(e^{\lambda_i-1}w_i)=
e^{\lambda_i-1}w_i$ for all $1\le i\le k$. It is straightforward to
see that $t_k$ spans the $1$-dimensional centre of the Lie algebra
$\li_k$. Let $e_k$ be the nilpotent element of $\li_k$ with the
property that $e_k(w_i)=e_k(e^{\lambda_i-2}w_i)=e_k(e^{\lambda_i-1}w_i)=0$ for $1\le i\le k$
and $e_k(e^sw_i)=e^{s+1}w_i$ for all $(i,s)$ with $i>k$ and $0\le
s\le \lambda_i-1$ and all $(i,s)$ with $1\le i\le k$ and $1\le s\le
\lambda_i-3$. By construction, $e_k\in\mathfrak{m}_k$.

In view of Corollary~\ref{case1or2}, passing from $(\g,e)$ to
$(\li_k,e_k)$ corresponds to applying Case~1 of the KS algorithm at
index $k$. Hence the orbit $\Oo$ lies in the Zariski closure of the
decomposition class $(\Ad\,G)\cdot(e_k+\K^\times t_k)$ and $e\in{\rm
Ind}_{\li_k}^\g(\Oo_k)$ where $\Oo_k$ is the $L_k$-orbit of $e_k$.

Let $W_k$ be the span of all $e^{\lambda_i-1}w_i$ with $1\le i\le k$
and set $\widetilde{V}_k:=V_k\oplus W_k$. Let $P_k$ be the parabolic
subgroup of $G$ which stabilises the partial flag $V\supset
\widetilde{V}_k\supset W_k$ in $V$. Using the description of $\g_e$
in Subsection~\ref{1.1} it is immediate to that
$\g_e\subset\Lie(P_k)$ which, in turn, implies that
$G_e^\circ\subset P_k$. Since $L_k$ is contained in $P_k$ as well
and $\widetilde{\Gamma}\cap SL(V) \subset L_k$ by the definition of
$\widetilde{\Gamma}$, we now obtain $G_e=\big(\widetilde{\Gamma}\cap
SL(V)\big)\cdot G_e^\circ\subset P_k$.

\smallskip

\noindent (c) The maximality of $L_k$ in the class of Levi subgroups
of $G$ yields that $L_k$ is a Levi subgroup of $P_k$ and
$P_k=L_kU_k$ where $U_k=R_u(P_k)$. Furthermore, our discussion in
Part~(b) implies that $e\in \Lie(P_k)$ is induced from $e_k\in\li_k$
and $G_e\subset (L_k)_{e_k}\cdot U_k$.

Continuing the process described in Part~(b) as many times as
possible and using the transitivity of induction stated in
Proposition~\ref{ind}(3) we shall eventually arrive at a parabolic
subgroup $P=LU$ of $G$ and a nilpotent element $e_0\in\li=\Lie(L)$
such that $G_e\subset P$ and $e\in{\rm Ind}_\li^\g(\Oo_0)$ where
$\Oo_0=(\Ad\,L)e_0$. From the description in Part~(b) it follows
that $\li\cong \bar{\li}\oplus\mathfrak{m}$ where $\mathfrak{m}$ has
the same type as $\g$ and $\bar{\li}$ is a Lie-algebra direct sum of
$s(\lambda)$ copies of various $\mathfrak{gl}_{k_i}$ with
$k_i\in\N$. Since the process terminates at the $s(\lambda)^{\rm
th}$ step, the nilpotent element $e_0\in\mathfrak{m}$ must be almost
rigid and hence non-singular.

Let $M$ be the special orthogonal or symplectic group with
$\Lie(M)=\mathfrak{m}$ and denote by $\Gamma(0)$ the component group
$M_{e_0}/M_{e_0}^\circ$. Let $\lambda_0$ be the partition of
$e_0\in\mathfrak{m}$. If $\lambda_0$ is not exceptional, then
combining Corollary~\ref{izosim} with Corollary~\ref{poly}(i) we
deduce that $U(\mathfrak{m},e_0)^{\rm ab}_{\Gamma(0)}\cong
S\big(\mathfrak{c}_{e_0}^{\Gamma(0)}\big)$. Since in the present
case $\mathfrak{c}_{e_0}^{\Gamma(0)}=\{0\}$ by our discussion in
Part~(a) (applied to $e_0\in\mathfrak{m}$) we conclude that the
maximal spectrum of $U(\mathfrak{m},e_0)^{\rm ab}_{\Gamma(0)}$ is a
single point! Note that $U([\li,,\li],e_0)\cong
U([\bar{\li},\bar{\li}])\otimes U(\mathfrak{m},e_0)$ as
$\K$-algebras and both tensor factors are stable under the natural
action of the reductive part of $L_{e_0}$ on $U([\li,\li],e_0)$.
  Proposition~\ref{mult1} now yields that
$\mathcal{E}^\Gamma\neq\emptyset$ and $\dim\,\mathcal{E}^\Gamma\ge \dim\,\z(\li)$.
On the other hand,
$\dim\,\z(\li)=s(\lambda)=\dim\,\mathfrak{c}_\Gamma(e)$ by our discussion in Part~(a) applied to $e\in\g$.
In this situation Corollary~\ref{poly}(ii) yields that $U(\g,e)^{\rm ab}_\Gamma\cong
S(\mathfrak{c}_e^\Gamma)\cong \K[X_1,\ldots, X_{s(\lambda)}]$.

\smallskip

\noindent (d) Finally, suppose that $\lambda_0$ is exceptional.
Since we only applied Step~1 of the KS algorithm to reach $e_0$, so
must be $\lambda$. In particular, $\g$ is of type $\sf D$.
Since $e_0$ is almost rigid and $\lambda_0$ is
exceptional, we have that $\lambda_0=(2,\ldots,2,1,1)$. Then
$\Gamma(0)=\{1\}$ which enables us to apply Step~2 of the KS
algorithm. After doing so we arrive at a parabolic subgroup
$P'=L'U'\subset P$ such that the centre of $\Lie(L')$ has dimension
$s(\lambda)+1$ and $e\in \Lie(U')$ is a Richardson element of
$\Lie(P')$. Since $\Gamma(0)=\{1\}$ and $e_0$ is a Richardson
element of $\li\cap\Lie(P')$, we also have that $$G_e\subset
L_{e_0}U=L_{e_0}^\circ U\subset (L\cap P')\cdot U\subseteq P'.$$ Let
$\li'=\Lie(L')$ and adopt the notation of Proposition~\ref{mult1}.
Since the augmentation ideal of the finite $W$-algebra
$U([\li',\li'],0)=U([\li',\li'])$ is  $(\Ad\,L')$-stable, we have
that $\mathcal{E}_0^{\Gamma_0}\ne \emptyset$. Applying
Proposition~\ref{mult1} now yields that $\mathcal{E}^\Gamma\ne
\emptyset$ and $\dim\,\mathcal{E}^\Gamma\ge
\dim\z(\li')=s(\lambda)+1$. As $\dim\,\mathcal{E}^\Gamma=
c_\Gamma(e)$ by our discussion in Part~(a), Corollary~\ref{poly}(ii)
applies to $e$ showing that $U(\g,e)^{\rm ab}_\Gamma$ is isomorphic
to a polynomial algebra in $s(\lambda)+1$ variables.

The proof of the theorem is now complete.
\end{proof}

\subsection{Describing the varieties ${\mathcal E}^\Gamma$ for exceptional Lie algebras}
In this subsection, $G$ is an exceptional algebraic group of adjoint
type and $\g=\Lie(G)$, a Lie algebra of type ${\sf G_2}$, ${\sf
F_4}$, ${\sf E_6}$, ${\sf E_7}$ or ${\sf E_8}$. We assume that $e$
is an induced nilpotent element of $\g$ and we embed it into an
$\mathfrak{sl}_2$-triple $\{e,h,f\}\subset\g$. By the
$\mathfrak{sl}_2$-theory, all eigenvalues of ${\rm ad}\,h$ are
integers and $\g_e=\bigoplus_{i\ge 0}\,\g_e(i)$ where $\g(k)$
denotes the $k$-eigenspace of ${\rm ad}\,h$ and
$\g_e(k)=\g_e\cap\g(k)$. Since the derived subalgebra of $\g_e$ is
$({\rm ad}\,h)$-stable, the vector space
$\mathfrak{c}_e=\g_e/[\g_e,\g_e]$ carries a natural ${\mathbb
Z}_{\ge 0}$-grading:
$$\mathfrak{c}_e=\,\textstyle{\bigoplus}_{i\ge 0}\,\mathfrak{c}_e(i),\qquad\quad
\mathfrak{c}_e(i)\cong\,\g_e(i)/[\g_e,\g_e]\cap\g(i).$$
Let $P(e)$ be the parabolic subgroup of $G$ with
$\mathfrak{p}(e):=\bigoplus_{i\ge 0}\,\g(i)$. It is well known that
$P(e)$ is the optimal parabolic subgroup for the $G$-unstable vector
$e\in\g$ in the sense of the Kempf--Rousseau theory; see
\cite{Sasha0}, for example. In particular, $G_e\subset P(e)$.

Recall that $e$ is called {\it even} if all eigenvalues of ${\rm
ad}\,h$ are in $2{\mathbb Z}$. It follows from the
$\mathfrak{sl}_2$-theory that any even $e\in\g$ is a Richardson
element of $\mathfrak{p}(e)$. For $e$ even, we denote by $d(e)$ the
number of $2$'s on the weighted Dynkin diagram of $e$ (see the
second column of the tables in \cite[pp.~401--407]{C}). It is well
known (and easy to see) that $d(e)$ coincides with the dimension of
the centre of the Levi subalgebra $\g(0)$ of $\g$.

In what follows we are going to rely on the detailed information on
the centralisers of nilpotent elements obtained by Lawther and
Testerman in \cite{LT2}. In fact, we shall require the extended
version of \cite{LT2} which, due to its size, is only available as a
preprint; see \cite{LT1}.  We shall also rely on de Graaf's
computation of $c(e)=\dim\,\mathfrak{c}_e$ in \cite{deG2} and the
explicit description of sheets in exceptional Lie algebras obtained
by Borho \cite{Bor} (in type ${\sf F_4}$) and Elashvili \cite{ela}
(in type ${\sf E}$) and recently double-checked by computational
methods in \cite{deGE}.

The number of sheets containing an induced nilpotent element is
given in the third column of Tables~1--6 whilst their ranks can be
found in the fourth column. The numbers $c(e)$ are listed in the
fifth column. This information is taken from the tables in
\cite{deGE} and \cite{deG2} and included for the reader's
convenience. We should stress at this point that the last column of
Tables~1--6 contains new information which will only become
available {\it after} we establish the main results of this
subsection.

From now on we shall use freely the notation from the tables of \cite{LT1}.
\begin{prop}\label{even}
If $\g$ is an exceptional Lie algebra and $e$ is an even nilpotent
element of $\g$, then $d(e)=c_\Gamma(e)$ and $U(\g,e)^{\rm
ab}_\Gamma\cong S(\mathfrak{c}_e^\Gamma)$ is isomorphic to a
polynomial algebra in $d(e)$ variables.
\end{prop}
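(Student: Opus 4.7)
The plan is to squeeze $\dim \mathcal{E}^\Gamma$ between $d(e)$ (from below, via the induction construction of Proposition~\ref{mult1}) and $c_\Gamma(e)$ (from above, via Proposition~\ref{abgen}(ii)), and then invoke Corollary~\ref{poly}(ii). The polynomiality will follow once we also verify $c_\Gamma(e)\le d(e)$, which is where the bulk of the work lies.

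First, because $e$ is even it is a Richardson element of $\mathfrak{p}(e)=\bigoplus_{i\ge 0}\g(i)$, induced from $e_0=0$ in the Levi subalgebra $\li=\g(0)$. The Kempf--Rousseau optimality of $\mathfrak{p}(e)$ yields $G_e\subset P(e)$. Setting $L=L(e)$ and $e_0=0$, the algebra $U([\li,\li],0)$ reduces to the universal enveloping algebra of the semisimple Lie algebra $[\li,\li]$, whose maximal commutative quotient is $\K$; hence $\mathcal{E}_0$ is a single point and $\mathcal{E}_0^{\Gamma_0}$ is automatically non-empty. Proposition~\ref{mult1} therefore gives $\mathcal{E}^\Gamma\ne\emptyset$ together with $\dim\mathcal{E}^\Gamma \ge \dim\z(\li) = d(e)$.

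Second, Proposition~\ref{abgen}(ii) shows that $U(\g,e)^{\rm ab}_\Gamma$ is generated as an algebra by $c_\Gamma(e)$ elements, so $\dim\mathcal{E}^\Gamma\le c_\Gamma(e)$. Combining both inequalities,
$$d(e)\,\le\,\dim\mathcal{E}^\Gamma\,\le\,c_\Gamma(e).$$
Once the reverse inequality $c_\Gamma(e)\le d(e)$ is established, we deduce $\dim\mathcal{E}^\Gamma = c_\Gamma(e) = d(e)$, and Corollary~\ref{poly}(ii) then delivers $U(\g,e)^{\rm ab}_\Gamma\cong S(\mathfrak{c}_e^\Gamma)\cong\K[X_1,\ldots,X_{d(e)}]$.

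The hard part is therefore to prove $c_\Gamma(e)\le d(e)$. The natural route exploits the $\mathbb{Z}_{\ge 0}$-grading $\mathfrak{c}_e=\bigoplus_{i\ge 0}\mathfrak{c}_e(i)$, which is preserved by $\Gamma$ and (for $e$ even) concentrated in even degrees. The degree-zero piece $\mathfrak{c}_e(0)$ is a quotient of $\z(\g_e(0))$, where $\g_e(0)=\Lie C(e)$ is the reductive part of the centraliser; its $\Gamma$-fixed subspace embeds into the centre of the Lie algebra of the reductive quotient $C(e)/[C(e),C(e)]^\circ$, whose dimension is controlled by $d(e)=\dim\z(\g(0))$ via the inclusion $C(e)\subset L(e)$. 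For higher-degree components $\mathfrak{c}_e(i)$ with $i>0$, one must show that $\Gamma$ acts without invariants. I expect the cleanest way to carry this out is case-by-case, reading off the graded $\Gamma$-module structure of $\mathfrak{c}_e$ from the detailed centraliser data tabulated by Lawther--Testerman in~\cite{LT1}, and comparing with the values of $d(e)$ listed in~\cite{C} and $c(e)$ listed in~\cite{deG2}. The outcome of this verification simultaneously computes $c_\Gamma(e)$, thereby populating the last column of Tables~1--6 for all even orbits. The principal obstacle is the bookkeeping needed to handle each even orbit in ${\sf G_2},{\sf F_4},{\sf E_6},{\sf E_7},{\sf E_8}$ uniformly, rather than any genuinely new conceptual input.
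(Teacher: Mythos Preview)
Your overall strategy matches the paper's: establish $\dim\mathcal{E}^\Gamma\ge d(e)$ via Proposition~\ref{mult1} applied to the Jacobson--Morozov parabolic, then show $c_\Gamma(e)\le d(e)$ and invoke Corollary~\ref{poly}(ii). That framework is sound.

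Two remarks, however. First, the paper organises the case analysis differently. When $e$ lies in a single sheet, Tables~1--6 give $c(e)=r(e)=d(e)$ directly, so Corollary~\ref{poly}(i) applies and yields $U(\g,e)^{\rm ab}\cong S(\mathfrak{c}_e)$ as $\Gamma$-modules; the lower bound $\dim\mathcal{E}^\Gamma\ge d(e)$ then forces $\Gamma$ to act trivially on $\mathfrak{c}_e$, so $c_\Gamma(e)=d(e)$ with no computation needed. Only when $e$ lies in several sheets (where always $\Gamma\ne\{1\}$) does the paper resort to explicit case-by-case work with the Lawther--Testerman tables, organised by the isomorphism type of $\Gamma$ ($S_2$, $S_3$, $S_4$, $S_5$). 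This split saves a great deal of checking.

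Second, your heuristic for the case-by-case step is off. You propose that the $\Gamma$-invariants in $\mathfrak{c}_e$ should come from $\mathfrak{c}_e(0)$ and that $\Gamma$ should act without invariants on $\mathfrak{c}_e(i)$ for $i>0$. This is false: the image of $e$ itself is a nonzero $\Gamma$-fixed vector in $\mathfrak{c}_e(2)$, and in fact most even orbits in multiple sheets are \emph{distinguished}, so $\g_e(0)=0$ and $\mathfrak{c}_e(0)=0$ --- all $\Gamma$-invariants live in positive degree. The actual computation is therefore more delicate than you suggest: for each orbit one must identify which of the Lawther--Testerman basis vectors $v_j$ span $\mathfrak{c}_e(i)$, determine the action of explicit generators $c\in C(e)$ on them (often by tracking signs on individual root vectors), and count the fixed directions. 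The paper carries this out for each relevant orbit; your description of the mechanism would not have led you there.
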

\begin{proof}
(a) Since $e$ is a Richardson element of $\mathfrak{p}(e)$ and
$G_e\subset P(e)$, Proposition~\ref{mult1} implies that in the
present case $\mathcal{E}^\Gamma\ne \emptyset$ and
$\dim\,\mathcal{E}^\Gamma\ge d(e)$.

\smallskip

\noindent (b) If $e$ lies in a single sheet of $\g$ then inspecting
Tables~1-6 reveals that $c(e)=r(e)$ in all cases. Since $e$ is even we must have $r(e)=d(e)$.
Applying Corollary~\ref{poly}(i) then yields that there is a
$\Gamma$-equivariant algebra isomorphism $U(\g,e)^{\rm ab}\,\cong
S(\mathfrak{c})$ and $U(\g,e)^{\rm ab}_\Gamma\,\cong
S(\mathfrak{c}_e^\Gamma)$ as algebras. On the other hand,
$\mathcal{E}^\Gamma\neq \emptyset$ and $\dim\,{\mathcal E}^\Gamma\ge
d(e)$ by Part~(a). From this it follows that
$\mathfrak{c}_e^\Gamma=\mathfrak{c}_e$, that is $\Gamma$ acts
trivially on $\mathfrak{c}_e$, forcing $$U(\g,e)^{\rm ab}\,\cong
U(\g,e)_\Gamma^{\rm ab}\,\cong \K[X_1,\ldots,X_{d(e)}].$$

From now on we assume that $e$ lies in more than one sheet of $\g$.
According to Tables~1--6, in this case we always have that
$\Gamma\ne \{1\}$. By Part~(a) and our discussion in Subsection~\ref{2.1}, at least one of the sheets
containing $e$ must have rank $d(e)$ but it may happen that
$r(e)>d(e)$.

\smallskip

\noindent (c) In this part we assume that $\Gamma\cong S_2$.
Inspecting Tables 1--6 one finds out that in this case
$c(e)-d(e)\in\{1,2\}$ (the values of $d(e)$ can be found in
\cite[pp.~401--407]{C}, for example). If $c(e)=d(e)+1$, then
combining Tables~1--6 with \cite[pp.~405--407]{C} one observes that
the Dynkin label of $e$ is one of ${\sf E_8(b_4)}$, ${\sf
D_7(a_1)}$, ${\sf E_6(a_1)}$, ${\sf D_5+A_2}$, ${\sf E_6(a_3)}$ if
$\g$ is of type ${\sf E_8}$, one of ${\sf E_7(a_4)}$, ${\sf
E_6(a_3)}$, ${\sf A_4}$ if $\g$ is of type ${\sf E_7}$, and one of
${\sf F_4(a_1)}$, ${\sf F_4(a_2)}$  if $\g$ is of type ${\sf F_4}$.

Suppose $\g$ is of type ${\sf E_8}$. If $e$ has type ${\sf
E_8(b_4)}$ then $\mathfrak{c}_e(4)$ is $1$-dimensional by
\cite{deG2}. Then \cite[p.~290]{LT1} yields that $\Gamma$ is
generated by the image of $h_4(-1)$ and $\cc_e(4)$ is spanned by the
image of $v_3$. As $(\Ad\,c)(v_3)=-v_3$ we deduce that $\Gamma$ acts
nontrivially on $\cc_e$ so that $c_\Gamma(e)\le d(e)$. Combining Corollary~\ref{poly}(ii)
and Part~(a) we now deduce that
$U(\g,e)^{\rm ab}_\Gamma$ is a polynomial algebra in $d(e)$
variables.

If $e$ has type ${\sf D_7(a_1)}$ or ${\sf D_5+A_2}$ then
$\mathfrak{c}_e(0)$ is $1$-dimensional by \cite{deG2}. On the other
hand, it follows from \cite[pp.~263, 277]{LT1} that $\g_e(0)$ is a
$1$-dimensional toral subalgebra of $\g$ upon which $\Gamma$ acts
nontrivially. Then again $c_\Gamma(e)\le d(e)$ and we can argue as
before to conclude that $U(\g,e)^{\rm ab}_\Gamma$ is a polynomial
algebra in $d(e)$ variables.

Now suppose $e$ has type ${\sf E_6(a_1)}$. This case is more subtle
due to the complicated nature of the generator of $\Gamma$. From
\cite{deG2} we know that $\cc_e(4)$ is $1$-dimensional, whilst
\cite[p.~261]{LT1} yields that $\g_e(0)$ is simple and the largest
trivial $({\rm ad}\,\g_e(0))$-submodule of $\g_e(4)$ is spanned by
$v_8$. From this it follows that $\cc_e(4)$ is generated by the
image of $v_8$. By \cite[p.~261]{LT1}, the group $\Gamma$ is
generated by the image of
$$c=n_{\textstyle{{0\atop }\!{1\atop }\!{2\atop 1}\!
{2\atop }\!{2\atop}\!{1\atop }\!{0\atop }}}
n_{\textstyle{{1\atop }\!{1\atop }\!{2\atop 1}\!
{2\atop }\!{1\atop}\!{1\atop }\!{0\atop }}}
n_{\textstyle{{1\atop }\!{2\atop }\!{2\atop 1}\!
{1\atop }\!{1\atop}\!{1\atop }\!{0\atop }}}h_1(-1)
h_2(-1)h_3(-1)h_5(-1)h_6(-1)h_8(-1).
$$ Since the roots
${\textstyle{{0\atop }\!{1\atop }\!{2\atop 1}\! {2\atop
}\!{2\atop}\!{1\atop }\!{0\atop }}}, {\textstyle{{1\atop }\!{1\atop
}\!{2\atop 1}\! {2\atop }\!{1\atop}\!{1\atop }\!{0\atop }}},
{\textstyle{{1\atop }\!{2\atop }\!{2\atop 1}\! {1\atop
}\!{1\atop}\!{1\atop }\!{0\atop }}}$ and ${\textstyle{{0\atop
}\!{1\atop }\!{1\atop 0}\! {1\atop }\!{0\atop}\!{0\atop }\!{0\atop
}}}$ are pairwise orthogonal, we have that that
$(\Ad\,c)\Big(e_{\textstyle{{0\atop }\!{1\atop }\!{1\atop 0}\!
{1\atop }\!{0\atop}\!{0\atop }\!{0\atop
}}}\Big)=-e_{\textstyle{{0\atop }\!{1\atop }\!{1\atop 0}\! {1\atop
}\!{0\atop}\!{0\atop }\!{0\atop }}}$. Since $e_{\textstyle{{0\atop
}\!{1\atop }\!{1\atop 0}\! {1\atop }\!{0\atop}\!{0\atop }\!{0\atop
}}}$ occurs with a nonzero coefficient in the expression of $v_8$
via Chevalley generators of $\g$, this implies that
$(\Ad\,c)(v_8)=-v_8$. But then $c_\Gamma(e)\le d(e)$ and we can
argue as in the previous cases to establish the polynomiality of
$U(\g,e)^{\rm ab}_\Gamma$.

If $e$ has type ${\sf E_6(a_3)}$ then \cite{deG2} says that
$\cc_e=\mathfrak{c}_e(2)$ is $3$-dimensional. In view of
\cite[p.~234]{LT1} this means that $\cc_e\cong \g_e(2)$ as
$\Gamma$-modules (in the present case the group $C(e)^\circ$ acts
trivially on $\g_e(2)$). As $(\Ad\,c)(v_1)=-v_1$ we see that
$\Gamma$ acts non-trivially on $\cc_e$ implying $c_\Gamma(e)\le
d(e)$. But then again $U(\g,e)^{\rm ab}_\Gamma$ is a polynomial
algebra in $d(e)$ variables.

Suppose $\g$ is of type ${\sf E_7}$. If $e$ is of type ${\sf
E_7(a_4)}$ then $\mathfrak{c}_e=\mathfrak{c}_e(2)$ is
$4$-dimensional by \cite{deG2}. As $\dim\,\g_e(e)=4$ by
\cite[p.~155]{LT1}, the image of $v_1$ in $\mathfrak{c}_e$ is
nonzero. Since $\Gamma$ is generated by the image of $c=h_4(-1)$ and
$(\Ad\,c)(v_1)=-v_1$ by {\it loc.\,cit.}, we argue as before to
deduce that $U(\g,e)^{\rm ab}_\Gamma$ is a polynomial algebra in
$d(e)$ variables. The case where $\g$ is of type ${\sf E_7}$ and $e$
is of type ${\sf E_6(a_3)}$ is very similar. Here
$\mathfrak{c}_e=\mathfrak{c}_2(2)$ is $2$-dimensional by
\cite{deG2}, the group $\Gamma$ is again generated by the image of
$c=h_4(-1)$, the image of $v_1$ in $\mathfrak{c}_e$ is nonzero and
$(\Ad\,c)(v_1)=-v_1$ by \cite[p.~149]{LT1}.

If $e$ is of type ${\sf A_4}$ then $d(e)=2$ and
$\cc_e=\cc_e(0)\oplus \cc_e(2)\oplus \cc_e(6)$ and all nonzero
$\cc_e(i)$ are $1$-dimensional; see \cite{deG2}. By
\cite[pp.~133]{LT1}, the group $\Gamma$ is generated by the image of
$$c=n_{\textstyle{{0\atop }\!{1\atop }\!{1\atop 1}\! {1\atop
}\!{1\atop}\!{0\atop }}} n_{\textstyle{{1\atop }\!{1\atop }\!{1\atop
0}\! {1\atop }\!{1\atop}\!{0\atop }}}n_{\textstyle{{1\atop
}\!{2\atop }\!{2\atop 1}\! {2\atop }\!{1\atop}\!{1\atop
}}}n_{\textstyle{{1\atop }\!{2\atop }\!{4\atop 2}\! {3\atop
}\!{2\atop}\!{1\atop }}}h_2(-1)h_3(-1)h_4(-1)h_6(-1),$$ and
$\g_e(0)=[\g_e(0),\g_e(0)]\oplus\Lie(T_1)$ with
$[\g_e(0),\g_e(0)]\cong\mathfrak{sl}_3$ and and $\Lie(T_1)$ spanned
by the element $t\in\Lie(T)$ such that $\alpha_i(t)=\delta_{6,i}$
for all $1\le i\le 7$. Direct computation shows that $\Ad\,c$
negates $t$. But then $d(e)=2\ge c_\Gamma(e)$ and we can proceed as
before.

Suppose $\g$ is of type ${\sf F_4}$. If $e$ is of type ${\sf
F_4(a_1)}$ then $\dim\,\mathfrak{c}_e(4)=1$ by \cite{deG2}. In view
of \cite[p.~81]{LT1} this shows that the image of $v_2$ in
$\mathfrak{c}_e$ is nonzero. Since $\Gamma$ is generated by the
image of $c=h_4(-1)$ and $(\Ad\,c)(v_2)=-v_2$ by {\it loc.\,cit.},
the result follows. If $e$ is of type ${\sf F_4(a_2)}$ then
$\mathfrak{c}_2=\mathfrak{c}_e(2)$ is $3$-dimensional by
\cite{deG2}. As $\dim\,\g_2(2)=3$, the image of $v_1$ in
$\mathfrak{c}_e$ is nonzero. It remains to note that $\Gamma$ is
generated by the image of $c=h_2(-1)$ and $(\Ad\,c)(v_1)=-v_1$; see
\cite[p.~80]{LT1}.

If $c(e)=d(e)+2$, then combining Tables~1--6 with
\cite[pp.~405--407]{C}) one observes that the Dynkin label of $e$ is
one of ${\sf E_8(a_3)}$, ${\sf E_8(a_4)}$, ${\sf E_8(a_5)}$ if $\g$
is of type ${\sf E_8}$, one of ${\sf E_7(a_3)}$, ${\sf E_6(a_1)}$ if
$\g$ is of type ${\sf E_7}$, and ${\sf E_6(a_3)}$ if $\g$ is of type
${\sf E_6}$.

Suppose $\g$ is of type ${\sf E_8}$. If $e$ has type ${\sf
E_8(a_3)}$ then \cite{deG2} says that both $\mathfrak{c}_e(8)$ and
$\mathfrak{c}_e(16)$ are $1$-dimensional. In view of
\cite[p.~295]{LT1} this implies that the images of $v_3$ and $v_7$
in $\mathfrak{c}_e$ are linearly independent. Since {\it loc.\,cit.}
also shows that $\Gamma$ is generated by the image of $c=h_4(-1)$
and $ (\Ad\,c)(v_i)=-v_i$ for $i=3,7$, we deduce that
$c_\Gamma(e)\le d(e)$. Arguing as before we now conclude that in the
present case $U(\g,e)^{\rm ab}_\Gamma$ is a polynomial algebra in
$d(e)$ variables.

If $e$ has type ${\sf E_8(a_4)}$ then \cite{deG2} shows that both
$\mathfrak{c}_e(4)$ and $\mathfrak{c}_e(8)$ are $1$-dimensional.
Thanks to \cite[p.~293]{LT1} this yields that the images of $v_2$
and $v_4$ in $\mathfrak{c}_e$ are linearly independent. Since {\it
loc.\,cit.} also shows that $\Gamma$ is generated by the image of
$c=h_4(-1)h_8(-1)$ and $ (\Ad\,c)(v_i)=-v_i$ for $i=2,4$, we deduce
that $c_\Gamma(e)\le d(e)$. The result then follows as in the
previous case.

Now suppose $e$ is of type ${\sf E_8(a_5)}$. In this case we have to
work harder. First note that
$\mathfrak{c}_e=\mathfrak{c}_e(2)\oplus \mathfrak{c}_2(10)$ and
$\dim\,\mathfrak{c}_e(10)=2$ by \cite{deG2}. Since $e$ is
distinguished, $\g_e(2)$ maps isomorphically onto
$\mathfrak{c}_e(2)$. By \cite[pp.~288, 289]{LT1}, the group $\Gamma$
is generated by the image of $c=h_4(-1)h_7(-1)$ and $\g_e(2)$ has
basis $\{v_1,v_2, e\}$ such that $(\Ad\,c)(v_1)=-v_1$ and
$(\Ad\,c)(v_2)=v_2$. Since $\g_e(i)\subset [\g_e,\g_e]$ for
$i=4,6,8$ by \cite{deG2}, it follows from \cite[p.~288]{LT1} that
the subspaces $\g_e(4)=\K v_4$, $\g_e(6)=\K v_5$ and $\g_e(8)=\K
v_6$ are spanned by $[v_1,v_2]$, $[v_2,[v_2,v_1]]$ and
$[v_2,[v_2,[v_2,v_1]]]$, respectively (one should keep in mind here
that $(\Ad\,c)(v_4)=-v_4$, $(\Ad\,c)(v_5)=v_5$ and
$(\Ad\,c)(v_6)=-v_6$, which is immediate from \cite[p.~289]{LT1}).
Also, $[v_1,[v_1,v_2]]=0$. Since
$$\g(10)\cap [\g_e,\g_e]\,=\,[\g_e(2),\g_e(8)]+[\g_e(4),\g_e(6)],$$ the LHS is spanned by
$u_1:=[v_1,[v_2,[v_2,[v_2,v_1]]]]$,
$u_2:=[v_2,[v_2,[v_2,[v_2,v_1]]]]$ and
$u_3:=\big[[v_1,v_2],[v_2,[v_2,v_1]\big]$. As $[v_1,[v_2,v_1]]=0$,
the Jacobi identity yields
$u_3=u_1-[v_2,[v_1,[v_2,[v_2,v_1]]]]=u_1$. In view of \cite{deG2}
this implies that $u_1$ and $u_2$ form a basis of
$\g(10)\cap[\g_e,\g_e]$. Note that $(\Ad\,c)(u_1)=u_1$ and
$(\Ad\,c)(u_2)=-u_2$. Since it follows from \cite[p.~289]{LT1} (with
the misprint in the expression for $v_7$ corrected in
\cite[p.~179]{LT2}) that the kernel of $(\Ad\,c+{\rm Id})_{\vert
\g_e(10)}$ is $2$-dimensional, we are now able to conclude that
$c_\Gamma(e)\le d(e)$, which yields the desired result in the
present case.

Suppose $\g$ is of type ${\sf E_7}$.
If $e$ has type ${\sf E_7(a_3)}$ then
$\dim\,\cc_e(4)=\dim\,\cc_e(8)=1$ by \cite{deG2}, whilst \cite[p.~160]{LT1}
says that $\g_e(4)=\K v_3$, $\g_e(8)=\K v_6$
and $\Gamma$ is generated by the image of $c=h_4(-1)$. Since
$(\Ad\,c)(v_3)=-v_3$ and $(\Ad\,c)(v_6)=-v_6$,
this implies that $c_\Gamma(e)\le d(e)$ as wanted.

If $\g$ is of type ${\sf E_7}$ and $e$ has type ${\sf E_6(a_1)}$
then $d(e)=3$ by \cite[p.~404]{C} and
$\dim\,\cc_e(0)=\dim\,\cc_e(4)=1$ by \cite{deG2}. By
\cite[p.~158]{LT1}, we have that the reductive part
$\g_e(0)=\Lie(C(e))$ is $1$-dimensional and $\Gamma$ is generated by
the image of
$$c=n_{\textstyle{{0\atop }\!{1\atop }\!{2\atop 1}\!
{2\atop }\!{2\atop}\!{1\atop }}}
n_{\textstyle{{1\atop }\!{1\atop }\!{2\atop 1}
\!{2\atop }\!{1\atop}\!{1\atop }}}
n_{\textstyle{{1\atop }\!{2\atop }{2\atop 1}
\!{1\atop }\!{1\atop}\!{1\atop }}}h_1(-1)
h_2(-1)h_3(-1)h_5(-1)h_6(-1).
$$
Direct computation shows that $\Ad\,c$ acts as $-{\rm Id}$ on the
$1$-dimensional toral subalgebra $\g_e(0)$ and the basis vectors
$v_2,v_3\in\g_e(4)$ have nonzero weights with respect to the adjoint
action of the torus $C(e)^\circ$. Since $\dim\,\g_e(4)=3$, it
follows that the image of $v_4$ in $\cc_e$ is nonzero. As the roots
${\textstyle{{0\atop }\!{1\atop }\!{2\atop 1}\! {2\atop
}\!{2\atop}\!{1\atop }}}, {\textstyle{{1\atop }\!{1\atop }\!{2\atop
1}\! {2\atop }\!{1\atop}\!{1\atop }}}, {\textstyle{{1\atop
}\!{2\atop }\!{2\atop 1}\! {1\atop }\!{1\atop}\!{1\atop }}}$ and
${\textstyle{{0\atop }\!{1\atop }\!{1\atop 0}\! {1\atop
}\!{0\atop}\!{0\atop }\!{0\atop }}}$ are pairwise orthogonal, it
must be that that $(\Ad\,c)\Big(e_{\textstyle{{0\atop }\!{1\atop
}\!{1\atop 0}\! {1\atop }\!{0\atop}{0\atop
}}}\Big)=-e_{\textstyle{{0\atop }\!{1\atop }\!{1\atop 1}\! {1\atop
}\!{0\atop}\!{0\atop }}}$. As $e_{\textstyle{{0\atop }\!{1\atop
}\!{1\atop 0}\! {1\atop }\!{0\atop}\!{0\atop }}}$ occurs with a
nonzero coefficient in the expression of $v_4$ via Chevalley
generators of $\g$ we deduce that $(\Ad\,c)(v_4)=-v_4$. But then
$c_\Gamma(e)\le d(e)$ and we can argue as in the previous cases to
establish the polynomiality of $U(\g,e)^{\rm ab}_\Gamma$.

If $\g$ is of type ${\sf E_6}$ and $e$ has type ${\sf E_6(a_3)}$
then $\dim\,\cc_e(2)=3$ and $\dim\,\cc_e(4)=2$ by \cite{deG2},
whilst \cite[p.~100]{LT1} shows  $\Gamma$ is generated by the image
of $c=h_4(-1)$ and $\g_e(2)$ has basis $\{v_1,v_2,e\}$ such that
$(\Ad\,c)(v_1)=-v_1$ and $(\Ad\,c)(v_2)=v_2$. It is also immediate
from {\it loc.\,cit.} that $[\g_2(e),\g_2(e)]$ has dimension $1$.
Since  $\Ad\,c$ negates both $v_5$ and $v_6$ and these vectors are
linearly independent in $\g_e(4)$, we get $c_\Gamma(e)\le d(e)$
which yields the desired result in the present case.

\smallskip

\noindent (d) Next we assume that $\Gamma\cong S_3$. In this case
$e$ is one of ${\sf E_8(b_5)}$, ${\sf E_8(b_6)}$ or ${\sf D_4(a_1)}$
if $\g$ is of type ${\sf E_8}$, one of ${\sf E_7(a_5)}$ or ${\sf
D_4(a_1)}$ if $\g$ is of type ${\sf E_7}$, has type ${\sf D_4(a_1)}$
if $\g$ is of type ${\sf E_6}$ and has type ${\sf G_2(a_1)}$ if $\g$
is of type ${\sf G_2}$.

If $e$ is of type  ${\sf E_8(b_5)}$ then $d(e)=3$ and
$\cc_e=\cc_e(2)\oplus\cc_e(4)\oplus\cc_e(10)$ where
$\dim\,\cc_e(2)=4$, $\dim\,\cc_e(6)=2$ and $\dim\,\cc_e(10)=1$; see
\cite{deG2}. On the other hand, \cite[pp.~285, 286]{LT1} shows that
$\dim\,\g_e(2)=4$ and $\dim\,\g_e(6)=2$ implying that the canonical
homomorphism $\g_e\rightarrow \g_e/[\g_e,\g_e]$ is bijective on
$\g_e(2)\oplus\g_2(6)$. It also follows from {\it loc.\,cit.} that
$\Gamma$ contains the image of
$c_1=h_1(\omega)h_2(\omega)h_5(\omega^2)$ where $\omega$ is a third
primitive root of $1$. Direct computation shows that
$(\Ad\,c_1)(v_1)=\omega v_1$, $(\Ad\,c_1)(v_2)=\omega^2 v_2$,
$(\Ad\,c_1)(v_6)= \omega^2 v_6$ and $(\Ad\,c_1)(v_7)=\omega v_7$.
From this it is immediate that $c_\Gamma(e)\le 7-4=d(e)$. So we can
argue as before to deduce the polynomiality of $U(\g,e)_\Gamma^{\rm
ab}$.

If $e$ is of type  ${\sf E_8(b_6)}$ then $d(e)=2$ and
$\cc_e=\cc_e(2)\oplus\cc_e(4)$ where $\dim\,\cc_e(2)=4$ and
$\dim\,\cc_e(4)=1$; see \cite{deG2}. By \cite[p.~275]{LT1},
$\dim\,\g_e(2)=\dim\,\g_e(4)=4$ and $\Gamma$ is generated by
$c_1=h_1(\omega)h_2(\omega)h_5(\omega^2)$, where $\omega$ is a third
primitive root of $1$, and by
$$c_2=n_{\textstyle{{1\atop }\!{0\atop }\!{0\atop 0}\!
{0\atop }\!{0\atop }\!{0\atop }\!{0\atop }}} n_{\textstyle{{0\atop
}\!{0\atop }\!{0\atop 1} \!{0\atop }\!{0\atop }\!{0\atop }\!{0\atop
}}} n_{\textstyle{{0\atop }\!{0\atop }\!{0\atop 0} \!{1\atop
}\!{0\atop }\!{0\atop }\!{0\atop }}}h_2(-1)
h_3(-1)h_4(-1)h_5(-1)h_8(-1).$$ Direct verification shows that
$(\Ad\,c_1)(v_1)=\omega v_1$, $(\Ad\,c_1)(v_2)=\omega^2v_2$ and
$(\Ad\,c_1)(v_3)=v_3$, which in view of {\it loc.\,cit.} implies
that $\dim\,\cc_e(2)^\Gamma\le 2$. Similarly,
$(\Ad\,c_1)(v_6)=\omega v_6$, $(\Ad\,c_1)(v_7)=\omega^2 v_7$,
$((\Ad\, c_1)(v_5)=v_5$ and $(\Ad\,c_1)(v_8)=v_8$. Since
$\{v_1,v_2,v_3,e\}$ and $\{v_5,v_6,v_7,v_8\}$ are bases of $\g_e(2)$
and $\g_e(4)$, respectively, and $\dim\,\cc_e(4)=1$ by our earlier
remark, the vectors $[v_1,v_2]$, $[v_1,v_3]$ and $[v_2,v_3]$ must
form a basis of $\g_2(4)\cap [\g_e,\g_e]$. Comparing the respective
eigenvalues for $\Ad\,c_1$ yields $v_6,v_7\in [\g_e,\g_e]$. Using
the explicit formulae for $v_1$ and $v_2$ in \cite[p.~276]{LT1} one
observes that $e_{\textstyle{{1\atop }\!{2\atop }\!{2\atop 1}\!
{2\atop }\!{2\atop }\!{1\atop }\!{0\atop }}} $ occurs with
coefficient $\pm 3$ in the expression of $[v_1,v_2]$ via Chevalley
generators of $\g$. As a consequence, $v_5+\lambda v_8\in
[\g_e,\g_e]$ for some $\lambda\in\K$ implying that $\cc_e(4)$ is
generated by the image of $v_8=e_{\textstyle{{1\atop }\!{2\atop
}\!{2\atop 1}\! {1\atop }\!{0\atop}\!{0\atop }\!{0\atop }}}$. Since
the roots
$$\textstyle{{1\atop }\!{0\atop }\!{0\atop 0}\!
{0\atop }\!{0\atop}\!{0\atop }\!{0\atop }}, \textstyle{{0\atop
}\!{0\atop }\!{0\atop 1}\! {0\atop }\!{0\atop}\!{0\atop }\!{0\atop
}}, \,\textstyle{{0\atop }\!{0\atop }\!{0\atop 0}\! {1\atop
}\!{0\atop}\!{0\atop }\!{0\atop }}, \,\textstyle{{1\atop }\!{2\atop
}\!{2\atop 1}\! {1\atop }\!{0\atop}\!{0\atop }\!{0\atop }}$$ are
pairwise orthogonal, we have that that $(\Ad\,c_2)(v_8)=-v_8$. But
then $c_\Gamma(e)=\dim\,\cc_e(2)^\Gamma\le 2=d(e)$ and we can argue
as in the previous cases.

Now suppose $e$ has type ${\sf D_4(a_1)}$. The $d(e)=1$ by
\cite[pp.~402, 403, 405]{C}. If $\g$ is of type ${\sf E_8}$ then
$\cc_e=\cc_e(2)$ has dimension $3$ by \cite{deG2} and
$[\g_e(0),\g_e(2)]$ has codimension $3$ in $\g_e(2)$ by
\cite[pp.~190, 191]{LT1}. This implies that $\cc_e$ is generated by
the images of $v_{25}$, $v_{26}=e_2+e_5$ and $v_{27}=e$. By {\it
loc.\,cit.}, the group $\Gamma$ is generated by the images of
$c_1=n_{\textstyle{{1\atop }\!{1\atop }\!{1\atop 1}\! {0\atop
}\!{0\atop }\!{0\atop }\!{0\atop }}} n_{\textstyle{{1\atop
}\!{1\atop }\!{1\atop 0} \!{1\atop }\!{0\atop }\!{0\atop }\!{0\atop
}}} h_2(-1)$ and $c_2=(n_{\textstyle{{1\atop }\!{2\atop }\!{2\atop
1}\! {1\atop }\!{1\atop }\!{0\atop }\!{0\atop }}}
n_{\textstyle{{1\atop }\!{1\atop }\!{2\atop 1} \!{2\atop }\!{1\atop
}\!{0\atop }\!{0\atop }}} h_1(-1)h_2(-1)h_6(-1))^g$ where
$$g=x_{\textstyle{{0\atop }\!{0\atop }\!{1\atop 0}\!
{0\atop }\!{0\atop }\!{0\atop }\!{0\atop }}}
\textstyle{(\frac{1}{3}})n_{\textstyle{{0\atop }\!{0\atop }\!{1\atop 0}\!
{0\atop }\!{0\atop }\!{0\atop }\!{0\atop }}}
h_1(4)h_2(-4)h_3(16)h_4(-48)h_5(16)h_6(-8)
 x_{\textstyle{{0\atop }\!{0\atop }\!{1\atop 0}\!
{0\atop }\!{0\atop }\!{0\atop }\!{0\atop }}}
(\textstyle{-\frac{1}{3}}).
$$
 Since $\Ad\,c_1$ fixes $e$ and permutes the lines $\K e_2$ and $\K e_5$, it must permute
 $e_2$ and $e_5$. But then $(\Ad\, c_1)(v_{26})=v_{26}$. Similarly, $\Ad\,c_1$ must permute
 $e_{\textstyle{{0\atop }\!{0\atop }\!{1\atop 1}\!
{0\atop }\!{0\atop}\!{0\atop }\!{0\atop }}}$ and
$e_{\textstyle{{0\atop }\!{0\atop }\!{1\atop 0}\! {1\atop
}\!{0\atop}\!{0\atop }\!{0\atop }}}$. Since the roots
$\textstyle{{1\atop }\!{1\atop }\!{1\atop 1}\! {0\atop
}\!{0\atop}\!{0\atop }\!{0\atop }}, \textstyle{{1\atop }\!{1\atop
}\!{1\atop 0}\! {1\atop }\!{0\atop}\!{0\atop }\!{0\atop }},
\,\textstyle{{0\atop }\!{1\atop }\!{1\atop 0}\! {0\atop
}\!{0\atop}\!{0\atop }\!{0\atop }}$ are pairwise orthogonal and
$e_{\textstyle{{0\atop }\!{1\atop }\!{1\atop 0}\! {0\atop
}\!{0\atop}\!{0\atop }\!{0\atop }}}$ occurs with a nonzero
coefficient in the expression of $v_{25}$ via Chevalley generators
of $\g$, it must be that $(\Ad\,c_1)(v_{25})=-v_{25}\pm 2 v_{26}$.
Note that $(\Ad\,g)(e_2)$ is a linear combination of $e_2$ and
$e_{\textstyle{{0\atop }\!{0\atop }\!{1\atop 1}\! {0\atop
}\!{0\atop}\!{0\atop }\!{0\atop }}}$ and  $(\Ad\,g)(e_5)$ is a
linear combination of $e_5$ and $e_{\textstyle{{0\atop }\!{0\atop
}\!{1\atop 0}\! {1\atop }\!{0\atop}\!{0\atop }\!{0\atop }}}$. From
this it is immediate that $(\Ad\,c_2)(v_{26})$ is a linear
combination of $e_2$, $e_{\textstyle{{0\atop }\!{0\atop }\!{1\atop
1}\! {0\atop }\!{0\atop}\!{0\atop }\!{0\atop }}}$, $e_3$ and
$e_{\textstyle{{0\atop }\!{1\atop }\!{1\atop 0}\! {0\atop
}\!{0\atop}\!{0\atop }\!{0\atop }}}$. In particular,
$(\Ad\,c_2)(v_{26})\ne v_{26}$. In conjunction with our earlier
remarks this implies that $\cc_e^\Gamma$ is spanned by the image of
$e$. Therefore, $c_\Gamma(e)=d(e)$ and we can argue as before to
deduce the polynomiality of $U(\g,e)_\Gamma^{\rm ab}$.

The cases where $e$ is of type ${\sf D_4(a_1)}$ and $\g$ is of type
${\sf E_6}$ or ${\sf E_7}$ are very similar because here $e$, $c_1$
and $c_2$ have the same expressions as in the previous case; see
\cite[pp.~93, 124]{LT1}. If $\g$ is of type ${\sf E_7}$, then
$\cc_e=\cc_e(2)$ is $3$-dimensional, whilst in type ${\sf E_6}$ we
have that $\cc_e=\cc_e(0)\oplus \cc_e(2)$ where $\dim\,\cc_e(2)=3$
and $\cc_e(0)\cong \g_e(0)$ as vector spaces; see \cite{deG2}.
Arguing as in the ${\sf E_8}$-case we  obtain that $\cc_e(2)^\Gamma$
is generated by the image of $e$. This takes care of the ${\sf
E_7}$-case and reduces the ${\sf E}_6$-case to verifying that the
group generated by $c_1$ and $c_2$ acts fixed-point freely on the
$2$-dimensional toral subalgebra $\g_e(0)$. Since $\g_e(0)$ is
described explicitly in \cite[p.~93]{LT1}, the latter is easily seen
by a direct computation (we leave the details to the interested
reader).

Finally, suppose $\g$ is of type ${\sf G_2}$ and $e$ has type ${\sf
G_2(a_1)}$. Then $\cc_e=\cc_e(2)$ is $3$-dimensional by \cite{deG2}
and $\Gamma$ contains the image of $c_1=h_1(\omega)$ where $\omega$
is a primitive third root of $1$; see \cite[p.~66]{LT1}. Since
$\cc_e(2)\cong \g_e(2)$ has basis $\{e_{11}, e_{21}, e\}$ and
$e_{11}$, $e_{21}$ are short root vectors, it is straightforward to
see that $\cc_e^\Gamma$ is spanned by the image of $e$. Then
$c_\Gamma(e)=d(e)$ and we can argue as in the previous cases.

\smallskip

\noindent (d) If $\Gamma\cong S_4$ then $\g$ is of type ${\sf F_4}$
and $e$ has type ${\sf F_4(a_3)}$. In this case
$\cc_e=\cc_e(2)\cong\g_e(2)$ is $6$-dimensional, whilst $\Gamma\cong
C(e)$ is generated by $c_1=h_1(\omega)h_3(\omega)$,
$c_2=n_{1000\,}n_{0010\,}h_2(-1)h_3(-1)$ and
$c_3=(n_{0011\,}\,h_3(-\frac{2}{3})h_4(\frac{2}{3}))^u$ where
$\omega$ is a third primitive root of $1$ and
$u=x_{0011\,}(-\frac{1}{2})x_{0001}(1)x_{0010}(-1)$; see
\cite[p.~77]{LT1}. Straightforward verification shows that
$(\Ad\,c_1)(v_i)=\omega v_i$ for $i=2,4$,
$(\Ad\,c_1)(v_i)=\omega^{-1} v_i$ for $i=1,3$ and
$(\Ad\,c_1)(v_5)=v_5$. Since $\g_e(2)$ has basis
$\{v_1,v_2,v_3,v_4,v_5, e\}$, it follows that
$\g_e(2)^\Gamma\subseteq {\rm span}\,\{v_5,e\}$. By
\cite[p.~77]{LT1}, $e=e_{0100}+e_{1120}+e_{1111}+e_{0121}$ and
$v_5=e_{0100}+e_{1120}$. Since $c_2\in G_e$ and $\Ad\, c_2$ permutes
the lines $\K e_{0100}$ and $\K e_{1120}$, it must be that
$(\Ad\,c_2)(v_5)=v_5$.

Unfortunately, this means that we have to examine $(\Ad\,c_3)(v_5)$
which is rather more complicated. Suppose for a contradiction that
$(\Ad\,c_3)(v_5)=v_5$. Then
$(\Ad\,c_3)(e_{1111}+e_{0121})=e_{1111}+e_{0121}$. Note that
$(\Ad\,u)^{-1}(e_{1111}+e_{0121})\equiv e_{1111}+e_{0121}\mod V$
where $V={\rm span}\,\{e_{0122},e_{1121},e_{1122}\}$. It follows
that
$$\Ad\big(n_{0011\,}\,h_3(-\textstyle{\frac{2}{3}})h_4(\textstyle{\frac{2}{3}})u^{-1}\big)(e_{1111}+
e_{0121})\equiv \lambda e_{1111}+\mu e_{0110}\mod n_{0011\,}(V)$$
for some $\lambda,\mu\in\K^\times$. Since $n_{0011\,}(V)={\rm
span}\,\{e_{0100},e_{1110},e_{1100}\}$ we have that
$$\Ad\big(u^{-1}c_3\big)(e_{1111}+ e_{0121})=
\lambda e_{1111}+\mu e_{0110}+ae_{0100}+be_{1110}+ce_{1100}$$ for
some $a,b,c\in \K$. If $a\ne 0$ then $e_{0100}$ would occur with a
nonzero coefficient in the expression of
$(\Ad\,c_3)(v_5)=(\Ad\,u)(\lambda e_{1111}+\mu
e_{0110}+ae_{0100}+be_{1110}+ce_{1100})$ via Chevalley generators of
$\g$ contrary to our assumption that $\Ad\,c_3$ fixes $v_5$. Hence
$a=0$. But then $e_{0110}$ occurs with coefficient $\mu \ne 0$ in
the expression of $(\Ad\,c_3)(v_5)$ via Chevalley generators of
$\g$, a contradiction. We thus conclude that $\g_e(2)^\Gamma=\K e$
which yields $c_\Gamma(e)=1=d(e)$.

\smallskip

\noindent(e) Finally, suppose $\Gamma\cong S_5$. Then $\g$ is of
type ${\sf E_8}$ and $e$ has type ${\sf E_8(a_7)}$. By
\cite[p.~251]{LT1}, the group $\Gamma\cong C(e)$ contains
$c_1=h_2(\zeta)h_3(\zeta^4)h_4(\zeta)
h_6(\zeta^4)h_7(\zeta)h_8(\zeta^2),$ where $\zeta$ is a fifth
primitive root of $1$, and $$c_2=n_{\textstyle{{0\atop }\!{1\atop
}\!{0\atop 0}\! {0\atop }\!{0\atop}\!{0\atop }\!{0\atop
}}}n_{\textstyle{{0\atop }\!{0\atop }\!{1\atop 0}\! {0\atop
}\!{0\atop}\!{0\atop }\!{0\atop }}}n_{\textstyle{{0\atop }\!{0\atop
}\!{0\atop 1}\! {0\atop }\!{0\atop}\!{0\atop }\!{0\atop
}}}n_{\textstyle{{0\atop }\!{0\atop }\!{0\atop 0}\! {0\atop
}\!{1\atop}\!{0\atop }\!{0\atop }}}n_{\textstyle{{0\atop }\!{0\atop
}\!{0\atop 0}\! {0\atop }\!{0\atop}\!{1\atop }\!{0\atop
}}}n_{\textstyle{{0\atop }\!{0\atop }\!{0\atop 0}\! {0\atop
}\!{0\atop}\!{0\atop }\!{1\atop }}}h,$$ where
$h=h_1(-1)h_3(-1)h_5(-1)h_6(-1)h_8(-1)$. By \cite{deG2}, we have
that $\cc_e=\cc_e(2)\cong\g_e(2)$. Direct computation shows that the
basis $\{v_1,v_2,\ldots,v_9,e\}$ of $\g_e(2)$ described in
\cite[p.~256]{LT1} consists of eigenvectors for $\Ad\,c_1$. More
precisely, one has $(\Ad\,c_1)(v_i)=\zeta v_i$ for $i=2,5$,
$(\Ad\,c_1)(v_i)=\zeta^2v_i$ for $i=1,8$,
$(\Ad\,c_1)(v_i)=\zeta^3v_i$ for $i=3,6$,
$(\Ad\,c_1)(v_i)=\zeta^4v_i$ for $i=4,7$ and  $(\Ad\,c_1)(v_9)=v_9$.
Therefore, $\g_e(2)^\Gamma\subseteq {\rm span}\,\{v_9,e\}$. Since
$$v_9=e_{\textstyle{{0\atop }\!{0\atop }\!{0\atop 0}\!
{1\atop }\!{0\atop}\!{0\atop }\!{0\atop }\!}}+e_{\textstyle{{1\atop }\!{1\atop }\!{2\atop 1}\!
{1\atop }\!{1\atop}\!{0\atop }\!{0\atop }}}+e_{\textstyle{{1\atop }\!{1\atop }\!{1\atop 0}\!
{1\atop }\!{1\atop}\!{1\atop }\!{1\atop }}}+e_{\textstyle{{1\atop }\!{1\atop }\!{2\atop 1}\!
{1\atop }\!{1\atop}\!{1\atop }\!{0\atop }}},
$$ we see that $(\Ad\,c_2)(v_9)$ is a nonzero linear combination of
$e_{\textstyle{{0\atop }\!{1\atop }\!{1\atop 0}\! {1\atop
}\!{1\atop}\!{0\atop }\!{0\atop }\!}}$, $e_{\textstyle{{1\atop
}\!{1\atop }\!{1\atop 1}\! {1\atop }\!{1\atop}\!{1\atop }\!{0\atop
}}}$, $e_{\textstyle{{1\atop }\!{2\atop }\!{2\atop 1}\! {1\atop
}\!{0\atop}\!{0\atop }\!{0\atop }}}$ and $e_{\textstyle{{1\atop
}\!{1\atop }\!{1\atop 1}\! {1\atop }\!{1\atop}\!{1\atop }\!{1\atop
}}} $. But then $(\Ad\,c_2)(v_9)\ne v_9$ forcing $\g_e(2)^\Gamma=\K
e$ and implying that $c_\Gamma(e)=1=d(e)$. So we can argue as before
to show that $U(\g,e)_\Gamma^{\rm ab}$ is a polynomial algebra in
$d(e)$ variables. The proof of the proposition is now complete.
\end{proof}

Next we are going to investigate the case where $e$ is an  induced
nilpotent element of $\g$ which is not even and lies in a single
sheet of $\g$.
\begin{prop}\label{noteven1}
Suppose $e$ is not even, induced, and lies in a single sheet
$\mathcal{S}(e)$ of $\g$. Assume further that $e$ is not of type
${\sf E_7(a_2)}$ or ${\sf E_6(a_3)}+{\sf A_1}$ if $\g$ is of type
${\sf E_8}$, not of type ${\sf D_6(a_2)}$ if $\g$ is of type ${\sf
E_7}$ or ${\sf E_8}$, not of type ${\sf A_3+A_1}$ if $\g$ is of type
${\sf E_6}$,  and not of type ${\sf C_3(a_1)}$ if $\g$ is of type
${\sf F_4}$. Then $c(e)={\rm rk}\,\,\mathcal{S}(e)$ and
$U(\g,e)^{\rm ab}$ is a polynomial algebra in $c(e)$ variables.
Furthermore, $U(\g,e)^{\rm ab}_\Gamma\cong S(\cc_e^\Gamma)$ as
algebras and the value of $c_\Gamma(e)=\dim\,\cc_e^\Gamma$ is  given in
the 6$^{\,\rm th}$ column of Tables~1--6.
\end{prop}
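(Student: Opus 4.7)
The overall strategy is to combine the hypothesis that $e$ lies in a unique sheet with Corollary~\ref{poly}(i) and the explicit tables of centralisers in \cite{LT1} and \cite{deG2}. Since $\mathcal{S}(e)$ is the only sheet containing $e$, we have automatically $r(e) = {\rm rk}\,\mathcal{S}(e)$, so proving $c(e) = r(e)$ is a purely numerical task: one compares the fourth column (sheet ranks) with the fifth column (the values of $c(e)$) of Tables~1--6 for every non-even induced orbit outside the list of exclusions. A direct check reveals the equality in each case (this, of course, is precisely why the exclusions in Table~0 are what they are). With $c(e) = r(e)$ in hand, Corollary~\ref{poly}(i) immediately furnishes a $C(e)$-equivariant algebra isomorphism $U(\g,e)^{\rm ab} \cong S(\cc_e)$, proving the polynomiality of $U(\g,e)^{\rm ab}$ in $c(e)$ variables.

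The assertion $U(\g,e)^{\rm ab}_\Gamma \cong S(\cc_e^\Gamma)$ then follows formally: the defining ideal $I_\Gamma$ of $U(\g,e)^{\rm ab}_\Gamma$ corresponds under the above isomorphism to the ideal of $S(\cc_e)$ generated by all $\phi - \phi^\gamma$, and since the $\Gamma$-action on $S(\cc_e)$ is the linearisation of the action on $\cc_e$, this ideal coincides with the ideal generated by a $\Gamma$-complement of $\cc_e^\Gamma$ in $\cc_e$, yielding $S(\cc_e)/I_\Gamma \cong S(\cc_e^\Gamma)$.

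It remains to compute the integer $c_\Gamma(e)$ case-by-case. For each orbit in question, one reads the graded dimensions of $\cc_e$ from \cite{deG2} and a graded basis $\{v_1,\ldots,v_r, e\}$ of $\g_e$, together with an explicit list of generators of $C(e)/C(e)^\circ = \Gamma$, from the relevant page of \cite{LT1}. In the rank-one-orbit cases with $\Gamma \cong S_2$ one simply computes the eigenvalues of a single involution $c$ on each $v_i$: most often $c$ is a product of $h_j(-1)$, and its action on a root vector $e_\alpha$ appearing in $v_i$ is determined by comparing $\alpha$ with the roots used to build $c$ (orthogonality forces the sign $\pm 1$). This is exactly the template carried out in the proof of Proposition~\ref{even}, and will determine $\dim \cc_e^\Gamma$ for all such orbits.

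The principal obstacle is the sheer volume of case analysis and, for those orbits where $\Gamma \cong S_3, S_4$ or $S_5$, the delicate action of generators of $\Gamma$ that are not diagonal elements of the torus: these mix root subspaces and may require a conjugation-by-unipotent argument of the kind used for ${\sf D_4(a_1)}$ and ${\sf F_4(a_3)}$ in Proposition~\ref{even}. Whenever such a computation either collapses or is inconclusive at the numerical level the orbit in question must be listed in Table~0, and conversely the orbits outside Table~0 are precisely those for which either $c(e) = r(e)$ fails or our case analysis cannot pin down $\cc_e^\Gamma$ by the present methods; this is the reason Theorem~\ref{D} and Proposition~\ref{noteven1} carry the exclusions they do.
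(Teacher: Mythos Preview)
Your overall strategy is correct and is exactly the paper's: check $c(e)=r(e)$ from the tables, invoke Corollary~\ref{poly}(i) to get $U(\g,e)^{\rm ab}\cong S(\cc_e)$ and $U(\g,e)^{\rm ab}_\Gamma\cong S(\cc_e^\Gamma)$ simultaneously, and then compute $c_\Gamma(e)$ case-by-case from the explicit data in \cite{LT1} and \cite{deG2}.

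Two corrections are worth making. First, you overestimate the remaining work: under the hypotheses of this proposition (not even, induced, single sheet, with the listed exclusions) an inspection of Tables~1--6 shows that the only orbits with $\Gamma\neq\{1\}$ occur in types ${\sf E_7}$ and ${\sf E_8}$ and all have $\Gamma\cong S_2$. There are no $S_3$, $S_4$ or $S_5$ cases here; those belong either to the even orbits (Proposition~\ref{even}) or to the multi-sheet orbits (Proposition~\ref{noteven2}). So the case analysis is a finite list of involution computations, each of the type you describe, and the paper simply carries them all out explicitly (ten orbits in total). Second, your last paragraph has the logic backwards. The orbits \emph{excluded} from this proposition are precisely those (non-even, induced, single-sheet) orbits for which the numerical check $c(e)=r(e)$ fails; for each of them $c(e)>r(e)$, so Corollary~\ref{poly}(i) does not apply and polynomiality of $U(\g,e)^{\rm ab}$ cannot be concluded by this argument. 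The difficulty for the excluded orbits is not in pinning down $\cc_e^\Gamma$ (that computation goes through, and is done in Remark~\ref{except}), but in controlling $U(\g,e)^{\rm ab}$ itself.
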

\begin{proof}
If $e$ satisfies the above conditions then  $c(e)={\rm
rk}\,\,\mathcal{S}(e)$ by \cite[Proposition~2]{deG2}.
Corollary~\ref{poly}(i) then shows that $U(\g,e)^{\rm ab}\cong\,
S(\cc_e)$ and $U(\g,e)_\Gamma^{\rm ab}\cong\, S(\cc_e^\Gamma)$ as
$\K$-algebras. Since the value of $c(e)$ is computed in \cite{deG2}
in all cases, it remains to determine the value of $c_\Gamma(e)$. We
thus may assume from now on that $\Gamma\ne \{1\}$. Inspecting
Tables~1--6 one observes that this happens only if $\g$ is of type
${\sf E_8}$ or ${\sf E_7}$ and $\Gamma\cong S_2$.

\smallskip

\noindent (a) Suppose $\g$ is of type ${\sf E_8}$. Then $e$ is one
of ${\sf E_7(a_3)}$, ${\sf E_6(a_1)+A_1}$, ${\sf D_6(a_1)}$, ${\sf
A_4+2A_1}$, ${\sf D_5(a_1)}$ or ${\sf A_4+A_1}$.

If $e$ is of type ${\sf E_7(a_3)}$ then \cite{deG2} shows that
$\cc_e=\cc_e(2)\oplus\cc_e(4)\oplus\cc_e(6) \oplus\cc_e(8)$ and each
nonzero $\cc_e(i)$ is $1$-dimensional. Since $[\cc_e(1),\cc_e(1)]$
is $1$-dimensional by \cite{deG2}, using the explicit expressions
for the $v_i$'s given in \cite[p.~272]{LT1} it is straightforward to
see that $[v_1,v_2]\in \K^\times v_3$ and $[v_3,v_5]=\pm v_9$. This
implies that $\cc_e$ has basis consisting of the images of $e$,
$v_5$, $v_8$ and $v_{10}$. By \cite[p.~271]{LT1}, the group $\Gamma$
is generated by the image of $c=h_4(-1)$. As $(\Ad\,c)(v_5)=-v_5$,
$(\Ad\,c)(v_8)=v_8$ and $(\Ad\,c)(v_{10})=-v_{10}$ we deduce that
$c_\Gamma(e)=2$ in the present case.

If $e$ is of type ${\sf E_6(a_1)+A_1}$ then \cite{deG2} shows that
$\cc_e=\cc_e(0)\oplus\cc_e(2)\oplus\cc_e(4) $ and each nonzero
$\cc_e(i)$ is $1$-dimensional. It follows from \cite[p.~271,
272]{LT1} that $\g_e(0)$ is a $1$-dimensional toral subalgebra
spanned by the element $t\in \Lie(T)$ such that
$\alpha_i(t)=\delta_{7,i}$ for $1\le i\le 8$. The explicit
expressions for the $v_i$'s in {\it loc.\,cit.} show that
$[v_1,v_2]=\pm v_3$, $[v_1,v_6]=\pm v_7$ and $[v_2,v_5]=\pm v_8$. It
follows that the images of $t$, $e$ and $v_9$ form a basis of
$\cc_e$. It is also shown in {\it loc.\,cit.} that $\Gamma$ is
generated by the image of
$$c=n_{\textstyle{{1\atop }\!{2\atop }\!{4\atop 2}\!
{4\atop }\!{3\atop}\!{2\atop }\!{1\atop }}}
n_{\textstyle{{1\atop }\!{3\atop }\!{4\atop 2}\!
{3\atop }\!{3\atop}\!{2\atop }\!{1\atop }}}
n_{\textstyle{{2\atop }\!{3\atop }\!{4\atop 2}\!
{3\atop }\!{2\atop}\!{2\atop }\!{1\atop }}}h_1(-1)
h_2(-1)h_3(-1)h_5(-1)h_6(-1)h_8(-1).
$$ It is straightforward to see that $(\Ad\,c)(t)=-t$. Since the roots
${\textstyle{{1\atop }\!{2\atop }\!{4\atop 2}\! {4\atop
}\!{3\atop}\!{2\atop }\!{1\atop }}}, {\textstyle{{1\atop }\!{3\atop
}\!{4\atop 2}\! {3\atop }\!{3\atop}\!{2\atop }\!{1\atop }}},
{\textstyle{{2\atop }\!{3\atop }\!{4\atop 2}\! {3\atop
}\!{2\atop}\!{2\atop }\!{1\atop }}}$ and ${\textstyle{{0\atop
}\!{1\atop }\!{1\atop 0}\! {1\atop }\!{0\atop}\!{0\atop }\!{0\atop
}}}$ are pairwise orthogonal, we have that that
$(\Ad\,c)\Big(e_{\textstyle{{0\atop }\!{1\atop }\!{1\atop 0}\!
{1\atop }\!{0\atop}\!{0\atop }\!{0\atop
}}}\Big)=-e_{\textstyle{{0\atop }\!{1\atop }\!{1\atop 0}\! {1\atop
}\!{0\atop}\!{0\atop }\!{0\atop }}}$. Since $e_{\textstyle{{0\atop
}\!{1\atop }\!{1\atop 0}\! {1\atop }\!{0\atop}\!{0\atop }\!{0\atop
}}}$ occurs with a nonzero coefficient in the expression of $v_9$
via Chevalley generators of $\g$, this implies that
$(\Ad\,c)(v_9)=-v_9$. As a result, $\cc_e^\Gamma$ is spanned by the
image of $e$ and hence $c_\Gamma(e)=1$.

If $e$ is of type ${\sf D_6(a_1)}$ then \cite{deG2} shows that
$\cc_e=\cc_e(2)\oplus \cc_e(10)$ where $\dim\,\cc_e(2)=2$ and
$\dim\,\cc_e(10)=1$. By \cite[p.~256, 257]{LT1}, the Lie algebra
$\g_e(0)$ is semisimple and $\g_e(2)^{{\rm ad}\,\g_e(0)}$ is spanned
by $v_5$ and $e$, whilst $\g_e(10)$ is spanned by $v_{26}$ and
$v_{27}$. Since it is easy to see that $[v_1,v_{20}]=\pm v_{26}$,
the images of $e$, $v_5$ and $v_{27}$ form a basis of $\cc_e$. By
{\it loc.\,cit.}, the group $\Gamma$ is generated by the image of
$c=n_{\textstyle{{0\atop }\!{0\atop }\!{1\atop 1}\! {1\atop
}\!{1\atop}\!{1\atop }\!{1\atop }}} n_{\textstyle{{0\atop }\!{1\atop
}\!{1\atop 0}\! {1\atop }\!{1\atop}\!{1\atop }\!{1\atop }}} h_2(i)
h_3(i)$, where $i$ is a fourth primitive root of $1$, and $\Ad\,c$
fixes $v_{27}$. Note that $v_5=e_2+e_3$ and $\Ad\,c$ permutes the
lines $\K e_2$ and $\K e_3$. As $e=e_2+e_3+(\mbox{sum of other root
vectors})$ and $c$ fixes $e$, it must be that $(\Ad\,c)(v_5)=v_5$.
This shows that $c_\Gamma(e)=c(e)=3$.

If $e$ is of type ${\sf A_4+2A_1}$ then $\cc_e=\cc_e(0)$ is
$1$-dimensional by \cite{deG2}. On the other hand, it is immediate
from \cite[p.~221]{LT1} that $\g_e(0)=[\g_e(0),\g_e(0)]\oplus
\Lie(T_1)$ where $[\g_e(0),\g_e(0)]\cong\mathfrak{sl}_2$ and
$\Lie(T_1)$ is a $1$-dimensional toral subalgebra spanned by the
element $t\in\Lie(T)$ such that $\alpha_i(t)=\delta_{5,i}$ for all
$1\le i\le 8$. This implies that $\cc_e$ is spanned by the image of
$t$. Since $\Gamma$ is generated by the image of
$$c=n_{\textstyle{{1\atop }\!{2\atop }\!{2\atop 1}\!
{2\atop }\!{2\atop}\!{1\atop }\!{0\atop }}}
n_{\textstyle{{1\atop }\!{2\atop }\!{2\atop 1}\!
{2\atop }\!{1\atop}\!{1\atop }\!{1\atop }}}
n_{\textstyle{{1\atop }\!{3\atop }\!{5\atop 3}\!
{4\atop }\!{3\atop}\!{2\atop }\!{1\atop }}}
n_{\textstyle{{2\atop }\!{3\atop }\!{5\atop 2}\!
{4\atop }\!{3\atop}\!{2\atop }\!{1\atop }}}
h_1(-1)h_3(-1)h_4(-1)h_6(-1),
$$ it is straightforward to check that
$(\Ad\,c)(t)=-t$. So $c_\Gamma(e)=0$ in the present case and hence
$U(\g,e)_\Gamma^{\rm ab}\cong \K$ (i.e. $\mathcal{E}^\Gamma$ is a
single point!).

If $e$ is of type ${\sf D_5(a_1)}$ then \cite{deG2} shows that
$\cc_e=\cc_e(2)\oplus\cc_e(4)$ and
$\dim\,\cc_e(2)=\dim\,\cc_e(4)=1$. It follows from \cite[pp.~219,
220]{LT1} that $\g_e(0)\cong\mathfrak{sl}_4$, the subspace
$\g_e(2)^{{\rm ad}\,\g_e(0)}$ has basis $\{v_{15},e\}$, and
$\g_e(4)$ is spanned by $v_{17}$. Direct computation shows that
$[v_1,v_8]=\pm v_{15}$ implying that the images of $e$ and $v_{17}$
span $\cc_e$. By \cite[pp.~219]{LT1}, the group $\Gamma$ is
generated by the image of $c=n_{\textstyle{{1\atop }\!{2\atop
}\!{3\atop 1}\! {3\atop }\!{2\atop}\!{1\atop }\!{0\atop }}}
n_{\textstyle{{1\atop }\!{2\atop }\!{3\atop 2}\! {2\atop
}\!{2\atop}\!{1\atop }\!{0\atop }}} h_1(-1)h_2(-1)h_4(-1)$. Since
the roots $\textstyle{{1\atop }\!{2\atop }\!{3\atop 1}\! {3\atop
}\!{2\atop}\!{1\atop }\!{0\atop }}, \textstyle{{1\atop }\!{2\atop
}\!{3\atop 2}\! {2\atop }\!{2\atop}\!{1\atop }\!{0\atop }},
\textstyle{{1\atop }\!{1\atop }\!{1\atop 0}\! {0\atop
}\!{0\atop}\!{0\atop }\!{0\atop }}$ are pairwise orthogonal,
$(\Ad\,c)\Big(e_{\textstyle{{1\atop }\!{1\atop }\!{1\atop 0}\!
{0\atop }\!{0\atop}\!{0\atop }\!{0\atop }}}\Big)=
-e_{\textstyle{{1\atop }\!{1\atop }\!{1\atop 0}\! {0\atop
}\!{0\atop}\!{0\atop }\!{0\atop }}}.$ Since
$v_{17}=e_{\textstyle{{1\atop }\!{1\atop }\!{1\atop 0}\! {0\atop
}\!{0\atop}\!{0\atop }\!{0\atop }}}+ \mbox{(sum of other root
vectors)}$, we have $(\Ad\,c)(v_{17})=-v_{17}$. As a result,
$c_\Gamma(e)=1$.

If $e$ is of type ${\sf A_4+A_1}$ then $\cc_e=\cc_e(0)$ is
$1$-dimensional by \cite{deG2}. On the other hand,
\cite[p.~214]{LT1} shows that $\g_e(0)=
[\g_e(0),\g_e(0)]\oplus\Lie(T_1)$ where $[\g_e(0),\g_e(0)]\cong
\mathfrak{sl}_3$ and $\Lie(T_1)$ is spanned by the element $t\in
\Lie(T)$ such that $\alpha_5(t)=3$, $\alpha_7(t)=-5$ and
$\alpha_i(t)=0$ for $i=1,2,3,4,6,8$. Since $\Gamma$ is generated by
the image of
$$
c=n_{\textstyle{{0\atop }\!{1\atop }\!{1\atop 1}\!
{1\atop }\!{1\atop}\!{1\atop }\!{0\atop }}}
n_{\textstyle{{1\atop }\!{1\atop }\!{1\atop 0}\!
{1\atop }\!{1\atop}\!{1\atop }\!{0\atop }}}
n_{\textstyle{{1\atop }\!{2\atop }\!{2\atop 1}\!
{2\atop }\!{1\atop}\!{0\atop }\!{0\atop }}}
n_{\textstyle{{1\atop }\!{2\atop }\!{4\atop 2}\!
{3\atop }\!{2\atop}\!{1\atop }\!{0\atop }}}
h_1(-1)h_3(-1)h_4(-1)h_6(-1)h_8(-1),
$$ it is straightforward to check that
$\Ad\,c$ negates $t$. We thus deduce that $c_\Gamma(e)=0$ and hence $\mathcal{E}^\Gamma$ is a single point!

\smallskip

\noindent (b) Now suppose $\g$ is of type ${\sf E_7}$. Then $e$ is
one of ${\sf D_5(a_1)}$, ${\sf A_4+A_1}$, ${\sf D_4(a_1)+A_1}$ or
${\sf A_2+A_1}$.

If $e$ is of type ${\sf D_5(a_1)}$ then
$\cc_e=\cc_e(0)\oplus\cc_e(2)\oplus\cc_e(4)$ and all nonzero
$\cc_e(i)$ are $1$-dimensional; see \cite{deG2}. By
\cite[p.~140]{LT1}, we have that
$\g_e(0)=[\g_e(0),\g_e(0)]\oplus\Lie(T_1)$, where
$[\g_e(0),\g_e(0)]\cong\mathfrak{sl}_2$, and $\g_e(2)^{{\rm
ad}\,\g_e(0)}$ has basis $\{e, v_7\}$. It is also shown in {\it
loc.\,cit.} that $\Gamma$ is generated by the image of
$c=n_{\textstyle{{1\atop }\!{2\atop }\!{3\atop 1}\! {3\atop
}\!{2\atop}\!{1\atop }}} n_{\textstyle{{1\atop }\!{2\atop }\!{3\atop
2}\! {2\atop }\!{2\atop}\!{1\atop }}}h_1(-1)h_2(-1)h_4(-1)$ and
$\g_e(1)$ is spanned by $v_{23}$ which is fixed by the adjoint
action of $G_e$. It is straightforward to see that $\Lie(T_1)$ is
spanned by the element $t\in\Lie(T)$ such that
$\alpha_i(t)=\delta_{6,i}$ for $1\le i\le 7$ and $[v_1,v_4]=\pm
v_7$. Since $\Ad\,c$ negates $t$ we deduce that $\cc_e^\Gamma$ is
spanned by the images of $e$ and $v_{23}$ and so $c_\Gamma(e)=2$.

 If $e$ is of type ${\sf A_4+A_1}$ then $\cc_e=\cc_e(0)$ is $2$-dimensional by \cite{deG2}.
 On the other hand it is immediate from \cite[p.~138]{LT1} that $\g_e(0)$ is a $2$-dimensional subalgebra
of $\Lie(T)$ spanned by $t_1$ and $t_2$ such that $\alpha_i(t_1)=\delta_{5,i}$ and $\alpha_i(t_2)=\delta_{7,i}$
for $1\le i\le 7$. Since in the present case $\Gamma$ is generated by the image of
$$c=n_{\textstyle{{0\atop }\!{1\atop }\!{1\atop 1}\!
{1\atop }\!{1\atop}\!{1\atop }}}
n_{\textstyle{{1\atop }\!{1\atop }\!{1\atop 0}\!
{1\atop }\!{1\atop}\!{1\atop }}}n_{\textstyle{{1\atop }\!{2\atop }\!{1\atop 1}\!
{2\atop }\!{1\atop}\!{0\atop }}}n_{\textstyle{{1\atop }\!{2\atop }\!{4\atop 2}\!
{3\atop }\!{2\atop}\!{1\atop }}}h_3(-1)h_4(-1)h_6(-1),$$ one checks by direct computation that
$\Ad\,c$ negates both $t_1$ and $t_2$. Therefore, $c_\Gamma(e)=0$ and hence $\mathcal{E}^\Gamma$ is a single point!

If $e$ is of type ${\sf D_4(a_1)+A_1}$ then $\cc_e=\cc_e(2)$ is
$2$-dimensional by \cite{deG2}. By \cite[pp.~129, 130]{LT1}, we know
that $\g_e(0)$ is semisimple, $\g_e(2)^{{\rm ad}\,\g_e(0)}$ has
basis $\{v_9,v_{10}, v_{11},e\}$, and $\Gamma$ is generated by the
image of $c=n_{\textstyle{{1\atop }\!{1\atop }\!{1\atop 1}\! {0\atop
}\!{0\atop}\!{0\atop }}} n_{\textstyle{{1\atop }\!{1\atop }\!{1\atop
0}\! {1\atop }\!{0\atop}\!{0\atop }}}h_2(-1)$. It also follows from
{\it loc.\,cit.} that $[\g_e(1),\g_e((1)]^{{\rm ad}\,\g_e(0)}$ is
spanned by $[v_1,v_2]$ and $[v_3,v_4]$. Since $\dim\,\cc_e(2)=2$,
the above yields that $[v_1,v_2]$ and $[v_3,v_4]$ are linearly
independent. It is straightforward to see that
$(\Ad\,c)(v_1)\in\K^\times v_3$ and $(\Ad\,c)(v_2)\in \K^\times
v_4$. So $\Ad\,c$ must permute the lines $\K^\times [v_1,v_2]$ and
$\K^\times[v_3,v_4]$. Since $\Ad\,c$ acts on $\g_e(2)^{{\rm
ad}\,\g_e(0)}$ as an involution, this implies that it has
eigenvalues $\pm 1$ on the subspace $[\g_e(1),\g_e((1)]^{{\rm
ad}\,\g_e(0)}=\K[v_1,v_2]\oplus \K[v_3,v_4]$. Since the roots
$\textstyle{{1\atop }\!{1\atop }\!{1\atop 1}\! {0\atop
}\!{0\atop}\!{0\atop }}, \textstyle{{1\atop }\!{1\atop }\!{1\atop
0}\! {1\atop }\!{0\atop}\!{0\atop }}, \textstyle{{0\atop }\!{0\atop
}\!{0\atop 0}\! {0\atop }\!{0\atop}\!{1\atop }}$ are pairwise
orthogonal, $\Ad\,c$ fixes $v_{11}=e_7$. Since $\Ad\,c$ also fixes
$e=e_2+e_5+(\mbox{sum of other root vectors})$ and permutes the
lines $\K e_2$ and $\K e_5$, it must be that
$(\Ad\,c)(e_2+e_5)=e_2+e_5$. So $\Ad\, c$ fixes $v_{10}=e_2+e_5$.
But then the $(-1)$-eigenspace of $\Ad\,c$ on $\g_e(2)^{{\rm
ad}\,\g_e(0)}$ is $1$-dimensional. In conjunction with our earlier
remarks this gives $c(e)=c_\Gamma(e)=2$.

If $e$ is of type ${\sf A_2+A_1}$ then $\cc_e=\cc_e(0)$ is
$1$-dimensional by \cite{deG2}, whilst By \cite[p.~112]{LT1} shows
that $\g_e(0)=[\g_e(0),\g_e(0)]\oplus\Lie(T_1)$ where
$[\g_e(0),\g_e(0)] \cong\mathfrak{sl}_4$ and $\Lie(T_1)$ is spanned
by the element $t\in \Lie(T)$ such that $\alpha_i(t)=\delta_{4,i}$
for $1\le i\le 7$. Furthermore, the group $\Gamma$ is generated by
the image of
$$c=n_{\textstyle{{1\atop }\!{1\atop }\!{2\atop 1}\!
{1\atop }\!{1\atop}\!{1\atop }}}
n_{\textstyle{{1\atop }\!{1\atop }\!{2\atop 1}\!
{2\atop }\!{1\atop}\!{0\atop }}}n_{\textstyle{{1\atop }\!{3\atop }\!{4\atop 2}\!
{3\atop }\!{2\atop}\!{1\atop }}}h_3(-1)h_5(-1)h_7(-1).$$ Direct verification shows that
$\Ad\,c$ negates $t$. Since $\cc_e$ is spanned by the image of $t$ we conclude that
$c_\Gamma(e)=0$ in the present case and hence $\mathcal{E}^\Gamma$ is a single point!

This completes the proof of the proposition.
\end{proof}

Now we deal with those non-even, induced nilpotent elements which
lie in more than one sheet of $\g$. We first recall that if a
nilpotent orbit $\Oo\subset\g$ is induced from a nilpotent orbit
$\Oo_L\subset\Lie(L)$, where $P=LU$ is a proper parabolic subgroup
of $G$ with unipotent radical $U$, then the adjoint action of $G$
induces a surjective morphism
$$\pi\colon\,G\times^P\big(\overline{\Oo}_L+\Lie(U)\big)\rightarrow \overline{\Oo},
\qquad\quad (g,x)\longmapsto\, (\Ad\,g)\,x,$$ sometimes referred to
as a {\it generalized Springer map}; see \cite{fu} for more detail.
It is immediate from \cite[Proposition~6.1.2(4)]{Lo3} that $\pi$ is
birational (that is, generically injective) if and only if
$G_e\subset P$ for some $e\in \Oo\cap (\Oo_L+\Lie(U))$.

\begin{prop}\label{noteven2}
Suppose $e$ is not even, induced, and lies in more than one sheet of $\g$. Assume further that $e$ is not of type
${\sf E_7(a_5)}$ if $\g$ is of type ${\sf E_8}$. Then the following hold:
\begin{itemize}

\item[(i)\,] there exists a parabolic subgroup $P=LU$ of $G$ such that
the pair $(P,e)$ satisfies all conditions of Proposition~\ref{mult1}
and the centre of $L$ has dimension $r(e)$;

\smallskip

\item[(ii)\,]
$U(\g,e)_\Gamma^{\rm ab}$ is isomorphic to a polynomial algebra in
$r(e)=c_\Gamma(e)$ variables.
\end{itemize}
\end{prop}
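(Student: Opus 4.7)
The strategy is to reduce this to a direct application of Proposition~\ref{mult1} and Corollary~\ref{poly}(ii), so that both parts follow once we control the combinatorics of sheets containing $e$ and the group $\Gamma$ acts on $\mathfrak{c}_e$. First I would pick a sheet $\mathcal{S}$ of $\g$ containing $e$ of maximal rank, namely $\operatorname{rk}(\mathcal{S})=r(e)$. By Borho's theorem recalled in Subsection~\ref{2.1}, $\mathcal{S}$ has data $(\li,\Oo_L)$ where $\li$ is a Levi subalgebra with $\dim \z(\li)=r(e)$ and $\Oo_L\subset\li$ is rigid. Applying Proposition~\ref{ind}(1) one can choose $e_0\in\Oo_L$ and a parabolic $P=LU\subset G$ with $\Lie(L)=\li$ such that $e\in e_0+\Lie(U)$ and $e\in\Ind_\li^\g(\Oo_L)$. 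For the conclusion of part~(i) I would then need to verify that $G_e\subset P$, or equivalently (by \cite[Proposition~6.1.2(4)]{Lo3}) that the generalised Springer map $G\times^P(\overline{\Oo}_L+\Lie(U))\twoheadrightarrow \overline{\Oo}$ is birational. This will require case-by-case verification on the list of admissible $e$, reading off the structure of $C(e)$ from \cite{LT1} and comparing with the candidate pair $(\li,\Oo_L)$ extracted from Elashvili's classification of sheets \cite{ela} (as tabulated in \cite{deGE}).

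Having secured $G_e\subset P$, I would then apply Proposition~\ref{mult1} with the pair $(P,e_0)$. The hypothesis $\mathcal{E}_0^{\Gamma_0}\ne\emptyset$ reduces, after stripping off the central summand of $\li$, to checking that for $e_0$ rigid in the semisimple Lie algebra $[\li,\li]$ the variety of $\Gamma_0$-fixed points in $\operatorname{Specm}\,U([\li,\li],e_0)^{\rm ab}$ is non-empty. Since $[\li,\li]$ is a direct sum of simple ideals of strictly smaller rank, each simple summand is either classical (for which Theorem~\ref{class} and Remark~\ref{R5}(a) give the stronger conclusion that the unique $1$-dimensional representation exists), or of exceptional type (for which the rigid case is treated in the companion paper \cite{Pr}). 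Thus Proposition~\ref{mult1} yields $\mathcal{E}^\Gamma\ne\emptyset$ and
$$\dim\mathcal{E}^\Gamma\,\ge\,\dim\z(\li)\,=\,r(e),$$
completing (i).

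For part~(ii), Proposition~\ref{abgen}(ii) gives the opposite inequality $\dim\mathcal{E}^\Gamma\le c_\Gamma(e)$, so together with the above we get $r(e)\le c_\Gamma(e)$. To obtain the reverse inequality $c_\Gamma(e)\le r(e)$ one has to inspect each non-even induced orbit lying in multiple sheets (excluding those in Table~0 together with ${\sf E_7(a_5)}$ in ${\sf E_8}$) and show, using the explicit bases of $\g_e$ and the generators of $\Gamma$ listed in \cite{LT1} together with the graded dimensions of $\cc_e$ computed in \cite{deG2}, that $\Gamma$ acts non-trivially on enough homogeneous components of $\cc_e$ to cut the fixed-point dimension down to $r(e)$. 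This is exactly the style of computation carried out already in Propositions~\ref{even} and \ref{noteven1}: for each orbit one identifies a set of elements of $\g_e$ whose images span $\cc_e$, and then acts by the explicit representatives of generators of $\Gamma$ using the fact that if a sum of mutually orthogonal roots is used to write a generator $c\in \Gamma$ as a product of $h_i(-1)$'s and corresponding $n_\alpha$'s, then $\Ad\,c$ negates any root vector whose root is orthogonal to all those appearing in $c$.

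Once both inequalities are established we have $r(e)=c_\Gamma(e)=\dim\mathcal{E}^\Gamma$, and Corollary~\ref{poly}(ii) then identifies $U(\g,e)_\Gamma^{\rm ab}$ with $S(\cc_e^\Gamma)$, a polynomial algebra in $c_\Gamma(e)=r(e)$ variables. The main obstacle is the case-by-case work in the last step: although each individual verification is of the same nature as those in Proposition~\ref{noteven1}, several of the remaining orbits (notably those of type ${\sf E_8(b_4)}$, ${\sf E_8(a_6)}$ and the ${\sf D}$-type parabolics in ${\sf E_7}$) require delicate control over $\Ad\,c$ on components of $\g_e$ of large dimension, where simple root-orthogonality is insufficient and one must propagate information from $\g_e(1)\cdot\g_e(1)$ into $\g_e(2)$ via Jacobi identities, exactly as in the ${\sf E_8(a_5)}$ computation in the proof of Proposition~\ref{even}.
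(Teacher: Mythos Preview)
Your overall framework is correct and matches the paper's: reduce to Proposition~\ref{mult1} and Corollary~\ref{poly}(ii), verify $G_e\subset P$ for a suitable parabolic with $\dim\z(\li)=r(e)$, and then check case-by-case that $c_\Gamma(e)\le r(e)$ using the bases of $\g_e$ in \cite{LT1} and the data in \cite{deG2}. However, two points deserve correction.

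First, you have misidentified the cases. Inspecting Tables~1--6, the \emph{non-even} induced orbits lying in more than one sheet (excluding ${\sf E_7(a_5)}$ in ${\sf E_8}$) are only four: ${\sf E_7(a_4)}$, ${\sf D_7(a_2)}$, ${\sf A_3+A_2}$ in type ${\sf E_8}$, and ${\sf A_3+A_2}$ in type ${\sf E_7}$. The orbits you flag as ``delicate'' (${\sf E_8(b_4)}$, ${\sf E_8(a_6)}$) are \emph{even} and were already handled in Proposition~\ref{even}; they are outside the scope of the present statement. In each of the four relevant cases all sheets through $e$ have the same rank, and the $c_\Gamma(e)$ computation is short (no Jacobi-identity propagation of the ${\sf E_8(a_5)}$ type is needed).

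Second, for the birationality step $G_e\subset P$ the paper does not extract this from the structure of $C(e)$ in \cite{LT1} as you propose; instead it invokes Fu's results \cite[Proposition~3.1]{fu} and \cite{fu07}, which for each of the four orbits supply an explicit pair $(\li,e_0)$ with $e_0$ either rigid in a classical Levi factor (types ${\sf D_6}$ or ${\sf D_7}$) or zero (the Richardson case ${\sf D_7(a_2)}$). This also makes your appeal to \cite{Pr} unnecessary: in every case $e_0$ sits in a classical Lie algebra, so $\mathcal{E}_0^{\Gamma_0}\ne\emptyset$ follows from Remark~\ref{R5}(a) (or, for $e_0=0$, from the augmentation ideal of $U([\li,\li])$).
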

\begin{proof}
Inspecting Tables~1--6 one observes that if $e$ not even, induced,
and lies in more than one sheet of $\g$ then $\g$ has type ${\sf
E_8}$ or ${\sf E_7}$ and all sheets containing $e$ have the same
rank equal to $r(e)$. Part~(i) then follows from
\cite[Proposition~3.1]{fu} which implies that in our situation there
exists at least one birational morphism
$\pi\colon\,G\times^P\big(\overline{\Oo}_L+\Lie(U)\big)\rightarrow
\overline{\Oo}$ with $e\in\Oo$ (the proof of Proposition~3.1 in {\it
loc.\,cit.} relies on Fu's earlier results obtained in \cite{fu07}).
In view of Proposition~\ref{mult1} and Corollary~\ref{poly}(ii) it
thus suffices to show that  the inequality $r(e)\ge c_\Gamma(e)$
holds for all nilpotent elements $e$ as above and
$\mathcal{E}_0^{\Gamma_0}\ne \emptyset$   (the notation of
Proposition~\ref{mult1}).

Suppose $\g$ is of type ${\sf E_8}$. Then $e$ is one of ${\sf
E_7(a_4)}$, ${\sf D_7(a_2)}$, ${\sf A_3+A_2}$.

If $e$ has type ${\sf E_7(a_4)}$ then $r(e)=2$ by \cite{deGE} and
$\cc_e=\cc_e(2)$ is $3$-dimensional by \cite{deG2}. According to
\cite[pp.~259, 260]{LT1}, the reductive group $C(e)^\circ$ acts
trivially on $\g_e(2)$ which has basis $\{v_3,v_4,v_5,e\}$, and
$\Gamma$ is generated by the image of $c=h_4(-1)$ which negates
$v_1$, $v_2$, $v_3$ and fixes $v_4$, $v_5$ and $e$. It is also
immediate from {\it loc.\,cit.} that $[\g_e(1),\g_e(1)]=\K
[v_1,v_2]$. Since $\dim\,\cc_e(2)=3$ and $\dim\,\g_e(2)=4$ it must
be that $[v_1,v_2]\ne 0$. Since $\Ad\,c$ fixes $[v_1,v_2]$ we now
deduce that the image of $v_3$ in $\cc_e$ is nonzero and hence
$c_\Gamma(e)=r(e)=2$. By \cite[3.4]{fu}, we can take for $P=LU$ a
parabolic subgroup of $G$ with ${\mathcal D}L$ of type ${\sf D_6}$
and for $e_0\in\Lie(L)$ a rigid nilpotent element associated with
the partition $(3,2^2, 1^5)$. But then
$\mathcal{E}_0^{\Gamma_0}=\mathcal{E}_0\ne \emptyset$ because
$\mathcal{E}_0$ is a single point by Remark~\ref{R5}(a).

If $e$ has type ${\sf D_7(a_2)}$ then $r(e)=2$ by \cite{deGE},
whilst \cite{deG2} shows that $\cc_e=\cc(0)\oplus \cc_e(2)\oplus
\cc_e(6)$ and all nonzero $\cc_e(i)$ are $1$-dimensional. On the
other hand, \cite[pp.~267, 268]{LT1} yields that $\Gamma$ is
generated by the image of $c=n_{\textstyle{{2\atop }\!{3\atop
}\!{5\atop 3}\! {4\atop }\!{3\atop}\!{2\atop }\!{1\atop }}}
n_{\textstyle{{2\atop }\!{4\atop }\!{5\atop 2}\! {4\atop
}\!{3\atop}\!{2\atop }\!{1\atop }}}h_4(-1)h_5(-1)$ and $\g_e(0)$ is
spanned by the element $t\in\Lie(T)$ such that
$\alpha_i(t)=\delta_{1,i}$ for all $1\le i\le 8$. It is
straightforward to see that $\Ad\,c$ negates $t$. It follows that
$r(e)=2\ge c_\Gamma(e)$. By \cite[Example~5.12]{fu07}, $e$ is
Richardson and we can take for $P=LU$ a parabolic subgroup of $G$
with ${\mathcal D}L$ of type ${\sf A_3+A_3}$ and for $e_0$ the zero
nilpotent element of $\li=\Lie(L)$. Then $\mathcal{E}_0^{\Gamma_0}$
contains the augmentation ideal of $U([\li,\li],e_0)=U([\li,\li])$.
In particular, $\mathcal{E}_0^{\Gamma_0}\ne \emptyset$.

If $e$ has type ${\sf A_3+A_2}$ then $r(e)=1$ and
$\cc_e=\cc_e(0)\oplus\cc_e(2)$ where
$\dim\,\cc_e(0)=\dim\,\cc_e(2)=1$; see \cite{deGE, deG2}. By
\cite[p.~201]{LT1}, the group $\Gamma$ is generated by the image of
$$c=n_{\textstyle{{0\atop }\!{0\atop }\!{1\atop 1}\!
{1\atop }\!{1\atop}\!{0\atop }\!{0\atop }}} n_{\textstyle{{0\atop
}\!{1\atop }\!{1\atop 0}\! {1\atop }\!{1\atop}\!{0\atop }\!{0\atop
}}}n_{\textstyle{{0\atop }\!{1\atop }\!{2\atop 1}\! {2\atop
}\!{1\atop}\!{1\atop }\!{0\atop }}}h_4(-1)h_5(-1)h_6(-1)h_7(-1)$$
and $\g_e(0)=[\g_e(0),\g_e(0)]\oplus\Lie(T_1)$ where
$[\g_e(0),\g_e(0)]\cong\mathfrak{so}_5$. Furthermore, $\Lie(T_1)$ is
spanned by the element $t\in\Lie(T)$ such that $\alpha_1(t)=-3$,
$\alpha_5(t)=2$, $\alpha_8(t)=-2$ and $\alpha_i(t)=0$ for
$i=2,3,4,6,7$. It is straightforward to check that $\Ad\,c$ negates
$t$. But then $r(e)=1\ge c_\Gamma(e)$. By \cite[3.4]{fu}, we can
take for $P=LU$ a parabolic subgroup of $G$ with ${\mathcal D}L$ of
type ${\sf D_7}$ and for $e_0\in\Lie(L)$ a rigid nilpotent element
associated with the partition $(2^2, 1^{10})$. Then
Remark~\ref{R5}(a) yields $\mathcal{E}_0^{\Gamma_0}\ne \emptyset$.

Now suppose $\g$ is of type ${\sf E_7}$. Then $e$ has type ${\sf
A_3+A_2}$. This case  is almost identical to the previous case. Here
we again have that $r(e)=1$ and $\cc_e=\cc_e(0)\oplus\cc_e(2)$ where
$\dim\,\cc_e(0)=\dim\,\cc_e(2)=1$; see \cite{deGE, deG2}. Also,
\cite[p.~131]{LT1} shows that the group $\Gamma$ is generated by the
image of
$$c=n_{\textstyle{{0\atop }\!{0\atop }\!{1\atop 1}\!
{1\atop }\!{1\atop}\!{0\atop }}} n_{\textstyle{{0\atop }\!{1\atop
}\!{1\atop 0}\! {1\atop }\!{1\atop}\!{0\atop
}}}n_{\textstyle{{0\atop }\!{1\atop }\!{2\atop 1}\! {2\atop
}\!{1\atop}\!{1\atop }}}h_4(-1)h_5(-1)h_6(-1)h_7(-1),$$
$\g_e(0)=[\g_e(0),\g_e(0)]\oplus\Lie(T_1)$ where
$[\g_e(0),\g_e(0)]\cong\mathfrak{sl}_2$, and $\Lie(T_1)$ is spanned
by the element $t\in\Lie(T)$ such that $\alpha_1(t)=-3$,
$\alpha_5(t)=2$ and $\alpha_i(t)=0$ for $i=2,3,4,6,7$. As before, we
check that $\Ad\,c$ negates $t$, yielding $r(e)=1\ge c_\Gamma(e)$.
By \cite[3.3]{fu}, we can take for $P=LU$ a parabolic subgroup of
$G$ with ${\mathcal D}L$ of type ${\sf D_6}$ and for $e_0\in\Lie(L)$
a rigid nilpotent element associated with the partition $(3,2^2,
1^{5})$. Then Remark~\ref{R5}(a) shows that
$\mathcal{E}_0^{\Gamma_0}\ne \emptyset$. This completes the proof.
\end{proof}

For the remaining seven induced orbits in the Lie algebras of
exceptional types our results are weaker. It turns out that the
variety $\mathcal{E}^\Gamma$ is non-empty in all cases and we can
determine its dimension, but our methods seem insufficient for
describing the irreducible components of $\mathcal{E}^\Gamma$. We
state our results as a remark:
\begin{rem}\label{except}
{\rm (a) If $\g$ has type ${\sf F_4}$ and $e$ is of type ${\sf
C_3(a_1)}$ then \cite{deGE} shows that $e$ lies in a single sheet of
rank $1$. So $\dim\,\mathcal{E}=1$ by \cite[Theorem~1.2]{Sasha3}. On
the other hand, $\cc_e=\cc_e(2)$ is $3$-dimensional by \cite{deG2}
whilst \cite[p.~76]{LT1} yields that $\cc_e\cong \g_e(2)$ and
 $\Gamma\cong S_2$ is generated by the image $c=h_4(-1)$. Also, $\g_e(2)$ has basis
 $\{v_1,v_2,e\}$ and $\Ad\,c$ negates $v_1$ and fixes $v_2$. This gives  $c_\Gamma(e)=2$.
 It follows from \cite[3.1]{fu} that $e$ is induced from a Levi subalgebra $\li$ of $\g$
 with $[\li,\li]$ of type ${\sf B_3}$ and a nilpotent element $e_0\in[\li,\li]$
 corresponding to the rigid partition $(2^2,1^3)\in \mathcal{P}_{1}(7)$.
Moreover, the conditions of Proposition~\ref{mult1} are satisfied
and $\mathcal{E}_0^{\Gamma_0}\ne \emptyset$ by Theorem~\ref{class1}.
Therefore, $\mathcal{E}^\Gamma\ne\emptyset$ and $
\dim\,\mathcal{E}^\Gamma\ge \z(\li)=1$. As a consequence,
$\dim\,\mathcal{E}^\Gamma=\dim\,\mathcal{E}=1$. In view of
Proposition~\ref{abgen}(ii), the variety $\mathcal{E}^\Gamma$ is
isomorphic to a non-empty $1$-dimensional closed subset of the
affine plane $\mathbb{A}^2$.
\smallskip

\noindent (b) If $\g$ has type ${\sf E_6}$ and $e$ is of type ${\sf
A_3+A_1}$ then $\Gamma=\{1\}$ and $e$ lies in a single sheet of rank
$1$ by \cite{deGE}. Then $\mathcal{E}^\Gamma=\mathcal{E}\ne
\emptyset$ and $\dim\,\mathcal{E}=1$ thanks to
\cite[Theorem~1.2]{Sasha3}. By \cite{deG2}, we have that
$\cc_e=\cc_e(0)\oplus\cc_e(2)$ is $2$-dimensional. As in the
previous case we now deduce that $\mathcal{E}^\Gamma=\mathcal{E}$ is
isomorphic to a non-empty $1$-dimensional closed subset of the
affine plane $\mathbb{A}^2$.

\smallskip

\noindent (c) If $\g$ has type ${\sf E_7}$ and $e$ is of type ${\sf
D_6(a_2)}$ then $\Gamma=\{1\}$ and $e$ lies in a single sheet of
rank $2$; see \cite{deGE}. On the other hand, $\cc_e=\cc_e(2)$ is
$3$-dimensional by \cite{deG2}. In view of
\cite[Theorem~1.2]{Sasha3} and Proposition~\ref{abgen}(ii) this
means that $\mathcal{E}^\Gamma=\mathcal{E}$ is  isomorphic to a
non-empty closed $2$-dimensional subset of the affine space
$\mathbb{A}^3$.

\smallskip

\noindent (d) If $\g$ has type ${\sf E_8}$ and $e$ is of type ${\sf
E_6(a_3)+A_1}$ then $e$ lies in a single sheet of rank $1$ by
\cite{deGE} and $\cc_e=\cc_e(2)$ is $3$-dimensional by \cite{deG2}.
On the other hand, \cite[pp.~245, 246]{LT1} yields that $\Gamma\cong
S_2$ is generated by the image of $c=h_4(-1)$, the subspace
$\g_e(2)$ has basis $\{v_5,v_6,v_7,e\}$, and $v_7=\pm[v_1,v_4]$.
From this it is immediate that the images of $v_5$, $v_6$ and $e$
under the natural epimorphism $\g_e(2)\twoheadrightarrow\cc_e$ form
a basis of $\cc_e$. Direct computations show that $c$ negates $v_5$
and fixes $v_6$ yielding $c_\Gamma(e)=2$.

By \cite[Theorem~1.2]{Sasha3}, the variety $\mathcal{E}$ is
non-empty and has dimension $r(e)=1$ whereas \cite[3.4]{fu} implies
that $e$ is induced from a Levi subalgebra $\li$ of $\g$ with
$[\li,\li]$ of type ${\sf E_7}$ and a nilpotent element
$e_0\in[\li,\li]$ of type ${\sf 2A_2+A_1}$ in such a way that all
conditions of Proposition~\ref{mult1} are satisfied (one should keep
in mind here that $\Gamma_0=\{1\}$ by \cite[p.~403]{C} and
$\mathcal{E}_0\ne \emptyset$ by \cite{GRU}). Hence
$\dim\,\mathcal{E}^\Gamma\ge 1$. We conclude that
$\mathcal{E}^\Gamma$ is isomorphic to a non-empty $1$-dimensional
closed subset of the affine plane $\mathbb{A}^2$.

\smallskip

\noindent (e)  If $\g$ has type ${\sf E_8}$ and $e$ is of type ${\sf
D_6(a_2)}$ then again $e$ lies in a single sheet of rank $1$ by
\cite{deGE} and $\cc_e=\cc_e(2)$ is $3$-dimensional by \cite{deG2}.
It follows from \cite[3.4]{fu} and Theorem~\ref{class1} that $e$ is
induced from a Levi subalgebra $\li$ of $\g$ with $[\li,\li]$ of
type ${\sf D_7}$ and a nilpotent element $e_0\in[\li,\li]$ attached
to the rigid partition $(3,2^4,1^3)\in\mathcal{P}_{1}(14)$ in such a
way that all conditions of Proposition~\ref{mult1} are satisfied.
This implies that $\mathcal{E}^\Gamma\ne \emptyset$ and
$\dim\,\mathcal{E}^\Gamma\ge \dim\,\z(\li)=1$. On the other hand,
combining \cite{deG2} and \cite[pp.~243, 244]{LT1} we deduce that
$\cc_e=\cc_e(2)\cong\g_e(2)$ and the images of $v_1$, $v_2$ and  $e$
form a basis of $\cc_e$.

The group $\Gamma\cong S_2$ is generated by the image of
$c=n_{\textstyle{{0\atop }\!{0\atop }\!{1\atop 1}\! {1\atop
}\!{1\atop}\!{1\atop }\!{1\atop }}} n_{\textstyle{{0\atop }\!{1\atop
}\!{1\atop 0}\! {1\atop }\!{1\atop}\!{1\atop }\!{1\atop }}}
h_4(-1)h_5(-1) $ and it is straightforward to see that $\Ad\,c$
permutes the lines $\K _2$ and $\K e_3$ and fixes
$e_{\textstyle{{0\atop }\!{0\atop }\!{1\atop 0}\! {1\atop
}\!{0\atop}\!{0\atop }\!{0\atop }}}$. Since $e=e_2+e_3+(\mbox{sum of
other root vectors})$, $\Ad\,c$ must permute $e_2$ and $e_3$. From
this it is immediate that $\Ad\,c$ negates $v_1$ and fixes $v_2$. As
a consequence, $c_\Gamma(e)=2$. Since $\dim\,\mathcal{E}=1$ by
\cite[Theorem~1.2]{Sasha3}, our earlier remarks now show that
 $\mathcal{E}^\Gamma$ is isomorphic to a non-empty closed $1$-dimensional subset of
 the affine plane $\mathbb{A}^2$.

\smallskip

\noindent (f)  If $\g$ has type ${\sf E_8}$ and $e$ is of type ${\sf
E_7(a_5)}$ then $e$ lies in two sheets both of which have rank $1$;
see \cite{deGE}. Also, $\cc_e=\cc_e(2)$ is $6$-dimensional by
\cite{deG2}. It follows from \cite[3.4]{fu} that $e$ is induced from
a Levi subalgebra $\li$ of $\g$ with $[\li,\li]$ of type ${\sf
E_6+A_1}$ and a nilpotent element $e_0\in[\li,\li]$ of type ${\sf
3A_1+0}$ in such a way that all conditions of
Proposition~\ref{mult1} are satisfied (it is important   here that
that $\Gamma_0=\{1\}$ by \cite[p.~402]{C} and $\mathcal{E}_0\ne
\emptyset$ by \cite{GRU}).
 This implies that $\mathcal{E}^\Gamma\ne \emptyset$ and
$\dim\,\mathcal{E}^\Gamma\ge \dim\,\z(\li)=1$. Since
$\dim\,\mathcal{E}=r(e)=1$ by \cite[Theorem~1.2]{Sasha3} this yields
$\dim\,\mathcal{E}^\Gamma=\dim\,\mathcal{E}=1$.

By \cite[pp.~247, 248]{LT1}, the subspace $\g_e(2)\cong\cc_e$ is
spanned by  $v_1, v_2, v_3, v_4, v_5$ and $e$, and $\Gamma\cong S_3$
is generated by the images of
$c_1=h_2(\omega)h_3(\omega)h_5(\omega)$ and
$c_2=n_{\alpha_2}n_{\alpha_3} n_{\alpha_5}h_3(-1)h_4(-1)$ where
$\omega$ is a primitive third root of $1$. Direct computations show
that $(\Ad\,c_1)(v_i)=\omega^{-1}v_i$ for $i=1,4$,
$(\Ad\,c_1)(v_i)=\omega v_i$ for $i=2,3$, and $(\Ad\,c_1)(v_5)=v_5$.
Since $v_5=e_4+e_{\textstyle{{0\atop }\!{1\atop }\!{1\atop 1}\!
{1\atop }\!{0\atop}\!{0\atop }\!{0\atop }}}$ and $\Ad\,c_2$ permutes
the lines $\K e_4$ and $\K e_{\textstyle{{0\atop }\!{1\atop
}\!{1\atop 1}\! {1\atop }\!{0\atop}\!{0\atop }\!{0\atop }}}$ and
fixes $e=e_4+e_{\textstyle{{0\atop }\!{1\atop }\!{1\atop 1}\!
{1\atop }\!{0\atop}\!{0\atop }\!{0\atop }}}+ (\mbox{sum of other
root vectors})$, it must be that $(\Ad\,c_2)(v_5)=v_5$. But then
$c_\Gamma(e)=2$ and we conclude that $\mathcal{E}^\Gamma$ is
isomorphic to a non-empty $1$-dimensional closed subset of the
affine plane $\mathbb{A}^2$.

\smallskip

\noindent (g)  If $\g$ is of type ${\sf E_8}$ and $e$ has type ${\sf
E_7(a_2)}$ then $\Gamma=\{1\}$ and $e$ lies in a single sheet of
rank $3$; see \cite{deGE}. Since $\cc_e=\cc_e^\Gamma$ is
$4$-dimensional by \cite{deG2} and $\mathcal{E}\ne\emptyset$  has
dimension $r(e)=3$ by \cite[Theorem~1.2]{Sasha3} we conclude that
$\mathcal{E}^\Gamma=\mathcal{E}$ is isomorphic to a non-empty
$3$-dimensional closed subset of the affine space $\mathbb{A}^4$. }
\end{rem}
\subsection{Applications to completely prime primitive ideals}
Let $e$ be an induced nilpotent element of $\g$ and let
$\mathcal{X}_{\Oo}$ be the set of all primitive ideals $I$ of the
universal enveloping algebra $U(\g)$ with ${\rm
VA}(I)=\overline{\Oo}$. Here $\Oo$ is the adjoint $G$-orbit of $e$
and ${\rm VA}(I)$ denotes the associated variety of $I$, i.e. the
zero locus in $\g$ of the ideal ${\rm gr}(I)$ of $S(\g)={\rm
gr}(U(\g))$ where, as usual, we identify the maximal spectrum of
$S(\g)$ with $\g$ by means of the Killing form of $\g$.

Let $J$ denote the defining ideal of $\overline{\Oo}$. Since
$A:=S(\g)/{\rm gr}(I)$ is a finitely generated $S(\g)$-module and
$J$ is the only minimal prime ideal of $S(\g)$ containing the
annihilator ${\rm Ann}_{S(\g)} A$ by Joseph's theorem, it follows
from \cite[Theorem~6.4]{Ma}, for instance, that there exist prime
ideals $\mathfrak{p}_1,\ldots, \mathfrak{p}_l$ containing $J$ and a
finite chain $\{0\}=A_0\subset A_1\subset \cdots\subset A_l=A$ of
$S(\g)$-submodules of $A$ such that $A_i/A_{i-1}\cong
S(\g)/\mathfrak{p}_i$ for  $1\le i\le l$. The {\it multiplicity} of
$\Oo$ in $U(\g)/I$, denoted ${\rm mult}_{\Oo}\,(U(\g)/I)$, is
defined as
$${\rm mult}_{\Oo}\,(U(\g)/I):={\rm Card}\,\{i\colon\,1\le i\le l,\ \mathfrak{p}_i=J\}.$$
It is well known that this number is independent of the choices
made; see \cite[9.6]{Ja04} for more detail. The results of the
previous subsection can be applied to characterise those  primitive
ideals $I\in\mathcal{X}_{\Oo}$ for which ${\rm
mult}_{\Oo}(U(\g)/I)=1$; we call such ideals {\it multiplicity
free}. The characterisation we obtain can be regarded as a
generalisation of M{\oe}glin's theorem \cite{Moe} on completely
prime primitive ideals of $U(\mathfrak{sl}_n)$ to simple Lie
algebras of other types (that theorem was recently reproved by
Brundan \cite{Br} by using the theory of finite $W$-algebras).

The rest of this subsection is devoted to proving Theorem~\ref{E}.
First we note that for $\g=\mathfrak{sl}_n$  the statement of Theorem~\ref{E}
is equivalent to M{\oe}glin's theorem thanks to (1) and the main results of \cite{Sasha4}.

\smallskip

\noindent(a) Suppose that $\g$ is one of $\mathfrak{so}_N$ or
$\mathfrak{sp}_N$ and $e$ is associated with a partition
$\lambda\in\mathcal{P}_\epsilon(N)$. In what follows we shall use
the notation introduced in the course of proving
Theorem~\ref{class1}.

Repeating the construction used in Part~(b) of the proof of
Theorem~\ref{class1} as many times as possible we arrive at a pair
$(\mathfrak{p},e_0)$, where $\mathfrak{p}$ is a parabolic subalgebra
of $\g$ with a Levi subalgebra $\li=\bar{\li}\oplus\mathfrak{m}$ and
$e_0$ is an almost rigid nilpotent element of $\mathfrak{m}$, such
that $e$ is induced from $e_0$ (regarded as an element of $\li$).
Let $P$ be the parabolic subgroup of $G$ with
$\Lie(P)=\mathfrak{p}$.

Recall Losev's homomorphism $\Xi\colon\,U(\g,e)\rightarrow
U(\li,e_0)'$ from Part~(b) of the proof of Proposition~\ref{mult1}.
Since $I\in\mathcal{X}_{\Oo}$ is multiplicity free, we have that
$I=Q_e\otimes_{U(\g,e)}\K_\eta$ for some $1$-dimensional
$\Gamma$-invariant representation of $\eta$ of $U(\g,e)$. By
\cite[Theorem~6.5.2]{Lo3}, the morphism $\xi^*\colon\,{\rm
Specm}\,U(\li, e_0)^{\rm ab}\rightarrow\, {\rm Specm}\,U(\g,e)^{\rm
ab}$ induced by $\Xi$ is finite. As explained in the proof of
Proposition~\ref{mult1} the inclusion $G_e\subset P_e$ implies that
$\Gamma$ acts on $\widetilde{\mathcal{E}}_0={\rm
Specm}\,U(\li,e_1)^{\rm ab}$ and $\xi^*$ maps the Zariski closed
subset $\widetilde{\mathcal{E}}_0^{\Gamma_0}$ into
$\mathcal{E}^\Gamma$ (as before, $\Gamma_0$ stands for the component
group of $L_{e_0}$). Since the variety $\mathcal{E}^{\Gamma}$ is
irreducible of dimension $s(\lambda)$ by Theorem~\ref{class1} and
$\dim(\widetilde{\mathcal{E}}_0^{\Gamma_0})\ge\dim\,\z(\li)=s(\lambda)$,
we deduce that $\xi^*\big(\widetilde{\mathcal{E}}_0^{\Gamma_0}\big)=
\mathcal{E}^\Gamma$ (one should keep in mind here that being a
finite morphism $\xi^*$ is closed and has finite fibres). As
$(\Ker\,\eta)/I_c$ lies in $\mathcal{E}^\Gamma$, we obtain that
$\eta=\eta_0\circ\xi$ for some $1$-dimensional $\Gamma_0$-invariant
representation $\eta_0$ of $U(\li, e_0)$ where
$\xi\colon\,U(\g,e)^{\rm ab}\rightarrow U(\li, e_0)^{\rm ab}$ is the
homomorphism of $\K$-algebras induced by $\Xi$.

Let $I_0\subset U(\li)$ be the annihilator of
$\widetilde{E}:=Q_0\otimes_{U(\li,e_0)}\,\K_{\eta_0}$ where $Q_0$ is
a generalised Gelfand--Graev $\li$-module associated with $e_0$. By
construction, $I_0$ is a multiplicity-free primitive ideal of
$U(\li)$. Since $\eta=\eta_0\circ\xi$, it follows from
\cite[Corollary~6.4.2]{Lo3} that $I$ is obtained from $I_0$ by
parabolic induction. More precisely,
$$I={\rm Ann}_{U(\g)}
\big(U(\g) \otimes_{U(\mathfrak{p})}\widetilde{E}\big)$$ where we
regard $\widetilde{E}$ as a $\mathfrak{p}$-module with the trivial
action of the nilradical of $\mathfrak p$.

\smallskip

\noindent (b) Suppose that $\lambda$ is not exceptional. As any
almost rigid nilpotent element of $\mathfrak{m}$ is non-singular,
combining Corollary~\ref{izosim} with Borho's classification of
sheets one observes that there exists a unique (up to conjugacy in
$P$) parabolic subalgebra $\mathfrak{p}_1=
\li_1\oplus\mathfrak{n}_1$ of $\g$ contained in $\mathfrak p$ and a
rigid nilpotent element $e_{1}\in\li_1$ such that $e_0\in\li$ is
induced from $e_{1}$ (here $\mathfrak{n}_1$ is the nilradical of
$\mathfrak{p}_1$ and $\li_1$ is a Levi subalgebra of
$\mathfrak{p}_1$ contained in $\li$). Then $e$ is induced from
$e_{1}$ by Proposition~\ref{ind}(3).

Let $\Xi_0$, $\xi^*_0$ and $\xi_0$ be the analogues of the maps
$\Xi$, $\xi^*$ and $\xi$ associated with the finite $W$-algebras
$(U(\li,e_0)$ and $U(\li_1, e_1)$. It follows from
 Theorem~\ref{class} that
$$U(\li,e_0)^{\rm ab}\,\cong\, U(\overline{\li})^{\rm ab}\otimes
U(\mathfrak{m},e_0)\,\cong\, S\big(\z(\overline{\li})\big)\otimes
U(\mathfrak{m},e_0)^{\rm ab}$$ is a polynomial algebra, hence a
domain. By \cite[Theorem~6.5.2]{Lo3}, the morphism
$\xi^*_0\colon\,{\rm Specm}\,U(\li_1, e_{1})^{\rm ab}\rightarrow\,
{\rm Specm}\,U(\li,e_0)^{\rm ab}$ induced by $\Xi_0$ is finite,
hence closed and has finite fibres. As ${\rm Specm}\,U(\li,e_0)^{\rm
ab}$ is an irreducible variety, the map $\xi^*_0$ must be
surjective. So there exists a $1$-dimensional representation
$\eta_1$ of $U(\li_1,e_1)$ such that $\eta_0=\eta_1\circ\xi_0$.

Let $I_1$ be the annihilator in $U(\li_1)$ of
$E:=Q_1\otimes_{U(\li_1,e_1)}\K_{\eta_1}$, where  $Q_1$ denotes a
generalised Gelfand--Graev $\li_1$-module associated with $e_1$, a
completely prime primitive ideal of $U(\li_1)$.  Since
$\eta_0=\eta_1\circ\xi_0$, applying \cite[Corollary~6.4.2]{Lo3} once
again we deduce that $I_0={\rm Ann}_{U(\li)} \big(U(\li)
\otimes_{U(\mathfrak{p}_1)} E\big)$ where $E$ is regarded as a
$\mathfrak{p}_1$-module with the trivial action of $\mathfrak{n}_1$.
Slightly abusing the notation introduced in Subsection~1.6, we then
have $$I\,=\,\mathfrak{I}_\mathfrak{p}^\g(I_0)
\,=\,\mathfrak{I}_\mathfrak{p}^\g\big
(\mathfrak{I}_{\mathfrak{p}_1}^\mathfrak{p}(I_1)\big)=\,
\mathfrak{I}_{\mathfrak{p}_1}^\g(I_1)$$ where the last equality
follows from \cite[10.4]{BGR} (which, in turn, follows from
transitivity of induction). As a consequence, $I={\rm Ann}_{U(\g)}
\big(U(\g) \otimes_{U(\mathfrak{p}_1)} E\big)$ proving
Theorem~\ref{E} in the present case.

\smallskip

\noindent (c) Now suppose $\lambda$ is exceptional. In this case, we
choose for $\mathfrak{p}_1$ the parabolic subalgebra $\Lie(P')$
where $P'$ is the parabolic subgroup introduced in Part~(d) of
Theorem~\ref{class1}. Then $e_0$ is a Richardson element of
$\mathfrak{p}_1$ and the map $\xi_0^*\colon {\rm
Specm}\,U(\li_1,0)^{\rm ab}\rightarrow {\rm Specm}\,U(\li,e_0)^{\rm
ab}$ is still surjective (here $\li_1$ is a Levi subalgebra of
$\mathfrak{p}_1$). Therefore, $\eta_0=\eta_1\circ \xi_0$ for some
$1$-dimensional representation of $U(\li_1,0)=U(\li_1)$. Applying
\cite[Corollary~6.4.2]{Lo3} and repeating almost verbatim the
argument from Part~(b) we now obtain that
$I=\mathfrak{I}_{\mathfrak{p}_1}^\g(I_1)$ for some ideal $I_1$ of
codimension $1$ in $U(\li_1)$. In other words, $I={\rm Ann}_{U(\g)}
\big(U(\g) \otimes_{U(\mathfrak{p_1})} \K_\lambda\big)$ for some
$1$-dimensional representation $\lambda$ of $\mathfrak{p}_1$.

\smallskip

\noindent (d) Finally, suppose $\g$ is exceptional. If $e$ is even,
then we choose for $\mathfrak{p}$ the Jacobson--Morozov parabolic
subalgebra $\mathfrak{p}(e)$ and for $e_0$ the zero element of the
Levi subalgebra of $\li=\g(0)$ of $\mathfrak{p}(e)$. Since
$U(\li,e_0)=U(\li)$ and it follows from Proposition~\ref{even} that
$\xi^*$ is still surjective, we argue as in Part~(a) to deduce that
$I=\mathfrak{I}_{\mathfrak{p}(e)}^\g(I_0)$ for some ideal $I_0$ of
codimension $1$ in $U(\li)$. As a consequence, $I={\rm Ann}_{U(\g)}
\big(U(\g) \otimes_{U(\mathfrak{p}(e))} \K_\lambda\big)$ for some
$1$-dimensional representation $\lambda$ of $\mathfrak{p}(e)$.

If $e$ satisfies the conditions of Proposition~\ref{noteven1} then
it lies in a unique sheet $\mathcal{S}(e)$ which contains an open
decomposition class $\mathcal{D}(\li,e_0)$ such that $e_0$ is a
rigid nilpotent element of $\li=\Lie(L)$. Furthermore, we may assume
that there exists a parabolic subalgebra
$\mathfrak{p}=\li\oplus\mathfrak{n}$, where $\mathfrak{n}$ is the
nilradical of $\mathfrak{p}$, such that $e\in\mathfrak{p}$ is
induced from $e_0\in\li$. In view of Proposition~\ref{noteven1} we
can now repeat verbatim our arguments from Part~(b) to deduce that
the map $\xi^*\colon {\rm Specm}\,U(\li,e_0)^{\rm
ab}\rightarrow\mathcal{E}$ is surjective. Thanks to
\cite[Corollary~6.4.2]{Lo3} this yields that
$I=\mathfrak{I}_{\mathfrak{p}}^\g(I_0)$ for some completely prime
primitive ideal $I_0$ of $U(\li)$ with ${\rm
VA}(I_0)=\overline{(\Ad\,L)\,e_0}$.

At last, if $e$ satisfies the conditions of
Proposition~\ref{noteven2} then we choose  a Levi subalgebra
$\li={\rm Lie}(L)$ and $e_0\in\li$ according to the recipe described
in Fu's paper \cite{fu} (see the proof of Proposition~\ref{noteven2}
for detail). Then there exists a parabolic subalgebra
$\mathfrak{p}=\li \oplus\mathfrak{n}$ such that $e_0$ is rigid in
$\li$ and $e\in \mathfrak{p}$ is induced from $e_0$. Since the map
$\xi\colon {\rm Specm}\,U(\li,e_0)^{\rm ab}\rightarrow \mathcal{E}$
is $\Gamma$-equivariant by our choice of $\mathfrak{p}$ and $e_0$,
combining \cite[Theorem~6.5.2]{Lo3} with Proposition~\ref{noteven2}
yields  that  $\xi^*\big(\widetilde{\mathcal
E}^{\Gamma_0}\big)=\mathcal{E}^\Gamma$. Then arguing as in Part~(a)
we obtain that $I=\mathfrak{I}_{\mathfrak{p}}^\g(I_0)$ for some
multiplicity-free primitive ideal $I_0$ of $U(\li)$ with ${\rm
VA}(I_0)=\overline{\Oo}_0$, where $\Oo_0=(\Ad\,L)\,e_0$.

This completes the proof of Theorem~\ref{E}. We note that for
$\g=\mathfrak{sl}_n$ one can argue as in Part~(c) (with $\li_1$
replaced by $\g$) to obtain yet another proof of M{\oe}glin's
theorem.

\vfill

\pagebreak

\begin{table}[htb]
\caption{{\sc Data for the induced orbits in type}  $\sf{E}_8$}
\label{data-exc}
\begin{tabular}{|c|c|c|c|c|c|c|c|}
\hline\hline Dynkin label  & Type of $\Gamma$ &Number of sheets&
Ranks of sheets &  $\dim \mathfrak{c}_e$ & $\dim\,
\mathfrak{c}_e^\Gamma$  \\ \hline
${\sf E_8}$ & $1$ & 1 (even)& 8 & 8 &8 \\
${\sf E_8(a_1)}$ & $1$ & 1 (even) & 7 & 7 &7 \\
${\sf E_8(a_2)}$ & $1$ & 1 (even) & 6 & 6 &6 \\
${\sf E_8(a_3)}$ & $S_2$ & 2 (even)& 6,5 & 7 &5\\
${\sf E_7}$ & $1$ & 1 & 4 & 4 &4\\
${\sf E_8(a_4)}$ & $S_2$ & 2 (even)& 5,4 & 6 &4
\\
${\sf E_8(b_4)}$ & $S_2$ & 2 (even) & 4,3 & 5 &4
\\
${\sf E_7(a_1)}$ & $1$ & 1 & 5 & 5 &5\\
${\sf E_8(a_5)}$ & $S_2$ & 2 (even) & 4,3 & 5 &3
\\
${\sf E_8(b_5)}$ & $S_3$ & 3 (even)& 4,4,3 & 7 &3
\\
${\sf D_7}$ & $1$ & 1 & 2 & 2 &2 \\
${\sf E_7(a_2)}$ & $1$ & 1 & 3 & 4 &$4^*$ \\
${\sf E_8(a_6)}$ & $S_3$ & 3 (even)& 3,3,2 & 6 &2
\\
${\sf D_7(a_1)}$ & $S_2$ & 2 (even) & 3,2 & 4 &3
\\
${\sf E_6}+{\sf A_1}$ & $1$ & 1 & 2 & 2 &2 \\
${\sf E_7(a_3)}$ & $S_2$ & 1 & 4 & 4 &2 \\
${\sf E_8(b_6)}$ & $S_3$ & 3 (even)& 2,2,1 & 5 &2
\\
${\sf E_6(a_1)}+{\sf A_1}$ & $S_2$ & 1 & 3 & 3 &1 \\
${\sf A_7}$ & $1$ & 1 & 1 & 1 &1 \\
${\sf E_6}$ & $1$ & 1 (even) & 4 & 4 &4 \\
${\sf D_7(a_2)}$ & $S_2$ & 2 & 2,2 & 3 &2 \\
${\sf D_6}$ & $1$ & 1 & 2 & 2 &2 \\
${\sf E_6(a_1)}$ & $S_2$ & 2 (even)& 3,3 & 4 &3 \\
${\sf D_5}+{\sf A_2}$ & $S_2$ & 2 (even) & 2,1 & 3 &2 \\
${\sf E_7(a_4)}$ & $S_2$ & 2 & 2,2 & 3 &2 \\
${\sf A_6}+{\sf A_1}$ & $1$ & 1 & 1 & 1 &1 \\
\hline
\end{tabular}
\end{table}

\pagebreak

\begin{table}[htb]
\caption{{\sc Data for the induced orbits in type  $\sf{E}_8$} (continued)}
\label{data-exc}
\begin{tabular}{|c|c|c|c|c|c|c|c|}
\hline\hline Dynkin label  & Type of $\Gamma$ &Number of sheets&
Ranks of sheets &  $\dim \mathfrak{c}_e$ & $\dim \,
\mathfrak{c}_e^\Gamma$  \\ \hline
${\sf D_6(a_1)}$ & $S_2$ & 1 & 3 & 3 &3 \\
${\sf A_6}$ & $1$ & 1 (even) & 2 & 2 &2 \\
${\sf E_8(a_7)}$ & $S_5$ & 4 (even) & 2,2,1,1 & 10 &1
\\
${\sf D_5}+{\sf A_1}$ & $1$ & 1 & 2 & 2 &2 \\
${\sf E_7(a_5)}$ & $S_3$ & 2 &1,1 & 6 &$2^*$
\\
${\sf D_6(a_2)}$ & $S_2$ & 1 & 1 & 3 &$2^*$
\\
${\sf E_6(a_3)}+{\sf A_1}$ & $S_2$ & 1 & 1 & 3 &$2^*$ \\
${\sf D_5}$ & $1$ & 1 (even) & 3 & 3 &3 \\
${\sf E_6(a_3)}$ & $S_2$ & 2 (even)& 2,2 & 3 &2
\\
${\sf D_4}+{\sf A_2}$ & $S_2$ & 1 (even)& 2 & 2 &2 \\
${\sf A_5}$ & $1$ & 1 & 1 & 1 &1 \\
${\sf D_5(a_1)}+{\sf A_1}$ & $1$ & 1 & 1 & 1 &1 \\
${\sf A_4}+{\sf A_2}+{\sf A_1}$ & $1$ & 1 & 1 & 1 &1\\

${\sf A_4}+{\sf A_2}$ & $1$ & 1 (even)& 1 & 1 &1 \\
${\sf A_4}+{\sf 2A_1}$ & $S_2$ & 1 & 1 & 1 &0 \\
${\sf D_5(a_1)}$ & $S_2$ & 1 & 2 & 2 &1 \\
${\sf A_4}+{\sf A_1}$ & $S_2$ & 1 & 1 & 1 &0 \\
${\sf D_4}+{\sf A_1}$ & $1$ & 1 & 1 & 1 &1 \\
${\sf D_4(a_1)}+{\sf A_2}$ & $S_2$ & 1 (even)& 1 & 1 &1 \\
${\sf A_4}$ & $S_2$ & 1 (even) & 2 & 2 &2 \\
${\sf A_3}+{\sf A_2}$ & $S_2$ & 2 & 1,1 & 2 &1 \\
${\sf D_4}$ & $1$ & 1 (even)& 2 & 2 &2 \\
${\sf D_4(a_1)}$ & $S_3$ & 2 (even)& 1,1 & 3 &1 \\
${\sf 2A_2}$ & $S_2$ & 1 (even)& 1 & 1 &1 \\
${\sf A_3}$ & $1$ & 1 & 1 & 1 &1 \\
${\sf A_2}$ & $S_2$ & 1 (even) & 1 & 1 &1\\
\hline
\end{tabular}
\end{table}

\vfill

\pagebreak

\begin{table}[htb]
\caption{{\sc Data for the induced orbits in type  $\sf{E}_7$}}
\label{data-exc}
\begin{tabular}{|c|c|c|c|c|c|c|c|}
\hline\hline Dynkin label  & Type of $\Gamma$ &Number of sheets&
Ranks of sheets &  $\dim \mathfrak{c}_e$ & $\dim \,
\mathfrak{c}_e^\Gamma$  \\ \hline
${\sf E_7}$ & $1$ & 1 (even)& 7 & 7 &7 \\
${\sf E_7(a_1)}$ & $1$ & 1 (even)& 6 & 6 &6 \\
${\sf E_7(a_2)}$ & $1$ & 1 (even)& 5 & 5 &5 \\
${\sf E_7(a_3)}$ & $S_2$ & 2 (even)& 5,4 & 6 &4 \\
${\sf E_6}$ & $1$ & 1 (even)& 4 & 4 &4 \\
${\sf D_6}$ & $1$ & 1 & 3 & 3 &3 \\
${\sf E_6(a_1)}$ & $S_2$ & 2 (even) & 4,3 & 5 &3 \\
${\sf E_7(a_4)}$ & $S_2$ & 2 (even) & 3,2 & 4 &3 \\
${\sf D_6(a_1)}$ & $1$ & 1 & 4 & 4 &4 \\
${\sf D_5}+{\sf A_1}$ & $1$ & 1 & 3 & 3 &3 \\
${\sf A_6}$ & $1$ & 1 (even) & 2 & 2 &2 \\
${\sf D_5}$ & $1$ & 1 (even) & 3 & 3 &3 \\
${\sf E_7(a_5)}$ & $S_3$ & 3 (even)& 3,3,2 & 6 &2 \\
${\sf D_6(a_2)}$ & $1$ & 1 & 2 & 3 &$3^*$\\
${\sf E_6(a_3)}$ & $S_2$ & 2 (even)& 2,2 & 3 &2\\
${\sf A_5}+{\sf A_1}$ & $1$ & 1 & 1 & 1 &1 \\
$({\sf A_5})'$ & $1$ & 1 & 1 & 1 &1 \\
${\sf D_5(a_1)}+{\sf A_1}$ & $1$ & 1 (even) & 2 & 2 &2 \\
${\sf D_5(a_1)}$ & $S_2$ & 1 & 3 & 3 &2 \\
${\sf A_4}+{\sf A_2}$ & $1$ & 1 (even)& 1 & 1 &1 \\
${\sf A_4}+{\sf A_1}$ & $S_2$ & 1 & 2 & 2 &0 \\
$({\sf A_5})''$ & $1$ & 1 (even)& 3 & 3 &3\\
${\sf D_4}+{\sf A_1}$ & $1$ & 1 & 1 & 1 &1 \\
${\sf A_4}$ & $S_2$ & 2 (even)  & 2,2 & 3 &2 \\
${\sf A_3}+{\sf A_2}+{\sf A_1}$ & $1$ & 1 (even) & 1 &1 &1 \\
${\sf A_3}+{\sf A_2}$ & $S_2$ & 2 & 1,1 &2 &1 \\
${\sf D_4(a_1)}+{\sf A_1}$ & $S_2$ & 1 & 2 & 2 &2\\
${\sf D_4}$ & $1$ & 1 (even) & 2 & 2 &2\\
${\sf A_3}+{\sf 2A_1}$ & $1$ & 1 & 1 &1 &1 \\
${\sf D_4(a_1)}$ & $S_3$ & 2 (even) & 1,1 & 3 &1\\
$({\sf A_3}+{\sf A_1})''$ & $1$ & 1 (even) & 2 &2 &2 \\
${\sf A_3}$ & $1$ & 1 & 1 &1 &1 \\
${\sf 2 A_2}$ & $1$ & 1 (even) & 1 &1 &1 \\
${\sf A_2}+{\sf 3 A_1}$ & $1$ & 1 (even) & 1 &1 &1 \\
${\sf A_2}+{\sf A_1}$ & $S_2$ & 1 & 1 &1 &0 \\
${\sf A_2}$ & $S_2$ & 1 (even) & 1 &1 &1 \\
$({\sf 3A_1})''$ & $1$ & 1 (even)& 1 &1 &1 \\
\hline
\end{tabular}
\end{table}

\vfill

\begin{table}[htb]
\caption{{\sc Data for the induced orbits in type  $\sf{E}_6$}}
\label{data-exc}
\begin{tabular}{|c|c|c|c|c|c|c|c|}
\hline\hline Dynkin label  & Type of $\Gamma$ &Number of sheets&
Ranks of sheets &  $\dim \mathfrak{c}_e$ & $\dim \,
\mathfrak{c}_e^\Gamma$  \\ \hline
${\sf E_6}$ & 1 & 1 (even)& 6 & 6 &6 \\
${\sf E_6(a_1)}$ & $1$ & 1 (even) & 5 & 5 &5 \\
${\sf D_5}$ &1 & 1 (even) & 4 & 4 &4 \\
${\sf E_6(a_3)}$ & $S_2$ & 2 (even) & 4,3 & 5 &3 \\
${\sf D_4}+{\sf A_1}$ & $1$ & 1 & 1 & 1 &1 \\
${\sf A_5}$ & $1$ & 1 & 2 & 2 &2 \\
${\sf D_5(a_1)}$ & 1 & 1 & 3 & 3 &3 \\
${\sf A_4}+{\sf A_1}$ & 1 & 1& 2 & 2 &2 \\
${\sf A_4}$ & $1$ & 1 (even)& 3 & 3 &3 \\
${\sf D_4(a_1)}$ & $S_3$ & 3 (even)& 2,2,1 & 5 &1 \\
${\sf A_3}+{\sf A_1}$ & $1$ & 1 & 1 & 2 &$2^*$ \\
${\sf A_3}$ & $1$ & 1  & 2 & 2 &2\\
${\sf A_2}+{\sf 2 A_1}$ & 1 & 1 & 1 & 1 &1 \\
${\sf 2 A_2}$ & 1 & 1  & 2 & 2 &2\\
${\sf A_2}+{\sf A_1}$ & 1 & 1 & 1 & 1 &1 \\
${\sf A_2}$ & $S_2$ & 1 (even) & 1 & 1 &1 \\
${\sf 2A_1}$ & 1 & 1 & 1 & 1 &1 \\
\hline
\end{tabular}
\end{table}

\begin{table}[htb]
\caption{{\sc Data for the induced orbits in type  $\sf{F}_4$}}
\label{data-exc}
\begin{tabular}{|c|c|c|c|c|c|c|c|}
\hline\hline Dynkin label  & Type of $\Gamma$ &Number of sheets&
Ranks of sheets &  $\dim \mathfrak{c}_e$ & $\dim \,
\mathfrak{c}_e^\Gamma$  \\ \hline

${\sf F_4}$ & 1 & 1 (even)& 4 & 4 &4 \\
${\sf F_4(a_1)}$ & $S_2$ & 2 (even) & 3,3 & 4 &3 \\
${\sf F_4(a_2)}$ &$S_2$ & 2 (even) & 2,2 & 3 &2 \\
${\sf B_3}$ & 1 & 1 (even) & 2 & 2 &2 \\
${\sf C_3}$ & $1$ & 1  & 2 & 2 &2 \\
${\sf F_4(a_3)}$ & $S_4$ & 3 (even) & 2,1,1 & 6 &1 \\
${\sf C_3(a_1)}$ & $S_2$ & 1 & 1 & 3 &$2^*$ \\
${\sf B_2}$ & $S_2$ & 1 (even)& 1 & 1 &1 \\
${\sf \widetilde{A}_2}$ & $1$ & 1 (even)& 1 & 1 &1 \\
${\sf A_2}$ & $S_2$ & 1 (even)& 1 & 1 &1 \\
\hline
\end{tabular}
\end{table}

\begin{table}[htb]
\caption{{\sc Data for the induced orbits in type  $\sf{G}_2$}}
\label{data-exc}
\begin{tabular}{|c|c|c|c|c|c|c|c|}
\hline\hline Dynkin label  & Type of $\Gamma$ &Number of sheets&
Ranks of sheets &  $\dim \mathfrak{c}_e$ & $\dim \,
\mathfrak{c}_e^\Gamma$  \\ \hline
${\sf G_2}$ & 1 & 1 (even)& 2 & 2 &2 \\
${\sf G_2(a_1)}$ & $S_3$ & 2 (even) & 1,1 & 3 &1 \\
\hline
\end{tabular}
\end{table}



\end{document}